\theoremstyle{plain}
\newtheorem{theorem}{Theorem}[section]
\newtheorem{lemma}[theorem]{Lemma}
\newtheorem{corollary}[theorem]{Corollary}
\theoremstyle{definition}
\newtheorem{definition}[theorem]{Definition}
\newtheorem{example}[theorem]{Example}
\theoremstyle{remark}
\newtheorem{remark}[theorem]{Remark}
\numberwithin{equation}{section}
\numberwithin{figure}{chapter}
\DeclareMathOperator{\dist}{dist}
\DeclareMathOperator{\rot}{rot}
\DeclareMathOperator{\Id}{\mbox{Id}}
\def\eop{\hfill$\square$}
\def\stretchcontm{{\stretchcont^{\!\!\!\!^{m}}}{\;}}
\def\stretchkkym{{\stretchkky^{\!\!\!\!^{m}}}{\;}}
\def\stretchxtwo{\stretchx^{\!\!\!\!\!^{2}}}
\def\stretchxm{\stretchx^{\!\!\!\!\!^{m}}}
\def\stretchcont{\;\;\;{^{\smallfrown}_{\smallsmile}{\!\!\!\!\!\!\!{\longrightarrow}}}\;}
\def\stretchkky{\;\;\;{^{\smallfrown}{\!{_{\smallsmile}}}{\!\!\!\!\!\!\!{\longrightarrow}}}\;}
\def\stretchx{\Bumpeq{\!\!\!\!\!\!\!\!{\longrightarrow}}}
\begin{document}

\title{\Huge{\textbf{Fixed points and\\ chaotic dynamics for expansive-contractive maps\\ in Euclidean spaces,\\ with some applications}}}
  \author{\Huge{\rm{\textbf{Marina Pireddu}}}
\\{}\\{}\\
{{Dipartimento di Matematica e Informatica, Universit\`{a} di Udine}}\\{}\\
{{Corso di Dottorato di Ricerca in Matematica e Fisica, Ciclo XXI}}\\{}\\
 \bigskip
{{Anno Accademico: 2008-2009}}\\{}\\
{{Ph.D. Thesis}}
\footnote{\,This is the final thesis for the Ph.D. program in Mathematics and Physics - Section: Mathematics. The final defence has
been held on June 4, 2009.}
\footnote{\,Sinceri ringraziamenti al Prof. Zanolin per l'infinita disponibilit\`{a} ed il prezioso e costante aiuto.}}

\date{}

\frontmatter
\maketitle

\chapter{Introduction}
In this work we introduce a topological method for the search of fixed points and periodic points for continuous maps defined on generalized rectangles in finite dimensional Euclidean spaces. We name our technique ``Stretching Along the Paths'' method, since we deal with maps that expand the arcs along one direction. Such theory was developed in the planar case by Papini and Zanolin in \cite{PaZa-04a, PaZa-04b} and it has been extended to the $N$-dimensional framework by the author and Zanolin in \cite{PiZa-07}. In the bidimensional setting, elementary theorems from plane topology suffice, while in the higher dimension some results from degree theory are needed, leading to the study of the so-called ``Cutting Surfaces'' \cite{PiZa-07}.
In the past decade a big effort has been addressed towards the search of fixed and periodic points. In this respect several authors \cite{BaCs-07,BaCs-08,GiRo-03, MiMr-95a,PoSzMI-01,Sr-00,SrWo-97,Sz-96,WoZg-00,Zg-96,ZgGi-04} have studied maps expansive along some directions and compressive along other ones, obtaining results that are also significant from a dynamical point of view, in particular in relation to Markov partitions. These achievements usually require sophisticated algebraic tools, like the Conley index or the Lefschetz number. Our approach, even if confined to a special configuration, looks instead more elementary. The description of the Stretching Along the Paths method and suitable variants of it can be found in Chapter \ref{ch-me}.\\
In Chapter \ref{ch-ch} we discuss which are the chaotic features that can be obtained for a given map when our technique applies. In particular, we are able to
prove the semi-conjugacy to the Bernoulli shift and thus the positivity of the topological entropy, the presence of topological transitivity and sensitivity with respect to initial conditions, the density of periodic points. Moreover we show the mutual relationships among various classical notions of chaos (such as that by Devaney, Li-Yorke, etc.). We also introduce an alternative geometrical framework related to the so-called ``Linked Twist Maps'', where it is possible to employ our method in order to detect complex dynamics. Chapter \ref{ch-ch} is as self-contained as possible and hence accessible also to the reader who is not familiar with the concept of chaos. Due to the large quantity of definitions available in the literature, our exposition is thought as a brief survey, where a comparison with the notion of chaos in the sense of coin-tossing is presented, too. Such characterization is indeed natural when considered in relation with the approach in Chapter \ref{ch-me}.  \\
The theoretical results obtained so far find an application to discrete- and continuous-time systems in Chapters \ref{ch-de} and \ref{ch-ode}. Namely, in Chapter \ref{ch-de} we deal with some one-dimensional and planar discrete economic models, both of the Overlapping Generations and of the Duopoly Games classes. The bidimensional models are taken from \cite{Me-92, Re-86} and \cite{AgEl-04}, respectively. On the other hand, in Chapter \ref{ch-ode} we analyze some nonlinear ODEs with periodic coefficients. In more details, we consider a modified version of the Volterra predator-prey model, in which a periodic harvesting is included, as well as a simplification of the Lazer-McKenna suspension bridges model \cite{LaMK-87, LaMK-90} from \cite{PaPiZa-08,PaZa-08}.
When dealing with ODEs with periodic coefficients, our method is applied to the associated Poincar\'{e} map and thus we are led back to work with discrete dynamical systems.\\
The above summary is just meant to mention the main aspects of this work. Each chapter is indeed equipped with a more detailed introduction, including the corresponding bibliography.\\
The contents of the present thesis are based on the papers \cite{MePiZa->,PaPiZa-08,PiZa-07,PiZa-07Ts,PiZa-08}
and partially on \cite{PiZa-05}, where maps expansive along several directions were considered.

\chapter{Notations}
For the reader's convenience, we introduce some basic notations that will be used throughout this work.

\medskip
\noindent
We denote by $\mathbb N,\,\mathbb Z,\,\mathbb Q,\,\mathbb R$ and $\mathbb C$ the sets of \textit{natural}, \textit{integer}, \textit{rational}, \textit{real} and \textit{complex numbers}, respectively. In particular ${\mathbb N}$ is the set of nonnegative integers, while ${\mathbb N}_0$ denotes the set of the positive integers. The sets of nonnegative and positive real numbers will be indicated with $\mathbb R^+$ and $\mathbb R_0^+.$ Accordingly, the first quadrant and the open first quadrant will be denoted by $(\mathbb R^+)^2$ and $(\mathbb R_0^+)^2,$ respectively.

\medskip
\noindent
For a subset $M$ of the topological space $W,$ we denote by $\overline M,\,{\mbox {Int}}(M)$ and $\partial M$ the \textit{closure}, the \textit{interior} and the \textit{boundary} of $M$ in $W,$ respectively. The set $M\subseteq W$ is said to be \textit{dense} in $W$ if $\overline M=W.$ If $M\subseteq W,$ we indicate with $W\setminus M$ the \textit{complement} of $M$ in $W.$ In the case that $A, B\subseteq W,$ we denote by $A\setminus B$ the \textit{relative complement} of $B$ in $A.$ By $|A|$ we mean the \textit{cardinality} of the set $A.$

\medskip
\noindent
We denote by $\Id$ the \textit{identity map} and by $\Id_W$ the identity on the space $W,$ when we need to specify it. Given a function $f,$ we sometimes indicate with $D_f$ its \textit{domain}. By $f\restriction_M$ we mean the \textit{restriction} of $f$ to a subset $M$ of its domain. For a function $f:W\supseteq D_f\to Z$ between the topological spaces $W$ and $Z,$ we define the \textit{preimage} of $z\in Z$ as $f^{-1}(z):=\{w\in D_f:f(w)=z\}.$

\medskip
\noindent
A function $f:W\to W$ is called a \textit{self-map} of the space $W.$ The \textit{iterates} of $f$ are defined recursively with the convention $f^0 = \Id_W,$ $f^1 =f$ and $f^n=f\circ f^{n-1},\,\forall n\ge 2.$ We say that $\overline w\in W$ is a \textit{fixed point}\index{point, fixed} for $f$ if $f(\overline w)=\overline w.$ We say that $\overline w\in W$ is a \textit{periodic point}\index{point, periodic} for $f$ if there exists an integer $l\ge 1$ such that $f^l(\overline w)=\overline w.$ The minimal $l$ such that $f^l(\overline w)=\overline w$ is called \textit{period} of $\overline w.$

\medskip
\noindent
The $N$-dimensional Euclidean space $\mathbb R^N$ is endowed with the usual {scalar product} $\langle\cdot\,,\cdot\rangle,$ {norm} $\|\cdot\|$ and {distance} $\dist(\cdot\,,\cdot).$
In the case of $\mathbb R,$ the norm $\|\cdot\|$ will be replaced with the \textit{absolute value} $|\cdot|\,.$ If $X$ is a metric space different from $\mathbb R^N,$ we denote by $d_X$ the distance defined on it. The distance between $M_1,\,M_2\subseteq X$ is indicated with $d_X(M_1,\,M_2):=\inf\{d_X(x,y):x\in M_1,\, y\in M_2\}.$

\medskip
\noindent
In a normed space $(X,\|\cdot\|_X),$ we denote by $B(x_0,r)$ and $B[x_0,r]$ the \textit{open} and \textit{closed balls} centered in $x_0\in X$ with radius $r>0,$ i.e. $B(x_0,r):=\{x\in X:\|x-x_0\|_X<r\}$ and $B[x_0,r]:=\{x\in X:\|x-x_0\|_X\le r\}.$ For $M\subseteq X,$ we set $B(M,r):=\{x\in X: \exists\, w\in M \mbox{ with } \|x-w\|_X< r\}.$ The set $B[M,r]$ is defined accordingly. If we wish to specify the dimension $m$ of the ball, when it is contained in $\mathbb R^m,$ we write $B_m(x_0,r)$ and $B_m[x_0,r]$ in place of $B(x_0,r)$ and $B[x_0,r].$ We denote by $S^{N-1}$ the $N-1$-dimensional \textit{unit sphere} embedded in $\mathbb R^N,$ that is, $S^{N-1}:=\partial B_N[0,1].$

\medskip
\noindent
Given an open bounded subset $\Omega$ of $\mathbb R^N,$ an element $p$ of $\mathbb R^N$ and a continuous function $f:\overline{\Omega} \to \mathbb R^N,$ we indicate with $\deg(f,\Omega,p)$
the \textit{topological degree} of the map $f$ with respect to $\Omega$ and $p.$ We say that $\deg(f,\Omega,p)$ is defined if $f(x)\ne p,\,\forall x \in \partial\Omega$ \cite{Ll-78}.

\medskip
\noindent
We denote by $\mathcal C(D,\mathbb R^N)$ the set of the continuous maps $f:D\to\mathbb R^N$  on a compact set $D,$ endowed with the infinity norm $|f|_\infty:=\max_{x\in D}\|f(x)\|.$

\medskip
\noindent
Given a compact interval $[a,b]\subset \mathbb R,$ we set $L^1([a,b]):=\{f:[a,b]\to\mathbb R: f \mbox{ is Lebesgue measurable and } \int_{a}^b|f(t)| dt <+\infty\},$ where the integral is the Lebesgue integral. This space is endowed with the norm $||f||_1:= \int_{a}^b|f(t)| dt\,.$

\tableofcontents

\mainmatter

\part{Expansive-contractive maps in Euclidean spaces}\label{pa-ec}
\chapter[Maps expansive along one direction]{Search of fixed points for maps expansive along one direction}\label{ch-me}
One of the main areas in general topology concerns the search of fixed points for continuous maps defined on arbitrary topological spaces. Given a topological space $W\supseteq A\ne\emptyset$ and a continuous map $f: A\to W,$ a \textit{fixed point} for $f$ is simply a point $\bar x\in A$ that is not moved by the map, i.e. $f(\bar x)=\bar x.$ In spite of the simplicity of the definition, the concept of fixed point turns out to be central in the study of dynamical systems, since, for instance, fixed points of suitable operators correspond to periodic solutions (cf. Chapter \ref{ch-ode}). A concept related to that of fixed point is the one of \textit{periodic point}. Namely, the periodic points of a continuous map $f$ are the fixed points of the iterates of the map, that is, $\bar x$ is a periodic point for $f$ if there exists an integer $l\ge 1$ such that $f^l(\bar x)=\bar x.$\\
As often happens, the more general are the spaces we consider, the more sophisticated are the tools to be employed:
the search of fixed points and periodic points can indeed require the use of advanced theories, like that of the
Conley or fixed point index \cite{MiMr-95a, MiMr-02, Ro-02, Sz-96, WoZg-00, Zg-96}, Lefschetz number \cite{LWSr-02, Sr-97, Sr-00, SrWo-97} and many other geometric or algebraic methods \cite{BaMr-07, Ea-75, SrWoZg-05, Wo-95}. In the past decades several efforts have been made in order to find elementary tools to deal with such a problem. For example Poincar{\'e}-Miranda Theorem or some other equivalent versions of Brouwer fixed point Theorem have turned out to be very useful when dealing with $N$-dimensional Euclidean spaces \cite{BaCsGa-07, Ma-06}.\\
It is among these not too sophisticated approaches that the theory of the Cutting Surfaces for the search of fixed point and periodic points, introduced by the author and Zanolin in \cite{PiZa-07}, has to be placed. More precisely, it concerns continuous maps defined on generalized $N$-dimensional rectangles and having an expansive direction. Our main tools are a modified version of the
classical Hurewicz-Wallman Intersection Lemma \cite[p.72]{En-78}, \cite[D), p.40]{HuWa-41}
and the Fundamental Theorem of Leray-Schauder \cite[Th\'{e}or\`{e}me Fondamental]{LeSh-34}.
The former result (also referred to Eilenberg-Otto \cite[Th.3]{EiOt-38},
according to \cite[Theorem on partitions, p.100]{DuGr-03}) is one of the basic lemmas
in dimension theory and it is known to be one of the equivalent versions of
Brouwer fixed point Theorem. It may be interesting to observe that extensions of
the Intersection Lemma led to generalizations of Brouwer fixed point Theorem to
some classes of possibly noncontinuous functions \cite{Wh-67}.
On the other hand, the Leray-Schauder Continuation Theorem concerns
topological degree theory and has found important applications in
nonlinear analysis and differential equations (see \cite{Ma-97}).
Actually, in Theorem \ref{th-es} of Section \ref{sec-nd}, we will
employ a more general version of such result
due to Fitzpatrick, Massab\'{o} and Pejsachowicz \cite{FiMaPe-86}.
However, we point out that for our applications to the study of periodic
points and chaotic-like dynamics we rely on the classical
Leray-Schauder Fundamental Theorem.
Another particular feature of our approach consists in a combination of Poincar\'{e}-Miranda
Theorem \cite{Ku-97,Ma-00}, which is an $N$-dimensional version of the
intermediate value theorem, with the properties of topological surfaces cutting the
arcs between two given sets.
The theory of Cutting Surfaces is explained in Section \ref{sec-nd}.\\
When we confine ourselves to the bidimensional setting, suitable results from plane topology can be applied \cite{PaPiZa-08}, like the Crossing Lemma \ref{lem-cr}, and no topological degree arguments are needed. In this case we enter the context of the ``Stretching Along the Paths'' method developed by Papini and Zanolin in \cite{PaZa-04a, PaZa-04b}. The name comes from the fact that one deals with maps that expand the arcs, in a sense that will be clarified in Section \ref{sec-sap}, where the interested reader can find all the related details. We stress that, with respect to \cite{PaZa-04a, PaZa-04b}, the terminology and notation are slightly different and some proofs are new.\\
In any case, our approach based on the Cutting Surfaces represents a possible extension of the planar theory in \cite{PaZa-04a, PaZa-04b} to the $N$-dimensional setting, for any $N\ge 2.$ By the similarity between the concepts employed and the results obtained in the two frameworks, we will keep the ``stretching'' terminology also for the higher dimensional case, without distinguishing between them. Indeed, the main difference between the planar setting and the $N$-dimensional one resides in the tools to be employed in order to get some results, like for instance the fixed point Theorems \ref{th-fp} and \ref{th-fpn}, on which the two theories are respectively based. Namely, as already mentioned, in the plane elementary results from general topology suffices, while in the higher dimension, one needs more sophisticated tools, such as topological degree.

\section{The planar case}\label{sec-sap}
\subsection{Stretching along the paths and variants}\label{sub-sap}

Before introducing the main concepts of the ``Stretching Along the Paths'' method, let us recall some facts about paths, arcs and continua.\\
Let $W$ be a topological space. By a \textit{path}\index{path, subpath} $\gamma$ in $W$ we mean a
continuous map $\gamma : {\mathbb R}\supseteq [a,b]\to W.$ Its range will be denoted by
${\overline{\gamma}},$ that is,
${\overline{\gamma}}:=\gamma([a,b]).$
A \textit{subpath} $\omega$ of $\gamma$ is the restriction of $\gamma$ to a closed subinterval of its
domain and hence it is defined as
$\omega:= \gamma\restriction_{[c,d]},$ for some $[c,d]\subseteq [a,b].$
If $W,Z$ are topological spaces and $\psi:W\supseteq
D_{\psi}\to Z$ is a map which is continuous on a set ${\mathcal
M}\subseteq D_{\psi}\,,$ then for any path $\gamma$ in $W$ with
$\overline{\gamma}\subseteq {\mathcal M},$ it follows that
$\psi\circ\gamma$ is a path in $Z$ with range
$\psi({\overline{\gamma}}).$ Notice that there is no loss of generality in assuming the
paths to be defined on $[0,1].$ Indeed, if $\theta_1: [a_1,b_1]\to
W$ and $\theta_2:[a_2,b_2]\to W,$ with $a_i<b_i,\, i=1,2,$ are two
paths in $W,$ we define the equivalence relation `` $\sim$ '' between $\theta_1$ and $\theta_2$ by setting $\theta_1\sim \theta_2$ if there is a
homeomorphism $h$ of $[a_1,b_1]$ onto $[a_2,b_2]$ such that $\theta_2(h(t)) =
\theta_1(t),\,$ $\forall\, t\in [a_1,b_1].$ It is easy to check that if $\theta_1\sim\theta_2\,,$
then the ranges of $\theta_1$ and $\theta_2$
coincide. Hence, for any path $\gamma$
there exists an equivalent path defined on $[0,1].$ In view of this fact we will usually deal with paths defined on $[0,1],$ but sometimes we will also consider paths defined on an arbitrary interval $[a,b],$ when this can help to simplify the exposition.
A concept similar to the one of path is that of arc.
More precisely an \textit{arc}\index{arc} is the homeomorphic image of the
compact interval $[0,1],$ while an \textit{open arc} is an arc without its
end-points. A \textit{continuum}\index{continuum, subcontinuum} of $W$ is a compact connected subset of $W$ and a \textit{subcontinuum} is a  subset of a continuum which is itself a continuum.\\
Now we can start with our definitions. Given a metric space $X,$
we call \textit{generalized rectangle}\index{rectangle! generalized} any set
${\mathcal R}\subseteq X$ homeomorphic to the unit square ${\mathcal Q}:=[0,1]^2$ of ${\mathbb R}^2.$
If ${\mathcal R}$ is a generalized rectangle and $h: {\mathcal Q}\to h({\mathcal Q})={\mathcal R}$
is a homeomorphism defining it, we call \textit{contour}\index{contour} $\vartheta{\mathcal R}$ of
${\mathcal R}$ the set
$$\vartheta {\mathcal R}:= h(\partial{\mathcal Q}),$$
where $\partial{\mathcal Q}$ is the usual boundary of the unit square. Notice that the contour $\vartheta{\mathcal R}$
is well-defined as it does not depend on the choice of the homeomorphism $h.$ In fact, $\vartheta{\mathcal R}$
is also a homeomorphic image of $S^1,$ that is, a Jordan curve.\\
By an \textit{oriented rectangle}\index{rectangle! oriented} we mean a pair
$${\widetilde{\mathcal R}}:=
({\mathcal R},{\mathcal R}^-),$$
where ${\mathcal R} \subseteq X$ is a generalized rectangle and
$${\mathcal R}^- := {\mathcal R}^-_{\ell}\cup {\mathcal R}^-_{r}$$
is the union of two disjoint arcs
${\mathcal R}^-_{\ell}\,,{\mathcal R}^-_{r}\,\subseteq \vartheta{\mathcal R},$ that we
call the \textit{left} and the \textit{right sides} of ${\mathcal R}^-.$\index{$[\,\cdot\,]^-$-set}
Since $\vartheta{\mathcal R}$ is a Jordan curve
it follows that $\vartheta{\mathcal R} \setminus(\,{\mathcal R}^-_{\ell}\cup{\mathcal R}^-_{r}\,)$
consists of two open arcs. We denote by ${\mathcal R}^+$ the closure of such open arcs, that we name
${\mathcal R}^+_{d}$ and ${\mathcal R}^+_{u}\,$ (the \textit{down} and \textit{up sides} of ${\mathcal R}^+$).
It is important to notice that we can always label the arcs
${\mathcal R}^{-}_{\ell}\,,$ ${\mathcal R}^{+}_{d}\,,$ ${\mathcal R}^{-}_{r}$ and
${\mathcal R}^{+}_{u},$ following the cyclic order $\ell-d-r-u-\ell,$
and take a homeomorphism $g: {\mathcal Q}\to g({\mathcal Q})={\mathcal R}$
so that
\begin{equation}\label{eq-or}
\begin{array}{lll}
& g(\{0\}\times [0,1]) = {\mathcal R}^-_{\ell}\,,\quad
& g(\{1\}\times [0,1]) = {\mathcal R}^-_{r}\,,\\
& g([0,1]\times\{0\}) = {\mathcal R}^+_{d}\,,\quad
& g([0,1]\times\{1\}) = {\mathcal R}^+_{u}\,.
\end{array}
\end{equation}

\noindent
Both the term ``generalized rectangle'' for ${\mathcal R}$ and the decomposition of the contour
$\vartheta{\mathcal R}$ into ${\mathcal R}^-$ and ${\mathcal R}^+$ are somehow inspired to the construction
of rectangular domains around hyperbolic sets arising in the theory of Markov partitions \cite[p.291]{HaKa-03},
as well as by the Conley-Wa\.{z}ewski theory \cite{Co-78,Sr-04}. Roughly speaking,
in such frameworks, the sets labeled as $[\cdot]^-,$
or as $[\cdot]^+,$ are made by those points which are moved by the flow outward, respectively inward,
with respect to ${\mathcal R}.$ As we shall see in the next definition of \textit{stretching along the paths} property, also in our case the $[\cdot]^{-}$-set is
loosely related to the expansive direction. Indeed we have:

\begin{definition}\label{def-sap}
{\rm{Let $X$ be a metric
space and let $\psi: X \supseteq D_{\psi}\to X$ be a map
defined on a set $D_{\psi}.$ Assume that ${\widetilde{\mathcal A}}:=
({\mathcal A},{\mathcal A}^-)$ and ${\widetilde{\mathcal B}}:=
({\mathcal B},{\mathcal B}^-)$ are oriented rectangles of
$X$ and let ${\mathcal K}\subseteq {\mathcal A}\cap D_{\psi}$
be a compact set. We say that \textit{$({\mathcal K},\psi)$ stretches
${\widetilde{\mathcal A}}$ to ${\widetilde{\mathcal B}}$ along the
paths}\index{stretching along the paths@\textsl{stretching along the paths} $\stretchx$} and write
\begin{equation}\label{eq-sap}
({\mathcal K},\psi): {\widetilde{\mathcal A}} \stretchx {\widetilde{\mathcal B}},
\end{equation}
if the following conditions hold:
\begin{itemize}
\item{} \; $\psi$ is continuous on ${\mathcal K}\,;$ \item{} \;
For every path $\gamma: [0,1]\to {\mathcal A}$ such that
$\gamma(0)\in {\mathcal A}^-_{\ell}$ and $\gamma(1)\in {\mathcal
A}^-_{r}$ (or $\gamma(0)\in {\mathcal A}^-_{r}$ and $\gamma(1)\in
{\mathcal A}^-_{\ell}$), there exists a subinterval
$[t',t'']\subseteq [0,1]$ with
$$\gamma(t)\in {\mathcal K},\quad \psi(\gamma(t))\in {\mathcal B}\,,\;\;
\forall\, t\in [t',t'']$$ and, moreover, $\psi(\gamma(t'))$ and
$\psi(\gamma(t''))$ belong to different sides of ${\mathcal B}^-.$
\end{itemize}

\medskip

\noindent In the special case in which ${\mathcal K} = {\mathcal
A},$ we simply write
$$\psi: {\widetilde{\mathcal A}} \stretchx {\widetilde{\mathcal B}}.$$
}}
\end{definition}

\noindent
The role of the compact set ${\mathcal K}$ is crucial in the results which use
Definition \ref{def-sap}. For instance, if \eqref{eq-sap} is satisfied with $\widetilde{\mathcal B}=\widetilde{\mathcal A},$
that is,
\begin{equation}\label{eq-aa}
({\mathcal K},\psi): {\widetilde{\mathcal A}} \stretchx {\widetilde{\mathcal A}},
\end{equation}
we are able to prove the existence of a fixed point for $\psi$ in ${\mathcal K}$ (see Theorem \ref{th-fp}).\\
Notice that when \eqref{eq-sap} holds, the stretching condition
$$({\mathcal K}',\psi): {\widetilde{\mathcal A}}\stretchx {\widetilde{\mathcal B}}$$
is fulfilled for any compact set ${\mathcal K}',$ with
${\mathcal K}\subseteq {\mathcal K}'\subseteq {\mathcal A}\cap D_{\psi},$ on which $\psi$ is
continuous. On the other hand, since $\mathcal K$ provides a localization of the fixed point, it is convenient to choose it as small as possible.

\medskip

\noindent
We point out that in Definition \ref{def-sap} (as well as in its variant in Definition \ref{def-cn} below) we don't require $\psi(\mathcal A)\subseteq \mathcal B.$ In fact, we will refer to $\mathcal B$ as to a ``target set'' and not as to a codomain.\\
The relation introduced in Definition \ref{def-sap} is fundamental in our ``stretching along the paths'' method.
Indeed, such strategy consists in checking property \eqref{eq-aa}, so that, in view of Theorem \ref{th-fp}, there exists at least a fixed point for $\psi$ in $\mathcal K.$ If in particular \eqref{eq-aa} is satisfied with respect to two or more pairwise disjoint compact sets $\mathcal K_i$'s we get a multiplicity of fixed points. On the other hand, when \eqref{eq-aa} holds for some iterate of $\psi,$ the existence of periodic points is ensured. Since the stretching property in \eqref{eq-sap} is preserved under composition of mappings (cf. Lemma \ref{lem-comp}), combining such arguments, also the presence of chaotic dynamics follows.
A detailed treatment of such topic can be found in Section \ref{sec-de}.

\noindent
\begin{definition}\label{def-cn}
{\rm{Let $X$ be a metric
space and let $\psi: X \supseteq D_{\psi}\to X$ be a map
defined on a set $D_{\psi}.$ Assume that ${\widetilde{\mathcal A}}:=
({\mathcal A},{\mathcal A}^-)$ and ${\widetilde{\mathcal B}}:=
({\mathcal B},{\mathcal B}^-)$ are oriented rectangles of
$X$ and let ${\mathcal K}\subseteq {\mathcal A}\cap D_{\psi}$
be a compact set. Let also $m\geq 2$ be an integer.
We say that \textit{$({\mathcal D},\psi)$ stretches ${\widetilde{\mathcal A}}$
to ${\widetilde{\mathcal B}}$ along the paths with crossing number $m$}\index{stretching along the paths @\textsl{stretching along the paths} $\stretchxm$} and write
$$({\mathcal D},\psi): {\widetilde{\mathcal A}} \stretchxm \,{\widetilde{\mathcal B}},$$
if there exist $m$ pairwise disjoint compact sets
$${\mathcal K}_0\,,\dots, {\mathcal K}_{m-1}\,\subseteq {\mathcal D}$$ such that
$$({\mathcal K}_i,\psi): {\widetilde{\mathcal A}} \stretchx {\widetilde{\mathcal B}},\quad i=0,\dots,m-1.$$
When ${\mathcal D} = {\mathcal A}\,,$
we simply write
$$\psi: {\widetilde{\mathcal A}} \stretchxm \, {\widetilde{\mathcal B}}.$$
}}
\end{definition}

\begin{figure}[htbp]
\centering
\includegraphics[scale=0.3]{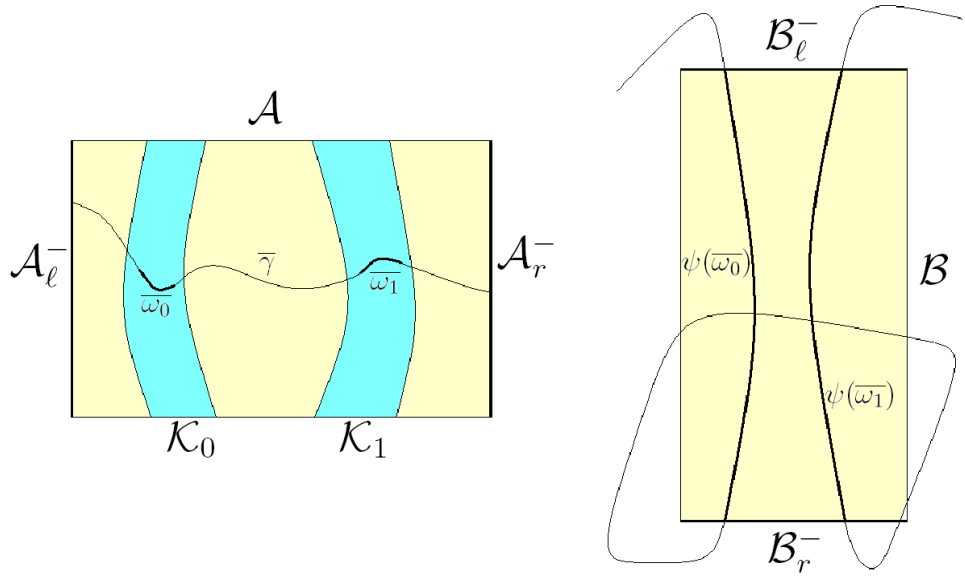}
\caption{\footnotesize{A pictorial comment to Definitions \ref{def-sap} and \ref{def-cn}.
The rectangles ${\mathcal A}$ and ${\mathcal B}$
have been oriented by selecting the sets ${\mathcal A}^-$
and ${\mathcal B}^-$ (drawn with thicker lines), respectively.
We represent a case in which the relation
$({\mathcal K_i},\psi): {\widetilde{\mathcal A}} {\Bumpeq{\!\!\!\!\!\!\!\!{\longrightarrow}}} {\widetilde{\mathcal B}},\, i=0,1,$ is satisfied for a map $\psi:\mathbb R^2\supseteq \mathcal A\to \mathbb R^2$ and for the two darker compact subsets ${\mathcal K_0}$ and ${\mathcal K_1}$ of
${\mathcal A},$ on which $\psi$ is continuous.
For a generic path $\gamma: [0,1]\to {\mathcal A}$ with $\gamma(0)$ and $\gamma(1)$ belonging to
different components of ${\mathcal A}^-,$ we have highlighted two subpaths
$\omega_0$ and $\omega_1$ with range in ${\mathcal K_0}$ and ${\mathcal K_1},$
respectively, such that their composition with $\psi$ determines two new paths
(drawn by bolder vertical lines)
with values in ${\mathcal B}$ and joining the two sides of ${\mathcal B}^-.$
In this framework, according to Definition \ref{def-cn}, we could also write $\psi: {\widetilde{\mathcal A}} \stretchxtwo\, {\widetilde{\mathcal B}}.$
}}
\end{figure}

\noindent
Definition \ref{def-cn} is an extension of Definition \ref{def-sap}, as they are coincident when $m=1.$ The concept of ``crossing number''\index{crossing number} is borrowed from Kennedy and Yorke \cite{KeYo-01} and adapted to our framework. Indeed in \cite{KeYo-01} the authors deal with a very general setting, that looks closely related to ours when restricted to the planar case. More precisely, in
\cite[horseshoe hypotheses $\Omega$]{KeYo-01}
a locally connected and compact subset $Q$ of the separable
metric space $X$ is considered, on which two disjoint and compact
sets $\hbox{\it end}_0\,, \hbox{\it end}_1\subseteq Q$ are selected, so that
any component of $Q$ intersects both of them. On $Q$ a continuous
map $f:Q\to X$ is defined in such a way that every continuum $\Gamma\subseteq Q$ joining $\hbox{\it end}_0$
and $\hbox{\it end}_1$ (i.e. a \textit{connection} according to
\cite{KeYo-01}) admits at least $m\geq 2$ pairwise disjoint compact and
connected subsets, whose images under $f$ are again connections.
Such subcontinua are named \textit{preconnections} and $m$ is the so-called
\textit{crossing number}.\\
Clearly, the
arcs ${\mathcal R}^-_{\ell}$ and ${\mathcal R}^-_{r}$ in our
definition of oriented rectangle are a particular case of the sets
$\hbox{\it end}_0$ and $\hbox{\it end}_1$ considered by Kennedy
and Yorke. Moreover, any path $\gamma$ with values in ${\mathcal
R}$ and joining ${\mathcal R}^-_{\ell}$ with ${\mathcal R}^-_{r}$
determines a connection (according to \cite{KeYo-01})
via its image $\overline{\gamma}.$ Similarly, any subpath
$\omega$ of $\gamma,$ with $\omega = \gamma\restriction_{[t',t'']}\,$ like in
Definition \ref{def-sap}, makes $\overline{\omega}$ a preconnection according
to Kennedy and Yorke. In fact, the idea of considering paths to detect a sort of expansion was independently developed by such authors in \cite{KeYo-02,KeYo-03}, where a terminology analogous to ours was employed.
A special request in our approach, not explicitly assumed in \cite{KeYo-01},
is the one concerning the compact sets like ${\mathcal K}.$ Indeed, in
\cite{KeYo-01} there are no sets playing the role of the
$\mathcal K_i$'s in Definition \ref{def-cn}.\\
On the other hand, our stretching condition is strong enough to ensure the
existence of fixed points and periodic points for the map $\psi$ inside
the $\mathcal K_i$'s, as we shall see in Theorems \ref{th-fp} and \ref{th-per} below, while
in \cite[Example 10]{KeYo-01} a fixed point free map defined on
$\mathbb R^2\times S^1$ and satisfying the horseshoe hypotheses
$\Omega$ is presented. Indeed, we point out that Theorem \ref{th-fp} allows not only to infer the existence of fixed points, but also to localize them. This turns out to be of particular importance when multiple coverings in the sense of Definition \ref{def-cn} occur.
\medskip

\noindent
In the proof of Theorems \ref{th-fp} and \ref{th-per}, we employ a classical result from plane topology (cf. Crossing Lemma \ref{lem-cr}), that we recall in Subsection \ref{sub-cl} for the reader's convenience. See also \cite{PaZa-04a, PaZa-04b}.

\begin{theorem}\label{th-fp}
Let $X$ be a metric space and let $\psi: X \supseteq D_{\psi}\to X$ be a map defined on a set $D_{\psi}.$ Assume that
${\widetilde{\mathcal R}}:= ({\mathcal R},{\mathcal R}^-)$
is an oriented rectangle of $X.$ If ${\mathcal K}\subseteq {\mathcal R}\cap D_{\psi}$ is a compact set for which it holds that
\begin{equation}\label{eq-ora}
({\mathcal K},\psi): {\widetilde{\mathcal R}} \stretchx {\widetilde{\mathcal R}},
\end{equation}
then there exists at least one point $z\in {\mathcal K}$ with $\psi(z) = z.$
\end{theorem}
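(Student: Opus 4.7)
The plan is to reduce the theorem to the unit square via an orientation-respecting homeomorphism and then combine an intermediate value argument with the Crossing Lemma. Fix a homeomorphism $g: \mathcal{Q} \to \mathcal{R}$ satisfying \eqref{eq-or} and set $\tilde\psi := g^{-1} \circ \psi \circ g$, $\tilde{\mathcal{K}} := g^{-1}(\mathcal{K})$. The induced orientation on $\mathcal{Q} = [0,1]^2$ has $\mathcal{Q}^-_\ell = \{0\} \times [0,1]$ and $\mathcal{Q}^-_r = \{1\} \times [0,1]$, and hypothesis \eqref{eq-ora} translates verbatim into the analogous stretching condition for $(\tilde{\mathcal{K}}, \tilde\psi)$ relative to $(\mathcal{Q}, \mathcal{Q}^-)$. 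A fixed point of $\tilde\psi$ in $\tilde{\mathcal{K}}$ will yield, via $g$, the desired fixed point of $\psi$ in $\mathcal{K}$.

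Let $\pi_1,\pi_2$ denote the projections onto the two coordinates, consider the compact set
$$\tilde{\mathcal{K}}_0 := \{z \in \tilde{\mathcal{K}} : \tilde\psi(z) \in \mathcal{Q}\},$$
and define on it the continuous scalar functions $F_i(z) := \pi_i(\tilde\psi(z)) - \pi_i(z)$, $i = 1, 2$. Let $Z := \{z \in \tilde{\mathcal{K}}_0 : F_1(z) = 0\}$, which is a compact subset of $\mathcal{Q}$. I claim that $Z$ meets every path in $\mathcal{Q}$ joining $\mathcal{Q}^-_\ell$ with $\mathcal{Q}^-_r$: given such a $\gamma$, the stretching property supplies an interval $[t',t''] \subseteq [0,1]$ with $\gamma([t',t'']) \subseteq \tilde{\mathcal{K}}_0$ and with $\pi_1(\tilde\psi(\gamma(t')))$, $\pi_1(\tilde\psi(\gamma(t'')))$ equal to $0$ and $1$ in some order. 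Hence $F_1 \circ \gamma$ evaluated at $t'$ and $t''$ has opposite (weak) signs, and the one-variable intermediate value theorem produces some $s \in [t',t'']$ with $\gamma(s) \in Z$.

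Next I would invoke the Crossing Lemma (Lemma \ref{lem-cr}, recalled in Subsection \ref{sub-cl}): since the compact set $Z \subseteq \mathcal{Q}$ meets every path from $\mathcal{Q}^-_\ell$ to $\mathcal{Q}^-_r$, it must contain a continuum $C$ that meets both $[0,1] \times \{0\}$ and $[0,1] \times \{1\}$. On $C$ the function $F_2$ is continuous; at a point $z \in C$ with $\pi_2(z) = 0$ one has $F_2(z) = \pi_2(\tilde\psi(z)) \geq 0$ (since $\tilde\psi(z) \in \mathcal{Q}$), whereas at a point with $\pi_2(z) = 1$ one gets $F_2(z) = \pi_2(\tilde\psi(z)) - 1 \leq 0$. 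Connectedness of $C$ together with continuity of $F_2$ yields some $z^* \in C$ with $F_2(z^*) = 0$; combined with $F_1(z^*) = 0$ this gives $\tilde\psi(z^*) = z^*$, whence $g(z^*) \in \mathcal{K}$ is a fixed point of $\psi$.

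The delicate step is the extraction of the continuum $C$: the conclusion that $Z$ merely intersects every left-to-right path is not by itself enough to guarantee a connected subset of $Z$ joining top and bottom, and one genuinely needs the planar Crossing Lemma (or an equivalent Janiszewski/Hurewicz--Wallman-type separation statement). Once that topological ingredient is in hand, the rest of the argument reduces to two applications of the intermediate value theorem, the functorial transfer of the stretching property through $g$, and elementary continuity considerations.
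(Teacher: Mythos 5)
Your proof is correct and follows essentially the same route as the paper's: transfer to the unit square via the defining homeomorphism, form the cutting set where the first coordinate of $\tilde\psi$ agrees with the point and $\tilde\psi$ stays in the square, verify the cutting property by the one-variable intermediate value theorem, invoke the Crossing Lemma to extract a continuum joining the top and bottom sides, and apply Bolzano's theorem to the second-coordinate discrepancy on that continuum. The only cosmetic difference is that the paper's set $V$ is specified with the condition $0\le\phi_2(x)\le 1$ while you package the analogous requirement as $\tilde\psi(z)\in\mathcal{Q}$; the two sets coincide.
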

\begin{proof}
By the definition of oriented rectangle,
there exists a homeomorphism
$h: {\mathbb R}^2 \supseteq {\mathcal Q}\to h({\mathcal Q}) = {\mathcal R}\subseteq X,$
mapping in a correct way (i.e. as in \eqref{eq-or})
the sides of ${\mathcal Q}= [0,1]^2$ into the arcs that
compose the sets ${\mathcal R}^-$ and ${\mathcal R}^+.$
Then, passing to the planar map $\phi:= h^{-1}\circ \psi \circ h$ defined on
$D_{\phi}:= h^{-1}(D_{\psi})\subseteq {\mathcal Q},$
we can confine ourselves
to the search of a fixed point for $\phi$ in the compact set
${\mathcal H}:=h^{-1}({\mathcal K})\subseteq {\mathcal Q}.$
The stretching assumption on $\psi$ is now translated to
$$({\mathcal H},\phi): {\widetilde{\mathcal Q}}\stretchx {\widetilde{\mathcal Q}}\,.$$
On ${\widetilde{\mathcal Q}}$ we consider the natural ``left-right'' orientation, choosing
$${\mathcal Q}^-=(\{0\}\times [0,1])\cup(\{1\}\times [0,1]).$$
A fixed point for $\phi$ in ${\mathcal H}$ corresponds to a fixed point for
$\psi$ in ${\mathcal K}.$
\\
For $\phi = (\phi_1,\phi_2)$ and $x= (x_1,x_2),$ we define the compact set
$$V:=\{x\in {\mathcal H}: 0\leq \phi_2(x)\leq 1,\; x_1 - \phi_1(x) = 0\}.$$
The proof consists in showing that $V$ contains a continuum
${\mathcal C}$ which joins in ${\mathcal Q}$
the lower side $[0,1]\times\{0\}$ to the upper side $[0,1]\times\{1\}.$
To this end, in view of Lemma \ref{lem-cr}, it is sufficient to prove that $V$ acts as a ``cutting surface''
(in the sense of Definition \ref{def-cut})
between the left and the right sides of ${\mathcal Q},$ that is,
$V$ intersects any path in ${\mathcal Q}$ joining the left side $\{0\}\times[0,1]$ to the right side
$\{1\}\times[0,1].$
Such cutting property can be checked via the intermediate value theorem by observing that if
$\gamma = (\gamma_1,\gamma_2):[0,1]\to\mathcal Q$ is a continuous map
with $\gamma(0)\in \{0\} \times [0,1]$ and $\gamma(1)\in \{1\} \times [0,1]$
then, the stretching hypothesis $({\mathcal H},\phi): {\widetilde{\mathcal Q}} \stretchx {\widetilde{\mathcal Q}}$ implies that
there exists an interval
$[t',t'']\subseteq [0,1]$ such that
$\gamma(t)\in {\mathcal H},$  $\phi(\gamma(t))\in {\mathcal Q},\,\forall\, t\in [t',t'']$
and
$\gamma_1(t') - \phi_1(\gamma(t')) \geq 0 \geq \gamma_1(t'') - \phi_1(\gamma(t''))$
or
$\gamma_1(t') - \phi_1(\gamma(t')) \leq 0 \leq \gamma_1(t'') - \phi_1(\gamma(t'')).$
Notice that, by the definition of $V$ it follows that $\phi_2(z)\in [0,1],\forall\, z \in {\mathcal C}.$
Hence, for every point $p = (p_1,p_2)\in
{\mathcal C}\cap ([0,1]\times\{0\})$ we have $p_2 - \phi_2(p) \leq 0$
and, similarly, $p_2 - \phi_2(p) \geq 0$ for every
$p = (p_1,p_2)\in {\mathcal C}\cap ([0,1]\times\{1\}).$
Applying Bolzano Theorem we obtain the existence of
at least a point $v=(v_1,v_2)\in {\mathcal C}\subseteq V \subseteq {\mathcal H}$ such that
$v_2 - \phi_2(v) =0.$ Hence $v$ is a fixed point of $\phi$ in ${\mathcal H}$ and $z:=h(v)$ is a fixed point for $\psi$ in ${\mathcal K}\subseteq {\mathcal R}.$
\end{proof}

\begin{remark}\label{rem-ora}
{\rm{
Notice that, for the validity of Theorem \ref{th-fp}, it is fundamental that the orientation of the generalized rectangle ${\mathcal R}$ in \eqref{eq-ora} remains the same for $\mathcal R$ considered as ``starting set'' and ``target set'' of the map $\psi.$ Indeed, if one chooses two different orientations for $\mathcal R,$ in general the above result does not hold anymore and the existence of fixed points for $\psi$ is no longer ensured, not only in $\mathcal K,$ but even in  $\mathcal R,$ as shown by the example depicted in Figure \ref{fig-nf}.
}}

\hfill$\lhd$\\
\end{remark}

\begin{figure}[htbp]
\centering
\includegraphics[scale=0.345]{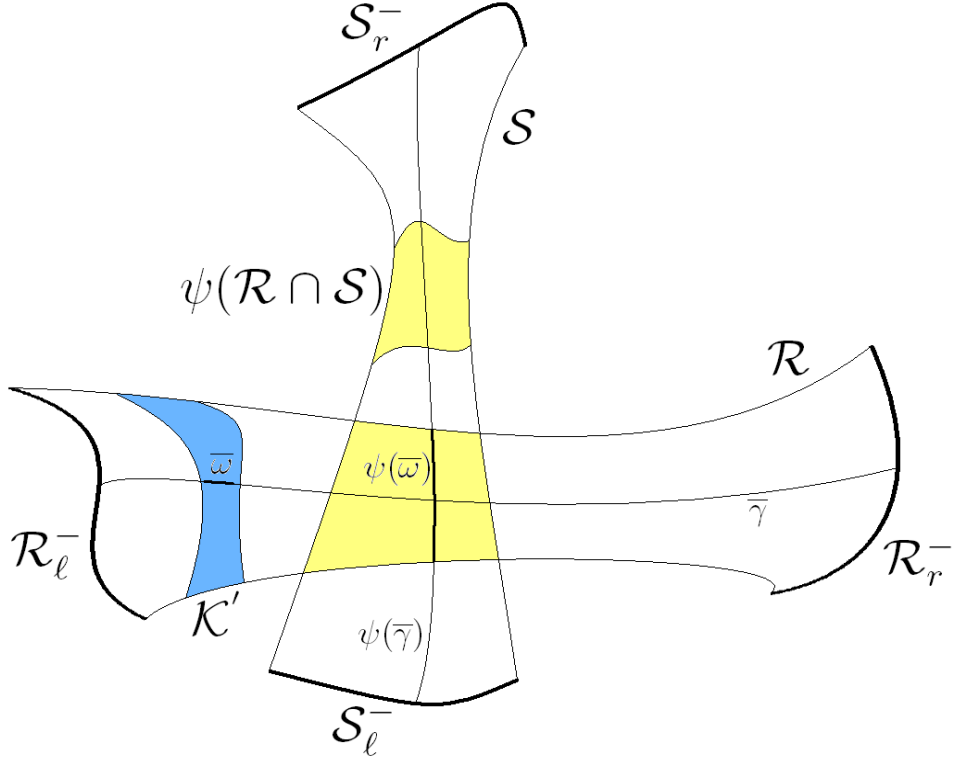}
\caption{\footnotesize{The generalized rectangle $\mathcal R$ is transformed by a continuous planar map $\psi$ onto the generalized rectangle $\mathcal S=\psi(\mathcal R),$ so that, in particular, ${\mathcal S}^-_{\ell}=\psi({\mathcal R}^-_{\ell})$ and ${\mathcal S}^-_{r}=\psi({\mathcal R}^-_{r}).$ The boundary sets ${\mathcal R}^-={\mathcal R}^-_{\ell}\cup {\mathcal R}^-_{r}$ and ${\mathcal S}^-={\mathcal S}^-_{\ell}\cup {\mathcal S}^-_{r}$ are drawn with thicker lines. As it is immediate to verify, for ${\widetilde{\mathcal R}}=(\mathcal R, {\mathcal R}^-)$ and ${\widetilde{\mathcal S}}=(\mathcal S, {\mathcal S}^-),$ it holds that $\psi: {\widetilde{\mathcal R}} \stretchx {\widetilde{\mathcal S}}.$ On the other hand, calling $\,{\widetilde{\!\widetilde{\mathcal R}}}$ the generalized rectangle $\mathcal R$ oriented by choosing $\overline{\vartheta{\mathcal R}\setminus {\mathcal R}^-}$ as $[\cdot]^-$-set, it also holds that
$({\mathcal K}^{'},\psi): {\widetilde{\mathcal R}} \stretchx \,{\widetilde{\!\widetilde{\mathcal R}}},$ where ${\mathcal K}^{'}$ is the subset of $\mathcal R$ depicted with a darker color. However, since $\mathcal R\cap \mathcal S$ is mapped by $\psi$ outside $\mathcal R$ (both $\mathcal R\cap \mathcal S$ and $\psi(\mathcal R\cap \mathcal S)$ are drawn with the same light color), there cannot exist fixed points for $\psi$ in $\mathcal R$ and, a fortiori, neither in ${\mathcal K}^{'}.$ Notice that Theorem \ref{th-fp} does not apply because we have taken two different orientations for $\mathcal R.$
A similar geometrical framework was already considered in \cite[Figure 8]{KeKoYo-01} and \cite[Figure 4]{PiZa-05}.
}}
\label{fig-nf}
\end{figure}

\noindent
As a comment to Theorem \ref{th-fp}, we discuss here its main differences with respect to the more classical Brouwer fixed point Theorem (recalled in the two-dimensional case as Theorem \ref{th-br} in Subsection \ref{sub-cl}).
It is a well-known fact that the fixed point property for continuous maps is preserved by homeomorphisms.
Therefore, it is straightforward to prove the existence of a fixed point for a continuous map $\psi$ if $\psi({\mathcal R})\subseteq {\mathcal R},$ with
${\mathcal R}$ a generalized rectangle of a metric space $X.$
The framework depicted in Theorem \ref{th-fp} is quite different.
Indeed, first of all, the stretching assumption $({\mathcal K},\psi): {\widetilde{\mathcal R}} \stretchx {\widetilde{\mathcal R}}$ does not imply $\psi({\mathcal R})\subseteq {\mathcal R}$ and, secondly, we need $\psi$ to be continuous only on ${\mathcal K}$
and not on the whole set ${\mathcal R}.$ Finally, as already noticed,
we stress that our result also localizes the presence of a
fixed point in the subset ${\mathcal K}.$ From the point of view of the applications, this means that we
are able to obtain a multiplicity of fixed points provided that the stretching property is fulfilled
with respect to pairwise disjoint compact subsets of ${\mathcal R}.$
Indeed, if the condition $(\mathcal D,\psi): {\widetilde{\mathcal R}} \stretchxm {\widetilde{\mathcal R}}$ is satisfied with a crossing number $m\ge 2,$ in view of Theorem \ref{th-fp}, the map $\psi$ has at least a fixed point in each of the compact sets $\mathcal K_i$'s, $i=0,\dots,m-1,$ from Definition \ref{def-cn} and therefore there are at least $m$ fixed points for $\psi$ in $\mathcal R$ (see Theorem \ref{th-per}).

\smallskip

\noindent
A different case, related to the stretching relation from Definition \ref{def-sap}, in which it is possible to find fixed points is when the special geometric configuration in Definition \ref{def-hv} gets realized, as stated in Theorem \ref{th-fpt} below.

\begin{definition}\label{def-hv}
{\rm{
Let ${\widetilde{\mathcal A}}:=({\mathcal A},{\mathcal A}^-)$ and ${\widetilde{\mathcal B}}:=({\mathcal B},{\mathcal B}^-)$
be two oriented rectangles of a metric space $X.$ We say that
${\widetilde{\mathcal A}}$ is a \textit{horizontal slab}\index{slab, horizontal} of ${\widetilde{\mathcal B}}$
and write
$${\widetilde{\mathcal A}} \subseteq_{\,h} \, {\widetilde{\mathcal B}}$$
if ${{\mathcal A}}\subseteq {{\mathcal B}}$ and, either
$${{\mathcal A}}^-_{\ell} \subseteq {{\mathcal B}}^-_{\ell}\,\quad\mbox{and }\quad
{{\mathcal A}}^-_{r} \subseteq {{\mathcal B}}^-_{r}\,,$$
or
$${{\mathcal A}}^-_{\ell} \subseteq {{\mathcal B}}^-_{r}\,\quad\mbox{and }\quad
{{\mathcal A}}^-_{r} \subseteq {{\mathcal B}}^-_{\ell}\,,$$
so that any path in ${\mathcal A}$ joining the two sides of
${\mathcal A}^-$ is also a path in ${\mathcal B}$ and joins the two opposite
sides of ${\mathcal B}^-.$
\\
We say that
${\widetilde{\mathcal A}}$ is a \textit{vertical slab}\index{slab, vertical} of ${\widetilde{\mathcal B}}$
and write
$${\widetilde{\mathcal A}} \subseteq_{\,v} \, {\widetilde{\mathcal B}}$$
if ${{\mathcal A}}\subseteq {{\mathcal B}}$ and
every path in ${\mathcal B}$ joining the two sides of
${\mathcal B}^-$ admits a subpath in ${\mathcal A}$
that joins the two opposite
sides of ${\mathcal A}^-.$
\\
Given three oriented rectangles ${\widetilde{\mathcal A}}:=({\mathcal A},{\mathcal A}^-),$
${\widetilde{\mathcal B}}:=({\mathcal B},{\mathcal B}^-)$ and ${\widetilde{\mathcal E}}:=({\mathcal E},{\mathcal E}^-)$ of the metric space $X,$
with ${\mathcal E}\subseteq {\mathcal A} \cap {\mathcal B},$
we say that ${\widetilde{\mathcal B}}$ \textit{crosses}
${\widetilde{\mathcal A}}$ \textit{in} ${\widetilde{\mathcal E}}$\index{crossing@\textsl{crossing} relation $\pitchfork$} and write
$${\widetilde{\mathcal E}}\in \{ {\widetilde{\mathcal A}}\pitchfork {\widetilde{\mathcal B}} \},$$
if
$${\widetilde{\mathcal E}}\subseteq_{\,h} \, {\widetilde{\mathcal A}}\quad\mbox{and} \quad
{\widetilde{\mathcal E}}\subseteq_{\,v} \, {\widetilde{\mathcal B}}.$$
}}
\end{definition}

\medskip

\noindent
The above definitions, which are adapted from the concept of ``slice'' in \cite{PaZa-04b, PaZa-07} and imitate the classical terminology in \cite[Ch.2.3]{Wi-88},
are topological in nature and therefore do not necessitate any metric assumption (like
smoothness, lipschitzeanity, or similar properties often required in the literature).
We also notice that the terms ``horizontal'' and ``vertical'' are employed
in a purely conventional manner, as it looks clear from Figure \ref{fig-inttv}: the horizontal is the expansive
direction and the vertical is the contractive one (in a quite broad sense). For instance, in \cite{PiZa-07} the terms ``vertical'' and ``horizontal'' were interchanged in regard to the $N$-dimensional setting, but this did not make any difference with respect to the meaning of the results obtained.

\smallskip

\noindent
The next theorem depicts a situation where the ``starting set'' and the ``target set''
of the mapping $\psi$ are two intersecting oriented rectangles. A graphical
illustration of it can be found in Figure \ref{fig-inttv}.

\begin{theorem}\label{th-fpt}
Let $X$ be a metric space and let $\psi: X \supseteq D_{\psi}\to X$ be a map defined on a set $D_{\psi}.$ Assume that
${\widetilde{\mathcal A}}:=({\mathcal A},{\mathcal A}^-)$ and ${\widetilde{\mathcal B}}:=({\mathcal B},{\mathcal B}^-)$ are oriented rectangles
of $X$ and let
${\mathcal K}\subseteq {\mathcal A}\cap D_{\psi}$ be a compact set such that
\begin{equation}\label{eq-intab}
({\mathcal K},\psi): {\widetilde{{\mathcal A}}}\stretchx {\widetilde{{\mathcal B}}}.
\end{equation}
If there exists an oriented rectangle
${\widetilde{\mathcal E}}:=({\mathcal E},{\mathcal E}^-)$ with
${\widetilde{\mathcal E}} \,\in \{\,{\widetilde{\mathcal A}} \pitchfork
{\widetilde{\mathcal B}}\,\},$
then $\psi$ has at least a fixed point in ${\mathcal K}\cap {\mathcal E}.$
\end{theorem}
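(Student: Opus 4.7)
My plan is to reduce Theorem~\ref{th-fpt} to Theorem~\ref{th-fp} by showing that the stretching relation descends to the oriented rectangle $\widetilde{\mathcal E}$: concretely, I will prove
\[
({\mathcal K}\cap {\mathcal E},\psi): {\widetilde{\mathcal E}} \stretchx {\widetilde{\mathcal E}},
\]
and then invoke Theorem~\ref{th-fp}. First I note that $\mathcal K \cap \mathcal E$ is compact (as $\mathcal K$ is compact and $\mathcal E$ is closed, being homeomorphic to $\mathcal Q$), that it is contained in $\mathcal E \cap D_\psi$, and that $\psi$ is continuous on it since it is continuous on $\mathcal K$.

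The heart of the argument is the verification of the path-stretching condition. Take an arbitrary path $\gamma:[0,1]\to\mathcal E$ with $\gamma(0)$ and $\gamma(1)$ on different components of $\mathcal E^-$. The first ingredient is the horizontal slab relation $\widetilde{\mathcal E}\subseteq_{\,h}\widetilde{\mathcal A}$: by definition $\gamma$ is simultaneously a path in $\mathcal A$ joining the two components of $\mathcal A^-$. Applying the hypothesis $({\mathcal K},\psi):\widetilde{\mathcal A}\stretchx\widetilde{\mathcal B}$ to $\gamma$, we obtain a subinterval $[t',t'']\subseteq[0,1]$ with $\gamma(t)\in\mathcal K$ and $\psi(\gamma(t))\in\mathcal B$ for every $t\in[t',t'']$, and such that $\psi(\gamma(t'))$ and $\psi(\gamma(t''))$ lie on different sides of $\mathcal B^-$.

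Next I invoke the vertical slab relation $\widetilde{\mathcal E}\subseteq_{\,v}\widetilde{\mathcal B}$. The restriction $\psi\circ\gamma\restriction_{[t',t'']}$ is a path in $\mathcal B$ whose endpoints lie on the two different components of $\mathcal B^-$, so by Definition~\ref{def-hv} it admits a subpath lying in $\mathcal E$ and joining the two components of $\mathcal E^-$; that is, there exists $[s',s'']\subseteq[t',t'']$ with $\psi(\gamma(t))\in\mathcal E$ for all $t\in[s',s'']$ and with $\psi(\gamma(s'))$, $\psi(\gamma(s''))$ on different sides of $\mathcal E^-$. For $t\in[s',s'']$ the point $\gamma(t)$ belongs to $\mathcal K$ (from the first step) and to $\mathcal E$ (since $\gamma$ takes values in $\mathcal E$), hence to $\mathcal K\cap\mathcal E$. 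Together with $\psi(\gamma(t))\in\mathcal E$, this yields precisely the stretching condition $({\mathcal K}\cap\mathcal E,\psi):\widetilde{\mathcal E}\stretchx\widetilde{\mathcal E}$. Theorem~\ref{th-fp} then produces a fixed point $z\in\mathcal K\cap\mathcal E$ of $\psi$.

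The only delicate point is making sure that the two slab properties are used in the correct direction: the horizontal slab relation is what allows us to \emph{feed} a path in $\mathcal E$ into the stretching hypothesis on $\mathcal A$, while the vertical slab relation is what lets us \emph{extract} from the image path in $\mathcal B$ a piece that still crosses $\widetilde{\mathcal E}$ in the right sense. I expect no further obstacle beyond carefully tracking these two roles and verifying that the successive restrictions $[s',s'']\subseteq[t',t'']\subseteq[0,1]$ remain nondegenerate, which is automatic from Definitions~\ref{def-sap} and~\ref{def-hv}.
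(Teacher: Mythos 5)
Your proof is correct, and it takes a route that is recognizably different from the paper's. The paper starts from an arbitrary path in $\mathcal B$ joining the two sides of $\mathcal B^-$, first extracts (via the vertical slab relation $\widetilde{\mathcal E}\subseteq_v\widetilde{\mathcal B}$) a subpath in $\mathcal E$ joining the sides of $\mathcal E^-$, then promotes it (via $\widetilde{\mathcal E}\subseteq_h\widetilde{\mathcal A}$) to a path in $\mathcal A$ joining the sides of $\mathcal A^-$, and only then applies the stretching hypothesis; the upshot is $({\mathcal K}\cap{\mathcal E},\psi):\widetilde{\mathcal B}\stretchx\widetilde{\mathcal B}$. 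You instead start from an arbitrary path already in $\mathcal E$, promote it immediately via $\subseteq_h$, apply the stretching hypothesis, and then cut the image down via $\subseteq_v$, arriving at $({\mathcal K}\cap{\mathcal E},\psi):\widetilde{\mathcal E}\stretchx\widetilde{\mathcal E}$. Both reductions invoke Theorem~\ref{th-fp} on a stretching-onto-itself relation and both yield a fixed point in ${\mathcal K}\cap{\mathcal E}$; the choice of which rectangle to stretch onto itself is a matter of where you begin the chase. If anything, your version makes the localization feel more immediate, since the final stretching relation lives directly on $\widetilde{\mathcal E}$, whereas the paper's lives on the larger $\widetilde{\mathcal B}$ with the localization encoded only in the compact set ${\mathcal K}\cap{\mathcal E}$.
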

\begin{proof}
In order to achieve the thesis, we show that
\begin{equation}\label{eq-stt}
(\mathcal K\cap{\mathcal E} ,\psi): {\widetilde{{\mathcal B}}}
\stretchx {\widetilde{{\mathcal B}}}.
\end{equation}
Indeed, let $\gamma$ be a path with ${\overline{\gamma}}\subseteq
\mathcal B$ and ${\overline{\gamma}}\cap {\mathcal
B}^-_{\ell}\ne\emptyset,\; {\overline{\gamma}}\cap {\mathcal
B}^-_{r}\ne\emptyset.$  Then, since ${\widetilde{\mathcal E}}
\subseteq_{\,v} \, {\widetilde{\mathcal B}},$ there exists a subpath $\omega$ of
$\gamma$ such that ${\overline{\omega}}\subseteq {\mathcal E}$ and
${\overline{\omega}}\cap {\mathcal E}^-_{\ell}\ne\emptyset,$
${\overline{\omega}}\cap {\mathcal E}^-_{r}\ne\emptyset.$ Recalling now
that ${\widetilde{\mathcal E}} \subseteq_{\,h} \,
{\widetilde{\mathcal A}},$ it holds that
${\overline{\omega}}\subseteq {\mathcal E} \subseteq {\mathcal A}$
and ${\overline{\omega}}\cap {\mathcal A}^-_{\ell}\ne\emptyset,\;
{\overline{\omega}}\cap {\mathcal A}^-_{r}\ne\emptyset.$ Finally, since
$({\mathcal K},\psi): {\widetilde{\mathcal A}}\stretchx
{\widetilde{\mathcal B}},$ there is a subpath $\eta$ of
$\omega$ such that $\overline{\eta}\subseteq \mathcal K\cap{\mathcal
E} ,\,\psi({\overline{\eta}})\subseteq \mathcal B,$ with
$\psi({\overline{\eta}})\cap {\mathcal B}^-_{\ell}\ne\emptyset,\;
\psi({\overline{\eta}})\cap {\mathcal B}^-_{r}\ne\emptyset.$ In this way
we have proved that any path $\gamma$ with
${\overline{\gamma}}\subseteq \mathcal B$ and ${\overline{\gamma}}\cap
{\mathcal B}^-_{\ell}\ne\emptyset,$ ${\overline{\gamma}}\cap {\mathcal
B}^-_{r}\ne\emptyset$ admits a subpath $\eta$ such that
$\overline{\eta}\subseteq {\mathcal K}\cap{\mathcal E}$ and
$\psi(\overline{\eta})\subseteq {\mathcal B}$ with
$\psi(\overline{\eta})\cap
{\mathcal B}^-_{\ell}\ne\emptyset,\; \psi(\overline{\eta})\cap {\mathcal
B}^-_{r}\ne\emptyset.$
Therefore condition \eqref{eq-stt} has been checked
and the existence of at least a fixed point for $\psi$ in ${\mathcal
K\cap{\mathcal E}}$ follows from
Theorem \ref{th-fp}. Notice that $\psi$ is continuous on $\mathcal K\cap{\mathcal E}$ as, by \eqref{eq-intab}, it is continuous on $\mathcal K.$
\end{proof}

\begin{figure}[htbp]
\centering
\includegraphics[scale=0.4]{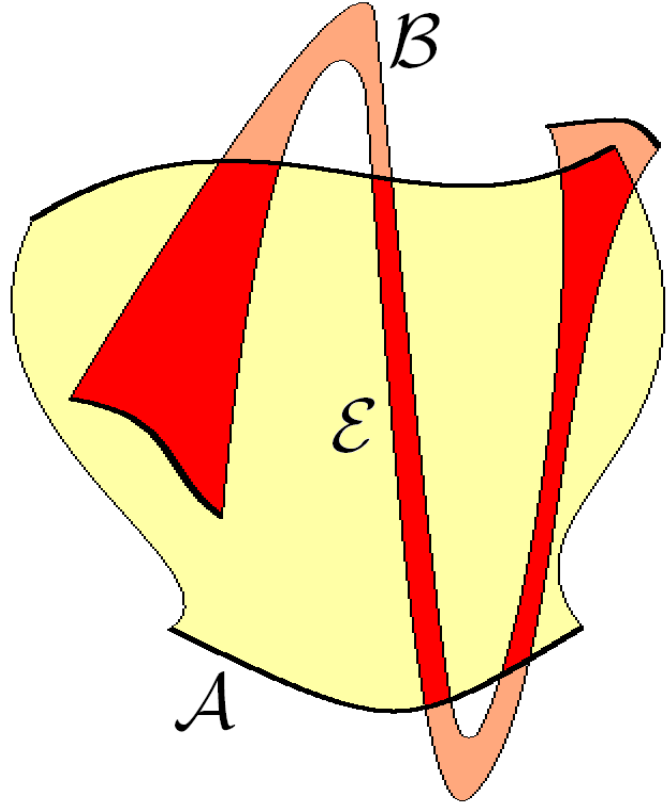}
\caption{\footnotesize{The continuous planar map $\psi$ transforms the generalized rectangle $\mathcal A$ onto the snake-like generalized rectangle $\mathcal B\supseteq\psi(\mathcal A),$ so that ${\mathcal B}^-_{\ell}\supseteq\psi({\mathcal A}^-_{\ell})$ and ${\mathcal B}^-_{r}\supseteq\psi({\mathcal A}^-_{r}).$ The boundary sets ${\mathcal A}^-={\mathcal A}^-_{\ell}\cup {\mathcal A}^-_{r}$ and ${\mathcal B}^-={\mathcal B}^-_{\ell}\cup {\mathcal B}^-_{r}$ have been drawn with thicker lines. Clearly, for ${\widetilde{\mathcal A}}=(\mathcal A, {\mathcal A}^-)$ and ${\widetilde{\mathcal B}}=(\mathcal B, {\mathcal B}^-),$ it holds that $\psi: {\widetilde{\mathcal A}} \stretchx {\widetilde{\mathcal B}}.$ Notice that we do not require that the end sets ${\mathcal B}_{\ell}$ and ${\mathcal B}_{r}$ of ${\mathcal B}$ lie outside ${\mathcal A},$ differently from the approaches based on degree theory, as discussed in \cite{PiZa-07Ts}.
Among the three intersections between
${\mathcal A}$ and ${\mathcal B},$ only the central one, that we call ${\mathcal E},$ corresponds to a crossing in the sense of
Definition \ref{def-hv}, since ${\widetilde{\mathcal E}} \subseteq_{\,h} \,
{\widetilde{\mathcal A}}$ and ${\widetilde{\mathcal E}} \subseteq_{\,v} \,
{\widetilde{\mathcal B}}.$ Therefore, Theorem \ref{th-fpt}
ensures the existence of at least a fixed point for $\psi$ in ${\mathcal E}.$ Even if the
intersection between ${\mathcal A}$ and ${\mathcal B}$ on the right is ``not far'' from belonging to  $\{{\widetilde{\mathcal A}}\pitchfork {\widetilde{\mathcal B}}\},$ it is possible to define $\psi$ so that it has no fixed points therein. This shows that our fixed point theorems are, in some sense, ``sharp''. See \cite{PiZa-07Ts} for more details and corresponding examples.
}}
\label{fig-inttv}
\end{figure}

\clearpage

\noindent
In regard to Theorem \ref{th-fpt}, we stress that if \eqref{eq-intab} holds and there exist $m\ge 2$ pairwise disjoint oriented rectangles
\begin{equation}\label{eq-intt}
{\widetilde{\mathcal E}}_0,\dots,{\widetilde{\mathcal E}}_{m-1} \,\in \{\,{\widetilde{\mathcal A}} \pitchfork {\widetilde{\mathcal B}}\,\},
\end{equation}
then the map $\psi$ has at least a fixed point in $\mathcal K\cap{\mathcal E}_i\,,\,\forall\, i=0,\dots,m-1.$ Hence, similarly to Definition \ref{def-cn}, also in this framework it is possible to find a multiplicity of fixed points, obtaining conclusions analogous to the ones in Theorem \ref{th-per} below.

\medskip

\noindent
If in place of fixed points we are concerned with the search of periodic points of any period (as in the following Theorems \ref{th-per}--\ref{th-comp}), then, in order to apply our stretching along the paths method, we need to check the stretching relation to be preserved under composition of maps. This fact can be easily proved by induction: the basic step is the content of the next lemma.
\begin{lemma}\label{lem-comp}
Let $X$ be a metric space and let $\varphi: X \supseteq D_{\varphi}\to X$ and $\psi: X \supseteq D_{\psi}\to X$ be maps defined on the sets $D_{\varphi}$ and $D_{\psi},$ respectively. Assume that
${\widetilde{\mathcal A}}:= ({\mathcal A},{\mathcal A}^-),\,
{\widetilde{\mathcal B}}:= ({\mathcal B},{\mathcal B}^-)$ and ${\widetilde{\mathcal C}}:= ({\mathcal C},{\mathcal C}^-)$
are oriented rectangles of $X.$
If
${\mathcal H}\subseteq {\mathcal A}\cap D_{\varphi}$ and ${\mathcal K}\subseteq {\mathcal B}\cap D_{\psi}$ are compact sets such that
$$({\mathcal H},\varphi): {\widetilde{\mathcal A}} \stretchx {\widetilde{\mathcal B}}\quad  \mbox{ and } \quad ({\mathcal K},\psi): {\widetilde{\mathcal B}}
\stretchx {\widetilde{\mathcal C}},$$
then it follows that
$$\left({\mathcal H}\cap \varphi^{-1}(\mathcal K),\psi\circ\varphi\right): {\widetilde{\mathcal A}} \stretchx {\widetilde{\mathcal C}}.$$
\end{lemma}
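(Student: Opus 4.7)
The plan is to verify directly the three requirements of Definition \ref{def-sap} for the pair $({\mathcal H}\cap \varphi^{-1}(\mathcal K),\psi\circ\varphi)$ with respect to ${\widetilde{\mathcal A}}$ and ${\widetilde{\mathcal C}}$: namely that $\mathcal H\cap\varphi^{-1}(\mathcal K)$ is a compact subset of ${\mathcal A}\cap D_{\psi\circ\varphi}$, that $\psi\circ\varphi$ is continuous on it, and that the path-stretching condition holds. The preliminary book-keeping is easy: $\varphi$ is continuous on $\mathcal H$, so $\varphi^{-1}(\mathcal K)\cap \mathcal H$ is closed in the compact set $\mathcal H$ and hence compact; since $\varphi$ sends this set into $\mathcal K\subseteq D_\psi$, the composition $\psi\circ\varphi$ is well-defined and continuous there (continuity of $\psi$ on $\mathcal K$ composed with continuity of $\varphi$ on $\mathcal H$).

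The heart of the argument is the chaining of the two stretching conditions along a given path. First I would take an arbitrary path $\gamma:[0,1]\to \mathcal A$ with $\gamma(0)$ and $\gamma(1)$ lying in different components of $\mathcal A^-$. Applying $({\mathcal H},\varphi): {\widetilde{\mathcal A}}\stretchx {\widetilde{\mathcal B}}$, I obtain an interval $[t',t'']\subseteq [0,1]$ such that $\gamma(t)\in \mathcal H$ and $\varphi(\gamma(t))\in \mathcal B$ for every $t\in [t',t'']$, with $\varphi(\gamma(t'))$ and $\varphi(\gamma(t''))$ lying in different sides of $\mathcal B^-$. Then the map $\eta:=\varphi\circ \gamma\restriction_{[t',t'']}$ is a continuous path in $\mathcal B$ joining the two components of $\mathcal B^-$ (up to the harmless affine reparameterization onto $[0,1]$ discussed in Subsection \ref{sub-sap}).

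Next I would apply the second stretching relation $({\mathcal K},\psi): {\widetilde{\mathcal B}}\stretchx {\widetilde{\mathcal C}}$ to $\eta$, obtaining a subinterval $[s',s'']\subseteq [t',t'']$ along which $\eta(s)=\varphi(\gamma(s))\in \mathcal K$ and $\psi(\varphi(\gamma(s)))\in \mathcal C$, with $\psi(\varphi(\gamma(s')))$ and $\psi(\varphi(\gamma(s'')))$ in different sides of $\mathcal C^-$. Combining this with the fact that $[s',s'']\subseteq [t',t'']$ forces $\gamma(s)\in \mathcal H$ and $\varphi(\gamma(s))\in \mathcal K$ simultaneously, one concludes $\gamma(s)\in \mathcal H\cap \varphi^{-1}(\mathcal K)$ for all $s\in [s',s'']$, which is exactly the stretching property for $\psi\circ\varphi$ from ${\widetilde{\mathcal A}}$ to ${\widetilde{\mathcal C}}$.

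I do not expect any serious obstacle: the only points requiring a little care are keeping track of the fact that the compact set selected for the composition must include precisely the preimage intersection $\mathcal H\cap \varphi^{-1}(\mathcal K)$ (to guarantee both that $\gamma$ lies in a set where the composition is continuous and that $\varphi\circ\gamma$ reaches $\mathcal K$), and the trivial reparameterization of the subpath $\varphi\circ\gamma\restriction_{[t',t'']}$ so as to match the $[0,1]$-domain convention invoked when applying the second stretching hypothesis.
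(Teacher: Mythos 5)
Your proposal is correct and mirrors the paper's own argument: take a path $\gamma$ joining the two sides of $\mathcal A^-$, apply the first stretching to produce a subinterval $[t',t'']$ on which $\varphi\circ\gamma$ is a path in $\mathcal B$ joining the sides of $\mathcal B^-$, then apply the second stretching to that path to extract $[s',s'']\subseteq[t',t'']$ and read the result back in terms of $\gamma$. Your extra remark on compactness of $\mathcal H\cap\varphi^{-1}(\mathcal K)$ is a slightly fuller spelling out of what the paper defers to a closing sentence about continuity of the composite map, but the route is the same.
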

\begin{proof}
Let $\gamma: [0,1]\to \mathcal A$ be a path such that $\gamma(0)$ and $\gamma(1)$ belong to the different sides of ${\mathcal A}^-.$
Then, since $({\mathcal H},\varphi): {\widetilde{\mathcal A}} \stretchx {\widetilde{\mathcal B}},$
there exists a subinterval $[t',t'']\subseteq [0,1]$ such that
$$\gamma(t)\in {\mathcal H},\quad \varphi(\gamma(t))\in {\mathcal B}\,,\;\;\forall\, t\in [t',t'']$$
and, moreover, $\varphi(\gamma(t'))$ and $\varphi(\gamma(t''))$ belong to different components of
${\mathcal B}^-.$ Let us call $\omega$ the restriction of $\gamma$ to $[t',t'']$ and define $ \nu:[t',t'']\to \mathcal B$ as $\nu:=\varphi\circ\omega.$ Notice that $\nu(t')$ and $\nu(t'')$ belong to the different sides of ${\mathcal B}^-$ and so, by the stretching hypothesis $({\mathcal K},\psi): {\widetilde{\mathcal B}} \stretchx {\widetilde{\mathcal C}},$ there is a subinterval $[s',s'']\subseteq[t',t'']$ such that
$$\nu(t)\in {\mathcal K},\quad \psi(\nu(t))\in {\mathcal C}\,,\;\;\forall\, t\in [s',s'']\,$$
with $\psi(\nu(s'))$ and $\psi(\nu(s''))$ belonging to different components of ${\mathcal C}^-.$ Rewriting all in terms of $\gamma,$ this means that we have found a subinterval $[s',s'']\subseteq [0,1]$ such that $$\gamma(t)\in {\mathcal H}\cap \varphi^{-1}(\mathcal K),\quad  \psi(\varphi(\gamma(t)))\in {\mathcal C}\,,\;\;\forall\, t\in [s',s'']$$ and $\psi(\varphi(\gamma(s')))$ and $\psi(\varphi(\gamma(s'')))$ belong to the different sides of ${\mathcal C}^-.$ By the arbitrariness of the path $\gamma,$ the stretching property $$\left({\mathcal H}\cap \varphi^{-1}(\mathcal K),\psi\circ\varphi\right): {\widetilde{\mathcal A}} \stretchx {\widetilde{\mathcal C}}$$
is thus fulfilled. We just point out that the continuity of the composite mapping $\psi\circ\varphi$ on the compact set ${\mathcal H}\cap \varphi^{-1}(\mathcal K)$ follows from the continuity of $\varphi$ on $\mathcal H$ and of $\psi$ on $\mathcal K,$ respectively.
\end{proof}

\smallskip
\noindent
\begin{theorem}\label{th-per}
Let $X$ be a metric space and $\psi: X \supseteq D_{\psi}\to X$ be a map defined on a set $D_{\psi}.$ Assume that
${\widetilde{\mathcal R}}:= ({\mathcal R},{\mathcal R}^-)$
is an oriented rectangle $X.$ If ${\mathcal K_0},\dots,{\mathcal K_{m-1}} $
are $m\ge 2$ pairwise disjoint compact subsets of ${\mathcal R}\cap D_{\psi}$ and
$$({\mathcal K}_i,\psi): {\widetilde{\mathcal R}} \stretchx {\widetilde{\mathcal R}}, \mbox{ for } i=0,\dots,m-1,$$
then the following conclusions hold:
\begin{itemize}
\item The map $\psi$ has at least a fixed point in ${\mathcal K}_i,\,i=0,\dots,m-1;$
\item For each two-sided sequence
$(s_{h})_{h\in{\mathbb Z}}\in \{0,\dots,m-1\}^{\mathbb Z},$
there exists a sequence of points $(x_{h})_{h\in {\mathbb Z}}$
such that
$\psi(x_{h-1}) = x_{h}\in {\mathcal K}_{s_{h}}\,,\,\forall\, h\in {\mathbb Z}\,;$
\item For each sequence \textbf{s} $=(s_n)_{n}\in \{0,\dots,m-1\}^{\mathbb N},$
there exists a compact connected
set ${\mathcal C}_{\mbox{\textbf{s}}}\subseteq {\mathcal K}_{s_0}$ satisfying
$${\mathcal C}_{\mbox{\textbf{s}}}\cap {\mathcal R}^+_{d}\ne\emptyset,
\quad
{\mathcal C}_{\mbox{\textbf{s}}}\cap {\mathcal R}^+_{u}\ne\emptyset
$$
and such that
$\psi^{i}(x)\in {\mathcal K}_{s_i}\,,\;\forall\, i\geq 1,\;\;\forall \, x\in {\mathcal C}_{\mbox{\textbf{s}}}\,;$
\item Given an integer $j\ge 2$ and a $j+1$-uple $(s_0,\dots,s_j),\, s_i\in \{0,\dots,m-1\},$ for $i=0,\dots,j,$ and $s_0=s_j,$
then there exists a point $w\in{\mathcal K_{s_0}}$ such that
$${\psi}^{i}(w)\in {\mathcal K}_{s_i},\,\forall i=1,\dots,j \quad \mbox{ and } \quad {\psi}^{j}(w)=w.$$
\end{itemize}
\end{theorem}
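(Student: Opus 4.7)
Throughout, the central tools are Theorem \ref{th-fp}, the composition Lemma \ref{lem-comp}, and the Crossing Lemma \ref{lem-cr}. The first conclusion is immediate: for each $i \in \{0,\dots,m-1\}$, the hypothesis $(\mathcal{K}_{i},\psi): \widetilde{\mathcal R} \stretchx \widetilde{\mathcal R}$ places us directly in the setting of Theorem \ref{th-fp}, which yields a fixed point of $\psi$ in $\mathcal K_i$.

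The fourth conclusion follows by iterating Lemma \ref{lem-comp}. Given $(s_0,\dots,s_j)$ with $s_0=s_j$, I would apply the lemma exactly $j-1$ times to the relations $(\mathcal K_{s_i},\psi):\widetilde{\mathcal R}\stretchx \widetilde{\mathcal R}$ for $i=0,\dots,j-1$. The outcome is a compact set $\mathcal H:=\bigcap_{i=0}^{j-1}\psi^{-i}(\mathcal K_{s_i})\subseteq \mathcal K_{s_0}$ satisfying $(\mathcal H,\psi^{j}):\widetilde{\mathcal R}\stretchx\widetilde{\mathcal R}$, with $\psi^{i}(x)\in\mathcal K_{s_i}$ for every $x\in\mathcal H$ and $0\le i\le j-1$. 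Applying Theorem \ref{th-fp} to $\psi^{j}$ on $\mathcal H$ produces $w\in\mathcal H$ with $\psi^{j}(w)=w$; since $s_0=s_j$, the inclusion $w\in\mathcal K_{s_0}=\mathcal K_{s_j}$ automatically yields $\psi^{j}(w)\in\mathcal K_{s_j}$.

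The second conclusion combines the same iterative scheme with a Cantor diagonal argument. For each $n\ge 1$ I would apply the construction above to the finite block $(s_{-n},\dots,s_n)$ of length $2n+1$, obtaining a non-empty compact set $\mathcal H_{n}\subseteq \mathcal K_{s_{-n}}$ with $(\mathcal H_n,\psi^{2n+1}):\widetilde{\mathcal R}\stretchx\widetilde{\mathcal R}$, and any $w_{n}\in\mathcal H_{n}$ then satisfies $\psi^{n+h}(w_{n})\in\mathcal K_{s_{h}}$ for $-n\le h\le n$. Setting $x_{h}^{(n)}:=\psi^{n+h}(w_{n})$, a diagonal extraction using the compactness of each $\mathcal K_{s_h}$ provides a subsequence $(n_k)$ and points $x_h\in\mathcal K_{s_h}$ with $x_h^{(n_k)}\to x_h$ for every $h\in\mathbb Z$. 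The identity $\psi(x_{h-1})=x_h$ then follows from the continuity of $\psi$ on $\mathcal K_{s_{h-1}}$ (guaranteed by the stretching hypothesis on $\mathcal K_{s_{h-1}}$) together with the closedness of $\mathcal K_{s_h}$.

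The third conclusion is the most delicate step, and I expect it to be the main obstacle, since it demands a continuum-level rather than pointwise statement. For a one-sided sequence $\mathbf{s}$ and each $n\ge 1$, set $\mathcal H_{n}:=\{x\in\mathcal K_{s_0}:\psi^{i}(x)\in\mathcal K_{s_i},\ 1\le i\le n\}$; by iterated composition, $(\mathcal H_{n},\psi^{n+1}):\widetilde{\mathcal R}\stretchx\widetilde{\mathcal R}$. Transferred to the unit square via the defining homeomorphism $h:\mathcal Q\to\mathcal R$ as in the proof of Theorem \ref{th-fp}, the set $h^{-1}(\mathcal H_n)$ meets every path in $\mathcal Q$ joining the left and right sides, so the Crossing Lemma \ref{lem-cr} furnishes a continuum $\mathcal D_{n}\subseteq\mathcal H_{n}$ with $\mathcal D_{n}\cap \mathcal R^{+}_{d}\ne\emptyset$ and $\mathcal D_{n}\cap \mathcal R^{+}_{u}\ne\emptyset$. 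By compactness of the hyperspace of subcontinua of $\mathcal R$ in the Hausdorff metric, a subsequence $(\mathcal D_{n_k})$ converges to a continuum $\mathcal C_{\mathbf{s}}\subseteq\mathcal K_{s_0}$ that still meets the closed sets $\mathcal R^{+}_{d}$ and $\mathcal R^{+}_{u}$. Finally, for each $x\in\mathcal C_{\mathbf{s}}$, picking $x_{n_k}\in\mathcal D_{n_k}$ with $x_{n_k}\to x$ and iteratively invoking the continuity of $\psi$ on $\mathcal K_{s_{i-1}}$ with the closedness of $\mathcal K_{s_i}$ yields $\psi^{i}(x)\in\mathcal K_{s_i}$ for all $i\ge 1$.
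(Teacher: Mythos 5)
Your proof is correct, and for the first, second, and fourth conclusions it follows essentially the same route as the paper: Theorem \ref{th-fp} directly for the first, iterated application of Lemma \ref{lem-comp} followed by Theorem \ref{th-fp} applied to $\psi^j$ on the compact set $\mathcal H$ for the fourth, and a diagonal compactness argument for the two-sided sequences in the second. The main structural difference is that the paper does not prove Theorem \ref{th-per} directly but deduces it from the more general Theorem \ref{th-comp} with $\mathcal R_i=\mathcal R,$ $\psi_i=\psi,$ $\mathcal K_i=\mathcal K_{s_i}\,.$

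Where you genuinely diverge from the paper is in the third (continuum) conclusion. The paper fixes a test path $\gamma_0$ joining the two sides of $\mathcal R^-$ and, by iterating the stretching hypothesis, builds a decreasing nested sequence of non-empty compact subsets $\Gamma_0\supseteq\Gamma_1\supseteq\cdots$ of $\overline{\gamma_0}$ whose intersection lies in the ``infinite-itinerary'' set $\mathcal S:=\{z\in\mathcal K_{s_0}:\psi^i(z)\in\mathcal K_{s_i},\ \forall i\ge 1\}\,.$ This shows $\mathcal S$ cuts every such path, and the Crossing Lemma \ref{lem-cr} is then applied \emph{once} to $\mathcal S$ to obtain $\mathcal C_{\textbf{s}}.$ You instead apply the Crossing Lemma to each finite-block set $\mathcal H_n$, obtain a continuum $\mathcal D_n$ joining $\mathcal R^+_d$ to $\mathcal R^+_u$, pass to a subsequential Hausdorff limit $\mathcal C_{\textbf{s}}$, and then verify the limit inherits the itinerary property by chaining continuity of $\psi$ on $\mathcal K_{s_{i-1}}$ with closedness of $\mathcal K_{s_i}.$ Both routes are valid. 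Yours invokes the compactness of the hyperspace of subcontinua of a compact metric space in the Hausdorff metric (and the closedness therein of the set of continua), which is a standard but heavier tool; in exchange you avoid the explicit nested-interval construction inside a single path, and the argument is perhaps more transparent about why the limit set meets both $+$-sides. The paper's approach is more elementary in not needing hyperspace machinery, and has the incidental benefit of exhibiting the continuum as a subset of the explicit cutting set $\mathcal S.$
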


\noindent
As we shall see in Section \ref{sec-de}, the previous result turns out to be our fundamental tool in the proof of Theorem \ref{th-ch} about chaotic dynamics. On the other hand, Theorem \ref{th-per} can be viewed as a particular case of Theorem \ref{th-comp} below. The proof of the former result is thus postponed since it comes as a corollary of the latter more general theorem. \\
Before stating Theorem \ref{th-comp}, we just make an observation that will reveal its significance in Section \ref{sec-sd} when dealing with symbolic dynamics.

\begin{remark}\label{rem-mp}
{\rm{We observe that in the hypotheses of Theorem \ref{th-per}, or equivalently when we enter the framework of Definition \ref{def-cn} with $\widetilde{\mathcal A}=\widetilde{\mathcal B},$ i.e., when there exist $m\ge 2$ pairwise disjoint compact subsets ${\mathcal K}_0,\dots,{\mathcal K}_{m-1}$ of an oriented rectangle $\mathcal A\subseteq X,$ for which $(\mathcal K_i,\psi):\widetilde{\mathcal A}\stretchx \widetilde{\mathcal A}$ holds, then it is possible to find $m$ pairwise disjoint vertical slabs $\widetilde{\mathcal R}_i$ of $\widetilde{\mathcal A}$ such that $\mathcal R_i\supseteq\mathcal K_i,$ for $i=0,\dots,m-1.$ Indeed, by Theorem \ref{th-per}, we know that any $\mathcal K_i$ contains a compact connected set $\mathcal C_i$ (actually, infinitely many) joining ${\mathcal A}^+_{d}$ and ${\mathcal A}^+_{u}.$ Thus, the idea is to ``fill'' and ``fatten'' each $\mathcal C_i$ up, in order to obtain a compact set $\mathcal R_i$ that still joins ${\mathcal A}^+_{d}$ and ${\mathcal A}^+_{u},$ but homeomorphic to the unit square of $\mathbb R^2$ and containing only $\mathcal K_i$ among all the $\mathcal K_j$'s. Moreover we require ${\mathcal A}^+_{d}\cap\mathcal R_i$ and ${\mathcal A}^+_{d}\cap\mathcal R_i$ to be arcs. Notice that this is possible because the $\mathcal K_i$'s are compact and disjoint. To such generalized rectangles $\mathcal R_i$'s we give the orientation ``inherited'' from $\mathcal A,$ that is, we set ${\mathcal R}^i_{d}:={\mathcal A}^+_{d}\cap\mathcal R_i$ and ${\mathcal R}^i_{u}:={\mathcal A}^+_{u}\cap\mathcal R_i,$ for $i=0,\dots,m-1,$
where we have denoted by
${\mathcal R}^{i}_{d}$ and ${\mathcal R}^{i}_{u}$ the two sides of ${\mathcal R}_i^+.$ Indicating with ${\mathcal R}^{i}_{\ell}$ and ${\mathcal R}^{i}_{r}$ the two parts of ${\mathcal R}_i^-\,,$ we find that they coincide with
the two components of $\overline{\vartheta\mathcal R_i\setminus ({\mathcal R}^i_{d}\cup{\mathcal R}^i_{u})}.$ In particular, we can name such sets following the cyclic order $\ell-d-r-u-\ell.$ As usual we put $\widetilde{\mathcal R}_i=(\mathcal R_i, \mathcal R_i^-)$ and, by construction, these are the desired vertical slabs of $\widetilde{\mathcal A}.$ See Figure \ref{fig-fat} for a graphical illustration.\\
If in addition the map $\psi$ is continuous on $\mathcal A,$ we claim that
\begin{equation*}
\psi:\widetilde{\mathcal R}_i\stretchx \widetilde{\mathcal R}_j,\,\forall\, i,j\in \{0,\dots,m-1\}.
\end{equation*}
At first we notice that $\psi$ is continuous on each $\mathcal R_i$ because $\bigcup_{i=0}^{m-1}\mathcal R_i\subseteq\mathcal A.$ Moreover, for any $i=0,\dots,m-1$ and for every path $\gamma: [a,b]\to {\mathcal R}_i,$ with
$\gamma(a)$ and $\gamma(b)$ belonging to different components of ${\mathcal R}_i^-,$
there exists a subinterval
$[t',t'']\subseteq [a,b]$ such that
$\gamma(t)\in {\mathcal K}_i$ and $\psi(\gamma(t))\in {\mathcal A},\,\forall\, t\in [t',t''],$ with $\psi(\gamma(t'))$ and
$\psi(\gamma(t''))$ belonging to different sides of ${\mathcal A}^-.$ This follows from the fact that $\gamma$ can be extended to a path $\gamma^{*}: [a',b']\to {\mathcal A},$ with $[a',b']\supseteq [a,b],$ such that $\gamma^{*}\restriction_{[a,b]}=\gamma$ and $\gamma^{*}(a'),$ $\gamma^{*}(b')$ belong to different sides of ${\mathcal A}^-,$ and by
recalling that $(\mathcal K_i,\psi):\widetilde{\mathcal A}\stretchx \widetilde{\mathcal A}.$
Then, since $\bigcup_{i=0}^{m-1}\mathcal R_i\subseteq\mathcal A,$ for any fixed $j= 0,\dots,m-1,$ there exists a subinterval $[s',s'']\subseteq [t',t'']$ such that $\psi(\gamma(t))\in {\mathcal R}_j,$ $\forall\, t\in [s',s''],$ with $\psi(\gamma(s'))$ and
$\psi(\gamma(s''))$ belonging to different sides of ${\mathcal R}_j^-.$ But this means that
$\psi:\widetilde{\mathcal R}_i\stretchx \widetilde{\mathcal R}_j.$ By the arbitrariness of $i,j\in\{0,\dots,m-1\},$ the proof of our claim is complete.
}}

\hfill$\lhd$\\
\end{remark}

\clearpage

\begin{figure}[htbp]
\centering
\includegraphics[scale=0.35]{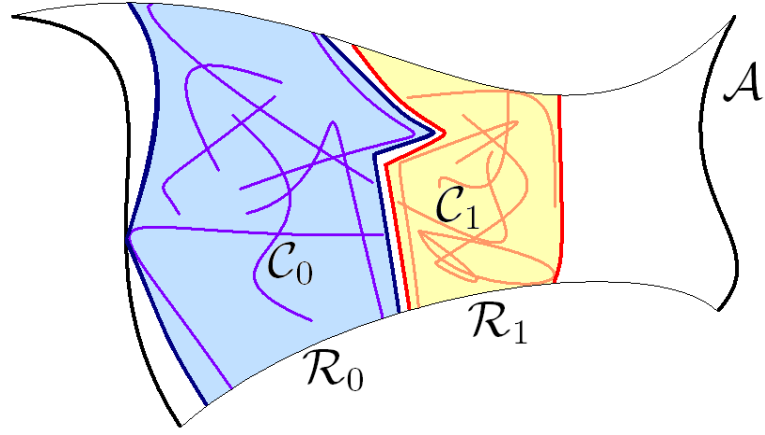}
\caption{\footnotesize{ With reference to the situation described in Remark \ref{rem-mp} for $m=2,$ we have depicted the generalized rectangle $\mathcal A,$ that we orientate by choosing as ${\mathcal A}^-$ the boundary set drawn with thicker lines. The continua $\mathcal C_0$ and $\mathcal C_1,$ contained respectively in the disjoint compact sets $\mathcal K_0$ and $\mathcal K_1$ (not represented in the picture, in order not to overburden it) and joining the two sides of ${\mathcal A}^+$, are embedded (together with the $\mathcal K_i$'s) in the disjoint generalized rectangles ${\mathcal R}_0$ and ${\mathcal R}_1,$ whose $[\cdot]^{-}$-sets have been indicated with thicker lines. With this choice, $\widetilde{\mathcal R}_0$ and $\widetilde{\mathcal R}_1$ are vertical slabs of $\widetilde{\mathcal A}\,.$
}}
\label{fig-fat}
\end{figure}

\begin{theorem}\label{th-comp}
Assume there is a double sequence of oriented rectangles $({\widetilde{\mathcal R}_i})_{i\in {\mathbb Z}}\,\,($with ${\widetilde{\mathcal R}_i} = ({\mathcal R}_i,{\mathcal R}^-_i)\,)$ of a metric space $X$ and a sequence $(({\mathcal K}_i,\psi_i))_{i\in {\mathbb Z}}\,,$
with ${\mathcal K}_i \subseteq {\mathcal R}_i$ compact sets, such that
$$({\mathcal K}_i,\psi_i): {\widetilde{\mathcal R}_i}  \stretchx {\widetilde{\mathcal R}_{i+1}}\,,\quad\forall\, i\in {\mathbb Z}.$$
Let us denote by
${\mathcal R}^{i}_{\ell}$ and ${\mathcal R}^{i}_{r}$ the two components of ${\mathcal R}_i^-$ and by ${\mathcal R}^{i}_{d}$ and ${\mathcal R}^{i}_{u}$ the two components of ${\mathcal R}_i^+\,.$
Then the following conclusions hold:
\begin{itemize}
\item There is a sequence $(w_k)_{k\in {\mathbb Z}}$ such that $w_k\in {\mathcal K}_k$
and $\psi_k(w_k) = w_{k+1}\,,$ for all ${k\in {\mathbb Z}}\,;$
\item For each $j\in {\mathbb Z}$ there exists a compact connected set ${\mathcal C}_j\subseteq {\mathcal K}_j$
 satisfying
$${\mathcal C}_j\cap {\mathcal R}^j_{d}\ne\emptyset,
\quad
{\mathcal C}_j\cap {\mathcal R}^j_{u}\ne\emptyset
$$
and such that, for every $w\in {\mathcal C}_j\,,$
there is a sequence $(y_i)_{i\geq j}$ with $y_j = w$ and
$$y_i \in {\mathcal K}_i\,,\quad \psi_{i}(y_i) = y_{i+1}\,,\;\forall\, i\geq j\,;$$
\item If there are integers $h$ and $l,$ with $h < l,$ such that ${\widetilde{\mathcal R}_h} = {\widetilde{\mathcal R}_{l}}\,,$
then there exists a finite sequence $(z_i)_{h\leq i\leq l-1}\,,$ with $z_i\in {\mathcal K}_i$ and
$\psi_i(z_i) = z_{i+1}$ for each $i=h,\dots,l-1,$ such that $z_{l} = z_h\,,$ that is,
$z_h$ is a fixed point of $\psi_{l-1}\circ\dots\circ\psi_{h}$ in ${\mathcal K}_h.$
\end{itemize}
\end{theorem}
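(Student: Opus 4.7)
My plan is to reduce each of the three statements to Lemma \ref{lem-comp} combined with Theorem \ref{th-fp} (for the periodic point) or with the Crossing-Lemma argument inside its proof (for the continua), followed by a compactness passage to the limit.

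For $j\in\mathbb Z$ and $n\ge 1$ I set $\Psi_{j,n}:=\psi_{j+n-1}\circ\cdots\circ\psi_j$ and, iterating Lemma \ref{lem-comp}, introduce the decreasing family of nonempty compact sets
$$\mathcal H_{j,n}:=\{w\in\mathcal K_j:\Psi_{j,k}(w)\in\mathcal K_{j+k}\text{ for }1\le k\le n-1\}\subseteq\mathcal K_j,$$
on which $\Psi_{j,n}$ is continuous and $(\mathcal H_{j,n},\Psi_{j,n}):\widetilde{\mathcal R}_j\stretchx\widetilde{\mathcal R}_{j+n}$ holds. Nonemptiness of $\mathcal H_{j,n}$ is immediate from the stretching, applied to any path in $\mathcal R_j$ joining $\mathcal R^j_\ell$ and $\mathcal R^j_r$. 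With this setup the third bullet is immediate: when $\widetilde{\mathcal R}_l=\widetilde{\mathcal R}_h$ one has $(\mathcal H_{h,l-h},\Psi_{h,l-h}):\widetilde{\mathcal R}_h\stretchx\widetilde{\mathcal R}_h$, so Theorem \ref{th-fp} produces $z_h\in\mathcal H_{h,l-h}$ with $\Psi_{h,l-h}(z_h)=z_h$; setting $z_i:=\Psi_{h,i-h}(z_h)$ for $h\le i\le l-1$ gives the required finite periodic sequence.

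For the second bullet I would revisit the proof of Theorem \ref{th-fp}: the stretching $(\mathcal H_{j,n},\Psi_{j,n}):\widetilde{\mathcal R}_j\stretchx\widetilde{\mathcal R}_{j+n}$ forces $\mathcal H_{j,n}$ to act as a cutting surface between $\mathcal R^j_\ell$ and $\mathcal R^j_r$, so the Crossing Lemma \ref{lem-cr} (applied in a unit-square chart of $\mathcal R_j$) furnishes a continuum $\mathcal C_j^{(n)}\subseteq\mathcal H_{j,n}$ meeting both $\mathcal R^j_d$ and $\mathcal R^j_u$. Because the hyperspace of subcontinua of the compact set $\mathcal R_j$ is itself compact under the Hausdorff distance, I extract a convergent subsequence $\mathcal C_j^{(n_k)}\to\mathcal C_j$; the limit is a continuum contained in $\bigcap_n\mathcal H_{j,n}$ (since each $\mathcal H_{j,n}$ is closed and the family is decreasing) and it still meets the closed sets $\mathcal R^j_d$ and $\mathcal R^j_u$. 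Any $w\in\mathcal C_j$ then yields the desired forward orbit $y_i:=\Psi_{j,i-j}(w)\in\mathcal K_i$ for $i\ge j$.

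The first bullet I obtain by a compactness argument in the product $\prod_{i\in\mathbb Z}\mathcal K_i$: for every $N\ge 1$ the set $X_N$ of sequences $(x_i)_{i\in\mathbb Z}$ with $x_i\in\mathcal K_i$ for all $i$ and $\psi_i(x_i)=x_{i+1}$ for $-N\le i\le N-1$ is nonempty (apply the second bullet at $j=-N$ and extend arbitrarily outside the window) and closed in the compact product, and the family $(X_N)$ decreases with $N$; the finite intersection property then delivers a bi-infinite orbit $(w_k)$. The main obstacle I foresee is the continuum extraction in the second bullet, since the $\mathcal C_j^{(n)}$ are not a priori nested and one must rely on the compactness of the hyperspace of subcontinua of $\mathcal R_j$ together with the stability, under Hausdorff limits, of the property of meeting two disjoint compact sets; everything else reduces to a routine combination of Lemma \ref{lem-comp}, Theorem \ref{th-fp}, and standard compactness.
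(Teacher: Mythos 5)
Your proposal is correct, and the third bullet as well as the first bullet proceed essentially as in the paper's proof (the paper refers its diagonal argument for the first bullet to outside references, but the compactness of $\prod_{i}\mathcal K_i$ with the finite intersection property is exactly what is meant). The genuine difference is in the second bullet. The paper works with the single set $\mathcal S:=\{z\in\mathcal K_0:\psi_j\circ\dots\circ\psi_0(z)\in\mathcal K_{j+1},\,\forall j\geq 0\}$, shows directly that $\mathcal S$ has the cutting property of Lemma \ref{lem-cr} by intersecting an arbitrary path with a nested decreasing family of nonempty compact arcs $\Gamma_n\subseteq\overline{\gamma_0}$, and then invokes the Crossing Lemma once to produce $\mathcal C_0\subseteq\mathcal S$. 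You instead apply the Crossing Lemma at each finite horizon $n$ (to the compact set $\mathcal H_{j,n}$, which is indeed a cutting set thanks to the composed stretching relation), obtaining a sequence of continua $\mathcal C_j^{(n)}$ that are not nested, and then pass to a subsequential Hausdorff limit, relying on the compactness of the hyperspace of subcontinua of $\mathcal R_j$ and on the stability of the properties ``$\subseteq\mathcal H_{j,m}$'' and ``meets $\mathcal R^j_d,\mathcal R^j_u$'' under Hausdorff limits. Both routes are valid and produce a continuum inside $\bigcap_n\mathcal H_{j,n}=\mathcal S$; the paper's version is more elementary in that it avoids the hyperspace compactness fact and only uses one application of the Crossing Lemma, while your version is more modular, replacing the nested-arcs bookkeeping by a single off-the-shelf limit argument. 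The only thing worth stating explicitly if you write this up is the (standard) closure property of Hausdorff limits that you mention at the end, so the proof is fully self-contained.
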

\begin{proof}
We prove the conclusions of the theorem in the reverse order. So,
let us start with the verification of the last assertion. By the assumptions
and by Definition \ref{def-sap}, it
is easy to check that
\begin{equation}\label{eq-comp}
({\mathcal H},\psi_{l-1}\circ\dots\circ\psi_h): {\widetilde{\mathcal R}_h}
\stretchx {\widetilde{\mathcal R}_{l}},
\end{equation}
where
$$\mathcal H:=\{z\in \mathcal K_h:\psi_{i}\circ\dots\circ\psi_h(z)\in\mathcal K_{i+1},\,\forall i=h,\dots,l-1\}.$$
With the positions $\widetilde {\mathcal R}={\widetilde{\mathcal R}_h} =
{\widetilde{\mathcal R}_{l}}$ and $\phi=\psi_{l-1}\circ\dots\circ\psi_h,$
we read condition \eqref{eq-comp} as $({\mathcal H},\phi): {\widetilde{\mathcal R}}
\stretchx {\widetilde{\mathcal R}}$ and therefore the thesis follows immediately by Theorem \ref{th-fp}.\\
As regards the second conclusion, without loss of generality, we can assume
$j=0.$ Let us define
the closed set
\begin{equation*}
\mathcal S:=\{z\in {\mathcal K}_0: \psi_j\circ\dots\circ\psi_0(z)\in
{\mathcal K}_{j+1},\,\forall j\geq 0\}
\end{equation*}
and fix a path $\gamma_0:[0,1]\to \mathcal R_0$ such that $\gamma_0(0)$ and $\gamma_0(1)$ belong to the different components of ${\mathcal R}_0^-.$ Then, since
$({\mathcal K}_0,\psi_0): {\widetilde{\mathcal R}_0} \stretchx
{\widetilde{\mathcal R}_{1}},$ there exists a
subinterval
$$[t'_1,t''_1]\subseteq [t'_0,t''_0]:=[0,1]$$
such that
$$\gamma_0(t)\in {\mathcal K}_{0}\quad\mbox{and } \;
\gamma_1(t):=\psi_0(\gamma_0(t))\in {\mathcal R}_1,\;\;\forall\,t\in [t'_1,t''_1].$$
By the same assumption, we also have that
$\psi_0(\gamma_0(t'_1))$ and $\psi_0(\gamma_0(t''_1))$ belong to different components of
${\mathcal R}^-_1.$ Similarly,
there exists a
subinterval
$$[t'_2,t''_2]\subseteq [t'_1,t''_1]$$
such that
$$\gamma_1(t)\in {\mathcal K}_{1}\quad\mbox{and } \;
\gamma_2(t):=\psi_1(\gamma_1(t))\in {\mathcal R}_2,\;\;\forall\,t\in [t'_2,t''_2],$$
with
$\psi_1(\gamma_1(t'_2))$ and $\psi_1(\gamma_1(t''_2))$ belonging to the different components of
${\mathcal R}^-_2.$
Defining
$$\Gamma_2:=\{x\in{\gamma}_0([t'_1,t''_1]):\psi_0(x)\in{\gamma}_1([t'_2,t''_2])\}\subseteq \{z\in {\mathcal K}_0:\psi_0(z)\in
{\mathcal K}_{1}\}$$
and proceeding by induction, we can find a decreasing sequence of
nonempty compact sets
$$\Gamma_0:={\gamma}_0([t'_0,t''_0])\supseteq\Gamma_1:={\gamma}_0([t'_1,t''_1])
\supseteq\Gamma_2\supseteq\dots\supseteq\Gamma_n\supseteq\Gamma_{n+1}\supseteq\dots$$
such that
$\psi_j\circ\dots\circ\psi_0(\Gamma_{j+1})\subseteq
{\mathcal R}_{j+1},$ with  $\psi_j\circ\dots\circ\psi_0(\Gamma_{j+1})\cap
{\mathcal R}_{\ell}^{j+1}\ne\emptyset$ and
$\psi_j\circ\dots\circ\psi_0(\Gamma_{j+1})\cap
{\mathcal R}_{r}^{j+1}\ne\emptyset,$ for $j\geq 0.$
Moreover, for every $i\geq 1,$ we have that
$$\Gamma_{i+1}\subseteq \{z\in {\mathcal K}_0:
\psi_{j-1}\circ\dots\circ\psi_0(z)\in
{\mathcal K}_{j},\,\forall \,j=1,\dots, i\}.$$
As it is straightforward to see,
${\displaystyle{\cap_{j=0}^{+\infty}\,\Gamma_j\ne\emptyset}}$ and for any
${\displaystyle{z\in\cap_{j=0}^{+\infty}\,\Gamma_j}}$ it holds that
$\psi_n\circ\dots\circ\psi_0(z)\in\mathcal K_{n+1},\,\forall
n\in\mathbb N.$ In this way we have shown that any path
$\gamma_0$ joining in $\mathcal R_0$ the two sides of
${\mathcal R}_0^{-}$ intersects $\mathcal S.$
The existence of the connected compact set ${\mathcal C}_0
\subseteq \mathcal S\subseteq {\mathcal K}_0$ joining the two components of $\mathcal R_0^{+}$ comes from Lemma \ref{lem-cr}. By the definition of $\mathcal S$ it is obvious that any point of $\mathcal C_0\subseteq\mathcal S$ generates a sequence as required in the statement of the theorem.\\
The first conclusion follows now by a standard diagonal argument (see, e.g., \cite[Proposition 5]{KeKoYo-01} and
\cite[Theorem 2.2]{PaZa-04a}), which allows to extend the result to bi-infinite sequences once it has been proved for one-sided sequences.
\end{proof}

\noindent {\textit{Proof of Theorem \ref{th-per}.}}
The first conclusion easily follows from Theorem \ref{th-fp}. As regards the remaining ones, given a two-sided sequence
$(s_{h})_{h\in{\mathbb Z}}\in \{0,\dots,m-1\}^{\mathbb Z},$ they can be achieved by applying Theorem \ref{th-comp} with the positions $\mathcal R_i=\mathcal R,$ $\psi_i=\psi$ and $\mathcal K_i=\mathcal K_{s_i}\,,\,\forall i\in \mathbb Z.$ The details are omitted since they are of straightforward verification.
\eop

\bigskip

We end this subsection with the presentation and discussion of some stretching relations alternative to the one in Definition \ref{def-sap}. In particular we will ask ourselves if the results on the existence and localization of fixed points are still valid with respect to these new concepts.

\smallskip

\noindent
For all the next definitions, the basic setting concerns the following framework:
{\em
Let $X$ be a metric
space. Assume $\psi: X \supseteq D_{\psi}\to X$ is a map
defined on a set $D_{\psi}$ and let ${\widetilde{\mathcal A}}:=
({\mathcal A},{\mathcal A}^-)$ and ${\widetilde{\mathcal B}}:=
({\mathcal B},{\mathcal B}^-)$ be oriented rectangles of $X.$ Let also $\mathcal K\subseteq \mathcal A\cap D_{\psi}$ be a compact set.
}
\begin{definition}\label{def-sac}
{\rm{
We say that $({\mathcal K},\psi)$ \textit{stretches} ${\widetilde{\mathcal A}}$
\textit{to} ${\widetilde{\mathcal B}}$ \textit{along the continua}\index{stretching along the continua@\textsl{stretching along the continua}} and write
$$({\mathcal K},\psi): {\widetilde{\mathcal A}}\stretchcont {\widetilde{\mathcal B}},$$
if the following conditions hold:
\begin{itemize}
\item{} $\psi$ is continuous on ${\mathcal K}\, ;$
\item{} For every continuum $\Gamma\subseteq {\mathcal A}$
with $\Gamma\cap {\mathcal A}^-_{\ell}\ne\emptyset$ and $\Gamma\cap {\mathcal A}^-_{r}\ne\emptyset,$
there exists a continuum $\Gamma'\subseteq \Gamma\cap {\mathcal K}$ such that
$\psi(\Gamma')\subseteq {\mathcal B}$
and
$$\psi(\Gamma')\cap {\mathcal B}^-_{\ell}\ne\emptyset\,,
\quad \psi(\Gamma')\cap {\mathcal B}^-_{r}\ne\emptyset.$$
\end{itemize}
}}
\end{definition}
\noindent

\smallskip

\noindent
While the above definition is based on the one given by Kennedy and Yorke in \cite{KeYo-01},
the following bears some resemblances to that of ``family of expanders'' considered
in \cite{KeKoYo-01}.

\begin{definition}\label{def-ea}
{\rm{
We say that $({\mathcal K},\psi)$ \textit{expands} ${\widetilde{\mathcal A}}$
\textit{across} ${\widetilde{\mathcal B}}$ \index{expanding across@\textsl{expanding across}$\stretchkky$} and write
$$({\mathcal K},\psi): {\widetilde{\mathcal A}}\stretchkky {\widetilde{\mathcal B}},$$
if the following conditions hold:
\begin{itemize}
\item{} $\psi$ is continuous on ${\mathcal K}\,;$
\item{} For every continuum $\Gamma\subseteq {\mathcal A}$
with $\Gamma\cap {\mathcal A}^-_{\ell}\ne\emptyset$ and $\Gamma\cap {\mathcal A}^-_{r}\ne\emptyset,$
there exists a nonempty compact set $P\subseteq \Gamma\cap{\mathcal K}$ such that
$\psi(P)$ is a continuum contained in ${\mathcal B}$ and
$$\psi(P)\cap {\mathcal B}^-_{\ell}\ne\emptyset\,,
\quad \psi(P)\cap {\mathcal B}^-_{r}\ne\emptyset.$$
\end{itemize}
}}
\end{definition}

\noindent
When it is possible to take ${\mathcal K} = {\mathcal A},$
we simply write
$\psi: {\widetilde{\mathcal A}}\stretchcont {\widetilde{\mathcal B}}$ and $\psi: {\widetilde{\mathcal A}}\stretchkky {\widetilde{\mathcal B}}$
in place of
$({\mathcal A},\psi): {\widetilde{\mathcal A}}\stretchcont {\widetilde{\mathcal B}}$ and $({\mathcal A},\psi): {\widetilde{\mathcal A}}\stretchkky {\widetilde{\mathcal B}},$ respectively.\\
In analogy to Definition \ref{def-cn}, one can define the variants $\stretchcontm$ and $\stretchkkym,$ when multiple coverings occur.

\smallskip

\noindent
As we shall see in a moment, the above relations behave in a different way with respect to the possibility of detecting fixed points. Indeed, since the ranges of paths are a particular kind of continua, one can follow the same steps as in the proof of Theorem \ref{th-fp} and show that an analogous result still holds when the property of stretching along the paths is replaced with the one of stretching along the continua (cf. \cite[Theorem 2.10]{PiZa-07Ts}). On the other hand, this is no more true when the relation in Definition \ref{def-ea} is fulfilled, since in general it guarantees neither the existence of fixed points nor their localization (in case that fixed points do exist).\\
Concerning the existence of fixed points,
a possible counterexample is described in Figure
\ref{fig-bh}
and is inspired to the
bulging horseshoe in \cite[Fig. 4]{KeKoYo-01}. For sake of conciseness,
we prefer to present it by means of a series of graphical illustrations in Figures \ref{fig-bh}--\ref{fig-lr2}:
we point out, however, that it is
based on a concrete definition of a planar map (whose form, although complicated,
can be explicitly given in analytical terms).\\
With respect to the localization of fixed points, a counterexample can instead be obtained
by suitably adapting a one-dimensional map to the planar case. Indeed,
if $f: {\mathbb R}\supseteq [0,1]\to {\mathbb R}$
is any continuous function, we can set
\begin{equation}\label{eq-psi}
\psi(x_1,x_2):= (f(x_1),x_2)
\end{equation}
and have a continuous planar map defined on the unit square $[0,1]^2$ of $\mathbb R^2,$
inheriting all the interesting properties of $f.$
Notice that, in this special case, any fixed point $x^*$ for $f$ generates
a vertical line $(x^*,s)$ (with $s\in [0,1]$) of fixed points for $\psi.$
The more general framework of a map $\psi$ defined as
$$\psi(x_1,x_2):= (f(x_1),g(x_2)),$$
for $g:[0,1]\to[0,1]$ a continuous function,
could be considered as well.
\\
In view of the above discussion, we define a continuous map
$f: [0,1]\to [0,1]$ of the form
\begin{equation}\label{eq-f}
f(s):=\; \left\{
\begin{array}{lll}
&\frac{1 - c}{a}\,s + c\quad &0\leq s < a,\\ \\
&\frac{1}{a-b}(s-b)\quad &a\leq s \leq b,\\ \\
&\frac{d}{1-b}(s-b)\quad &b < s \leq 1,\\
\end{array}
\right.
\end{equation}
where $a,b,c,d$ are fixed constants such that $0 < a < b < 1$
and $0 < c < d < 1.$

\smallskip

\noindent
For ${\widetilde{\mathcal A}}$ the unit square oriented in the standard
left-right manner and $\psi$ as in \eqref{eq-psi},
it holds that $\psi: {\widetilde{\mathcal A}}\stretchkky {\widetilde{\mathcal A}}.$ In particular, we
can write both
$$({\mathcal K}_0,\psi): {\widetilde{\mathcal A}}\stretchkky {\widetilde{\mathcal A}},
\quad\mbox{for } \, {\mathcal K}_0 = [a,b]$$
and
$$({\mathcal K}_1,\psi): {\widetilde{\mathcal A}}\stretchkky {\widetilde{\mathcal A}},
\quad\mbox{for } \, {\mathcal K}_1 = [0,a]\cup [b,1].$$
In the former case, we also have
$$({\mathcal K}_0,\psi): {\widetilde{\mathcal A}}\stretchx {\widetilde{\mathcal A}}$$
and therefore, consistently with Theorem \ref{th-fp}, there exists at least a fixed point for $\psi$ in ${\mathcal K}_0\,.$
On the other hand, as it is clear from Figure
\ref{fig-gr},
there are no fixed points for $\psi$ in ${\mathcal K}_1\,.$ Thus, the localization of the
fixed points is not guaranteed when only the relation $\stretchkky$ is satisfied.
The same example could be slightly modified in order to have that $\psi: {\widetilde{\mathcal A}}\stretchkkym {\widetilde{\mathcal A}}$ for an arbitrary $m\geq 2,$
but just one fixed point does exist.\\
We finally observe that, playing with the coefficients $a,b,c,d$ and choosing a suitable compact set
${\mathcal K}_2\subset [0,1],$ it is possible to have
$$({\mathcal K}_2,\psi): {\widetilde{\mathcal A}}\stretchkky {\widetilde{\mathcal A}},$$
for a case in which neither $\psi$ nor $\psi^2$ possess fixed points in ${\mathcal K}_2\,.$
The set ${\mathcal K}_2$ will consist of the union of some compact subintervals of ${\mathcal K}_0$
and ${\mathcal K}_1\,.$ A graphical representation of $f$ and $f^2$ is given in Figure \ref{fig-gr}.

\bigskip

\begin{figure}[ht]
\centering
\includegraphics[scale=0.22]{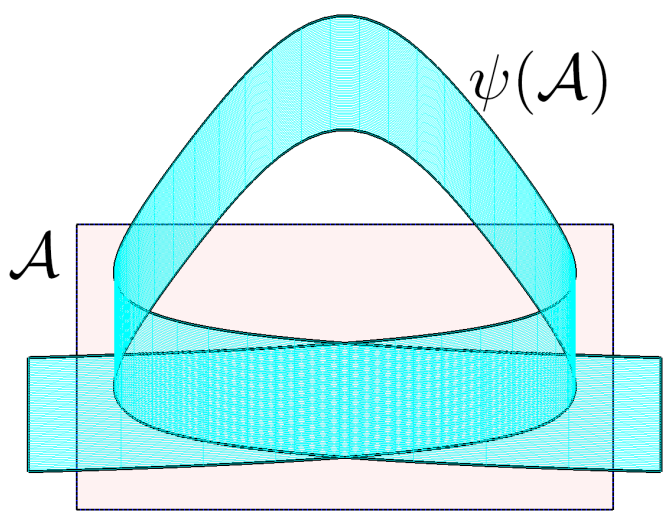}
\caption{\footnotesize {A rectangle $\mathcal A,$ oriented by taking as
$[\cdot]^-$-set the union of its left and right vertical segments, is deformed by a continuous planar map $\psi$
onto $\psi(\mathcal A),$ a ribbon bent across the rectangle itself. The left and
right sides of the domain are homeomorphically transformed onto
the two endings of the ribbon. This is an example of 
$\psi:{\widetilde{\mathcal A}} \stretchkky {\widetilde{\mathcal A}}\,.$
}}
\label{fig-bh}
\end{figure}

\begin{figure}[ht]
\centering
\includegraphics[scale=0.35]{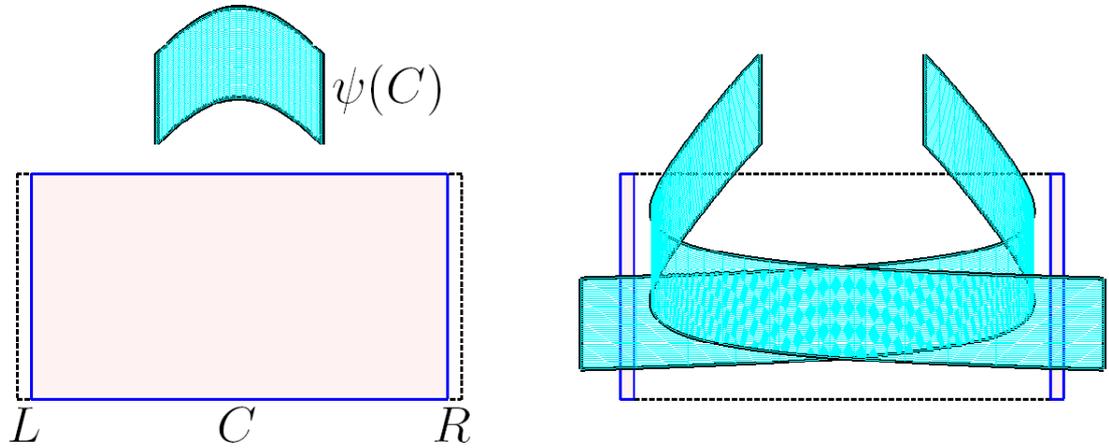}
\caption{\footnotesize{In regard to the example portrayed in the previous picture, we emphasize
how some sub-rectangles of $\mathcal A$ are transformed by the map $\psi.$ In particular,
the main central part $C$ of the rectangle is pushed out of $\mathcal A$ (picture at the left),
while the two narrow rectangles $L$ and $R$ near the left and right sides of it, respectively, are stirred onto two overlapping bent strips
(picture at the right). This can happen in two different ways, as shown in Figures \ref{fig-lr1}--\ref{fig-lr2}.
One can pass from a configuration to the other
by considering, instead of the map $\psi,$ the related map
$(x_1,x_2)\mapsto \psi(-x_1,x_2).$
}}
\label{fig-c}
\end{figure}

\begin{figure}[ht]
\centering
\includegraphics[scale=0.35]{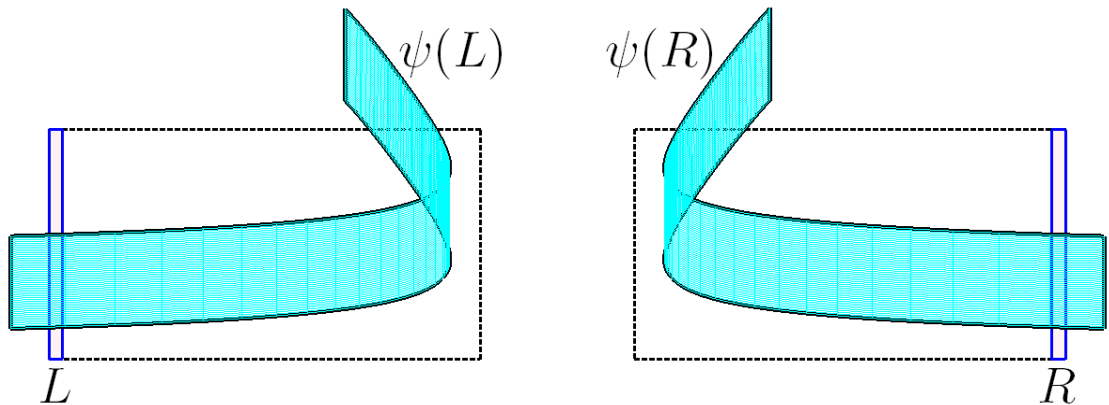}
\caption{\footnotesize{In this case, we enter the setting of Theorem \ref{th-fp}.
By applying our result to the narrow rectangles $L$ and $R$ (provided they are suitably oriented in an obvious
left-right manner), we can prove the existence of at least two fixed points for
the planar map $\psi$ in ${\mathcal A}$ (one fixed point lies in $L$ and the other in $R$).
}}
\label{fig-lr1}
\end{figure}

\begin{figure}[ht]
\centering
\includegraphics[scale=0.35]{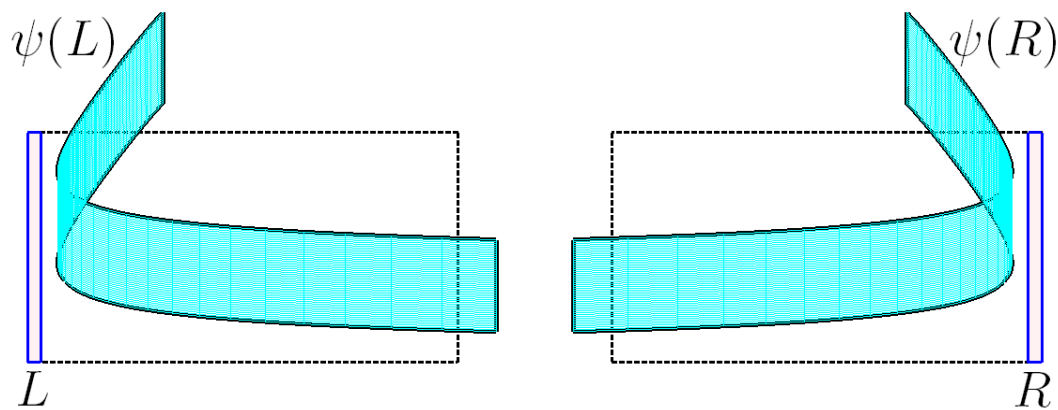}
\caption{\footnotesize{In the framework depicted above, we do not have any fixed point. In fact,
the central main part $C$ of the rectangle $\mathcal A$ is mapped by $\psi$ onto a hat-like figure outside the domain
(see the left of Figure
\ref{fig-c}) and, at the same time,
each of the two narrow rectangles $L$ and $R,$ which constitute the remaining part of the domain, is
mapped onto a set which is disjoint from itself.
}}
\label{fig-lr2}
\end{figure}

\begin{figure}[ht]
\centering
\includegraphics[scale=0.41]{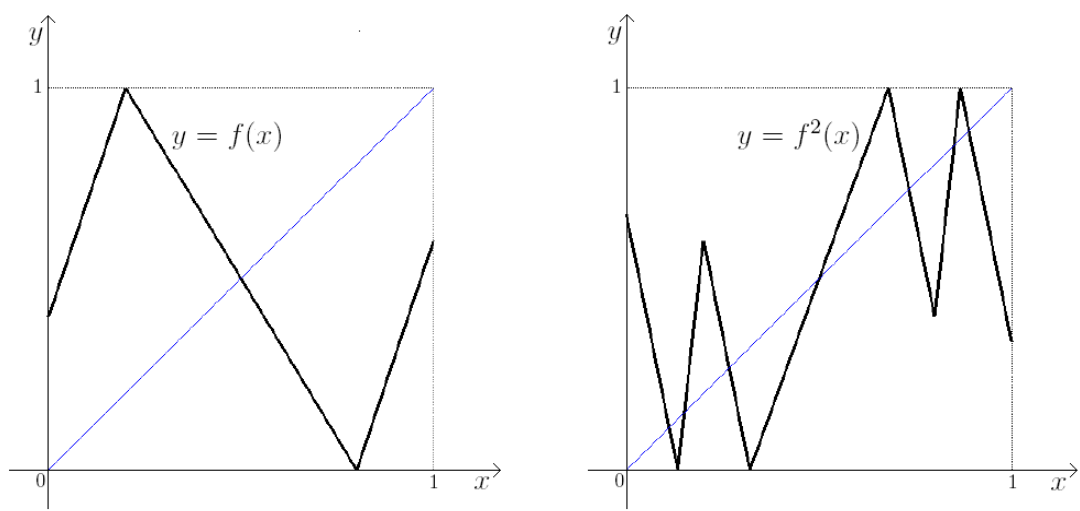}
\caption{\footnotesize{Example of the graph of a function $f$ (left)
and of its iterate $f^2$ (right), with $f$ defined as in
\eqref{eq-f}. We have drawn
the line $y=x$ in order to show the fixed points and the points of
period $2.$
}}
\label{fig-gr}
\end{figure}

\clearpage

\subsection{The Crossing Lemma}\label{sub-cl}

The fundamental result from plane topology that we have employed in the proof of the main theorems of Subsection \ref{sub-sap} is the Crossing Lemma (cf. Lemma \ref{lem-cr} below), which establishes the existence of a continuum ${\mathcal C}$
joining two opposite sides of a rectangle and contained in a given compact set ${\mathcal S}.$ More precisely, the existence of $\mathcal C$ is guaranteed by the \textit{cutting property} $(CP)$ for ${\mathcal S}$ in Lemma \ref{lem-cr},
that is, a condition which asserts that ${\mathcal S}$ intersects any path
contained in the rectangle and joining the other two sides. In the higher dimensional
case the set ${\mathcal S}$ looks like a \textit{cutting surface} (see Section \ref{sec-nd}\,). \\
In the next pages we furnish a verification of the Crossing Lemma, presenting a few topological facts that allow to achieve it. Even if, for the reader's convenience, we try to make the treatment as self-contained as possible, some theorems are stated below without proof.\\
The first among such results is Whyburn Lemma (see Lemma \ref{lem-w}). It is a central tool in bifurcation theory
and in the application of continuation methods to nonlinear equations
\cite{Al-81, Ma-97, Ra-70}. We quote here one of its most used versions, as taken from
\cite[Lemma 1.33]{Ra-70}. The interested reader can find a proof as well as further details in Chapter V of
Kuratowski's book \cite{Ku-68}. We refer also to
\cite{Al-81} and \cite{Ma-97} for interesting surveys about the role of
connectedness in the study of fixed point theory and operator equations.

\begin{lemma}[Whyburn Lemma]\index{Whyburn Lemma}\label{lem-w}
If $A$ and $B$ are (nonempty) disjoint closed subsets of a compact metric
space $K$ such that no connected components of $K$ intersect both
$A$ and $B,$ then $K = K_{A}\cup K_{B}\,,$ where $K_{A}$ and
$K_{B}$ are disjoint compact sets containing $A$ and $B,$
respectively.
\end{lemma}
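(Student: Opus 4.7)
My plan is to derive Whyburn's Lemma from the classical fact that, in a compact Hausdorff space, the connected component of a point coincides with its \emph{quasi-component}, i.e.\ with the intersection of all clopen (= simultaneously open and closed) sets containing that point. Once this identification is available, the disjoint closed sets $A$ and $B$ can be separated by a clopen partition of $K$, which is exactly what the conclusion asks for.

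First I would record the auxiliary theorem: if $K$ is a compact Hausdorff space and $x\in K$, then the connected component $C_x$ equals $Q_x:=\bigcap\{U\subseteq K\,:\,U\text{ clopen},\,x\in U\}$. The inclusion $C_x\subseteq Q_x$ is immediate from connectedness. For the reverse inclusion one shows that $Q_x$ itself is connected: if $Q_x=F_1\sqcup F_2$ were a separation into disjoint nonempty closed sets with, say, $x\in F_1$, then by normality of $K$ the sets $F_1,F_2$ admit disjoint open neighborhoods $W_1,W_2$; a compactness argument applied to the family of clopen sets containing $x$ would then yield a finite intersection that is clopen, contains $x$, and lies inside $W_1\cup W_2$, contradicting the connectedness analysis and producing a clopen set separating $x$ from $F_2$. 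This part is the main obstacle, but it is a standard compactness/normality exercise.

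Armed with $C_x=Q_x$, I proceed as follows. Fix $a\in A$. By hypothesis $C_a\cap B=\emptyset$, hence $Q_a\cap B=\emptyset$. Consider the family $\mathcal U_a:=\{U\subseteq K\,:\,U\text{ clopen},\,a\in U\}$. Since $B\subseteq K\setminus Q_a=\bigcup_{U\in\mathcal U_a}(K\setminus U)$, the compact set $B$ is covered by the open family $\{K\setminus U\}_{U\in\mathcal U_a}$, so finitely many $U_1,\dots,U_n\in\mathcal U_a$ satisfy $B\cap\bigcap_{i=1}^n U_i=\emptyset$. The intersection $V_a:=\bigcap_{i=1}^n U_i$ is clopen, contains $a$, and is disjoint from $B$.

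Finally, $\{V_a\}_{a\in A}$ is an open cover of the compact set $A$, so it admits a finite subcover $V_{a_1},\dots,V_{a_m}$. Set
\[
K_A:=\bigcup_{j=1}^m V_{a_j},\qquad K_B:=K\setminus K_A.
\]
Being a finite union of clopen sets, $K_A$ is clopen; hence $K_B$ is clopen as well. By construction $A\subseteq K_A$, $B\subseteq K_B$, and $K_A\cap K_B=\emptyset$. Both sets are closed subsets of the compact space $K$, hence compact, and $K=K_A\cup K_B$, which is the desired decomposition.
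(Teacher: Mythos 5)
Your proof is correct. Note that the paper itself states this lemma \emph{without} proof---it is quoted verbatim from Rabinowitz \cite[Lemma 1.33]{Ra-70}, with a pointer to Chapter V of Kuratowski \cite{Ku-68} for the argument---so there is no internal proof against which to compare; but your route (reduce to the coincidence of connected components and quasi-components in a compact Hausdorff space, then use compactness of $B$ and of $A$ to obtain a clopen set $K_A \supseteq A$ disjoint from $B$) is precisely the standard one found in those references, and every step you spell out is sound. In particular, you correctly observe that for each $a\in A$ the quasi-component $Q_a$ misses $B$, that compactness of $B$ extracts a finite subfamily of clopen sets whose intersection $V_a$ is clopen, contains $a$, and avoids $B$, and that compactness of $A$ then yields a finite union $K_A$ which is clopen, contains $A$, and is disjoint from $B$, with $K_B := K\setminus K_A$ doing the rest. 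The one place you compress is the proof of the auxiliary theorem that $C_x = Q_x$ in a compact Hausdorff space; your sketch has the right structure (normality to separate a putative disconnection $F_1\sqcup F_2$ of $Q_x$ by disjoint open $W_1,W_2$, compactness to replace $Q_x$ by a finite intersection $V$ of clopen sets lying in $W_1\cup W_2$, and then the observation that $V\cap W_1$ is clopen since $\overline{V\cap W_1}\subseteq V\cap\overline{W_1}\subseteq V\setminus W_2\subseteq V\cap W_1$), and if you fill in that last inclusion chain explicitly your argument is complete. Your version in fact proves the statement in the greater generality of compact Hausdorff spaces, of which the compact metric case in the lemma is a special case.
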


\noindent
In the applications, Whyburn Lemma is often employed in the form of the following alternative:
\begin{flushleft}
\textit{Either there exists a continuum ${\mathcal C}\subseteq K$
with ${\mathcal C}\cap A\ne\emptyset$ and ${\mathcal C}\cap B\ne\emptyset,$
or $A$ and $B$ are separated, in the sense that there exists $K_{A}$ and $K_{B}$
as in Lemma \\ \ref{lem-w}.}
\end{flushleft}

\smallskip
\noindent
The other result that we state without proof is a two-dimensional version of the well-known Brouwer fixed point Theorem. In such a simplified form, it can be proved by using the winding number \cite{ChSt-66}.

\begin{theorem}\label{th-br}
Let $\phi: D\to D$ be a continuous self-mapping of a set $D\subseteq {\mathbb R}^2,$
where $D$ is homeomorphic to a closed disc. Then $\phi$ has at least one fixed point in $D.$
\end{theorem}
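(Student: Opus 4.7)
The plan is to reduce to the closed unit disc and then derive a contradiction by comparing winding numbers on the boundary circle. First I would use the fact that fixed-point existence is invariant under topological conjugacy: if $h:\overline{B_2[0,1]}\to D$ is a homeomorphism and $\widetilde\phi:=h^{-1}\circ\phi\circ h$ has a fixed point $y,$ then $h(y)$ is a fixed point of $\phi.$ So without loss of generality I may assume $D=\overline{B_2[0,1]}\subseteq\mathbb R^2$ and $\partial D=S^1.$

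Next I would argue by contradiction, assuming $\phi$ has no fixed point. Then the continuous vector field $v(x):=\phi(x)-x$ is nowhere zero on $D,$ and the normalized map $u(x):=v(x)/\|v(x)\|$ defines a continuous function $u:D\to S^1.$ Since $u$ is defined on the whole disc, the restriction $u|_{S^1}$ extends continuously across $D,$ hence is null-homotopic as a map $S^1\to S^1;$ since the winding number (equivalently the degree) classifies homotopy classes of such maps, this forces $\deg(u|_{S^1})=0.$

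On the other hand, I would show that $u|_{S^1}$ must have nonzero winding number by building a homotopy between $v|_{S^1}$ and the antipodal vector field $-\Id|_{S^1}$ through nowhere-vanishing maps. The natural candidate is the straight-line homotopy
\[
H_t(x) := -x + t\phi(x), \qquad t\in[0,1],\; x\in S^1,
\]
so that $H_0(x)=-x$ and $H_1(x)=v(x).$ If $H_t(x)=0$ for some $x\in S^1$ and $t\in[0,1],$ then $x=t\phi(x),$ whence $1=\|x\|=t\|\phi(x)\|\le t\le 1,$ forcing $t=1$ and $\phi(x)=x,$ contradicting the no-fixed-point assumption. After normalizing, $H_t/\|H_t\|$ provides a homotopy in $S^1$ between $u|_{S^1}$ and the antipodal map, which has winding number $1$ around the origin. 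Thus $\deg(u|_{S^1})=1,$ contradicting the previous paragraph.

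The only genuinely delicate point, and hence the place I expect to spend care, is the verification that the homotopy $H_t$ never vanishes; this is exactly where both hypotheses $\phi(D)\subseteq D$ and ``no fixed point'' get used. Everything else is a standard invocation of homotopy invariance of the winding number and of the fact that a map $S^1\to S^1$ extends continuously over the disc if and only if it has winding number zero.
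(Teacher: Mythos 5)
Your proof is correct and follows exactly the winding-number route the paper itself points to: Theorem~\ref{th-br} is stated without proof, accompanied only by the remark that it ``can be proved by using the winding number'' with a citation to Chinn--Steenrod. The homotopy $H_t(x)=-x+t\phi(x)$ with the verification that it never vanishes on $S^1$ (using both $\phi(D)\subseteq D$ and the no-fixed-point assumption) is the standard and cleanest realization of that suggestion, so there is nothing to contrast.
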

\noindent
In the sequel, the set $D$ will be the product of two nondegenerate compact intervals $[a,b]$ and $[c,d].$

\bigskip
\noindent
The next lemma is a particular case of the
celebrated {Leray-Schauder Continuation Principle} \cite[Th\'{e}or\`{e}me Fondamental]{LeSh-34}.
Although its intuitiveness, the following result plays a useful role in various contexts:
for a different application, see for instance \cite{KuPoSoTu-05}, while the interested reader can find more corresponding details in \cite{Ma-97,Ze-86} and in the bibliography therein.
By its significance in our approach, we furnish two proofs of Lemma \ref{lem-ls}: the first one is
less direct,
but elementary, while the second one is based on degree theory and thus it is immediately extendable to an higher dimensional setting
\cite{PiZa-07}.

\smallskip

\noindent
Introducing the projections of the plane onto the coordinate axes
$$\pi_{1}, \pi_{2}\,:\, {\mathbb R}^2\to {\mathbb R},\quad \pi_{1}(x,y): = x,\;\; \pi_{2}(x,y) := y,\;\;
\forall\, (x,y)\in {\mathbb R}^2\,,$$
it can be stated as follows:

\begin{lemma}\label{lem-ls}
Let $f: [a,b]\times[c,d]\to {\mathbb R}$ be a continuous map such that
\begin{equation*}
\begin{array}{c}
f(x,y) \leq 0,\quad\forall\, (x,y)\in [a,b]\times \{c\},\\{}\\
f(x,y) \geq 0,\quad\forall\, (x,y)\in [a,b]\times \{d\}.
\end{array}
\end{equation*}
Then there exists a compact connected set
$${\mathcal S}\subseteq \{(x,y)\in [a,b]\times [c,d]\,: f(x,y) =0\},$$
such that
$$\pi_{1}({\mathcal S}) = [a,b].$$
\end{lemma}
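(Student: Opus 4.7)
The approach I would take is a proof by contradiction combining Whyburn's Lemma (Lemma \ref{lem-w}) with the two-dimensional Brouwer fixed point theorem (Theorem \ref{th-br}), these being the only non-trivial tools the author has made available for the ``elementary'' proof announced in the text.

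\textbf{Setup and Whyburn dichotomy.} Let $R:=[a,b]\times[c,d]$ and $Z:=\{(x,y)\in R:f(x,y)=0\}$; by continuity of $f$, $Z$ is a compact subset of $R$. The intermediate value theorem applied to $y\mapsto f(x,y)$ for each fixed $x\in[a,b]$ shows that $Z$ meets every vertical line of $R$, so $\pi_{1}(Z)=[a,b]$. In particular the closed sets
\[
A := Z\cap(\{a\}\times[c,d]), \qquad B := Z\cap(\{b\}\times[c,d])
\]
are nonempty. Applying Whyburn's Lemma to the compact metric space $Z$ with the disjoint closed subsets $A$ and $B$, the dichotomy stated just after Lemma \ref{lem-w} yields either \emph{(i)} a continuum $\mathcal S\subseteq Z$ meeting both $A$ and $B$, or \emph{(ii)} a partition $Z=K_A\cup K_B$ into disjoint compact sets with $A\subseteq K_A$ and $B\subseteq K_B$.

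\textbf{Favourable case.} In case \emph{(i)} the proof is essentially complete: $\pi_1(\mathcal S)$ is a connected subset of $[a,b]$ containing both endpoints, hence it equals $[a,b]$, and $\mathcal S$ is the continuum required by the statement.

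\textbf{Excluding the separated case via Brouwer.} The substance of the argument consists in ruling out case \emph{(ii)}. I would first separate $K_A$ from $K_B$ by disjoint open neighborhoods $W_A\supseteq K_A$ and $W_B\supseteq K_B$ in $R$ with $\overline{W_A}\cap\overline{W_B}=\emptyset$; by compactness $|f|\ge\delta>0$ on $R\setminus(W_A\cup W_B)$. I would then build a continuous self-map $\Phi:R\to R$ without fixed points by superposing two small shifts. The vertical component is a normalized signed multiple of $f$, pushing downward where $f>0$ and upward where $f<0$; the boundary sign conditions in the hypothesis guarantee that this component keeps every point inside $[c,d]$, and on $R\setminus(W_A\cup W_B)$ it alone already prevents fixed points thanks to the lower bound $\delta$. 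The horizontal component is driven by a Urysohn-type function equal to $+1$ on a neighborhood of $K_A$ and to $-1$ on a neighborhood of $K_B$, tailored near the left and right edges so that it is nonnegative on $\{a\}\times[c,d]$ and nonpositive on $\{b\}\times[c,d]$ (which is possible because $K_A\cap B=\emptyset$ and $K_B\cap A=\emptyset$). On $W_A$ (respectively $W_B$) this component pushes strictly to the right (respectively left), thereby killing the fixed points of the vertical shift that live on $Z$. Suitable small amplitudes make the combined map continuous, self-mapping and fixed-point-free, contradicting Brouwer's Theorem \ref{th-br}.

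\textbf{Main obstacle.} The hard part is making the construction of $\Phi$ genuinely work: one has to ensure continuity, self-mapping of $R$ even on $\partial R$ (the horizontal push must vanish or point inward at the vertical edges and not overshoot the corners), and absence of fixed points on all four sides of $R$ simultaneously. The corners, where both components can degenerate to zero shift, are the most delicate spots and must be handled by exploiting either the strict sign of $f$ there or the value of the Urysohn function. Once this gluing is carried out the Brouwer contradiction closes case \emph{(ii)}, leaving only case \emph{(i)}, which delivers the desired continuum $\mathcal S$ with $\pi_1(\mathcal S)=[a,b]$.
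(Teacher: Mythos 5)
You have the right strategy and the right tools: the paper's proof does use exactly Whyburn's Lemma \ref{lem-w} to obtain the $K_A\cup K_B$ decomposition in the separated case and Brouwer's Theorem \ref{th-br} to derive a contradiction from it. Where your sketch stalls is the construction of the fixed-point-free self-map, and you are honest about this being the ``main obstacle.'' The paper circumvents all of your boundary/corner worries with one device you did not find: the clamping projections $P_{[a,b]}(s)=\max\{a,\min\{s,b\}\}$ and $P_{[c,d]}(s)=\max\{c,\min\{s,d\}\}$. Instead of tuning amplitudes and Urysohn cutoffs so that the shifted point stays inside $R$, the paper simply defines
\[
\phi_1(x,y):=P_{[a,b]}\bigl(x-g(x,y)\bigr),\qquad \phi_2(x,y):=P_{[c,d]}\bigl(y-f(x,y)\bigr),
\]
where $g$ is the interpolation function $g(p)=\frac{\dist(p,K_A)-\dist(p,K_B)}{\dist(p,K_A)+\dist(p,K_B)}$, taking values in $[-1,1]$ and equal to $\mp 1$ on $K_A$, $K_B$. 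The composed map $\phi$ is automatically a continuous self-map of $R$, with no delicate gluing needed; Brouwer then produces a fixed point $z$, and the actual contradiction comes not from showing the map is fixed-point-free (it is not!) but from analyzing the fixed point: the boundary sign conditions on $f$ force $z_2-f(z)\in[c,d]$, hence the clamp in $\phi_2$ is inactive and $f(z)=0$, so $z\in K_A\cup K_B$ and $g(z)=\pm1$; a parallel argument in the first coordinate forces $g(z)=0$, a contradiction. So the logical structure is slightly inverted from what you describe: you try to make the map fixed-point-free and derive the contradiction from Brouwer's guarantee of a fixed point, whereas the paper admits the fixed point and shows it cannot satisfy both clamping identities at once. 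Your route could presumably be pushed through with enough care at the corners, but the projection trick makes that entire difficulty evaporate, and it is the step you would need to supply to complete the argument.
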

\begin{proof}
Let
$$K:=\{(x,y)\in [a,b]\times [c,d]\,: f(x,y) =0\}$$
be the zero-set for the map $f$
and let
$$A:= K\cap (\, \{a\}\times [c,d] \,),\quad B:= K\cap (\, \{b\}\times [c,d] \,).$$
By the intermediate value theorem, $A$ and $B$ are both nonempty. They are also
disjoint compact subsets of the compact set $K.$
\\
Assume, by contradiction, that there are no compact connected subsets of $K$
intersecting both $A$ and $B.$
Then, according to Whyburn Lemma \ref{lem-w}, there exist compact sets $K_A$ and $K_B$ with
$$
K = K_A \cup K_B\,,\;\; K_A\supseteq A,\;\; K_B\supseteq B,\;\; K_A \cap K_B = \emptyset.
$$
We can thus define the continuous function (see \cite[Lemma on Interpolation]{NeSt-60})
$$
g(x,y) := \frac{\dist(p,K_A) - \dist(p,K_B)}{\dist(p,K_A) + \dist(p,K_B)}\,\quad \mbox{ for }\; p=(x,y)\in [a,b]\times [c,d],
$$
which has $[-1,1]$ as range and attains the value $-1$ on $K_A\,$ and the value $1$ on $K_B\,.$
Notice that the denominator never vanishes as the compact sets $K_A$ and $K_B$ are disjoint.
\\
We also introduce the projections of the real line onto the intervals $[a,b]$ and $[c,d],$ respectively.
For $s\in\mathbb R$ they are defined as
\begin{equation}\label{eq-pr}
P_{[a,b]}(s):= \max\{a,\min\{s,b\} \},\quad P_{[c,d]}(s):= \max\{c,\min\{s,d\} \}.
\end{equation}
Finally, we  consider the continuous map
$$\phi = (\phi_1,\phi_2): [a,b]\times [c,d] \to [a,b]\times [c,d],$$
with components
$$\phi_1(x,y):= P_{[a,b]}(x - g(x,y)),\quad \phi_2(x,y):= P_{[c,d]}(y - f(x,y)).$$
Since $D:=[a,b]\times [c,d]$
is homeomorphic to the closed unit disc, Brouwer Theorem \ref{th-br} ensures the existence of a
fixed point $z = (z_1,z_2)\in D$ for the map $\phi,$ that is,
\begin{equation}\label{eq-fixp}
z_1 = P_{[a,b]}(z_1 - g(z_1,z_2)),\quad z_2 = P_{[c,d]}(z_2 - f(z_1,z_2)).
\end{equation}
We claim that
\begin{equation}\label{eq-cd}
z_2 - f(z_1,z_2)\in [c,d].
\end{equation}
Indeed, if $z_2 - f(z_1,z_2)\not\in [c,d],$ then either $z_2 - f(z_1,z_2) < c$ or $z_2 - f(z_1,z_2) > d.$
In the former case, $P_{[c,d]}(z_2 - f(z_1,z_2)) = c$ and thus, from the second relation in \eqref{eq-fixp},
we find $z_2 = c.$ By the sign condition on $f(x,y)$ for $(x,y)\in [a,b]\times\{c\},$
it follows that $f(z) = f(z_1,c) \leq 0$ and therefore
$z_2 - f(z_1,z_2) = c - f(z_1,c) \geq c.$ This contradicts the assumption $z_2 - f(z_1,z_2) < c.$
On the other hand, if $z_2 - f(z_1,z_2) > d,$ then $P_{[c,d]}(z_2 - f(z_1,z_2)) = d$ and thus the second relation in \eqref{eq-fixp} implies $z_2 = d.$ By the sign condition on $f(x,y)$ for $(x,y)\in [a,b]\times\{d\},$
it follows that $f(z) = f(z_1,d) \geq 0$ and therefore
$z_2 - f(z_1,z_2) = d - f(z_1,d) \leq d.$ This contradicts the assumption $z_2 - f(z_1,z_2) > d.$ Condition \eqref{eq-cd} is thus proved.

\noindent From \eqref{eq-cd} and the second relation in \eqref{eq-fixp}, we obtain
$z_2 = P_{[c,d]}(z_2 - f(z_1,z_2))$ $= z_2 - f(z_1,z_2).$ Hence $f(z)=0,$ that is, $z\in K = K_A \cup K_B$
and, recalling the definition of the interpolation map $g,$ we conclude that
\begin{equation}\label{eq-pm1}
g(z) =\pm 1.
\end{equation}

\noindent
Arguing as in the proof of \eqref{eq-cd}, we can also show that
$$z_1 - g(z_1,z_2)\in [a,b].$$
Then, the first relation in \eqref{eq-fixp} implies that
$z_1 = P_{[a,b]}(z_1 - g(z_1,z_2)) = z_1 - g(z_1,z_2).$ Hence,
$$g(z)=0,$$
in contradiction with \eqref{eq-pm1}.
Thus, by Whyburn Lemma \ref{lem-w}, the existence of a continuum
${\mathcal S}\subseteq K = \{(x,y)\in [a,b]\times [c,d]\,: f(x,y) =0\},$
with ${\mathcal S}\cap A\ne\emptyset$ and ${\mathcal S}\cap B\ne\emptyset,$ follows.
This means that the set ${\mathcal S}$ contains points of the form $(a,u)$ and $(b,v),$
with $u,v\in [c,d],$ and consequently the image of ${\mathcal S}$ under
$\pi_{1}$ covers the interval $[a,b].$
The verification of the lemma is complete.

\medskip

\noindent
We present also an alternative proof via degree theory.
\\
Consider the auxiliary function $\widehat{f}: [a,b]\times {\mathbb R}\to {\mathbb R}$ defined by
$$\widehat{f}(x,y):= f(x,P_{[c,d]}(y)) + \min\{y-c,\max\{0,y-d\} \}.$$
The map $\widehat{f}$ is continuous and such that
$$\widehat{f}(x,y) = f(x,y),\quad\forall\, (x,y)\in [a,b]\times[c,d].$$
Moreover, it satisfies the following inequalities:
\begin{equation*}
\begin{array}{c}
\widehat{f}(x,y) < 0,\quad\forall\, (x,y)\in [a,b]\times (-\infty,c\,[\,,\\
\!\!\widehat{f}(x,y) > 0,\quad\forall\, (x,y)\in [a,b]\times \,]\,d,+\infty).
\end{array}
\end{equation*}
If we treat the variable $x\in [a,b]$ as a parameter and consider the open set
$\Omega:= \,]c-1,d+1[\,,$ we have that
$\widehat{f}(x,y)\ne 0,$ for every $x\in [a,b]$ and $y\in\partial\Omega.$
The Brouwer degree $\mbox{deg}(\widehat{f}(a,\cdot),\Omega,0)$ is well-defined and nontrivial,
since, by the sign condition $\widehat{f}(a,c-1)< 0 < \widehat{f}(a,d+1),$ it holds that
$$\mbox{deg}\left(\widehat{f}(a,\cdot),\Omega,0\right)=1.$$
The Leray-Schauder Continuation Theorem (see \cite{LeSh-34,Ma-97,Ze-86}) ensures that
the set of solution pairs
$$\widehat{K}:=\left\{(x,y)\in [a,b]\times \Omega\,: \,\widehat{f}(x,y) =0\right\}$$
contains a continuum along which $x$ assumes all the values in $[a,b].$
By the definition of $\widehat{f},$ it is clear that $\widehat{K} = K$ and this gives
another proof of the existence of ${\mathcal S}.$
\end{proof}

\smallskip
\noindent
The next result shows that the continuum ${\mathcal S}$
found in Lemma \ref{lem-ls} can be $\varepsilon$-approximated
by paths.
\begin{lemma}\label{lem-lsp}
Let $f: [a,b]\times[c,d]\to {\mathbb R}$ be a continuous map such that
\begin{eqnarray*}
f(x,y) \leq -1,&\quad\forall\, (x,y)\in [a,b]\times \{c\},\\
f(x,y) \geq 1,&\quad\forall\, (x,y)\in [a,b]\times \{d\}.
\end{eqnarray*}
Then, for each
$\varepsilon > 0,$ there exists a continuous map
$$\gamma= \gamma_{\varepsilon}\,:[0,1]\to [a,b]\times [c,d],$$
such that
$$\gamma(0)\in \{a\}\times [c,d],\;\;\; \gamma(1)\in \{b\}\times [c,d]$$
and
$$|f(\gamma(t))|< \varepsilon,\;\;\forall\, t\in [0,1].$$
\end{lemma}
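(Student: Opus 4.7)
The plan is to upgrade the continuum produced by Lemma \ref{lem-ls} into an actual path by thickening it to an open neighborhood and exploiting local path-connectedness of the rectangle.

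First I would apply Lemma \ref{lem-ls} directly to $f$: the hypotheses of Lemma \ref{lem-lsp} ($f\leq -1$ on the bottom side and $f\geq 1$ on the top side) are strictly stronger than those of Lemma \ref{lem-ls}, so there exists a compact connected set
$$\mathcal{S}\subseteq \{(x,y)\in [a,b]\times[c,d] : f(x,y)=0\}$$
with $\pi_{1}(\mathcal{S})=[a,b]$. In particular $\mathcal{S}$ meets both vertical sides of the rectangle, so we can fix points $p_0\in\mathcal{S}\cap(\{a\}\times[c,d])$ and $p_1\in\mathcal{S}\cap(\{b\}\times[c,d])$.

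Next, by uniform continuity of $f$ on the compact rectangle $R:=[a,b]\times[c,d]$, choose $\delta>0$ so that $\|p-q\|<\delta$ implies $|f(p)-f(q)|<\varepsilon$ for all $p,q\in R$, and set
$$V:=\{p\in R : \dist(p,\mathcal{S})<\delta\}.$$
Then $V$ is open in $R$, contains $\mathcal{S}$, and for every $q\in V$ there is $s\in\mathcal{S}$ with $\|q-s\|<\delta$, whence $|f(q)|=|f(q)-f(s)|<\varepsilon$. So any path that stays in $V$ automatically satisfies the required estimate.

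Then I would verify that $V$ is path-connected, which is the heart of the argument. Write $V=\bigcup_{s\in\mathcal{S}}\bigl(B(s,\delta)\cap R\bigr)$; each piece is the intersection of two convex sets, hence convex and in particular connected, and each contains the corresponding point $s\in\mathcal{S}$. Since $\mathcal{S}$ itself is connected and is contained in $V$, the union $V$ is connected. Moreover, $R$ is convex, hence locally path-connected, and $V$ is open in $R$, so $V$ is locally path-connected. A connected, locally path-connected space is path-connected, so there exists a continuous $\gamma:[0,1]\to V$ with $\gamma(0)=p_0$ and $\gamma(1)=p_1$; by construction $\gamma$ has values in $R$, joins the left and right sides of the rectangle, and satisfies $|f(\gamma(t))|<\varepsilon$ for all $t\in[0,1]$.

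The only non-routine point is the path-connectedness of the open $\delta$-neighborhood $V$: the continuum $\mathcal{S}$ supplied by Lemma \ref{lem-ls} need not itself be path-connected (it is produced by Whyburn's lemma and can, a priori, be a very complicated compact connected set), so one must pass to its neighborhood and combine connectedness with local path-connectedness of $R$. Everything else --- uniform continuity, the choice of endpoints on the vertical sides, and the $\varepsilon$-estimate --- is immediate once $\mathcal{S}$ and $V$ are in place.
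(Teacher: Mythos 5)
Your proof is correct and takes essentially the same route as the paper: apply Lemma \ref{lem-ls} to get the continuum $\mathcal{S}$, thicken it to an open set on which $|f|<\varepsilon$, and use that open connected subsets of a locally path-connected space are path-connected. The paper instead extends $f$ to the plane, covers $\mathcal{S}$ by a finite chain of discs inside the tube $T_\varepsilon$, and then truncates the resulting path to force it back into $[a,b]\times[c,d]$; your choice of working with the $\delta$-neighborhood $V\subseteq R$ (via uniform continuity) and invoking the abstract local-path-connectedness theorem neatly avoids both the finite subcover bookkeeping and the truncation step.
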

\begin{proof}
Without loss of generality, we can assume
$0 < \varepsilon < 1.$
\\
First of all, we introduce the continuous function
$${\tilde{f}}: {\mathbb R}^2 \to {\mathbb R},\quad
{\tilde{f}}(x,y) := f(P_{[a,b]}(x),P_{[c,d]}(y) ),$$
which extends $f$ to the whole plane by means of the projections in \eqref{eq-pr}.

\noindent From the sign conditions on $f,$ we easily see that
\begin{eqnarray*}
{\tilde{f}}(x,y) \leq -1,&\quad\forall\, (x,y)\in {\mathbb R}^2: y\leq c\,,\\
{\tilde{f}}(x,y) \geq 1,&\quad\forall\, (x,y)\in {\mathbb R}^2: y\geq d\,.
\end{eqnarray*}
We also define the set
$$T_{\varepsilon}:=\left\{(x,y)\in {\mathbb R}^2: \, |{\tilde{f}}(x,y)| < \varepsilon\right\},$$
which is an open subset of the strip ${\mathbb R}\times\, ]c,d[\,.$
\\
By Lemma \ref{lem-ls}, there exists a compact connected set
$${\mathcal S}\subseteq \{(x,y)\in [a,b]\times [c,d]\,: f(x,y) =0\},$$
such that
$\pi_{1}({\mathcal S}) = [a,b].$ Notice that
$${\mathcal S}\subseteq T_{\varepsilon}\,.$$
For every $p\in {\mathcal S},$ there exists an open disc $B(p,\delta_p)$ with center in $p$
and radius $\delta_p > 0,$ such that $B(p,\delta_p)\subseteq T_{\varepsilon}\,.$
By compactness, we can find a finite number of points $p_1\,,\dots,p_n\,\in {\mathcal S}$
such that
$${\mathcal S}\subseteq B:=\bigcup_{i=1}^n B(p_i,\delta_i) \subseteq T_{\varepsilon}\,,$$
where we have set $\delta_i:= \delta_{p_i}\,.$
Without loss of generality (adding two further discs, if necessary),
we can suppose that
$$p_1\in (\{a\}\times [c,d])\cap T_{\varepsilon}\,\;\;\mbox{ and }\;\; p_n\in (\{b\}\times [c,d])\cap T_{\varepsilon}\,.$$
The open set $B$ is connected. Indeed, it is union of connected sets
(the open discs $B(p_i,\delta_i)$) and each of such connected sets has nonempty intersection
with the connected set ${\mathcal S}.$
\\
Since every open connected set in the plane is also
arcwise connected, there exists a continuous map $\omega_{\varepsilon}: [s_0,s_1]\to B$
with $\omega_{\varepsilon}(s_0)=p_1$ and $\omega_{\varepsilon}(s_1)=p_n\,.$
Then, we define
$$s'\,:= \max\{s\in [s_0,s_1]\,:\,\omega_{\varepsilon}(s)\in\, \{a\}\times [c,d]\}$$
and
$$s''\,:= \min\{s\in [s',s_1]\,:\,\omega_{\varepsilon}(s)\in\, \{b\}\times [c,d]\}.$$
Hence, the path
$$\gamma_{\varepsilon}(t):= \omega_{\varepsilon}(s'+ t \cdot(s'' - s')),\;\; \mbox{for } t\in [0,1],$$
is continuous and has the desired properties. Indeed,
$\gamma_{\varepsilon}(t)\in ([a,b]\times {\mathbb R})\cap T_{\varepsilon}$ $\subseteq \, [a,b]\times \,]c,d[\,$
and therefore $|{\tilde{f}}(\gamma_{\varepsilon}(t))| = |f(\gamma_{\varepsilon}(t))| < \varepsilon,$
for all $t\in [0,1].$ Moreover,
$\gamma_{\varepsilon}(0) = \omega_{\varepsilon}(s')\in \{a\}\times [c,d]$ and
$\gamma_{\varepsilon}(1) = \omega_{\varepsilon}(s'')\in \{b\}\times [c,d].$ The proof is complete.
\end{proof}

\smallskip
\noindent
We are now in position to prove the following\index{Crossing Lemma}:
\begin{lemma}[Crossing Lemma]\label{lem-cr}
Let $K\subseteq [0,1]^2$ be a compact set which satisfies the {\em cutting property}\footnote{In view of Definition \ref{def-cut} in Section \ref{sec-nd}, we could also say that \textit{$K$ cuts the arcs between $\{0\}\times [0,1]$ and $\{1\}\times [0,1].$}}{\em :}
\begin{equation*}
\begin{array}{c}
K\cap \gamma([0,1])\ne\emptyset,\\
\mbox{for each continuous map} \; \gamma: [0,1]\to [0,1]^2,\\
\mbox{with }
\gamma(0)\in \{0\}\times [0,1] \;\mbox{and } \gamma(1)\in \{1\}\times [0,1].
\end{array}
\leqno{(CP)}
\end{equation*}
Then there exists a continuum  ${\mathcal C}\subseteq K$
with
$${\mathcal C}\cap ([0,1]\times\{0\})\ne\emptyset\quad\mbox{and }\;\; {\mathcal C}\cap ([0,1]\times\{1\})\ne\emptyset.$$
\end{lemma}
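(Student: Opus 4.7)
The natural approach is to set up a dichotomy via Whyburn's Lemma~\ref{lem-w} and eliminate the bad alternative by means of Lemma~\ref{lem-lsp}. Let $A := K\cap ([0,1]\times\{0\})$ and $B := K\cap ([0,1]\times\{1\})$: applying $(CP)$ to the two trivial paths $t\mapsto (t,0)$ and $t\mapsto (t,1)$ shows that $A$ and $B$ are nonempty, and they are plainly disjoint compact subsets of $K$. I would now suppose, for the sake of contradiction, that no connected component of $K$ intersects both $A$ and $B$. Whyburn's Lemma would then produce disjoint compact sets $K_A\supseteq A$ and $K_B\supseteq B$ with $K = K_A\cup K_B$. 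The whole plan is to manufacture a continuous path from the left side $\{0\}\times[0,1]$ to the right side $\{1\}\times[0,1]$ whose range avoids $K$, in direct violation of $(CP)$.

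The first step towards building such a path is an auxiliary disjointness observation: if $p\in K_A\cap ([0,1]\times\{1\})$ then $p\in K\cap ([0,1]\times\{1\}) = B\subseteq K_B$, contradicting $K_A\cap K_B=\emptyset$, so $K_A$ misses the top side; symmetrically $K_B$ misses the bottom side. Consequently the enlarged sets
$$K_A' := K_A\cup \bigl([0,1]\times\{0\}\bigr),\qquad K_B' := K_B\cup \bigl([0,1]\times\{1\}\bigr)$$
are still compact and disjoint. I would then define the continuous interpolation
$$f(x,y) := \frac{\dist\bigl((x,y),K_A'\bigr)-\dist\bigl((x,y),K_B'\bigr)}{\dist\bigl((x,y),K_A'\bigr)+\dist\bigl((x,y),K_B'\bigr)},$$
whose denominator never vanishes since $K_A'\cap K_B'=\emptyset$; by construction $f\equiv -1$ on $K_A'$ and $f\equiv +1$ on $K_B'$, so in particular $f(x,0)\leq -1$ and $f(x,1)\geq 1$ for every $x\in[0,1]$. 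Lemma~\ref{lem-lsp} applied with $\varepsilon = 1/2$ then yields a continuous path $\gamma:[0,1]\to [0,1]^2$ with $\gamma(0)\in\{0\}\times[0,1]$, $\gamma(1)\in\{1\}\times[0,1]$ and $|f(\gamma(t))|<1/2$ throughout. Since $|f|\equiv 1$ on $K_A'\cup K_B'\supseteq K$, the image of $\gamma$ misses $K$, contradicting $(CP)$. The Whyburn alternative therefore has to hold, producing a continuum $\mathcal{C}\subseteq K$ that meets both $A$ and $B$ and hence both horizontal sides of the square.

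The step that calls for the most care is the design of $f$: the raw separator function of $K_A$ and $K_B$ alone would not in general satisfy the sign hypotheses of Lemma~\ref{lem-lsp} on the top and bottom sides, which is why I first enlarge $K_A$ and $K_B$ by gluing the bottom and top sides to them. That enlargement is legitimate precisely because of the little disjointness observation above, which rests in turn on having identified $A$ and $B$ with the $K$-intersections of the two horizontal sides; once $K_A'$ and $K_B'$ are in place, Lemma~\ref{lem-lsp} does all the remaining work and the contradiction with $(CP)$ is immediate.
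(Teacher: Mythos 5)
Your proof is correct and follows essentially the same route as the paper's: assume the Whyburn dichotomy fails, enlarge $K_A$ and $K_B$ by gluing on the bottom and top sides (using the same disjointness observation), build the normalized distance interpolation $f$, invoke Lemma~\ref{lem-lsp} with $\varepsilon=1/2$, and contradict $(CP)$. The only cosmetic difference is that you state explicitly that $(CP)$ applied to the horizontal sides makes $A$ and $B$ nonempty, and you phrase the final contradiction as ``the path misses $K$'' rather than ``any point of the path in $K$ would give $|f|=1$''; both are the same argument.
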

\begin{proof}
By the assumptions, the sets
$$A:= K\cap ([0,1]\times \{0\}),\quad B:= K\cap ([0,1]\times \{1\})$$
are compact, nonempty and disjoint. The thesis is achieved if we prove that there exists
a continuum ${\mathcal C}\subseteq K$ with ${\mathcal C}\cap A\ne\emptyset$ and
${\mathcal C}\cap B\ne\emptyset.$ If, by contradiction, there is no continuum of this kind,
Whyburn Lemma \ref{lem-w} implies that the set $K$ can be decomposed as
$$K= K_A\cup K_B\,,\;\; K_A\cap K_B=\emptyset,\;\; K_A\supseteq A,\;\; K_B\supseteq B,$$
with $K_A$ and $K_B$ compact sets.
Let us then define the compact sets
$${K'_A}:=K_A\cup ([0,1]\times \{0\})\,,\quad {K'_B}:= K_B\cup ([0,1]\times \{1\})\,.$$
We claim that they are disjoint. Indeed, since $K_A\cap K_B=\emptyset,$ if there exists $\bar x\in {K'_A}\cap{K'_B},$ then $\bar x\in {K_A}\cap ([0,1]\times \{1\})$ or $\bar x\in {K_B}\cap ([0,1]\times \{0\}).$ In the former case, recalling that $K_A\subseteq K,$ we find that $\bar x\in B\subseteq K_B$ and thus $\bar x\in K_A\cap K_B=\emptyset.$ The same contradiction can be achieved by assuming $\bar x\in {K_B}\cap ([0,1]\times \{0\}).$ The claim on the disjointness of ${K'_A}$ and ${K'_B}$ is thus checked.\\
Arguing like in the proof of Lemma \ref{lem-ls}, we introduce the continuous interpolation function
$$
f(x,y) := \frac{\dist(p,{K'_A}) - \dist(p,{K'_B})}{\dist(p,{K'_A}) + \dist(p,{K'_B})}\,\quad \mbox{ for }\; p=(x,y)\in [0,1]^2
$$
which has $[-1,1]$ as range and attains the value $-1$ on ${K'_A}\,$ and the value $1$ on ${K'_B}\,.$
By Lemma \ref{lem-lsp} there exists a continuous map
$\gamma: [0,1]\to [0,1]^2$ with $\gamma(0)\in \{0\}\times [0,1]$ and
$\gamma(1)\in \{1\}\times [0,1]$ such that
\begin{equation}\label{eq-1-2}
|f(\gamma(t))|< \frac1 2\,,\quad\forall\, t\in [0,1].
\end{equation}
The cutting property $(CP)$ ensures the existence of $t^*\in [0,1]$ such that
$$\gamma(t^*)\in K.$$
By the splitting of $K= K_A\cup K_B$ and the definition of $f$ we find that
$$f(\gamma(t^*))= \pm 1,$$
in contradiction with \eqref{eq-1-2}. This concludes the proof.
\end{proof}

\smallskip
\noindent
The above Crossing Lemma may be used to give a simple proof of Poincar\'{e}-Miranda Theorem
in dimension two. We recall that Poincar\'{e}-Miranda Theorem asserts the existence of a
zero for a continuous vector field $F = (F_1,F_2)$ defined on a rectangle
$[a_1,b_1]\times [a_2,b_2]\subseteq {\mathbb R}^2,$
such that $F_1(a_1,x_2)\cdot F_1(b_1,x_2)\le 0,$ for every $x_2\in [a_2,b_2]$ and
$F_2(x_1,a_2)\cdot F_2(x_1,b_2)\le 0,$ for every $x_1\in [a_1,b_1].$
The result holds also for the standard hypercube of ${\mathbb R}^N$ and
for $N$-dimensional rectangles, as we shall see in Section \ref{sec-nd} (cf. Theorem \ref{th-pm}).
Such theorem is usually referred to Carlo Miranda, who in 1940
noticed its equivalence to Brouwer fixed point Theorem \cite{Mi-40}. On the other hand, as remarked in
\cite{Ma-00}, Henry Poincar\'{e} in \cite{Po-83,Po-84} had already announced this result
with a suggestion of a correct proof using the Kronecker's index. In \cite{Po-84},
with regard to the two-dimensional case, Poincar\'{e} (assuming strict inequalities for the components of the vector field
on the boundary of the rectangle) described also an heuristic proof as follows:
the ``curve'' $F_2=0$ departs from a point of the side $x_1 = b_1$ and ends at some point of $x_1 = a_1\,;$
in the same manner,
the curve $F_1=0,$ departing from a point of  $x_2 = b_2$ and ending at some point of $x_2 = a_2\,,$
must necessarily meet the first ``curve'' in the interior of the rectangle.
Further information and historical remarks about Poincar\'{e}-Miranda Theorem can be found in
\cite{Ku-97, Ma-00}.\\
Following Poincar\'{e}'s heuristic argument, one could adapt his proof in the following manner:
if $\gamma(t)$ is a path contained in the rectangle $[a_1,b_1]\times [a_2,b_2]$ and
joining the left and the right sides, by Bolzano Theorem there exists at least a zero of
$F_1(\gamma(t))$ and this, in turns, means that any path as above meets the set ${\mathcal S}:=F_1^{-1}(0)\,.$
The Crossing Lemma then implies the existence of a compact connected set ${\mathcal C}_1\subseteq
F_1^{-1}(0)$ which intersects the lower and the upper sides of the rectangle.
At this point one can easily achieve the conclusion in various different ways.
For instance, one could just repeat the same argument on $F_2$ in order to obtain a compact connected set
${\mathcal C}_2\subseteq F_2^{-1}(0)$ which intersects the left and the right sides of the rectangle
and thus prove the existence of a zero of the vector field $F$ using the fact that
${\mathcal C}_1\cap{\mathcal C}_2\ne\emptyset.$ Alternatively, one could apply
Bolzano Theorem and find a zero for $F_2$ restricted to ${\mathcal C}_1$ (see also
\cite{ReZa-00} for a similar use of a variant of the Crossing Lemma and \cite{PiZa-07}
for extensions to the $N$-dimensional setting).
Conversely, it is possible to provide a proof
of the Crossing Lemma via Poincar\'{e}-Miranda Theorem \cite{PaZa-04b}.

\section{The $N$-dimensional setting}\label{sec-nd}
\subsection{Cutting surfaces and topological results}\label{sub-cs}
In addition to the notions on paths introduced in Section \ref{sec-sap}, we need some further concepts in order to present a generalization to higher dimensional settings of the planar theory previously explained.\\
If $\gamma_1,\,\gamma_2:[0,1]\to W$ are paths in the topological space $W$ with
$\gamma_1(1)=\gamma_2(0),$  we define the \textit{gluing of
$\gamma_1$ with $\gamma_2$}\index{gluing of paths} as the path
$\gamma_1\star\gamma_2:[0,1]\to W$ such that
\begin{displaymath}
\gamma_1\star\gamma_2(t):=\left\{ \begin{array}{ll}
\gamma_1(2t) & \textrm{for $0\le t\le \frac 1 2 \,,$}\\
\gamma_2(2t-1) & \textrm{for $\frac 1 2\le t\le 1\,.$ }
\end{array} \right.
\end{displaymath}
Moreover, given a path $\gamma:[0,1]\to W,$ we denote by
$\gamma^-:[0,1]\to W$ the path having $\overline{\gamma}$ as support,
but run with reverse orientation with respect to $\gamma,$ i.e. $\gamma^-(t):=\gamma(1-t),$
for all $t\in [0,1].$ We also recall that a topological
space $W$ is said \textit{arcwise connected}\index{arcwise connectedness} if,
for any couple of distinct points $p,q\in W,$ there is a path $\gamma:
[0,1]\to W$ such that $\gamma(0) = p$ and $\gamma(1) = q.$ In the
case of a Hausdorff topological space $W,$ the range
${\overline{\gamma}}$ of $\gamma$ turns out to be a locally connected
metric continuum (a Peano space according to \cite{HoYo-61}).
Thus, if $W$ is a metric space, the above definition of arcwise connectedness is equivalent
to the fact that, given any two points $p,q\in W$ with $p\ne q,$
there exists an arc (i.e. the homeomorphic image of
[0,1]) contained in $W$ and having $p$ and $q$ as
extreme points (see \cite[pp.115-131]{HoYo-61}).

\medskip

We start our explanation by presenting some topological lemmas concerning
the relationship between particular surfaces \index{cutting surfaces} and zero sets of
continuous real valued functions. Analogous results can be found, often in a more implicit form, in different contexts. However, since for our
applications we need a specific version of the statements, we
give an independent proof with all the details. \\
At first, let us introduce the central concept of ``cutting set''\index{cutting the arcs@\textsl{cutting the arcs} property}.

\begin{definition}\label{def-cut}
{\rm{
Let $X$ be an arcwise connected metric space and let $A, B,
C$ be closed nonempty subsets of $X$ with $A\cap
B=\emptyset.$ We say that \textit{$C$ cuts the arcs between $A$
and $B$} if for any path $\gamma: [0,1]\to X,$
with ${\overline{\gamma}}\cap A\neq\emptyset$ and ${\overline{\gamma}}\cap
B\neq\emptyset,$ it follows that ${\overline{\gamma}}\cap
C\neq\emptyset.$ In the sequel, if $X$ is a subspace of a larger
metric space $Z$ and we wish to stress that we consider
only paths contained in $X,$ we make more precise our definition
by saying that \textit{$C$ cuts the arcs between $A$ and $B$ in $X$}.}}
\end{definition}

\noindent
Such definition is a modification of the classical one regarding the cutting
of a space between two points in \cite{Ku-68}.
See \cite{BeDiPe-02} for a more general concept concerning a set $C$ that
intersects every connected set meeting two nonempty sets $A$ and $B.$
In the special case that $A$ and $B$ are the opposite faces of an $N$-dimensional cube,
J. Kampen \cite[p.512]{Ka-00} says that $C$ separates $A$ and $B.$ We prefer to use
the ``cutting'' terminology in order to avoid any misunderstanding with other notions
of separation which are more common in topology.
In particular we remark that
our definition agrees with the usual one of cut
when $A,B,C$ are pairwise disjoint \cite{FeLeSh-99}.

\smallskip

\noindent
In the sequel, even when not explicitly mentioned, we assume
the basic space $X$ to be arcwise connected. In some of the next results
the local arcwise connectedness\index{arcwise connectedness, local} of $X$ is required, too, i.e. for any $p\in X$ it holds that each neighborhood of $p$ contains an arcwise connected neighborhood of $p.$ With this respect, we
recall that
any connected and locally arcwise connected metric space is arcwise connected (see \cite[Th.2, p.253]{Ku-68}).

\begin{lemma}\label{lem-cut}
Let $X$ be a connected and locally arcwise connected metric space and let
$A, B, C\subseteq X$ be closed and nonempty sets with $A\cap
B=\emptyset.$ Then $C$ cuts the arcs between $A$ and $B$ if and
only if there exists a continuous function $f: X\to {\mathbb R}$ such that
\begin{equation}\label{eq-ab}
f(x)\leq 0,\, \forall\, x\in A,\qquad f(x) \geq 0,\, \forall\, x\in B
\end{equation}
and
\begin{equation}\label{eq-zs}
C = \{x\in X: f(x) = 0\}.
\end{equation}
\end{lemma}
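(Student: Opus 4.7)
The plan is to prove both directions separately, with the nontrivial content lying in the construction of $f$ from the cutting assumption.

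For the easy direction (existence of $f$ implies the cutting property), I would fix a path $\gamma:[0,1]\to X$ with $\gamma(t_1)\in A$ and $\gamma(t_2)\in B$ for some $t_1,t_2\in[0,1]$. Composing with $f$ gives a continuous real-valued function on $[0,1]$ with $f(\gamma(t_1))\le 0\le f(\gamma(t_2))$ by \eqref{eq-ab}. Bolzano's theorem, applied on the subinterval with endpoints $t_1,t_2$, produces a $t^{*}$ with $f(\gamma(t^{*}))=0$, i.e.\ $\gamma(t^{*})\in C$ by \eqref{eq-zs}, which is what is required.

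For the converse, assume $C$ cuts the arcs between $A$ and $B$ and consider the open set $U:=X\setminus C$. Since $X$ is locally arcwise connected, so is $U$, so each of its connected components is open and arcwise connected. The key observation is that no component $W$ of $U$ can meet both $A$ and $B$: otherwise, picking $a\in W\cap A$ and $b\in W\cap B$, the arcwise connectedness of $W$ would yield a path from $a$ to $b$ entirely in $W\subseteq U$, contradicting the cutting hypothesis. Let $V_B$ be the union of those components of $U$ that meet $B$, and $V_A$ the union of all the remaining components, so that $V_A,V_B$ are disjoint open sets with $V_A\cup V_B=X\setminus C$. By the observation, $A\setminus C\subseteq V_A$ and $B\setminus C\subseteq V_B$. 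Set
\[
\widetilde F_A:=\overline{V_A}\cup A\cup C,\qquad \widetilde F_B:=\overline{V_B}\cup B\cup C,
\]
and define $f:X\to\mathbb R$ by $f(x):=-\dist(x,C)$ on $\widetilde F_A$ and $f(x):=\dist(x,C)$ on $\widetilde F_B$. The properties \eqref{eq-ab} and \eqref{eq-zs} are then immediate from the two defining formulas and the fact that $C$ is closed, once one verifies that the construction is consistent, i.e.\ that $\widetilde F_A\cup\widetilde F_B=X$ and $\widetilde F_A\cap\widetilde F_B=C$; continuity then follows from the pasting lemma, since $\widetilde F_A$ and $\widetilde F_B$ are closed and both formulas vanish on the intersection.

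The main obstacle is verifying $\widetilde F_A\cap\widetilde F_B=C$. The inclusion $\supseteq$ is built into the definition. For the opposite inclusion, the crucial point is that $\overline{V_A}\subseteq V_A\cup C$ and $\overline{V_B}\subseteq V_B\cup C$, which comes from the fact that $V_A$ and $V_B$ are complementary open subsets of $U$, so that $\partial V_A\cup\partial V_B$ must lie in $C$. Combined with $V_A\cap V_B=\emptyset$, $A\cap B=\emptyset$, $A\setminus C\subseteq V_A$, and $B\setminus C\subseteq V_B$, a routine case-by-case expansion of the intersection $(\overline{V_A}\cup A\cup C)\cap(\overline{V_B}\cup B\cup C)$ reduces every cross-term to a subset of $C$. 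The covering $\widetilde F_A\cup\widetilde F_B=X$ is then immediate from $V_A\cup V_B\cup C=X$. With this in place, the required continuous function satisfying \eqref{eq-ab} and \eqref{eq-zs} is obtained, completing the proof.
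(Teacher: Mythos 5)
Your proof is correct and follows essentially the same route as the paper's: both decompose $X\setminus C$ into an ``$A$-side'' and a ``$B$-side'' using the components of $X\setminus C$ (which are open by local arcwise connectedness), rule out components meeting both $A$ and $B$ via the cutting hypothesis, and take $f$ to be the signed distance to $C$. The only cosmetic difference is in how continuity is argued: the paper writes $f=\rho\cdot\mu$ with $\rho=\dist(\cdot,C)$ and $\mu\in\{-1,0,1\}$ the locally constant sign function, proving continuity at points of $C$ by a squeeze, whereas you build the two closed pieces $\widetilde F_A,\widetilde F_B$ explicitly and invoke the pasting lemma---two equivalent ways of recording the same observation.
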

\begin{proof}
Assume there exists a continuous function $f: X\to {\mathbb R}$ satisfying
\eqref{eq-ab} and \eqref{eq-zs}. Let $\gamma: [0,1]\to X$ be a path
such that $\gamma(0)\in A$ and $\gamma(1)\in B.$ We want to prove that
${\overline{\gamma}}\cap C\neq\emptyset.$ Indeed, for
the composite continuous function
$\theta:=f\circ\gamma: [0,1]\to {\mathbb R},$ we have that $\theta(0)\leq 0 \leq \theta(1)$ and so
Bolzano Theorem ensures the existence of $t^*\in [0,1]$ with $\theta(t^*)=0.$ This means that
$\gamma(t^*)\in C$ and therefore ${\overline{\gamma}}\cap C\neq\emptyset.$
Hence, we have proved that $C$ cuts the arcs between $A$ and $B.$\\
Conversely, let us assume that $C$ cuts the arcs between $A$ and $B.$
We introduce the auxiliary functions
\begin{equation*}
\rho: X\to {{\mathbb R}}^+\,,
\end{equation*}
\begin{equation}\label{eq-ro}
\rho(x):= \mbox{dist}(x,C),\;\forall\, x\in X
\end{equation}
and
\begin{equation*}
\mu: X\to \{-1,0,1\},
\end{equation*}
\begin{equation}\label{eq-mu}
\mu(x):=
\quad
\left
\{
\begin{array}{ll}
~\,0&\; \mbox{ if } \, x\in C,\\
\!-1 &\; \mbox{ if } \, x\not\in C \;
\mbox{and $\exists$ a path $\gamma_x:[0,1]\to X\setminus C$} \\
~\quad &\mbox{
\,such that $\gamma_x(0)\in A$ and $\gamma_x(1) = x,$}\\
~\,1&\; \mbox{ elsewhere.}\\
\end{array}
\right.
\end{equation}
Observe that $\rho$ is a continuous function with $\rho(x) = 0$ if and only if $x\in C$
and also $\mu(x) = 0$ if and only if $x\in C.$ Moreover, $\mu$ is bounded.\\
Let $x_0\not\in C.$ We claim that $\mu$ is continuous in $x_0\,.$ Actually, $\mu$
is locally constant on $X\setminus C.$ Indeed, since $x_0\in X\setminus C$ (an open set) and $X$
is locally arcwise connected, there is a neighborhood $U_{x_0}$ of $x_0$ with
$U_{x_0}\subseteq X\setminus C,$ such that for each $x\in U_{x_0}$ there exists a path $\omega_{x_0,x}$
joining $x_0$ to $x$ in $U_{x_0}\,.$ Clearly, if there is a path $\gamma_{a,x_0}$ in $X\setminus C$
joining some point $a\in A$ with $x_0\,,$ then the path $\gamma_{a,x_0}\star\omega_{x_0,x}$
connects $a$ to $x$ in $X\setminus C.$ This proves that if $\mu(x_0) = -1,$ then $\mu(x) = -1$
for every $x\in U_{x_0}\,.$ On the other hand, if there is a path $\gamma_{a,x}$ in
$X\setminus C$
which connects some point $a\in A$ to $x\in U_{x_0}\,,$ then the path
$\gamma_{a,x}\star\omega^-_{x_0,x}$ connects $a$ to $x_0\,$ in $X\setminus C.$
This shows that if $\mu(x_0) = 1$ (that is, it is not possible to connect $x_0$ to any point
of $A$ in $X\setminus C$ using a path), then $\mu(x) = 1$ for every $x\in U_{x_0}$
(that is, it is not possible to connect any point $x\in U_{x_0}$ to any point
of $A$ in $X\setminus C$ using a path).\\
We can now define
\begin{equation}\label{eq-frm}
f: X\to {\mathbb R}, \qquad f(x):= \rho(x) \mu(x).
\end{equation}
Clearly, $f(x)= 0$ if and only if $x\in C$ and, moreover, $f$ is continuous.
Indeed, if $x_0\not \in C,$ we have that $f$ is continuous in $x_0$ because both $\rho$ and $\mu$ are
continuous in $x_0\,.$ If $x_0\in C$ and $x_n\to x_0$ (as $n\to \infty$), then $\rho(x_n)\to 0$
and $|\mu(x_n)|\leq 1,$ so that $f(x_n)\to 0 = f(x_0).$
Finally, by the definition of $\mu$ in \eqref{eq-mu}, it holds that $\mu(a) = -1,$ for every $a\in A\setminus C,$ and hence, for such an $a,$ it holds that $f(a) < 0.$ On the contrary, if we suppose that $b\in B\setminus C,$ we must have $\mu(b) =1.$ In fact, by the cutting condition,
there is no path connecting in $X\setminus C$ the point $b$ to any point of $A.$ Therefore, in this case we have $f(b) > 0.$
The proof is complete.
\end{proof}

\noindent Considering the functions $\mu$ and $f$ as in
\eqref{eq-mu}--\eqref{eq-frm}, we notice that their definition,
although adequate from the point of view of the proof of Lemma
\ref{lem-cut}, perhaps does not represent
an optimal choice. For instance, we would like the sign
of $f$ to coincide for all the points located ``at the same side''
of $A$ (respectively of $B$) with respect to $C$ (see Figure \ref{fig-si}). Having such
request in mind, we will propose a different definition for the
function $\mu$ in \eqref{eq-mu2} below.
With this respect, we introduce
the following sets that we call \textit{the side of $A$ in $X$}
and \textit{the side of $B$ in $X$}\index{side of a set ${\mathcal S}(\,\cdot)$}, respectively:
\begin{equation*}
{\mathcal S}(A):=\{x\in X: \, {\overline{\gamma}}\cap A\ne\emptyset,\,\forall\, \mbox{path }\,\gamma: [0,1]\to X,
\mbox{with } \gamma(0) = x, \gamma(1)\in B\},
\end{equation*}
\begin{equation*}
{\mathcal S}(B):=\{x\in X: \, {\overline{\gamma}}\cap B\ne\emptyset,\,\forall\, \mbox{path }\,\gamma: [0,1]\to X,
\mbox{with } \gamma(0) = x, \gamma(1)\in A\},
\end{equation*}that is, a point $x$ belongs to the side of $A$ (to the side of $B$)
if, whenever we try to connect $x$ to $B$ (to $A$) by a path,
we first meet $A$ (we first meet $B$). By definition, it follows that
\begin{equation*}
A\subseteq {\mathcal S}(A),\quad B\subseteq {\mathcal S}(B).
\end{equation*}
In addition it is possible to check that ${\mathcal S}(A)$ and ${\mathcal S}(B)$ are closed and disjoint. This is the content of the next lemma.

\begin{lemma}\label{lem-dis}
Let $X$ be a connected and locally arcwise connected metric space and let
$A, B\subseteq X$ be closed and nonempty sets with $A\cap
B=\emptyset.$ Then ${\mathcal S}(A)$ and ${\mathcal S}(B)$ are closed and, moreover,
\begin{equation*}
{\mathcal S}(A)\cap {\mathcal S}(B) = \emptyset.
\end{equation*}
\end{lemma}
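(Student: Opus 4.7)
The plan is to handle the two claims separately: disjointness of $\mathcal{S}(A)$ and $\mathcal{S}(B)$ first, then closedness of $\mathcal{S}(A)$, with the closedness of $\mathcal{S}(B)$ following by a symmetric argument.

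For disjointness I would argue by contradiction. A preliminary observation simplifies the set-up: if $x\in B$, then the constant path at $x$ is a legitimate test path in the definition of $\mathcal{S}(A)$ whose range $\{x\}$ misses $A$ (as $A\cap B=\emptyset$), so $\mathcal{S}(A)\cap B=\emptyset$; symmetrically $\mathcal{S}(B)\cap A=\emptyset$. Hence any hypothetical $x\in\mathcal{S}(A)\cap\mathcal{S}(B)$ lies outside $A\cup B$. Arcwise connectedness of $X$ (a consequence of connectedness plus local arcwise connectedness, recalled just before the lemma) provides a path $\gamma:[0,1]\to X$ with $\gamma(0)=x$ and $\gamma(1)\in B$. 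Since $A$ is closed and $x\in\mathcal{S}(A)$, the number $t^{\ast}:=\min\{t\in[0,1]:\gamma(t)\in A\}$ is well defined, and $0<t^{\ast}<1$ thanks to $x\notin A$, $\gamma(1)\in B$ and $A\cap B=\emptyset$. The reparametrization $\widetilde{\gamma}(s):=\gamma(s t^{\ast})$ is a path from $x$ to $\gamma(t^{\ast})\in A$, so by $x\in\mathcal{S}(B)$ there is some $s^{\ast}\in[0,1]$ with $\gamma(s^{\ast}t^{\ast})\in B$; from $x\notin B$ and $A\cap B=\emptyset$ one infers $0<s^{\ast}<1$, hence $s^{\ast}t^{\ast}<t^{\ast}$. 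But then $\gamma\restriction_{[0,s^{\ast}t^{\ast}]}$ is a path from $x$ to a point of $B$ which, by $x\in\mathcal{S}(A)$, must meet $A$ at some parameter no larger than $s^{\ast}t^{\ast}<t^{\ast}$, contradicting the minimality of $t^{\ast}$.

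For the closedness of $\mathcal{S}(A)$, I would pick a sequence $(x_n)\subseteq\mathcal{S}(A)$ with $x_n\to x\in X$ and a test path $\gamma:[0,1]\to X$ with $\gamma(0)=x$, $\gamma(1)\in B$, aiming to show $\overline{\gamma}\cap A\ne\emptyset$. Suppose by contradiction that $\overline{\gamma}\cap A=\emptyset$. Since $\overline{\gamma}$ is compact and $A$ is closed, $2\delta:=\dist(\overline{\gamma},A)>0$. Local arcwise connectedness of $X$ provides an arcwise connected neighborhood $U$ of $x$ with $U\subseteq B(x,\delta)$; for $n$ sufficiently large $x_n\in U$, so there is a path $\eta_n$ in $U$ with $\eta_n(0)=x_n$ and $\eta_n(1)=x$. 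Because $x\in\overline{\gamma}$ forces $\dist(x,A)\geq 2\delta$, every point of $\overline{\eta}_n\subseteq B(x,\delta)$ lies at distance at least $\delta$ from $A$, so $\overline{\eta}_n\cap A=\emptyset$. Then the glued path $\eta_n\star\gamma$ joins $x_n$ to a point of $B$ without meeting $A$, contradicting $x_n\in\mathcal{S}(A)$; hence $x\in\mathcal{S}(A)$. Interchanging the roles of $A$ and $B$ yields the closedness of $\mathcal{S}(B)$.

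The main obstacle is the disjointness step, where one has to intertwine a ``first-hit of $A$'' on the test path from $x$ to $B$ with a ``first-hit of $B$'' on the truncated path to $A$, and use $A\cap B=\emptyset$ to squeeze out a strict inequality that collides with the initial minimality. By comparison, the closedness portion is a relatively standard application of local arcwise connectedness, manufacturing short bridging paths $\eta_n$ that are forced to stay uniformly away from $A$ by the positive distance separating $\overline{\gamma}$ from $A$.
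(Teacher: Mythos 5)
Your proof is correct, but both halves take routes that differ from the paper's, and the comparison is worth noting. For the disjointness, the paper also fixes a path $\gamma$ from $x\in\mathcal{S}(A)\cap\mathcal{S}(B)$ to a point of $B$, but then alternately applies the two membership conditions to build a strictly decreasing sequence $t_1>s_1>t_2>s_2>\dots>0$ of parameters with $\gamma(t_i)\in A$ and $\gamma(s_i)\in B$, and lands the contradiction only in the limit: $\gamma(\inf t_n)=\lim a_n=\lim b_n\in A\cap B=\emptyset$. You short-circuit this by selecting $t^*:=\min\{t:\gamma(t)\in A\}$ up front (legitimate because $\{t:\gamma(t)\in A\}$ is a nonempty closed subset of $[0,1]$) and deriving a first-hit time strictly below $t^*$ in a single pass; this avoids the infinite descent and the limit step entirely and is the cleaner argument. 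For closedness, the paper goes the other way around and shows $X\setminus\mathcal{S}(A)$ is open: given $w\notin\mathcal{S}(A)$ with a witnessing path $\gamma$ avoiding $A$, it picks an arcwise connected neighborhood $V_w\subseteq X\setminus A$ of $w$ (using only that $A$ is closed, no compactness) and prepends short paths inside $V_w$, which immediately shows every $x\in V_w$ also avoids $\mathcal{S}(A)$. Your sequential argument is correct in a metric space, but it invokes compactness of $\overline{\gamma}$ and a $\dist(\overline{\gamma},A)>0$ estimate that the paper's version does not need; the paper's openness argument is slightly leaner because it never has to keep the bridging path uniformly away from $A$ by a quantitative margin — containment in $X\setminus A$ suffices. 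Both approaches are valid; yours trades a small amount of extra machinery in the closedness step for a genuinely simpler disjointness step.
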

\begin{proof}
First of all, we show that ${\mathcal S}(A)$ is closed by
proving that if $w\not\in {\mathcal S}(A)$ then there exists a
neighborhood $U_w$ of $w$ with $U_w\subseteq X\setminus {\mathcal
S}(A).$ Indeed, if $w\not\in {\mathcal S}(A),$ there is a path
$\gamma:[0,1]\to X$ with $\gamma(0) = w$ and $\gamma(1)=b\in B$
and such that $\gamma(t)\not\in A,$ for every $t\in [0,1].$ Since
$A$ is closed and $X$ is locally arcwise connected, there
exists an arcwise connected set $V_w$ with $w\in V_w\subseteq
X\setminus A.$ Hence, for every $x\in V_w,$ there is a path
$\omega_x$ connecting $x$ to $w$ in $V_w\,.$ As a consequence, we
find that the path $\gamma_x:=\omega_x\star\gamma$ connects $x\in
V_w$ to $b\in B$ with $\overline{\gamma_x}\subseteq X\setminus A.$
Clearly, the neighborhood $U_w:= V_w$ satisfies our
requirement and this proves that $X\setminus {\mathcal S}(A)$ is
open. The same argument ensures that ${\mathcal S}(B)$ is closed.\\
It remains to show that ${\mathcal S}(A)$ and ${\mathcal S}(B)$
are disjoint. Since $A\cap B =\emptyset,$
it follows immediately from the definition of side that
$$A\cap {\mathcal S}(B)= \emptyset,\quad
B\cap {\mathcal S}(A)= \emptyset.$$
Assume by contradiction that there exists
$$x\in {\mathcal S}(A) \cap {\mathcal S}(B),$$
with $x\not\in A\cup B.$
Since $X$ is arcwise connected, there is a path $\gamma: [0,1]\to X$ such that $\gamma(0) = x$
and $\gamma(1) = b\in B.$ The fact that $x\in {\mathcal S}(A)\setminus A$
implies that there is $t_1\in \, ]0,1[\,$ such that $\gamma(t_1) = a_1\in A.$
On the other hand, since
$x\in {\mathcal S}(B)\setminus B,$
there exists $s_1\in \, ]0,t_1[\,$ such that $\gamma(s_1) = b_1\in B.$
Proceeding by induction and using repeatedly the definition of ${\mathcal S}(A)$
and ${\mathcal S}(B),$ we obtain a sequence
$$t_1 > s_1 > t_2 > \dots > t_{j} > s_{j} > t_{j+1} > \dots > 0$$
with $\gamma(t_i) = a_i \in A$ and $\gamma(s_i) = b_i\in B.$
For $t^*= \inf t_n = \inf s_n \in [0,1[\,,$ it holds that
$\gamma(t^*) = \lim a_n = \lim b_n \in A\cap B,$ a contradiction.
This concludes the proof.
\end{proof}

\noindent
Thanks to Lemma \ref{lem-dis}, it is possible to consider the cutting property with the sets ${\mathcal S}(A)$ and ${\mathcal S}(B)$ playing the role of $A$ and $B$ in Definition \ref{def-cut}. In particular, in connection with Lemma \ref{lem-cut}, the following results hold:

\begin{lemma}\label{lem-cuteq}
Let $X$ be a connected and locally arcwise connected metric space and let
$A, B, C\subseteq X$ be closed and nonempty sets with $A\cap
B=\emptyset.$ Then $C$ cuts the arcs between $A$ and $B$ if and
only if $C$ cuts the arcs between ${\mathcal S}(A)$ and ${\mathcal S}(B).$
\end{lemma}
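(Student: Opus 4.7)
The ``if'' direction is immediate: since $A\subseteq{\mathcal S}(A)$ and $B\subseteq{\mathcal S}(B)$, any path joining $A$ to $B$ already joins ${\mathcal S}(A)$ to ${\mathcal S}(B)$, so a set cutting the arcs between the latter two necessarily cuts the arcs between the former. Hence I concentrate on the opposite direction.

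So assume $C$ cuts the arcs between $A$ and $B$, and take a path $\gamma:[0,1]\to X$ with $p:=\gamma(0)\in{\mathcal S}(A)$ and $q:=\gamma(1)\in{\mathcal S}(B)$; the goal is to show $\overline{\gamma}\cap C\neq\emptyset$. The central reduction is to establish that $\overline{\gamma}$ meets both $A$ and $B$. Once this is known, pick $s_0,t_0\in[0,1]$ with $\gamma(s_0)\in A$ and $\gamma(t_0)\in B$; assuming without loss of generality $s_0<t_0$, set $s_1:=\sup\{s\in[s_0,t_0]:\gamma(s)\in A\}$ and $t_1:=\inf\{t\in[s_1,t_0]:\gamma(t)\in B\}$. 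Closedness of $A,B$ and the disjointness $A\cap B=\emptyset$ yield $s_1<t_1$ with $\gamma(s_1)\in A$ and $\gamma(t_1)\in B$, so that the sub-path $\gamma|_{[s_1,t_1]}$ joins $A$ to $B$ and, by hypothesis, meets $C$; in particular $\overline{\gamma}\cap C\neq\emptyset$.

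It remains to show $\overline{\gamma}\cap A\neq\emptyset$ (the argument for $B$ is symmetric). Assume by contradiction that $\overline{\gamma}\cap A=\emptyset$. If $q\in B$, then $\gamma$ itself is a path from $p\in{\mathcal S}(A)$ to $B$ that misses $A$, contradicting the definition of ${\mathcal S}(A)$. Otherwise $q\in{\mathcal S}(B)\setminus B$, and by arcwise connectedness one can pick a path $\eta:[0,1]\to X$ with $\eta(0)=q$ and $\eta(1)=b\in B$. Applying the definition of ${\mathcal S}(A)$ to the concatenation $\gamma\star\eta$ forces $\overline{\eta}\cap A\neq\emptyset$, so $t_A:=\inf\{t\in[0,1]:\eta(t)\in A\}$ is well-defined and satisfies $\eta(t_A)\in A$ by closedness of $A$. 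The sub-path $\eta|_{[0,t_A]}$ then joins $q\in{\mathcal S}(B)$ to $A$, so by the definition of ${\mathcal S}(B)$ and closedness of $B$ one may set $t_B:=\inf\{t\in[0,t_A]:\eta(t)\in B\}$, with $\eta(t_B)\in B$ and $t_B\leq t_A$. Finally, $\gamma\star\eta|_{[0,t_B]}$ is a path from $p\in{\mathcal S}(A)$ to $B$ whose possible intersection with $A$ lies entirely in $\eta([0,t_B])$; but by minimality of $t_A$ no such intersection is allowed for $t<t_A$, forcing $t_B=t_A$ and hence $\eta(t_A)\in A\cap B=\emptyset$, the desired contradiction.

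The only genuine obstacle is the last bookkeeping step, where I must juggle the two infima (``first entry of $\eta$ into $A$'' and ``first entry into $B$'') and use the disjointness of $A$ and $B$, together with their closedness to guarantee the infima are attained, in order to collapse them into a single impossible point. Everything else is an essentially mechanical unpacking of the definitions of ${\mathcal S}(A)$ and ${\mathcal S}(B)$ together with the reduction encoded in the second paragraph.
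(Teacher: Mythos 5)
Your proposal is correct, but it follows a genuinely different route from the paper's argument. The paper proceeds by contradiction from a path $\gamma:[\tfrac12,1]\to X\setminus C$ joining ${\mathcal S}(A)\setminus A$ to ${\mathcal S}(B)\setminus B$, prepends a path $\omega$ from a point $a_0\in A$ to $\gamma(\tfrac12)$, and then iterates the definitions of ${\mathcal S}(A)$ and ${\mathcal S}(B)$ to manufacture an \emph{infinite} alternating sequence $s_1<t_1<s_2<t_2<\cdots<\tfrac12$ of parameters with $\gamma_0(s_i)\in B$ and $\gamma_0(t_i)\in A$, all lying on the auxiliary segment $\omega$; the contradiction then comes from a limit argument, since $\gamma_0(\sup t_n)$ would land in $A\cap B=\emptyset$. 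Your argument, by contrast, first proves the cleaner intermediate claim that any path $\gamma$ from ${\mathcal S}(A)$ to ${\mathcal S}(B)$ must already meet $A$ (and, symmetrically, $B$). You establish that claim by contradiction using exactly two nested first-entry times $t_A\le t_B\le t_A$ on a single auxiliary path $\eta$, and collapse them with the disjointness $A\cap B=\emptyset$; once $\overline{\gamma}$ is known to meet both $A$ and $B$, a subpath joining $A$ to $B$ meets $C$ by the original cutting hypothesis. Your route avoids the paper's infinite recursion and limit step entirely, and it has the bonus of isolating the reusable fact ``a path from ${\mathcal S}(A)$ to ${\mathcal S}(B)$ passes through both $A$ and $B$.'' The paper's version, while longer, is structurally parallel to its proof of Lemma~\ref{lem-dis} (which uses an analogous infinite alternation), so the authors get a certain thematic uniformity; yours is the more economical argument for this particular lemma.

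One small remark on presentation: in your intermediate reduction, the maximal-then-minimal construction of $s_1,t_1$ is more than is needed — once you have $\gamma(s_0)\in A$ and $\gamma(t_0)\in B$, the restriction of $\gamma$ to the interval between $s_0$ and $t_0$ already meets both $A$ and $B$, hence meets $C$ by the hypothesis. The extra bookkeeping does no harm, but it can be deleted.
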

\begin{proof}
One direction of the inference is obvious. In fact, every path joining $A$ to $B$
is also a path joining ${\mathcal S}(A)$ to ${\mathcal S}(B).$ Thus, if
$C$ cuts the arcs between ${\mathcal S}(A)$ and ${\mathcal S}(B),$
then it also cuts the arcs between $A$ and $B.$\\
Conversely, let us assume that $C$ cuts the arcs between $A$ and $B.$
We want to prove that $C$ cuts the arcs between ${\mathcal S}(A)$ and ${\mathcal S}(B).$
By the definition of ${\mathcal S}(A)$ and ${\mathcal S}(B),$ it is
straightforward to check that
$C$ cuts the arcs between $A$ and ${\mathcal S}(B),$ as well as that
it cuts the arcs between ${\mathcal S}(A)$ and $B.$
Suppose by contradiction that there exists a path
$$\gamma: \left[\tfrac 1 2,1\right] \to X\setminus C$$
such that
$\gamma(\tfrac 1 2) = a\in {\mathcal S}(A)\setminus A$ and
$\gamma(1) = b\in {\mathcal S}(B)\setminus B.$
We choose a point $a_0 \in A$ and connect it to $a\in {\mathcal S}(A)$ by a path
$\omega: [0,\tfrac 1 2] \to X$ with $\omega(0) = a_0$ and $\omega(\tfrac 1 2) = a.$
Introduce the new path $\gamma_0:= \omega\star\gamma : [0,1]\to X,$
with $\gamma_0(0) = a_0\in A$ and $\gamma_0(1) = b\in {\mathcal S}(B).$
By the definition of ${\mathcal S}(B),$ there exists $s_1\in \, ]0,1[\,$
such that $\gamma_0(s_1) = b_1\in B.$ Since $B\cap {\mathcal S}(A) = \emptyset,$ it follows that $b_1\not\in {\mathcal S}(A).$
Notice also that $0 < s_1 < \tfrac 1 2:$ indeed,
if $\tfrac 1 2 < s_1 < 1,$ recalling that $\gamma_0(\tfrac 1 2) =
\gamma(\tfrac 1 2) = a \in {\mathcal S}(A)$ and $\gamma_0(s_1) =
\gamma(s_1) = b_1 \in B,$ there must be a ${\tilde{t}}\in [\tfrac 1 2, s_1]$
such that $\gamma({\tilde{t}}\,) \in C,$ in contrast to the assumption on $\gamma.$
The restriction of the path
$\gamma_0$ to the interval $[s_1, \tfrac 1 2]$
defines a path joining $b_1\in B$ to $a\in {\mathcal S}(A).$ Therefore there exists
$t_1\in \,]s_1,\tfrac 1 2[\,$ such that
$\gamma_0(t_1) = a_1\in A.$ The restriction of the path
$\gamma_0$ to the interval $[t_1, 1]$
defines a path joining $a_1\in A$ to $b\in {\mathcal S}(B).$ Hence there exists
$s_2\in \,]t_1, 1[\,$ with
$\gamma_0(s_2) = b_2\in B.$ As above, we can also observe that
$b_2\not\in {\mathcal S}(A)$ and  $t_1 < s_2 < \tfrac 1 2.$
Proceeding by induction, we obtain a sequence
$$s_1 < t_1 < s_2 < \dots < s_{j} < t_{j} < s_{j+1} < \frac 1 2$$
with $\gamma(t_i) = a_i \in A$ and $\gamma(s_i) = b_i\in B.$
For $t^*= \sup t_n = \sup s_n \in \,]0,\tfrac 1 2],$ we have that
$\gamma_0(t^*)= \omega(t^*)= \lim a_n = \lim b_n \in A\cap B,$ a contradiction.
The proof is complete.
\end{proof}

\begin{lemma}\label{lem-cuts}
Let $X$ be a connected and locally arcwise connected metric space and let
$A, B, C\subseteq X$ be closed and nonempty sets with $A\cap
B=\emptyset.$ Then $C$ cuts the arcs between $A$ and $B$ if and
only if there exists a continuous function $f: X\to {\mathbb R}$ such that
\begin{equation}\label{eq-ab2}
f(x)\leq 0,\, \forall\, x\in {\mathcal S}(A),\qquad f(x) \geq 0,\, \forall\, x\in {\mathcal S}(B)
\end{equation}
and
\begin{equation}\label{eq-zs2}
C = \{x\in X: f(x) = 0\}.
\end{equation}
\end{lemma}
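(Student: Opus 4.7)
The plan is to derive Lemma \ref{lem-cuts} essentially for free by combining the two preceding results, Lemma \ref{lem-cuteq} and Lemma \ref{lem-cut}, using Lemma \ref{lem-dis} to verify hypotheses. No new geometric construction should be needed.

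For the ``if'' direction, suppose a continuous $f:X\to\mathbb{R}$ satisfying \eqref{eq-ab2}--\eqref{eq-zs2} exists. Since $A\subseteq\mathcal{S}(A)$ and $B\subseteq\mathcal{S}(B)$ by the very definition of the side of a set, conditions \eqref{eq-ab2}--\eqref{eq-zs2} immediately imply \eqref{eq-ab}--\eqref{eq-zs} of Lemma \ref{lem-cut}. Therefore $C$ cuts the arcs between $A$ and $B$, as required.

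For the ``only if'' direction, suppose $C$ cuts the arcs between $A$ and $B$. By Lemma \ref{lem-cuteq}, $C$ also cuts the arcs between $\mathcal{S}(A)$ and $\mathcal{S}(B)$. Now the idea is to apply Lemma \ref{lem-cut} with $\mathcal{S}(A)$ and $\mathcal{S}(B)$ in place of $A$ and $B$. To do this I need to check that the hypotheses of Lemma \ref{lem-cut} are satisfied for this new triple: $\mathcal{S}(A)$ and $\mathcal{S}(B)$ should be closed, nonempty, and disjoint. Nonemptiness is immediate from $A\subseteq\mathcal{S}(A)$ and $B\subseteq\mathcal{S}(B)$, while closedness and disjointness are precisely the content of Lemma \ref{lem-dis}. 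Lemma \ref{lem-cut} then produces a continuous function $f:X\to\mathbb{R}$ with $f\leq 0$ on $\mathcal{S}(A)$, $f\geq 0$ on $\mathcal{S}(B)$, and $C=\{x\in X:f(x)=0\}$, which is exactly \eqref{eq-ab2}--\eqref{eq-zs2}.

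The proof is thus essentially a two-line bookkeeping argument. There is no real obstacle, since all the machinery (the explicit construction via $\rho\cdot\mu$ of Lemma \ref{lem-cut}, the closedness/disjointness of the sides in Lemma \ref{lem-dis}, and the equivalence of the cutting property with respect to the sides in Lemma \ref{lem-cuteq}) has already been established. The only thing to double-check is that the conventions in Lemma \ref{lem-cut} do not tacitly require $A$ and $B$ to be the ``original'' sets rather than $\mathcal{S}(A),\mathcal{S}(B)$: a quick inspection of its statement confirms it applies to any pair of closed, nonempty, disjoint subsets, so the substitution is legitimate.
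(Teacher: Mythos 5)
Your proof is correct and takes essentially the same approach as the paper: both directions reduce to Lemmas \ref{lem-cut}, \ref{lem-cuteq}, and \ref{lem-dis} in exactly the same way, and the paper merely adds a remark spelling out the explicit form of the resulting function $f$ via \eqref{eq-mu2}.
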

\begin{proof}
Clearly, if there exists a continuous function $f: X\to {\mathbb R}$ satisfying \eqref{eq-ab2} and \eqref{eq-zs2}, then \eqref{eq-ab} and \eqref{eq-zs} hold, too. Hence Lemma \ref{lem-cut}
implies that $C$ cuts the arcs between $A$ and $B.$\\
Conversely, if $C$ cuts the arcs between $A$ and $B,$
by Lemma \ref{lem-cuteq} it follows that
$C$ cuts the arcs between ${\mathcal S}(A)$ and ${\mathcal S}(B).$
Thus we can apply Lemma \ref{lem-cut} with respect to the triple
$({\mathcal S}(A),{\mathcal S}(B),C).$ In particular, the function
$f$ is defined as in \eqref{eq-frm},
with $\rho$ like in \eqref{eq-ro} and $\mu: X \to \{-1,0,1\}$ as follows:
\begin{equation}\label{eq-mu2}
\mu(x):=
\quad
\left
\{
\begin{array}{ll}
~\,0&\; \mbox{ if } \, x\in C,\\
\!-1 &\; \mbox{ if } \, x\not\in C \;
\mbox{and $\exists$ a path $\gamma_x:[0,1]\to X\setminus C$} \\
~\quad &\mbox{
\,such that $\gamma_x(0)\in {\mathcal S}(A)$ and $\gamma_x(1) = x,$}\\
~\,1&\; \mbox{ elsewhere.}\\
\end{array}
\right.
\end{equation}
\end{proof}

\begin{figure}[ht]
{
\includegraphics[width=2.6in,height=2in]{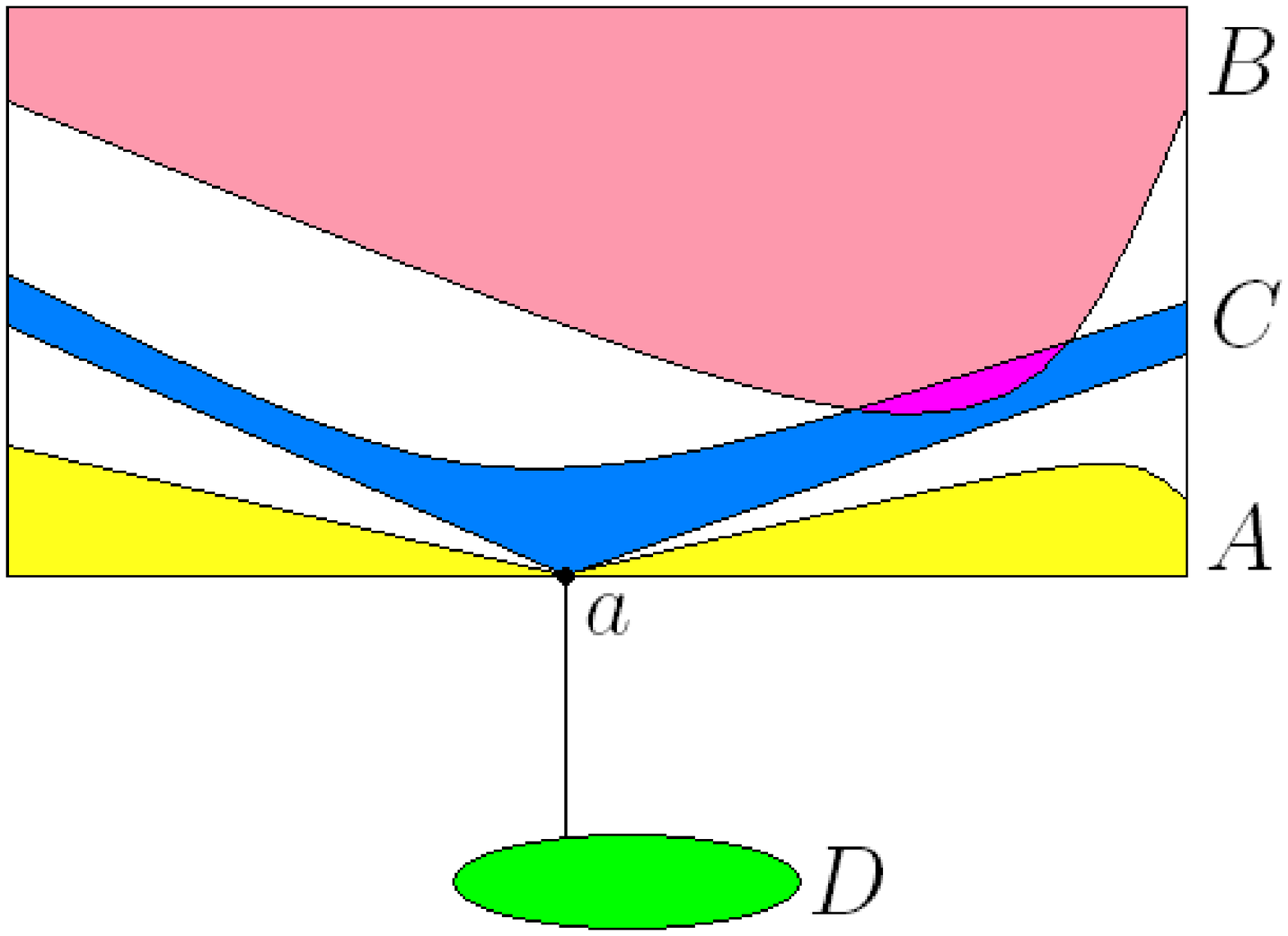}
\includegraphics[width=2.6in,height=2in]{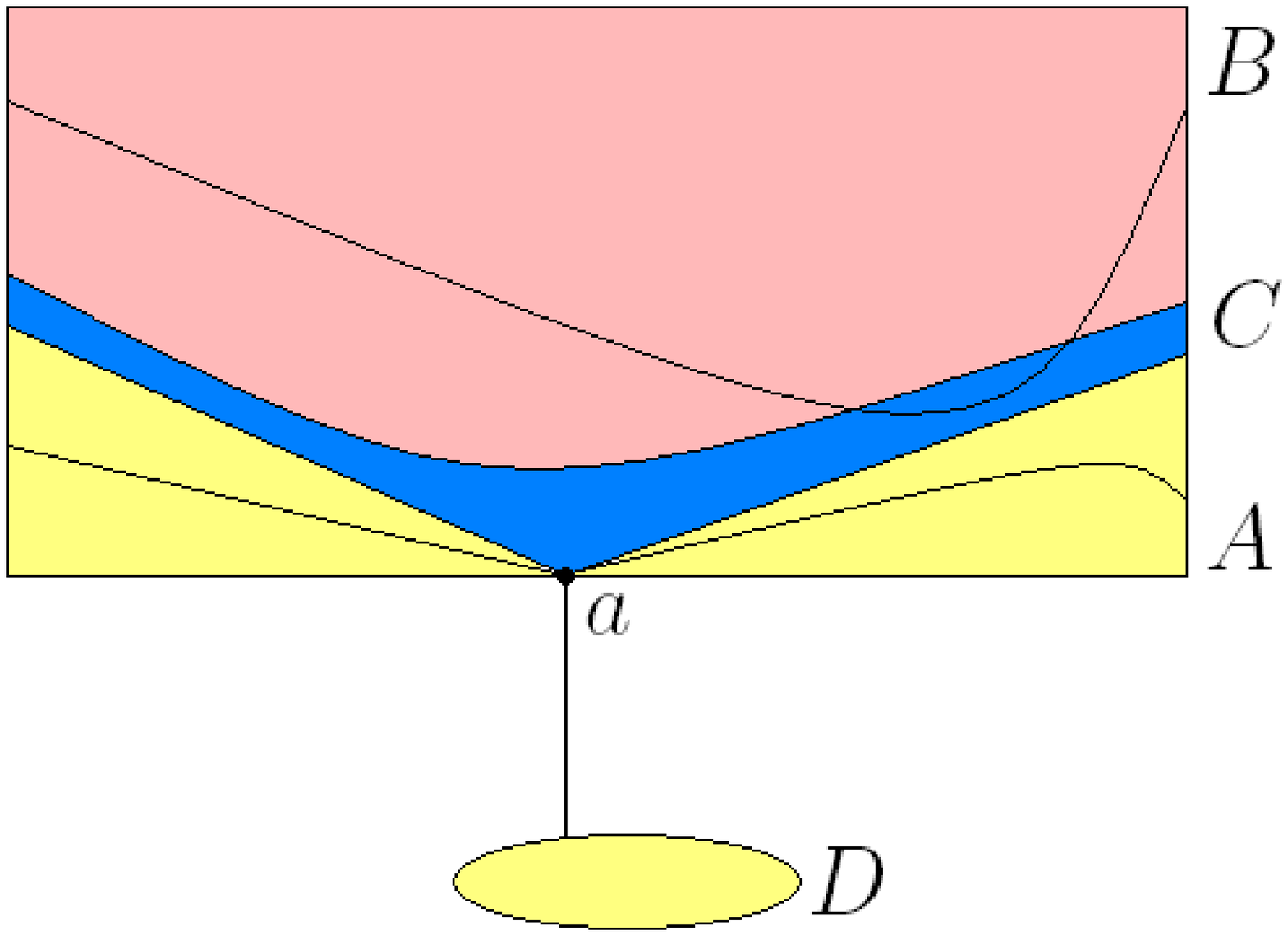}
}
\caption{\footnotesize {The picture on the left-hand side gives an illustration
of the framework described in Definition \ref{def-cut}.
The space $X$ is the figure itself as a subset of the plane and the set $C$ (darker region) cuts the arcs between $A$
and $B.$
We allow a nonempty intersection between $B$ and $C$
as well as between $A$ and $C$ (the singleton $\{a\}$).
Notice that the only manner to connect with a path
the points of $A$ to the points of the ``appendix'' $D$ is passing through the point $a\in A\cap C.$
Therefore on the set $D$ the map $f$ in Lemma \ref{lem-cut} with $\mu$ as in \eqref{eq-mu} would be positive, while it is
negative on $A\setminus C.$
The picture on the right-hand side provides an interpretation of Lemma \ref{lem-cuts}.
In regard to a function $f$ having its factor $\mu$
defined like in \eqref{eq-mu2}, we have painted
with a light color (yellow, in the colored version) the points where $f<0$ and in grey (pink) the points where
$f > 0.$ Notice that the region $D$ has been painted in light color, because
$D\subseteq {\mathcal S}(A).$
}}
\label{fig-si}
\end{figure}

So far we have considered only the case of paths connecting two disjoint sets $A$ and $B.$
This choice is motivated
by the foregoing examples for subsets of $N$-dimensional spaces.
For the sake of completeness we end this subsection by discussing
the framework in which $A$ and $B$ are joined by a continuum. We confine ourselves to the
following lemma which will find an application in Theorem \ref{th-fpn}.

\begin{lemma}\label{lem-appr}
Let $X$ be a connected and locally arcwise connected metric space and let
$A, B\subseteq X$ be closed and nonempty sets with $A\cap
B=\emptyset.$ Let $\Gamma\subseteq X$ be a compact connected set
such that
$$\Gamma\cap A\ne\emptyset,\quad \Gamma\cap B\ne\emptyset.$$
Then, for every $\varepsilon > 0,$ there exists a path
$\gamma=\gamma_{\varepsilon}\,: [0,1]\to X$ with
$\gamma(0)\in A,$ $\gamma(1)\in B$ and
$${\overline{\gamma}} \subseteq B(\Gamma,\varepsilon).$$
Moreover, if $X$ is locally compact and $C\subseteq X$
is a closed set which cuts the arcs between $A$ and $B,$
then
$$\Gamma\cap C\ne\emptyset.$$
\end{lemma}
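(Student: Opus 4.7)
The plan is to establish the two assertions in turn, the approximation claim first and the intersection claim as a consequence.

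For the approximation statement, the idea is to build an open, arcwise connected neighborhood $U$ of $\Gamma$ lying inside $B(\Gamma,\varepsilon)$, and then join a point of $A\cap\Gamma$ to a point of $B\cap\Gamma$ by a path inside $U$. Concretely, for each $p\in\Gamma$ choose a radius $r_p\in\,]0,\varepsilon[\,$ and, using the local arcwise connectedness of $X$, an open arcwise connected neighborhood $U_p\subseteq B(p,r_p)\subseteq B(\Gamma,\varepsilon)$. By compactness of $\Gamma$ extract a finite subcover $U_{p_1},\dots,U_{p_n}$ and set $U:=\bigcup_{i=1}^n U_{p_i}$. Each $U_{p_i}$ is connected and meets the connected set $\Gamma$, so their union $U$ is connected; being an open subset of $X$, the set $U$ inherits local arcwise connectedness, and hence is arcwise connected by the standard fact quoted at the beginning of the subsection. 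Picking $a\in A\cap\Gamma\subseteq U$ and $b\in B\cap\Gamma\subseteq U$, I obtain a path $\gamma=\gamma_\varepsilon:[0,1]\to U\subseteq B(\Gamma,\varepsilon)$ with $\gamma(0)=a\in A$ and $\gamma(1)=b\in B$, which is what is claimed.

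For the second statement, I would argue by contradiction. Suppose $\Gamma\cap C=\emptyset$. Since $\Gamma$ is compact, $C$ is closed, and the two are disjoint, the continuous function $p\mapsto \mathrm{dist}(p,C)$ attains a positive minimum on $\Gamma$, so $\varepsilon_0:=\mathrm{dist}(\Gamma,C)>0$. Applying the first part of the lemma with $\varepsilon=\varepsilon_0$ yields a path $\gamma$ joining $A$ to $B$ with $\overline{\gamma}\subseteq B(\Gamma,\varepsilon_0)$. Every point of $B(\Gamma,\varepsilon_0)$ has distance strictly less than $\varepsilon_0$ from $\Gamma$, so it cannot belong to $C$; thus $\overline{\gamma}\cap C=\emptyset$, contradicting the hypothesis that $C$ cuts the arcs between $A$ and $B$.

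The main technical point is the arcwise connectedness of the open neighborhood $U$: once that is secured, both parts are short. I expect local compactness to play no role in my argument, since the compactness of $\Gamma$ alone suffices to guarantee $\mathrm{dist}(\Gamma,C)>0$ against the closed set $C$; the author may be including that hypothesis to have uniformly relatively compact neighborhoods $U_p$ available or to align with the setting used in subsequent applications, but the reasoning above goes through without it.
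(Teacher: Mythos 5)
Your proof is correct. For the first part, your route is essentially the paper's in spirit (cover $\Gamma$ by finitely many small arcwise-connected sets, use connectedness of $\Gamma$ to see the union is connected, then join $a\in A\cap\Gamma$ to $b\in B\cap\Gamma$), though you package the chaining step by invoking the quoted theorem that a connected, locally arcwise connected metric space is arcwise connected, whereas the paper carries out the chain argument explicitly by rearranging the covering balls so consecutive ones overlap; the two are interchangeable. One minor technical wrinkle: the paper's stated definition of local arcwise connectedness speaks of \emph{neighborhoods}, not open neighborhoods, so to claim the $U_p$ can be taken open (and hence $U$ is an open subspace inheriting local arcwise connectedness) you are implicitly using the standard fact that in a locally arcwise connected space the arc-components of open sets are open; it would be worth a sentence.

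For the second part your argument is genuinely different from, and cleaner than, the paper's. The paper assumes local compactness in order to manufacture a compact set $\overline{B(\Gamma,\varepsilon_0)}$, produces a sequence $c_n\in C\cap B(\Gamma,1/n)$ from the cutting property, extracts a convergent subsequence, and shows the limit lies in $\Gamma\cap C$. You instead argue by contradiction: if $\Gamma\cap C=\emptyset$ then $\varepsilon_0:=\mathrm{dist}(\Gamma,C)>0$ because $\Gamma$ is compact and $C$ is closed, and then the path from the first part with $\varepsilon=\varepsilon_0$ lies entirely in $B(\Gamma,\varepsilon_0)$, which misses $C$, contradicting the cutting hypothesis. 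This is shorter, avoids the diagonal/limiting argument, and — as you rightly observe — does not use local compactness at all, so your proof actually establishes the conclusion under weaker hypotheses than the statement advertises. That strengthening is real and correct; the local compactness assumption in the lemma appears to be dispensable.
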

\begin{proof}
Let $\varepsilon > 0$ be fixed and consider, for every $p\in \Gamma,$
a radius $\delta_p\in\, ]0,\varepsilon[$ such that any two points
in $B(p,\delta_p)$ can be joined by a path in $B(p,\varepsilon).$
Since $\Gamma$ is compact, we can find a finite number of points
$$p_1\,,p_2\,,\dots, p_k\,\in \Gamma,$$
such that
$$\Gamma\subseteq
{\mathcal B}:=\bigcup_{i=1}^k B(p_i,\delta_i)
\subseteq B(\Gamma,\varepsilon), \; \mbox{ where } \;\delta_i:= \delta_{p_i}\,.$$
By the connectedness of $\Gamma,$
for any partition of $\{1,\dots,k\}$ into two nonempty disjoint subsets $J_1$
and $J_2\,,$ there exist $i\in J_1$ and $j\in J_2$ such that
$B(p_i,\delta_i)\cap B(p_j,\delta_j)\ne\emptyset.$
This, in turns, implies that we can rearrange the $p_i$'s (possibly changing their
order in the labelling) so that
$$B(p_i,\delta_i)\cap B(p_{i+1},\delta_{i+1})\ne\emptyset,\quad\forall\, i=1,\dots, k-1.$$
Hence we can conclude that for any pair of points $w,z\in
{\mathcal B},$ with $w\ne z,$ there is a path $\gamma=
\gamma_{w,z}$ joining $w$ with $z$ and such that
$\overline{\gamma}\subseteq B(\Gamma,\varepsilon).$ In particular,
taking $a\in A\cap \Gamma$ and $b\in B\cap \Gamma,$ we have that
there exists a path $\gamma = \gamma_{\varepsilon}\,: [0,1]\to
B(\Gamma,\varepsilon),$ with $\gamma(0) = a$ and $\gamma(1) = b$
and this proves the first part of the statement.\\
Assume now that $X$ is locally compact (i.e., for any $p\in X$ and
$\eta >0,$ there exists $0<\mu_p \leq\eta$ such that
${\overline{B(p,\mu_p)}}$ is compact). By the compactness of
$\Gamma$ we can find a finite number of points
$q_1\,,q_2\,,\dots, q_{l}\,\in \Gamma$
and corresponding radii $\mu_i := \mu_{q_i}$ such that
$$\Gamma\subseteq
{\mathcal A}:=\bigcup_{i=1}^l B(q_i,\mu_i)$$
and
${\overline {\mathcal A}} = \bigcup_{i=1}^l {\overline{B(q_i,\mu_i)}}$
is compact. Since ${\mathcal A}$ is an open neighborhood of the compact set $\Gamma,$
there exists $\varepsilon_0 > 0$ such that
$B(\Gamma,\varepsilon_0) \subseteq {\mathcal A}.$ Hence, for each
$0 < \varepsilon \leq \varepsilon_0$ we have that the set
${\overline{B(\Gamma,\varepsilon)}}$ is compact. Taking $\varepsilon=\frac 1 n,$ we know that for every $n\in\mathbb N_0$ there exists a path $\gamma_{n}\,: [0,1]\to X,$ with $\gamma_n(0)\in A,$ $\gamma_n(1)\in B$ and $\overline{\gamma_n}
\subseteq B(\Gamma,\tfrac 1 n).$ But, since $C$ cuts the arcs
between $A$ and $B,$ it follows that for every $n\in \mathbb N_0,$
there is a point $c_n\in C\cap B(\Gamma,\frac 1 n).$
For $n\geq \hat n$ large enough ($\hat n > 1/\varepsilon_0$),
the sequence $(c_n)_{n\geq\hat n}$ is
contained in the compact set ${\overline{B(\Gamma,\varepsilon_0)}}$
and therefore it admits a
converging subsequence
$c_{n_k}\to c^{*}\in {\overline{B(\Gamma,\varepsilon_0)}}.$
Since $d_X(c_{n_k},\Gamma)<\frac 1 {n_k}$ and
the sets $C,\,\Gamma$ are closed, the limit point $c^{*}\in
\Gamma\cap C.$ This concludes the proof.
\end{proof}

\subsection{The Intersection Lemma}\label{sub-hw}

We continue our exposition by presenting some
applications of the topological lemmas just proved to the
intersection of generalized surfaces which
separate the opposite edges of an $N$-dimensional cube. Such
generalized surfaces (cf. Definition \ref{def-gr}) will be described as zero-sets of continuous
scalar functions and therefore a nonempty intersection will be
obtained as a zero of a suitably defined vector field. To this
aim, we recall a classical result about the existence of zeros for
continuous maps in ${\mathbb R}^N,$ the Poincar\'{e}-Miranda
Theorem\index{Poincar\'{e}-Miranda
Theorem}.

\begin{theorem}\label{th-pm}
Let $I^N:= [0,1]^N$ be the $N$-dimensional unit cube of ${\mathbb R}^N,$
for which we denote by
$[x_i = k]:= \{x=(x_1,\dots,x_N)\in I^N: \, x_i = k\}.$
Let
$F = (F_1,\dots, F_N): I^N\to {\mathbb R}^N$
be a continuous mapping such that, for each $i\in\{1,\dots,N\},$
$$F_i(x)\leq 0,\;\forall\, x\in [x_i = 0]\,\mbox{ and }\;
F_i(x)\geq 0,\;\forall\, x\in [x_i = 1]$$
or
$$F_i(x)\geq 0,\;\forall\, x\in [x_i = 0]\,\mbox{ and }\;
F_i(x)\leq 0,\;\forall\, x\in [x_i = 1].$$
Then there exists $\bar{x}\in I^N$ such that $F(\bar{x}) = 0.$
\end{theorem}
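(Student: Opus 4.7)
The plan is to reduce Poincar\'{e}-Miranda to Brouwer's fixed point theorem in dimension $N$, mirroring the strategy used in the first proof of Lemma \ref{lem-ls}. First I would normalize the sign conventions: by replacing, if necessary, $F_i$ with $-F_i$ (which leaves the zero-set unchanged), I can assume that for every $i=1,\dots,N$,
$$F_i(x)\leq 0,\ \forall\, x\in [x_i=0],\qquad F_i(x)\geq 0,\ \forall\, x\in [x_i=1].$$

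Next, using the scalar projection $P_{[0,1]}(s):=\max\{0,\min\{s,1\}\}$ as in \eqref{eq-pr}, I would define the continuous self-map $\phi : I^N\to I^N$ by
$$\phi(x):=\bigl(P_{[0,1]}(x_1-F_1(x)),\,\dots,\,P_{[0,1]}(x_N-F_N(x))\bigr).$$
Since $I^N$ is homeomorphic to a closed ball of $\mathbb{R}^N$, Brouwer's fixed point theorem produces $\bar x\in I^N$ with $\phi(\bar x)=\bar x$, that is, $\bar x_i=P_{[0,1]}(\bar x_i-F_i(\bar x))$ for every $i$.

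The key step is then to show, componentwise, that $\bar x_i-F_i(\bar x)\in [0,1]$, whence $\bar x_i-F_i(\bar x)=\bar x_i$ and thus $F_i(\bar x)=0$. This is a verbatim copy of the argument used in \eqref{eq-cd}: if $\bar x_i-F_i(\bar x)<0$, then $P_{[0,1]}(\bar x_i-F_i(\bar x))=0$, so $\bar x_i=0$, hence $\bar x\in [x_i=0]$ and the sign assumption gives $F_i(\bar x)\leq 0$, so $\bar x_i-F_i(\bar x)\geq 0$, a contradiction; a symmetric argument rules out $\bar x_i-F_i(\bar x)>1$. Running this for every coordinate $i$ yields $F(\bar x)=0$.

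The only mildly delicate point is the uniform sign-normalization, but this is bookkeeping rather than a real obstacle. If one wanted to avoid the case analysis altogether and stay closer to the cutting-surface philosophy of this section, an alternative would be the inductive route: by Lemma \ref{lem-cuts}, each zero set $F_i^{-1}(0)\cap I^N$ contains a cutting set between the faces $[x_i=0]$ and $[x_i=1]$, and one could try to intersect these $N$ cutting surfaces via the Hurewicz-Wallman style Intersection Lemma mentioned in the introduction to Chapter \ref{ch-me}. This second route is conceptually more in line with the rest of Section \ref{sec-nd}, but it requires a genuinely $N$-dimensional intersection result, whereas the Brouwer-projection proof above is entirely self-contained once Theorem \ref{th-br} is accepted in dimension $N$.
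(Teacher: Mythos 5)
Your Brouwer-projection argument is correct and complete, and it is essentially the standard reduction of Poincar\'e--Miranda to Brouwer's fixed point theorem. Be aware, though, that the paper itself does not prove Theorem~\ref{th-pm}: it is stated as a classical result (attributed to Miranda~\cite{Mi-40}) and then invoked as a known tool, so there is no proof in the paper to compare against. That said, your strategy is exactly the one the paper applies, with only one ``nontrivial'' coordinate, in the first proof of Lemma~\ref{lem-ls}: clamp with the projection $P_{[0,1]}$, extract a Brouwer fixed point of the clamped map, and use the sign conditions on the faces to rule out truncation at either endpoint. The sign-normalization (replacing $F_i$ by $-F_i$ for the ``or'' case) is harmless, and you correctly flag that you need Brouwer in dimension $N$ rather than the two-dimensional Theorem~\ref{th-br}.

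One concrete warning about the alternative you sketch: within this paper the Intersection Lemma (Theorem~\ref{th-hw}) is \emph{proved by applying} Theorem~\ref{th-pm}, and the same is true of Lemma~\ref{lem-es}, Theorem~\ref{th-invh} and Corollary~\ref{cor-jo}, which all sit downstream of it. Deducing Poincar\'e--Miranda from any of these would therefore be circular in the paper's logical architecture; a cutting-surface proof requires obtaining the intersection statement by some independent route. The paper does exactly this in dimension two at the end of Subsection~\ref{sub-cl}, sketching Poincar\'e--Miranda from the Crossing Lemma~\ref{lem-cr}, which in turn rests on Whyburn's Lemma and Brouwer rather than on Theorem~\ref{th-pm}. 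So your primary proof is the right self-contained choice here; the alternative would need a genuinely independent $N$-dimensional intersection result to avoid circularity.
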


\medskip

\noindent
Let us introduce now the spaces we are going to consider.

\begin{definition}\label{def-gr}
{\rm{
Let $Z$ be a metric space and
$$h: {\mathbb R}^N\supseteq I^N \to X\subseteq Z$$
be a homeomorphism of $I^N:=[0,1]^N$ onto its image $X.$
We call the pair
$${\widehat{X}}:= ({X},h)$$
a \textit{generalized $N$-dimensional rectangle of $Z$}\index{rectangle! generalized $N$-dim.}.
We also set
$${X}_{i}^{\ell}:= h([x_i = 0]),\quad
{X}_{i}^{r}:= h([x_i = 1])$$ and name them the \textit{left} and
the \textit{right} $i$-faces of $X.$
\\
Finally, we define
$$\vartheta X:= h\left(\partial I^N\right)$$
and call it the \textit{contour}\index{contour} of $X.$}}
\end{definition}
\noindent
As it is immediate to see, the generalized $N$-dimensional rectangles
are a natural extension to the higher dimension of the generalized rectangles in Section \ref{sec-sap}. In a similar
way also the concept of oriented rectangle will be transposed to the $N$-dimensional framework (cf. Definition \ref{def-or}).

\medskip

\noindent
Our main result on the intersection of generalized $N$-dimensional rectangles is Theorem \ref{th-hw} below, which can be considered as a variant of
Hurewicz-Wallman Lemma about dimension \cite{HuWa-41}. The statements of the two results are in fact very similar, but the lemma in \cite{HuWa-41} concerns, instead of our concept of cutting, the stronger notion of separation and requires the sets  $A, B, C$ in Definition \ref{def-cut} to be pairwise disjoint (see \cite{Ku-68}). Furthermore, with reference to Definition \ref{def-gr}, in the statement of Hurewicz-Wallman Lemma only the very special case in which $Z= {\mathbb R}^N,$ $X= I^N$ and $h= {\mbox {Id}}_{{\mathbb R}^N}$ is considered.
For such reasons we have chosen to provide all the details.

\begin{theorem}[Intersection Lemma]\index{Hurewicz-Wallman Lemma}\label{th-hw}
Let ${\widehat{X}}:= ({X},h)$ be a generalized $N$-dimensional rectangle of a metric space $Z.$
Assume that, $\forall i\in\{1,\dots,N\},$ there exists a compact set
${\mathcal S}_i\subseteq {X}$
that cuts the arcs between ${X}_{i}^{\ell}$
and ${X}_{i}^{r}$ in $X.$ Then
$$\bigcap_{i=1}^N {\mathcal S}_i \ne\emptyset.$$
\end{theorem}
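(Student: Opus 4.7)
The plan is to reduce the problem to the standard cube $I^N$ via the homeomorphism $h$, convert each cutting hypothesis into a sign condition on a continuous scalar function using Lemma \ref{lem-cut}, and then apply the Poincar\'e-Miranda Theorem (Theorem \ref{th-pm}) to locate a common zero of the resulting vector field.

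First I would observe that $X = h(I^N)$ inherits from $I^N$ the properties of compactness, connectedness, and local arcwise connectedness, so Lemma \ref{lem-cut} applies in the ambient space $X$. For every fixed $i$, the faces $X_i^\ell$ and $X_i^r$ are nonempty disjoint closed subsets of $X$, and $\mathcal S_i$ is closed (being compact) and nonempty (otherwise no path would connect $X_i^\ell$ to $X_i^r$ in $X$, contradicting the fact that $X$ is homeomorphic to $I^N$). Applying Lemma \ref{lem-cut} to the triple $(X_i^\ell, X_i^r, \mathcal S_i)$, I obtain a continuous function $f_i: X \to \mathbb R$ such that
\[
f_i \leq 0 \text{ on } X_i^\ell, \qquad f_i \geq 0 \text{ on } X_i^r, \qquad \mathcal S_i = \{x \in X : f_i(x) = 0\}.
\]

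Next I pull back via $h$ by setting $g_i := f_i \circ h: I^N \to \mathbb R$ for $i = 1, \dots, N$. Since $h([x_i = 0]) = X_i^\ell$ and $h([x_i = 1]) = X_i^r$, the sign conditions become
\[
g_i(x) \leq 0 \text{ for } x \in [x_i = 0], \qquad g_i(x) \geq 0 \text{ for } x \in [x_i = 1].
\]
The continuous map $F := (g_1, \dots, g_N): I^N \to \mathbb R^N$ then satisfies the boundary hypotheses of Theorem \ref{th-pm}, which yields a point $\bar x \in I^N$ with $F(\bar x) = 0$, i.e.\ $g_i(\bar x) = 0$ for every $i$. Setting $\bar z := h(\bar x) \in X$, this means $f_i(\bar z) = 0$ and hence $\bar z \in \mathcal S_i$ for every $i$, which is the desired conclusion.

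The argument is largely assembly: Lemma \ref{lem-cut} does the work of turning a topological cutting condition into a scalar sign condition, and Poincar\'e-Miranda closes the loop. The only delicate point to watch is that the cutting hypothesis is stated intrinsically for paths lying in $X$ (not in the ambient $Z$), so I must apply Lemma \ref{lem-cut} with $X$ itself as the base space; since $X$ is connected and locally arcwise connected as noted above, no additional hypothesis on $Z$ is needed. I expect this verification of the ambient-space assumptions to be the only real item requiring care.
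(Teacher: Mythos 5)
Your proposal is correct and takes essentially the same approach as the paper: reduce to $I^N$ via $h$, invoke Lemma \ref{lem-cut} to turn each cutting hypothesis into a signed zero-set, and conclude with Poincar\'e--Miranda. The only cosmetic difference is that the paper first transports the sets $\mathcal S_i$ into $I^N$ via $h^{-1}$ and applies Lemma \ref{lem-cut} there, while you apply Lemma \ref{lem-cut} directly in $X$ (correctly noting $X$ is connected and locally arcwise connected, being homeomorphic to $I^N$) and then pull the scalar functions back via $h$ --- these are interchangeable steps yielding the same vector field on $I^N$.
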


\begin{proof}
Through the inverse of the homeomorphism
$h: {\mathbb R}^N\supseteq I^N \to X\subseteq Z$
we can define the compact sets
$$C_i:= h^{-1}({\mathcal S}_i),$$
which cut the arcs between
$[x_i=0]$ and $[x_i=1]$ in $I^N$ (for $i=1,\dots, N$).
Clearly, it is sufficient to prove that
$$\bigcap_{i=1}^N C_i \ne\emptyset.$$
By Lemma \ref{lem-cut}, for every $i=1,\dots,N,$ there exists
a continuous function $f_i: I^N\to {\mathbb R}$ such that
$f_i \leq 0$ on $[x_i =0]$ and $f_i \geq 0$ on $[x_i=1].$ Moreover,
$$C_i =\left\{x\in I^N:\, f_i(x) = 0\right\}.$$
The continuous vector field
$f^{^{^{{\!\!\!\!\rightarrow}}}}:= (f_1,\dots,f_N):I^N\to {\mathbb R}^N$
satisfies the assumptions of Poincar\'{e}-Miranda Theorem \ref{th-pm} and therefore
there exists ${\bar{x}}\in I^N$ such that
$$f_i({\bar{x}})=0,\;\;\forall\, i=1,\dots,N.$$
Hence ${\bar{x}}\in \bigcap_{i=1}^N C_i$ and the proof is complete.
\end{proof}

\noindent
We recall that the previous result was applied in \cite{PiZa-07} in order to extend
some recent theorems about fixed points and periodic points for
continuous mappings in Euclidean spaces. In particular, in \cite[Corollary 3.5]{PiZa-07}
we generalized, via a simplified proof, a theorem by Kampen \cite{Ka-00},
while in \cite[Theorem 3.8]{PiZa-07}
we obtained an extension of a result by Zgliczy\'nski \cite{Zg-01}
about periodic points associated to Markov partitions.

\subsection{Zero sets of maps depending on parameters}\label{sub-cont}

As a next step we deal with the intersection of generalized surfaces
which separate the opposite edges of an $N$-dimensional cube,
in the case that the number of
cutting surfaces is smaller than the dimension of the space.
Our main tool is a result by
Fitzpatrick, Massab\'{o} and  Pejsachowicz (see \cite[Theorem 1.1]{FiMaPe-86})
on the covering dimension of the
zero set of an operator depending on parameters.
For the reader's convenience, we recall the concept of covering dimension \cite{En-78} in Definition \ref{def-dim} below, where by {\textit{order}} of a family ${\mathfrak A}$
of subsets of the metric space $Z$
we mean the largest integer $n$ such that the family ${\mathfrak A}$ contains
$n+1$ sets with a non-empty intersection; if no such integer exists,
the family ${\mathfrak A}$ has order infinity.

\begin{definition}\label{def-dim}
{\rm{\cite[p.54, p.208]{En-78}
Let $Z$ be a metric space. We say that $\mbox{\rm dim \!} Z \leq n$
if every finite open cover of the space $Z$ has a finite open [closed]
refinement of order $\leq n.$
The object $\mbox{\rm dim \!} Z\in {\mathbb N}\cup\{\infty\}$
is called the \textit{covering dimension} or the
{\textit{\v Cech-Lebesgue dimension}}\index{dimension, covering} of the metric space $Z.$
According to \cite{FiMaPe-86}, if $z_0\in Z,$ we also say that
{\textit{$\mbox{\rm dim \!} Z \geq j$ at $z_0$}} if each neighborhood
of $z_0$ has dimension at least $j.$}}
\end{definition}

\noindent
By a classical result from topology (cf. \cite[The coincidence theorem]{En-78}),
in separable metric spaces
the covering dimension coincides with the {inductive dimension}
\cite[p.3]{En-78}.

\smallskip

\noindent
In view of the following results we also recall that,
given an open bounded set ${\mathcal O}\subseteq {\mathbb R}^N$ and $n\in \{1,\dots, N-1\},$ a continuous map
$\pi: {\overline{\mathcal O}}\to {\mathbb R}^{N-n}$ is a \textit{complement}\index{complementing map} for the continuous map
$F: {\overline{\mathcal O}}\to {\mathbb R}^{n}$ if
the topological degree
$\mbox{\rm deg}((\pi,F),{\mathcal O},0)$ is defined and nonzero \cite{FiMaPe-86}.
According to \cite{HoYo-61}, a mapping $f$ of a space $X$ into a space $Y$ is
said to be \textit{inessential} if $f$ is homotopic to a constant; otherwise $f$ is
\textit{essential}\index{essential map}.\\
At last we introduce a further notation. Given
 an $N$-dimensional rectangle ${\mathcal R}:= \prod_{i=1}^N [a_i,b_i],$ we
denote its opposite $i$-faces by
$${\mathcal R}^{\ell}_i := \{x\in {\mathcal R}: \, x_i = a_i\} \quad \mbox{and} \quad
{\mathcal R}^{r}_i := \{x\in {\mathcal R}: \, x_i = b_i\}\,.$$

\begin{theorem}\label{th-es}
Let ${\mathcal R}:= \prod_{i=1}^N [a_i,b_i]$ be an $N$-dimensional
rectangle and let $P = (p_1,\dots,p_N)$ be any interior point of
${\mathcal R}.$ Let $n\in \{1,\dots, N-1\}$ be fixed. Suppose that
$F = (F_1,\dots, F_n): {\mathcal R}\to {\mathbb R}^n$ is a
continuous mapping such that, for each $i\in\{1,\dots,n\},$
$$F_i(x) < 0,\;\forall\, x\in {\mathcal R}^{\ell}_i\quad\mbox{and }\;\,
F_i(x) > 0,\;\forall\, x\in {\mathcal R}^{r}_i$$
or
$$F_i(x) > 0,\;\forall\, x\in {\mathcal R}^{\ell}_i\quad\mbox{and }\;\,
F_i(x) < 0,\;\forall\, x\in {\mathcal R}^{r}_i\,.$$ Define also
the affine map
\begin{equation}\label{eq-prt}
\pi: {\mathbb R}^N\to {\mathbb R}^{N-n},\quad
\pi_j(x_1,\dots,x_{N}):= x_j - p_j\,,\; j = n + 1,\dots, N.
\end{equation}
Then there exists a connected subset ${\mathcal Z}$ of
$$F^{-1}(0) = \{x\in {\mathcal R}:\, F_i(x) =0,\,\forall\, i=1,\dots,n\}$$
whose dimension at each point is at least
$N-n.$
Moreover,
$$\mbox{\rm dim}({\mathcal Z}\cap\partial {\mathcal R})\geq N - n - 1$$
and
$$\pi: {\mathcal Z}\cap \partial {\mathcal R} \to {\mathbb R}^{N-n}\setminus\{0\}$$
is essential.
\end{theorem}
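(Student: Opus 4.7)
The plan is to reduce the statement to Theorem 1.1 of Fitzpatrick--Massab\'o--Pejsachowicz \cite{FiMaPe-86}, which produces exactly the three conclusions (a connected set in the zero-set with local dimension at least $N-n$, the boundary dimension estimate, and the essentiality of the complementing map on the boundary) provided that $\pi$ is a \emph{complement} for $F$ in the sense recalled just before the theorem. So the entire work reduces to verifying
\begin{equation*}
\deg\bigl((\pi,F),\,\mbox{Int}\,{\mathcal R},\,0\bigr)\neq 0.
\end{equation*}

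To check this, I would introduce the auxiliary continuous mapping
\begin{equation*}
G:=(F_{1},\dots,F_{n},\,x_{n+1}-p_{n+1},\dots,x_{N}-p_{N}):{\mathcal R}\to{\mathbb R}^{N},
\end{equation*}
so that $G$ is (up to reordering of coordinates) exactly the map $(\pi,F)$. Its non-vanishing on $\partial{\mathcal R}$ is immediate: on each face ${\mathcal R}^{\ell}_{i}$ or ${\mathcal R}^{r}_{i}$ with $i\leq n$, the $i$-th component has a strict sign by assumption; on each face with index $j>n$, the affine component $x_{j}-p_{j}$ is nonzero because $P$ is interior to ${\mathcal R}$. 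Then I would compute the degree by the straight-line homotopy
\begin{equation*}
H(x,t):=(1-t)\,G(x)+t\,(x-P),\qquad t\in[0,1].
\end{equation*}
On a face of type $[x_{i}=a_{i}]$ with $i\leq n$, the $i$-th component of $H$ equals $(1-t)F_{i}(x)+t(a_{i}-p_{i})$, a convex combination of two numbers of the same (strict) sign, hence nonzero; the case $[x_{i}=b_{i}]$ is symmetric. On a face with index $j>n$, the $j$-th component is simply $x_{j}-p_{j}\neq 0$ throughout the homotopy. Thus $H(\cdot,t)$ is admissible, and by homotopy invariance
\begin{equation*}
\deg(G,\mbox{Int}\,{\mathcal R},0)=\deg(\mbox{Id}-P,\mbox{Int}\,{\mathcal R},0)=1,
\end{equation*}
(up to the sign produced by permuting the components to order $(\pi,F)$).

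With $\pi$ thus confirmed to be a complement for $F$, an application of \cite[Theorem 1.1]{FiMaPe-86} to the open set $\mathcal O=\mbox{Int}\,{\mathcal R}$ delivers a connected subset ${\mathcal Z}\subseteq F^{-1}(0)$ having dimension at least $N-n$ at every one of its points, satisfying $\dim({\mathcal Z}\cap\partial{\mathcal R})\geq N-n-1$, and such that the restriction $\pi:{\mathcal Z}\cap\partial{\mathcal R}\to{\mathbb R}^{N-n}\setminus\{0\}$ is essential. The only genuinely delicate step is the face-by-face verification that the homotopy $H$ avoids the origin; this is where the strict sign inequalities in the hypotheses are used in an essential way, and it is also why the theorem is stated with strict (rather than weak) inequalities, in contrast to the Poincar\'e--Miranda Theorem \ref{th-pm}.
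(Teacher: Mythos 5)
Your overall route is exactly the paper's: define the combined map $(F,\pi)$, show its Brouwer degree on $\mbox{Int}\,\mathcal{R}$ at $0$ is nonzero so that $\pi$ is a complement for $F$, then invoke \cite[Theorem 1.1]{FiMaPe-86}. However, your specific homotopy argument has a gap. The hypothesis on each $F_i$, $i\le n$, is a disjunction: either $F_i<0$ on $\mathcal{R}^{\ell}_i$ and $F_i>0$ on $\mathcal{R}^r_i$, or the reverse. On a face $[x_i=a_i]=\mathcal{R}^{\ell}_i$ we always have $a_i-p_j<0$; so if $F_i$ happens to satisfy the \emph{second} alternative ($F_i>0$ on $\mathcal{R}^{\ell}_i$), the two terms in $(1-t)F_i(x)+t(a_i-p_i)$ have \emph{opposite} strict signs, not the same sign, and the convex combination passes through zero for some $t\in(0,1)$. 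Thus the straight-line homotopy of $G$ to $\mbox{Id}-P$ is not admissible in general, and as stated it would yield degree $1$, which is actually wrong whenever an odd number of the $F_i$'s carry the reverse sign pattern.

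The fix is small: homotope $G$ instead to the map whose $i$-th component is $\varepsilon_i(x_i-p_i)$ for $i\le n$ (with $\varepsilon_i=+1$ when $F_i<0$ on $\mathcal{R}^{\ell}_i$ and $\varepsilon_i=-1$ otherwise) and whose $j$-th component is $x_j-p_j$ for $j>n$. Then on every face the relevant component is a convex combination of two strictly same-signed numbers, the homotopy is admissible, and you obtain $\deg(G,\mbox{Int}\,\mathcal{R},0)=\prod_{i\le n}\varepsilon_i=(-1)^d$ with $d$ the number of indices of the second type. This is precisely the degree value stated in the paper's proof; in particular it is nonzero, which is all that is needed, and the permutation of components to order $(\pi,F)$ contributes only a further factor $\pm1$, as you correctly noted.
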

\begin{proof}
We define the continuous mapping
$$H := (F,\pi):{\mathcal R}\to {\mathbb R}^N.$$
By the assumptions on $F$ and $\pi$ we have
$$\mbox{\rm deg}(H, {\mbox {Int}}(\mathcal R),0) = (-1)^d\ne 0,$$
where $d$ is the number of components $i\in\{1,\dots,n\}$ such
that $F_i(x) > 0$ for $x \in {\mathcal R}^{\ell}_i$ and
$F_i(x) < 0$ for $x\in {\mathcal R}^{r}_i.$ Hence $\pi$ turns
out to be a complementing map for $F$ (according to
\cite{FiMaPe-86}). A direct application of \cite[Theorem
1.1]{FiMaPe-86}  gives the thesis (observe that the dimension $m$ in
\cite[Theorem 1.1]{FiMaPe-86} corresponds to our $N-n$).
\end{proof}

\noindent
Notice that if $a_i < 0 < b_i\,,$ for $i=1,\dots,N,$ then we can take $P=0,$ so
that the complementing map is just the projection $\pi: {\mathbb
R}^N\to {\mathbb R}^{N-n}\,.$

\bigskip

\noindent
A more elementary version of Theorem \ref{th-es} can be
given for the zero set of a vector field with range in ${\mathbb R}^{N-1}\,.$
In this case it is possible to achieve the thesis by a direct use of the classical
Leray-Schauder Continuation Theorem \cite{LeSh-34},
instead of the more sophisticated tools
in \cite{FiMaPe-86}.
Namely, we have:

\begin{theorem}\label{th-z}
Let ${\mathcal R}:= \prod_{i=1}^N [a_i,b_i]$ be an $N$-dimensional rectangle
and let
$F = (F_1,\dots, F_{N-1}): {\mathcal R}\to {\mathbb R}^{N-1}$
be a continuous mapping such that, for each $i\in\{1,\dots,N-1\},$
$$F_i(x) < 0,\;\forall\, x\in {\mathcal R}^{\ell}_i\quad\mbox{and }\;
F_i(x) > 0,\;\forall\, x\in {\mathcal R}^{r}_i$$
or
$$F_i(x) > 0,\;\forall\, x\in {\mathcal R}^{\ell}_i\quad\mbox{and }\;
F_i(x) < 0,\;\forall\, x\in {\mathcal R}^{r}_i\,.$$
Then there exists a closed connected subset ${\mathcal Z}$ of
$$F^{-1}(0) = \{x\in {\mathcal R}:\, F_i(x) =0,\,\forall\, i=1,\dots,N-1\}$$
such that
$${\mathcal Z}\cap {\mathcal R}^{\ell}_N\ne\emptyset,\quad
{\mathcal Z}\cap {\mathcal R}^{r}_N\ne\emptyset.$$
\end{theorem}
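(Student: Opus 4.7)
The plan is to treat the last coordinate $x_N$ as a parameter and invoke the classical Leray--Schauder Continuation Theorem, in direct analogy with the second proof of Lemma \ref{lem-ls}. The sign conditions on $F_1,\dots,F_{N-1}$ are exactly what is needed to produce a nontrivial, parameter-independent Brouwer degree on each parameter slice, which is the input that allows Leray--Schauder to propagate a zero of $F$ along the whole interval $[a_N,b_N]$.

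More concretely, for each $\lambda\in[a_N,b_N]$ I would set $\mathcal{R}':=\prod_{i=1}^{N-1}[a_i,b_i]$ and $F_\lambda(y):=F(y,\lambda)$. The assumption on $F$ forces $F_\lambda$ to inherit the Poincar\'e--Miranda-type sign conditions on the opposite $i$-faces of $\mathcal{R}'$, so in particular $F_\lambda$ is nonvanishing on $\partial\mathcal{R}'$; this makes $\deg(F_\lambda,\mathrm{Int}(\mathcal{R}'),0)$ well-defined, and a straight-line homotopy of each component to the corresponding affine map with matching sign shows that the degree equals $(-1)^d$, where $d$ counts the indices with reversed sign convention, hence it is $\pm 1$. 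I would then extend $F$ to a continuous auxiliary map $\widehat{F}:\mathcal{R}'\times\mathbb{R}\to\mathbb{R}^{N-1}$ mimicking the construction of $\widehat{f}$ in the second proof of Lemma \ref{lem-ls}: compose the $\lambda$-variable with the projection onto $[a_N,b_N]$, so that $\widehat{F}$ coincides with $F$ on $\mathcal{R}$ and is uniformly nonvanishing on the boundary of the enlarged open set $\mathrm{Int}(\mathcal{R}')\times\,]a_N-1,b_N+1[\,$. The Leray--Schauder Continuation Theorem then delivers a closed connected subset $\mathcal{Z}$ of $\widehat{F}^{-1}(0)=F^{-1}(0)\cap\mathcal{R}$ along which $x_N$ assumes every value in $[a_N,b_N]$, so in particular $\mathcal{Z}\cap \mathcal{R}^{\ell}_N\ne\emptyset$ and $\mathcal{Z}\cap \mathcal{R}^{r}_N\ne\emptyset$, which is exactly the conclusion.

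The main hurdle will be the bookkeeping of the extension $\widehat{F}$: one must modify only the $\lambda$-dependence (and only for $\lambda\notin[a_N,b_N]$) so as to preserve $F^{-1}(0)\cap\mathcal{R}$ while forcing nonvanishing on the boundary of the enlarged domain, exactly in the spirit of the planar construction in Lemma \ref{lem-ls}. As an entirely elementary alternative, one could bypass Leray--Schauder and replicate the \emph{first} proof of Lemma \ref{lem-ls} in the $N$-dimensional setting: apply Theorem \ref{th-pm} on the slices $\mathcal{R}^{\ell}_N$ and $\mathcal{R}^{r}_N$ to produce disjoint nonempty closed sets $A,B\subseteq K:=F^{-1}(0)$; assume for contradiction that no continuum in $K$ joins them and split $K=K_A\sqcup K_B$ via Whyburn's Lemma \ref{lem-w}; enlarge to $K_A':=K_A\cup \mathcal{R}^{\ell}_N$ and $K_B':=K_B\cup \mathcal{R}^{r}_N$, which remain disjoint because $K_A\cap \mathcal{R}^{r}_N\subseteq B\subseteq K_B$ and symmetrically; feed the associated Urysohn-type interpolation $g$ as the $N$-th coordinate of a Brouwer self-map $\Phi$ of $\mathcal{R}$, with the remaining components $\Phi_i(x)=P_{[a_i,b_i]}(x_i\mp F_i(x))$ for $i=1,\dots,N-1$; the sign analysis of the proof of \eqref{eq-cd} then forces any fixed point $z$ to satisfy $F_i(z)=0$ for all $i<N$ together with $g(z)=0$, while $z\in K\subseteq K_A'\cup K_B'$ gives $g(z)=\pm 1$, a contradiction.
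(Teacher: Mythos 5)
Your primary route is the paper's own: split $x=(y,\lambda)$ with $\lambda=x_N$, observe that the sign conditions make $\deg(F(\cdot,\lambda),\mathrm{Int}(\mathcal{M}),0)=(-1)^d\ne 0$ for every $\lambda\in[a_N,b_N]$ with $\mathcal{M}=\prod_{i<N}[a_i,b_i]$, and invoke the Leray--Schauder Continuation Theorem to extract a continuum of zeros whose $\lambda$-projection covers $[a_N,b_N]$. One small remark: the auxiliary extension $\widehat{F}$ you propose is not actually needed here. In the second proof of Lemma \ref{lem-ls} that construction served to convert the weak inequalities $f\le 0$, $f\ge 0$ on the boundary strips into strict ones so that the degree on $\Omega$ would be well-defined; in Theorem \ref{th-z} the hypotheses are already strict ($F_i<0$, $F_i>0$ on the opposite $i$-faces), so $F(\cdot,\lambda)$ is automatically nonvanishing on $\partial\mathcal{M}$ and the continuation theorem applies directly on $[a_N,b_N]$ without any enlargement. (Also, your extension is in $\lambda$ rather than in $y$; in the planar model the extension was in the degree variable, not the parameter, so the analogy is slightly mismatched --- but as noted the whole step is unnecessary.)

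The elementary alternative you sketch is not in the paper but it is correct and worth recording. Applying Poincar\'e--Miranda (Theorem \ref{th-pm}) on the $(N-1)$-dimensional faces $\mathcal{R}^\ell_N$ and $\mathcal{R}^r_N$ gives nonempty $A:=F^{-1}(0)\cap\mathcal{R}^\ell_N$ and $B:=F^{-1}(0)\cap\mathcal{R}^r_N$; if no subcontinuum of $K:=F^{-1}(0)$ joins them, Whyburn's Lemma yields a disjoint splitting $K=K_A\sqcup K_B$, which you enlarge to the disjoint compacta $K_A':=K_A\cup\mathcal{R}^\ell_N$ and $K_B':=K_B\cup\mathcal{R}^r_N$ exactly as in Lemma \ref{lem-cr}; feeding the interpolation $g$ (equal to $-1$ on $K_A'$ and $+1$ on $K_B'$) into the $N$-th component $\Phi_N(x)=P_{[a_N,b_N]}(x_N-g(x))$ of a Brouwer self-map of $\mathcal{R}$, while taking $\Phi_i(x)=P_{[a_i,b_i]}(x_i\mp F_i(x))$ for $i<N$ (sign chosen per the orientation of the face conditions), produces a fixed point $z$ for which the projection arguments force $F_i(z)=0$ for $i<N$ and $g(z)=0$, contradicting $g(z)=\pm 1$ on $K$. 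This gives a degree-free proof relying only on $N$-dimensional Brouwer and Whyburn, faithfully generalizing the first proof of Lemma \ref{lem-ls}; by contrast the paper's Leray--Schauder route is shorter but imports the continuation machinery. Both are valid, and it is in the spirit of the paper to have both a degree-theoretic and an elementary argument available.
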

\begin{proof}
We split $x = (x_1,\dots,x_{N-1},x_N)\in {\mathcal R}\subseteq {\mathbb R}^N$ as
$x = (y,\lambda)$ with
$$y= (x_1,\dots,x_{N-1})
\in {\mathcal M}:= \prod_{i=1}^{N-1} [a_i,b_i],\quad
\lambda = x_N\in [a_N,b_N]$$
and define
$$f = f(y,\lambda): {\mathcal M}\times [a_N,b_N]\to {\mathbb R}^{N-1}\,,\quad
f(y,\lambda):= F(x_1,\dots,x_{N-1},\lambda),$$
treating the variable $x_N =\lambda$ as a parameter for the
$(N-1)$-dimensional vector field
$$f_{\lambda}(\cdot) = f(\cdot,\lambda).$$
By the assumptions on $F$ we have
$$\mbox{\rm deg}(f_{\lambda}, {\mbox {Int}}(\mathcal M),0) = (-1)^d\ne 0,
\quad\forall\, \lambda\in [a_N,b_N],$$ where $d$ is the number of
components $i\in\{1,\dots,N-1\}$ such that $F_i(x) > 0$ for $x
\in {\mathcal R}^{\ell}_i$ and $F_i(x) < 0$ for $x\in
{\mathcal R}^{r}_i.$ The Leray-Schauder Continuation Theorem
\cite[Th\'{e}or\`{e}me Fondamental]{LeSh-34} ensures the existence of a closed
connected set
$$\mathcal Z\subseteq \left\{(y,\lambda)\in {\mathcal M}\times [a_N,b_N]\,:\, f(y,\lambda) =
0\in {\mathbb R}^{N-1}\right\},$$
whose projection onto the $\lambda$-component covers the interval $[a_N,b_N].$
By the above positions the thesis immediately follows.
\end{proof}

\noindent
For the interested reader, we mention that Theorem \ref{th-z} can be found in \cite{KuSoTu-00} and that it was then applied in \cite{KuPoSoTu-05}.

\bigskip

\noindent
In the next lemma we take the unit cube
$I^N:=[0,1]^N$ as $N$-dimensional rectangle and choose
the interior point $P = \left(\tfrac 1 2,\tfrac 1 2,\dots, \tfrac 1 2\right),$ in order to apply Theorem \ref{th-es}. Obviously,
any other point interior to $I^N$ could be chosen as well.

\begin{lemma}\label{lem-es}
Let $n\in \{1,\dots, N-1\}$ be fixed.
Assume that, $\forall \,i\in\{1,\dots,n\},$ there is a compact set
${\mathcal S}_i\subseteq I^N$
that cuts the arcs between $[x_i = 0]$
and $[x_i = 1]$ in $I^N.$ Then
there exists a connected subset ${\mathcal Z}$ of
$\,\displaystyle{\bigcap_{i=1}^n {\mathcal S}_i \ne \emptyset},$
whose dimension at each point is at least
$N-n.$ Moreover,
$$\mbox{\rm dim}\left({\mathcal Z}\cap\partial I^N\right)\geq N - n - 1$$
and
$$\pi: {\mathcal Z}\cap \partial I^N \to {\mathbb R}^{N-n}\setminus\{0\}$$
is essential, where $\pi$ is the affine map defined in \eqref{eq-prt}.
\end{lemma}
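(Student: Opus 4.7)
The plan is to mimic the proof of the Intersection Lemma (Theorem \ref{th-hw}), substituting the stronger Theorem \ref{th-es} for Poincar\'e--Miranda. For each $i\in\{1,\ldots,n\}$, Lemma \ref{lem-cut} furnishes a continuous $f_i:I^N\to\mathbb R$ with $\mathcal S_i=f_i^{-1}(0)$, $f_i\le 0$ on $[x_i=0]$ and $f_i\ge 0$ on $[x_i=1]$. Assembling the vector field $F:=(f_1,\ldots,f_n):I^N\to\mathbb R^n$ one has $F^{-1}(0)=\bigcap_{i=1}^{n}\mathcal S_i$, and the strategy is to apply Theorem \ref{th-es} on the rectangle $I^N$ with interior point $P=(\tfrac12,\ldots,\tfrac12)$, so that the complementing map \eqref{eq-prt} takes the form $\pi_j(x)=x_j-\tfrac12$ for $j=n+1,\ldots,N$; the conclusions are then exactly those required in the statement.

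The technical difficulty is that Theorem \ref{th-es} demands \emph{strict} inequalities on the faces, whereas Lemma \ref{lem-cut} provides only non-strict ones. I would bridge this gap by a small linear perturbation: for $\delta>0$ set $F_i^{\delta}(x):=f_i(x)+\delta(x_i-\tfrac12)$, so that $F_i^{\delta}\le-\delta/2<0$ on $[x_i=0]$ and $F_i^{\delta}\ge\delta/2>0$ on $[x_i=1]$. Theorem \ref{th-es} applied to $F^{\delta}$ produces, for each $\delta>0$, a connected set $\mathcal Z_\delta\subseteq(F^{\delta})^{-1}(0)\subseteq I^N$ of dimension at least $N-n$ at each point, with $\dim(\mathcal Z_\delta\cap\partial I^N)\ge N-n-1$ and with $\pi|_{\mathcal Z_\delta\cap\partial I^N}$ essential. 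Choosing $\delta_k\downarrow 0$, the compactness of the hyperspace of closed subsets of $I^N$ under the Hausdorff metric (Blaschke) yields a Hausdorff-convergent subsequence $\mathcal Z_{\delta_k}\to\mathcal Z$; continuity of $F$ combined with $F(z_k)=F^{\delta_k}(z_k)-\delta_k(z_k-\tfrac12\mathbf 1)\to 0$ forces $\mathcal Z\subseteq F^{-1}(0)=\bigcap_i\mathcal S_i$, and the Hausdorff limit of connected compacta is connected.

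The main obstacle is transferring the lower bounds on covering dimension and the essentiality of $\pi$ from the approximants $\mathcal Z_\delta$ to the limit $\mathcal Z$: essentiality is a homotopy invariant but its preservation under Hausdorff convergence requires a stability argument for the common affine map $\pi$, and covering dimension is not automatically lower semicontinuous along Hausdorff-convergent sequences of compacta. A perhaps cleaner alternative that sidesteps the limit altogether is to enlarge the cube to $\mathcal R_\varepsilon:=[-\varepsilon,1+\varepsilon]^N$ and extend each $f_i$ to $\bar f_i(x):=f_i(P_{I^N}(x))+\max(x_i-1,0)-\max(-x_i,0)$, where $P_{I^N}$ is the coordinatewise projection onto $I^N$; the extended $\bar f_i$ coincides with $f_i$ on $I^N$ and satisfies $\bar f_i\le-\varepsilon$ on $\{x_i=-\varepsilon\}$ and $\bar f_i\ge\varepsilon$ on $\{x_i=1+\varepsilon\}$. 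Theorem \ref{th-es} applied on $\mathcal R_\varepsilon$ yields a connected $\bar{\mathcal Z}\subseteq\bar F^{-1}(0)$; a direct check shows that $\bar F^{-1}(0)$ forces $\bar x_i\in[0,1]$ for $i\le n$ and that it is a cylindrical extension of $F^{-1}(0)$ in the ``free'' coordinates $x_{n+1},\ldots,x_N$, so that the retraction $P_{I^N}$ maps $\bar{\mathcal Z}$ onto a connected subset $\mathcal Z\subseteq\bigcap_i\mathcal S_i$. The essentiality of $\pi$ on $\mathcal Z\cap\partial I^N$ then follows from the observation that $\pi$ and $\pi\circ P_{I^N}$ are homotopic through non-vanishing maps on $\bar F^{-1}(0)\cap\partial\mathcal R_\varepsilon$ via the straight-line interpolation $(1-t)x+t\,P_{I^N}(x)$, whose $j$-th component retains its sign away from $1/2$ for $j>n$ when $\bar x_j\notin[0,1]$; the dimension inequalities are inherited through this deformation-retract structure, which preserves local covering dimension at interior points and transports boundary dimension along the correspondence $\bar{\mathcal Z}\cap\{x_j\in\{-\varepsilon,1+\varepsilon\}\}\leftrightarrow\mathcal Z\cap\{x_j\in\{0,1\}\}$.
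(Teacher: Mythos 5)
Your second approach (enlarging the cube) is close in spirit to what the paper does, and your extension trick $\bar f_i(x)=f_i(P_{I^N}(x))+\max(x_i-1,0)-\max(-x_i,0)$ is a perfectly good way to manufacture strict sign conditions (the paper instead works on the ``tunnel sets'' $T_{i^*}$ and invokes Lemma \ref{lem-cuts}, then projects via $\eta_{[0,1]}$; your correction term achieves the same thing more directly). You also rightly abandon the $\delta$-perturbation scheme: neither covering dimension nor essentiality passes in any obvious way to Hausdorff limits.

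The gap is in the choice of rectangle. You enlarge \emph{all} $N$ coordinates, $\mathcal R_\varepsilon=[-\varepsilon,1+\varepsilon]^N$, whereas the paper takes $\mathcal R=\prod_{i=1}^{n}[-1,2]\times\prod_{j=n+1}^{N}[0,1]$, enlarging only the $n$ cutting directions and leaving the ``free'' coordinates $j>n$ at $[0,1]$. This is not a cosmetic difference. With the paper's $\mathcal R$, one has $\bigl(F\restriction_{\mathcal R}\bigr)^{-1}(0)=\bigcap_{i=1}^n\mathcal S_i\subseteq I^N$ on the nose: the zero set cannot escape $I^N$ in any coordinate, because the first $n$ are forced into $[0,1]$ by the strict sign conditions and the last $N-n$ are confined by the rectangle itself. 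Consequently the $\mathcal Z$ produced by Theorem \ref{th-es} already lives in $\bigcap_i\mathcal S_i$, $\mathcal Z\cap\partial\mathcal R\subseteq\mathcal Z\cap\partial I^N$, and every conclusion of Theorem \ref{th-es} (pointwise dimension $\geq N-n$, $\dim(\mathcal Z\cap\partial I^N)\geq N-n-1$, essentiality of $\pi$) transfers with no further work---essentiality on a subset trivially implies essentiality on a superset. With your $\mathcal R_\varepsilon$, by contrast, $\bar F^{-1}(0)$ is a genuine cylindrical thickening of $F^{-1}(0)$ in the directions $j>n$, $\bar{\mathcal Z}$ need not lie in $I^N$, and you are forced to push forward by the retraction $P_{I^N}$.

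That push-forward is where the argument breaks. Your essentiality transfer is fine: the straight-line homotopy $(1-t)x+tP_{I^N}(x)$ keeps the $j$-th coordinate on the same side of $1/2$ whenever $x_j\notin[0,1]$, so it avoids $0$, and ``$\pi|_B$ inessential $\Rightarrow\pi\circ P_{I^N}$ inessential $\Rightarrow\pi|_A$ inessential'' runs correctly. But the dimension claims do not follow from what you have written. Covering dimension is not preserved by retractions or deformation retracts (a disc retracts to a point), and the bound ``$\dim$ at each point $\geq N-n$'' is a statement about every point of $\mathcal Z=P_{I^N}(\bar{\mathcal Z})$, including boundary points $y$ whose fibre $P_{I^N}^{-1}(y)\cap\bar F^{-1}(0)$ is a non-degenerate box in the directions $j>n$. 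Nothing you say rules out the image collapsing there; ``preserves local covering dimension at interior points and transports boundary dimension along the correspondence'' is an assertion, not a proof, and the set $\bar{\mathcal Z}$ coming from Theorem \ref{th-es} is not known to be saturated for $P_{I^N}$, so you cannot even claim a retraction $\bar{\mathcal Z}\to\mathcal Z$ within $\bar{\mathcal Z}$. If you instead apply your own construction on $\prod_{i\le n}[-\varepsilon,1+\varepsilon]\times I^{N-n}$, the retraction disappears, $\mathcal Z$ is directly the set Theorem \ref{th-es} hands you, and the proof closes exactly as in the paper.
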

\begin{proof}
For any fixed index $i^*\in\{1,\dots,n\}$ we introduce the {{tunnel set}}
$$T_{i^*}\,:= \prod_{i=1}^{i^*-1}\, [0, 1]\times {\mathbb R}\times
\prod_{i=i^*+1}^N [0,1].$$
It is immediate to check that ${\mathcal S}_{i^*}$ cuts the arcs between
$[x_{i^*} = 0]$
and $[x_{i^*} = 1]$ in $T_{i^*}\,.$
\\
By Lemma \ref{lem-cuts} there exists a continuous function
$f_{i^*}\,: T_{i^*}\to {\mathbb R}$ such that
$$f_{i^*}(x) \leq 0\,,\;\forall\, x\in T_{i^*}\,\;\mbox{with } x_{i^*}\leq 0\quad
\mbox{and } \;
f_{i^*}(x) \geq 0\,,\;\forall\, x\in T_{i^*}\,\;\mbox{with } x_{i^*}\geq 1\,,$$
and moreover
$${\mathcal S}_{i^*}\,= \{ x\in T_{i^*}\,: \, f_{i^*}(x) = 0\}.$$
By this latter property and the fact that ${\mathcal S}_{i^*}\subseteq I^N$
it follows that
$$f_{i^*}(x) < 0\,,\;\forall\, x\in T_{i^*}\,\;\mbox{with } x_{i^*}< 0\quad
\mbox{and } \;
f_{i^*}(x) > 0\,,\;\forall\, x\in T_{i^*}\,\;\mbox{with } x_{i^*} >  1\,.$$
Now we define, for $x = (x_1,\dots,x_{i^*-1},x_{i^*},x_{i^*+1},\dots, x_N)\in {\mathbb R}^N,$
the continuous function
$$F_{i^*}(x):= f_{i^*}\bigl(\,\eta_{[0,1]}(x_1),\dots,
\eta_{[0,1]}(x_{i^* -1}), x_{i^*},
\eta_{[0,1]}(x_{i^* +1}),\dots,
\eta_{[0,1]}(x_{N})\,\bigr ),$$
where
\begin{equation}\label{eq-pri}
\eta:\mathbb R\to [0,1],\,\,\eta_{[0,1]}(s):= \max\{0,\min\{s,1\} \}
\end{equation}
is the projection of ${\mathbb R}$ onto the interval $[0,1].$
As a consequence of the above positions we find that
$$F_{i^*}(x) < 0,\;\forall\, x\in{\mathbb R}^N\,: \, x_{i^*} < 0\quad\mbox{and }\;
F_{i^*}(x) > 0,\;\forall\, x\in{\mathbb R}^N\,: \, x_{i^*} >  1\,.$$
We can thus apply Theorem \ref{th-es} to the map
$F = (F_1,\dots,F_n)$ restricted to the $N$-dimensional rectangle
$${\mathcal R}:= \prod_{i=1}^{n}\,\,[-1,2] \times \prod_{i=n+1}^{N}[0,1].$$
Clearly,
$$\bigl(F\restriction_{\mathcal R}\bigr)^{-1}(0) = \bigcap_{i=1}^n {\mathcal S}_i\subseteq I^N$$
and the proof is complete.
\end{proof}

\begin{remark}\label{rem-conv}
{\rm{
Both in Theorem \ref{th-es} and in Lemma \ref{lem-es} the fact that we have privileged
the first $n$ components is purely conventional. It is evident that the results are valid
for any finite sequence of indexes $i_1 < i_2 < \dots < i_n$ in $\{1,\dots,N\}.$
The same observation applies systematically to all the other results
(preceding and subsequent) in which some directions are conventionally chosen.
Moreover, notice that Lemma \ref{lem-es} is invariant under homeomorphisms in a sense that is described
in Theorem \ref{th-invh} below.}}

\hfill$\lhd$\\
\end{remark}

\begin{theorem}\label{th-invh}
Let ${\widehat{X}}:= ({X},h)$ be a generalized $N$-dimensional rectangle of a metric space $Z.$ Let $i_1 < i_2 < \dots < i_n$ be a finite sequence of $n\ge 1$ indexes
in $\{1,\dots,N\}.$
Assume that, for each $j\in\{i_1,\dots,i_n\},$ there is a compact set
${\mathcal S}_j\subseteq {X}$
that cuts the arcs between ${X}_{j}^{\ell}$
and ${X}_{j}^{r}$ in $X.$
Then there exists a compact connected subset ${\mathcal Z}$ of
$\,\displaystyle{\bigcap_{k=1}^n {\mathcal S}_{i_k}\ne\emptyset},$
whose dimension at each point is at least
$N-n.$ Moreover,
$$\mbox{\rm dim}({\mathcal Z}\cap\vartheta X)\geq N - n - 1$$
and
$$\pi: h^{-1}({\mathcal Z})\cap \partial I^N \to {\mathbb R}^{N-n}\setminus\{0\}$$
is essential, where $\pi$ is defined as in \eqref{eq-prt} for $P = \left(\tfrac 1 2,\tfrac 1 2,\dots, \tfrac 1 2\right).$
\end{theorem}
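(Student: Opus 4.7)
The plan is to reduce Theorem \ref{th-invh} to Lemma \ref{lem-es} by transferring everything through the defining homeomorphism $h:I^N\to X$. First, for each $k=1,\dots,n$, I would set
$$C_{i_k}:=h^{-1}(\mathcal{S}_{i_k})\subseteq I^N.$$
Since $h$ is a homeomorphism, each $C_{i_k}$ is compact, and because $h$ sends paths in $I^N$ between $[x_{i_k}=0]$ and $[x_{i_k}=1]$ bijectively onto paths in $X$ between $X^{\ell}_{i_k}$ and $X^{r}_{i_k}$ (and conversely), the cutting property is preserved: $C_{i_k}$ cuts the arcs between $[x_{i_k}=0]$ and $[x_{i_k}=1]$ in $I^N$.

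Next, I would apply Lemma \ref{lem-es} to the family $\{C_{i_k}\}_{k=1}^n$. In view of Remark \ref{rem-conv}, the lemma applies for an arbitrary choice of $n$ indices in $\{1,\dots,N\}$, not just for the first $n$. This yields a connected subset
$$\mathcal{Z}_0\subseteq \bigcap_{k=1}^n C_{i_k}\subseteq I^N,$$
whose dimension at each point is at least $N-n$, with $\mathrm{dim}(\mathcal{Z}_0\cap\partial I^N)\geq N-n-1$, and such that $\pi:\mathcal{Z}_0\cap\partial I^N\to\mathbb{R}^{N-n}\setminus\{0\}$ is essential.

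Then I would define $\mathcal{Z}:=h(\mathcal{Z}_0)\subseteq X$. The properties transfer as follows: $\mathcal{Z}$ is connected as the continuous image of a connected set, and compact because it is a closed subset (being the $h$-image of a closed subset of the compact space $I^N$) of the compact set $X$; the inclusion $\mathcal{Z}\subseteq\bigcap_{k=1}^n\mathcal{S}_{i_k}$ is immediate from the definition of the $C_{i_k}$. Since covering dimension is a topological invariant and $h$ restricts to a homeomorphism between $\mathcal{Z}_0$ and $\mathcal{Z}$ (respectively between $\mathcal{Z}_0\cap\partial I^N$ and $\mathcal{Z}\cap\vartheta X$, as $h(\partial I^N)=\vartheta X$ by Definition \ref{def-gr}), the local dimension bound $\geq N-n$ and the boundary dimension bound $\geq N-n-1$ are preserved. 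Finally, since $h^{-1}(\mathcal{Z})=\mathcal{Z}_0$ (using that $h$ is a homeomorphism onto $X$), the essentiality of $\pi:h^{-1}(\mathcal{Z})\cap\partial I^N\to\mathbb{R}^{N-n}\setminus\{0\}$ follows directly from the corresponding conclusion of Lemma \ref{lem-es}.

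The only subtle point, which I expect to be the main (minor) obstacle, is verifying that the cutting property genuinely transfers under $h^{-1}$: this uses that any path $\gamma:[0,1]\to I^N$ with $\gamma(0)\in[x_{i_k}=0]$ and $\gamma(1)\in[x_{i_k}=1]$ corresponds, via composition with $h$, to a path in $X$ joining $X^{\ell}_{i_k}$ and $X^{r}_{i_k}$, and vice versa by composition with $h^{-1}$, so that the intersection of its image with $C_{i_k}$ is nonempty exactly when the image of the $h$-composed path meets $\mathcal{S}_{i_k}$. Once this observation is in place, all remaining conclusions are formal consequences of Lemma \ref{lem-es} combined with the topological invariance of covering dimension and of essentiality.
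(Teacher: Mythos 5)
Your argument is correct and is essentially the paper's own proof spelled out: the paper also disposes of this theorem in a single line by pulling everything back through $h^{-1}$ to reduce to Lemma \ref{lem-es}. The only step you flag as subtle (transferring the cutting property through $h$) is indeed the key observation, and you verify it correctly; the remaining transfers (connectedness, topological invariance of covering dimension, identification $h^{-1}(\mathcal{Z})=\mathcal{Z}_0$) are exactly the formal consequences the paper alludes to.
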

\begin{proof}
The result easily follows by moving to the setting of Lemma \ref{lem-es}
through the homeomorphism $h^{-1}$ and repeating the arguments used therein.
\end{proof}

\noindent
At last we present a result (Corollary \ref{cor-jo}) which plays a crucial role
in the subsequent proofs.
It concerns the case $n=N-1$ and could be obtained by
suitably adapting the arguments employed in Lemma \ref{lem-es}.
However, due to its significance for our applications, we prefer to provide a detailed proof
using Theorem \ref{th-z}
(which requires only the knowledge of the Leray-Schauder principle
and therefore, in some sense, is more elementary).
Corollary \ref{cor-jo} extends to an arbitrary dimension some results
in \cite[Appendix]{ReZa-00} which were there proved only for $N=2$ using \cite{Sa-80}.

\begin{corollary}\label{cor-jo}
Let ${\widehat{X}}:= ({X},h)$ be a generalized $N$-dimensional rectangle of a metric space $Z.$
Let $k\in \{1,\dots,N\}$ be fixed.
Assume that, for each $j\in\{1,\dots,N\}$ with $j\ne k,$
there exists a compact set
${\mathcal S}_j\subseteq {X}$
that cuts the arcs between ${X}_{j}^{\ell}$
and ${X}_{j}^{r}$ in $X.$
Then there exists a compact connected subset ${\mathcal C}$ of
$\,\displaystyle{\bigcap_{i\ne k} {\mathcal S}_{i}\ne \emptyset},$
such that
$${\mathcal C}\cap {X}_{k}^{\ell}\ne \emptyset,\quad
{\mathcal C}\cap {X}_{k}^{r}\ne\emptyset.$$
\end{corollary}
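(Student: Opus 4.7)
The plan is to follow the hint given just before the statement and reduce the corollary to Theorem~\ref{th-z} via the homeomorphism $h$ and the function-construction machinery from Lemma~\ref{lem-cuts}, in the same spirit as the proof of Lemma~\ref{lem-es}. By Remark~\ref{rem-conv} I may assume $k=N$. Pulling everything back through $h^{-1}$, the task becomes: given compact sets $C_j:=h^{-1}(\mathcal{S}_j)\subseteq I^N$ cutting the arcs in $I^N$ between $[x_j=0]$ and $[x_j=1]$, for $j=1,\dots,N-1$, find a compact connected subset of $\bigcap_{j\ne N} C_j$ joining $[x_N=0]$ and $[x_N=1]$; the image under $h$ is then the desired $\mathcal{C}$.

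For each $j\in\{1,\dots,N-1\}$, I would introduce the tunnel set $T_j:=[0,1]^{j-1}\times\mathbb{R}\times[0,1]^{N-j}$, observe (as in the proof of Lemma~\ref{lem-es}) that $C_j$ still cuts the arcs between $[x_j=0]$ and $[x_j=1]$ inside $T_j$, and invoke Lemma~\ref{lem-cuts} to get a continuous function $f_j:T_j\to\mathbb{R}$ with $f_j\le 0$ on $\{x_j\le 0\}$, $f_j\ge 0$ on $\{x_j\ge 1\}$ and zero-set exactly $C_j$. Since $C_j\subseteq I^N$, the non-strict sign inequalities automatically sharpen to strict ones outside the slab $\{0\le x_j\le 1\}$. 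I then extend $f_j$ to a continuous $F_j:\mathbb{R}^N\to\mathbb{R}$ by composing the remaining coordinates with the projection $\eta_{[0,1]}$ of \eqref{eq-pri}; the strict sign conditions survive this extension and, crucially, the zero-set of $F_j$ remains contained in the slab $\{0\le x_j\le 1\}$.

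Setting $\mathcal{R}:=[-1,2]^{N-1}\times[0,1]$ and $F:=(F_1,\dots,F_{N-1})$, all the hypotheses of Theorem~\ref{th-z} are fulfilled along the first $N-1$ coordinate directions, so that theorem produces a closed (hence compact, being inside $\mathcal{R}$) connected set $\mathcal{Z}\subseteq F^{-1}(0)\cap\mathcal{R}$ meeting both $\mathcal{R}^{\ell}_N$ and $\mathcal{R}^{r}_N$. The strict inequalities force $F^{-1}(0)\cap\mathcal{R}\subseteq I^N$, whence $\mathcal{Z}\subseteq\bigcap_{j\ne N}C_j$ and $\mathcal{Z}$ meets $[x_N=0]$ and $[x_N=1]$; defining $\mathcal{C}:=h(\mathcal{Z})$ concludes the argument. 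The only delicate step, and the one most likely to need care in writing up, is the passage from the non-strict sign inequalities supplied by the cutting property (through Lemma~\ref{lem-cuts}) to the strict ones demanded by the Leray-Schauder principle underlying Theorem~\ref{th-z}; the tunnel-set extension combined with the containment $C_j\subseteq I^N$ is precisely what makes this passage go through.
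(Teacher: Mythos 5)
Your proposal is correct and follows essentially the same route as the paper: reduce to $I^N$ via $h^{-1}$ (with $k=N$ by permuting coordinates), use the tunnel-set trick plus Lemma~\ref{lem-cuts} to manufacture continuous functions with strict sign conditions outside the slab (possible precisely because $C_j\subseteq I^N$), extend by the projection $\eta_{[0,1]}$, apply Theorem~\ref{th-z} on a slightly enlarged rectangle, and push forward by $h$. The only cosmetic difference is that you fix $\mathcal{R}=[-1,2]^{N-1}\times[0,1]$ where the paper writes $\prod_{i=1}^{N-1}[-\varepsilon,1+\varepsilon]\times[0,1]$ for an arbitrary $\varepsilon>0$.
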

\begin{proof}
Without loss of generality (if necessary, by a permutation of the
coordinates), we assume $k=N.$ In this manner,
using the homeomorphism $h^{-1}: Z\supseteq X= h(I^N)\to I^N,$
we can confine ourselves to the
following framework:
\\
For each $j\in\{1,\dots,N-1\},$ there exists a compact set
$${\mathcal S}\,'_j \,:= h^{-1}({\mathcal S}_j)\subseteq I^N$$
that cuts the arcs between $[x_j=0]$ and $[x_j=1]$
in $I^N\,.$\\
Proceeding as in the proof of Lemma \ref{lem-es},
for any fixed $i^*\in\{1,\dots,N-1\},$ we introduce the {{tunnel set}}
$$T_{i^*}\,:= \prod_{i=1}^{i^*-1}\,\, [0,1]\times {\mathbb R}\times
\prod_{i=i^*+1}^N [0, 1]$$
and find that ${\mathcal S}\,'_{i^*}$ cuts the arcs between
$[x_{i^*} = 0]$
and $[x_{i^*} = 1]$ in $T_{i^*}\,.$
\\
Hence, by Lemma \ref{lem-cuts} there exists a continuous function
$f_{i^*}\,: T_{i^*}\to {\mathbb R}$ such that
$$f_{i^*}(x) \leq 0\,,\;\forall\, x\in T_{i^*}\,\;\mbox{with } x_{i^*}\leq 0\quad
\mbox{and } \;
f_{i^*}(x) \geq 0\,,\;\forall\, x\in T_{i^*}\,\;\mbox{with } x_{i^*}\geq  1\,,$$
as well as
$${\mathcal S}\,'_{i^*}\,= \{ x\in T_{i^*}\,: \, f_{i^*}(x) = 0\}.$$
More precisely it holds that
$$f_{i^*}(x) < 0\,,\;\forall\, x\in T_{i^*}\,\;\mbox{with } x_{i^*}< 0\quad
\mbox{and } \;
f_{i^*}(x) > 0\,,\;\forall\, x\in T_{i^*}\,\;\mbox{with } x_{i^*} >  1\,.$$
We define, for $x = (x_1,\dots,x_{i^*-1},x_{i^*},x_{i^*+1},\dots, x_N)\in {\mathbb R}^N,$
the continuous function
$$F_{i^*}(x):= f_{i^*}\bigl(\,\eta_{[0,1]}(x_1),\dots,
\eta_{[0, 1]}(x_{i^* -1}), x_{i^*},
\eta_{[0, 1]}(x_{i^* +1}),\dots,
\eta_{[0, 1]}(x_{N})\,\bigr ),$$
where
$\eta_{[0, 1]}$
is the projection of ${\mathbb R}$ onto the interval $[0, 1]$
defined as in \eqref{eq-pri}. Then we have
$$F_{i^*}(x) < 0,\;\forall\, x\in{\mathbb R}^N\,: \, x_{i^*} < 0\quad\mbox{and }\;
F_{i^*}(x) > 0,\;\forall\, x\in{\mathbb R}^N\,: \, x_{i^*} >  1\,.$$
Now we consider the map
$F = (F_1,\dots,F_{N-1})$ restricted to the $N$-dimensional rectangle
$${\mathcal R}:= \prod_{i=1}^{N-1}[-\varepsilon,1+\varepsilon]
\times [0,1],$$
for any fixed $\varepsilon > 0.$
Since
$$\bigl(F\restriction_{\mathcal R}\bigr)^{-1}(0) = \bigcap_{i=1}^{N-1} {\mathcal S}\,'_i\subseteq I^N,$$
we can set
$${\mathcal C}:= h({\mathcal Z}),$$
where ${\mathcal Z} \subseteq \bigl(F\restriction_{\mathcal R}\bigr)^{-1}(0)$
comes from the statement of Theorem \ref{th-z}, and this concludes the proof.
\end{proof}

\noindent
As we shall see, the latter result turns out to be our main
ingredient in the next subsection for the study of the dynamics of continuous maps defined on topological $N$-dimensional rectangles
which possess, in a very broad sense, a one-dimensional expansive
direction. Indeed, it allows to prove the crucial fixed point Theorem \ref{th-fpn} (similar to Theorem \ref{th-fp})
and subsequent results on the existence of multiple periodic points. Notice that the
role of Corollary \ref{cor-jo} in the proof of Theorem \ref{th-fpn} is analogous to the one of the Crossing Lemma \ref{lem-cr} in the verification of Theorem \ref{th-fp}.

\subsection{Stretching along the paths in the $N$-dimensional case}\label{sub-strn}
Finally, we are in position to provide an extension to $N$-dimensional spaces of the results
obtained in Section \ref{sec-sap} for the planar maps which expand the paths along a certain direction. To this aim, we reconsider Definition \ref{def-gr} in order to focus our attention on
generalized $N$-dimensional rectangles in which we have fixed once for all
the left and right sides. In the applications, these opposite sides
give a sort of orientation to the generalized $N$-dimensional rectangles
and in fact they are usually related to the expansive direction.

\begin{definition}\label{def-or}
\rm{
Let $Z$ be a metric space and let
${\widehat{X}}:= ({X},h)$ be a
generalized $N$-dimensional rectangle of $Z.$
We set
$$X_{\ell}:= h([x_N=0]),\quad X_{r}:= h([x_N=1])$$
and
$$X^-:= X_{\ell}\cup X_{r}\,.$$
The pair
$${\widetilde{X}}:= (X, X^-)$$
is called an {\textit{oriented $N$-dimensional rectangle}}\index{rectangle! oriented $N$-dim.}
\textit{of $Z$}.
}
\end{definition}

\begin{remark}\label{rem-convn}
\rm{A comparison between Definitions \ref{def-gr} and \ref{def-or}
shows that an oriented $N$-dimensional rectangle is just a generalized $N$-dimensional rectangle in which
we privilege the two subsets of its contour which correspond to the
opposite faces for some fixed component (namely, the $x_N$-component).
In analogy with Remark \ref{rem-conv}, we point out that the choice of the
$N$-th component is purely conventional.
For example, in some other papers (see \cite{GiRo-03, PiZa-05, ZgGi-04}) and also in the planar examples from Section \ref{sec-sap},
the first component was selected.
Clearly, there is no substantial difference as the homeomorphism $h: {\mathbb R}^N\supseteq I^N \to X\subseteq Z$
could be composed with a permutation matrix, yielding to a new
homeomorphism with the same image set. From this point of view, our
definition fits to the one of $h$-set of $(1,N-1)$-type,
given by Zgliczy\'{n}ski and Gidea in \cite{ZgGi-04}
for a subset of ${\mathbb R}^N$ which is obtained as the inverse image
of the unit cube through a homeomorphism of ${\mathbb R}^N$ onto itself.
The similar concept of $(1,N-1)$-window is then considered by Gidea and Robinson in \cite{GiRo-03}:
it is defined as a homeomorphic copy of the unit cube $I^N$ of $\mathbb R^N$
through a homeomorphism whose domain is an open neighborhood of $I^N.$
}

\hfill$\lhd$\\
\end{remark}

\noindent
We now adapt to maps
between oriented $N$-dimensional rectangles the concept of stretching along the paths,
already considered in Definition \ref{def-sap} in regard to the planar case.

\begin{definition}\label{def-sapn}
{\rm{Let $Z$ be a metric
space and let $\psi: Z \supseteq D_{\psi}\to Z$ be a map
defined on a set $D_{\psi}.$ Assume that ${\widetilde{X}}:=
({X},{X}^-)$ and ${\widetilde{Y}}:=
({Y},{Y}^-)$ are oriented rectangles of
$Z$ and let ${\mathcal K}\subseteq {X}\cap D_{\psi}$
be a compact set. We say that \textit{$({\mathcal K},\psi)$ stretches
${\widetilde{X}}$ to ${\widetilde{Y}}$ along the
paths} \index{stretching along the paths@\textsl{stretching along the paths} $\stretchx$} and write
\begin{equation*}
({\mathcal K},\psi): {\widetilde{X}} \stretchx {\widetilde{Y}},
\end{equation*}
if the following conditions hold:
\begin{itemize}
\item{} \; $\psi$ is continuous on ${\mathcal K}\,;$ \item{} \;
For every path $\gamma: [0,1]\to {X}$ such that
$\gamma(0)\in {X}^-_{\ell}$ and $\gamma(1)\in {
X}^-_{r}$ (or $\gamma(0)\in {X}^-_{r}$ and $\gamma(1)\in
{X}^-_{\ell}$), there exists a subinterval
$[t',t'']\subseteq [0,1]$ such that
$$\gamma(t)\in {\mathcal K},\quad \psi(\gamma(t))\in {Y}\,,\;\;
\forall\, t\in [t',t'']$$ and, moreover, $\psi(\gamma(t'))$ and
$\psi(\gamma(t''))$ belong to different sides of ${Y}^-.$
\end{itemize}
}}
\end{definition}

\noindent
By the similarity between Definitions \ref{def-sap} and \ref{def-sapn}, it is
immediate to see that the remarks in Section \ref{sec-sap} about the stretching along the path relation for planar
maps remain valid also in the higher dimensional setting. In particular the comments on the relationship with the Brouwer fixed point Theorem or with the theory
of the topological horseshoes in \cite{KeKoYo-01,KeYo-01} hold true.
Suitable modifications of Definitions \ref{def-cn}--\ref{def-ea}
could be presented as well. For the sake of conciseness we prefer to omit them.
In view of its significance for the following treatment, we present however the next fixed point theorem, which shows that the set $\mathcal K$ still plays a crucial role.

\begin{theorem}\label{th-fpn}
Let $Z$ be a metric space and let $\psi: Z \supseteq D_{\psi}\to Z$ be a map defined on a set $D_{\psi}.$ Assume that
${\widetilde{X}}:= ({X},{X}^-)$
is an oriented $N$-dimensional rectangle of $Z.$ If ${\mathcal K}\subseteq {X}\cap D_{\psi}$ is a compact set for which it holds that
\begin{equation}\label{eq-fpn}
({\mathcal K},\psi): {\widetilde{X}} \stretchx {\widetilde{X}},
\end{equation}
then there exists at least one point $z\in {\mathcal K}$ with $\psi(z) = z.$
\end{theorem}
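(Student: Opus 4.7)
The plan is to transpose the planar argument of Theorem~\ref{th-fp} to the $N$-dimensional setting, with Corollary~\ref{cor-jo} replacing the Crossing Lemma. First I would pull the problem back to the unit cube: letting $h\colon I^N\to X$ be the homeomorphism associated with $\widetilde{X}$, set $\phi:=h^{-1}\circ\psi\circ h$ on the compact set $\mathcal H:=h^{-1}(\mathcal K)\subseteq I^N$, so that \eqref{eq-fpn} transfers to $(\mathcal H,\phi)\colon \widetilde{I^N}\stretchx \widetilde{I^N}$ with $(I^N)^-=[x_N=0]\cup[x_N=1]$, and a fixed point of $\phi$ in $\mathcal H$ transforms into one of $\psi$ in $\mathcal K$. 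Extract from the stretching a surface cutting in the privileged direction by defining
$$V:=\{x\in\mathcal H\,:\ \phi(x)\in I^N,\ \phi_N(x)=x_N\},$$
which is compact. Exactly as in the planar proof, given any path $\gamma$ in $I^N$ joining $[x_N=0]$ to $[x_N=1]$, the stretching yields a subinterval $[t',t'']$ on which $\gamma$ takes values in $\mathcal H$, $\phi\circ\gamma$ takes values in $I^N$ and $\phi_N(\gamma(t')),\phi_N(\gamma(t''))$ are $0$ and $1$ in some order, so that Bolzano applied to $t\mapsto\phi_N(\gamma(t))-\gamma_N(t)$ produces a point of $V$ on $\gamma$; hence $V$ cuts the arcs between $[x_N=0]$ and $[x_N=1]$ in $I^N$.

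For each of the remaining $N-1$ coordinate directions I would manufacture an auxiliary cutting surface that does not rely directly on stretching. Since $\mathcal H$ is closed in the normal space $I^N$, Tietze's theorem extends every $\phi_j$ ($j=1,\dots,N-1$) to a continuous $\tilde\phi_j\colon I^N\to\mathbb R$; setting $\bar\phi_j:=\eta_{[0,1]}\circ\tilde\phi_j\colon I^N\to[0,1]$, with $\eta_{[0,1]}$ as in \eqref{eq-pri}, define
$$\mathcal S_j:=\{x\in I^N\,:\ \bar\phi_j(x)=x_j\},\qquad j=1,\dots,N-1.$$
The map $t\mapsto \bar\phi_j(\gamma(t))-\gamma_j(t)$ is non-negative on $[x_j=0]$ and non-positive on $[x_j=1]$ because $\bar\phi_j$ is $[0,1]$-valued, so Bolzano shows that $\mathcal S_j$ cuts the arcs between $[x_j=0]$ and $[x_j=1]$ in $I^N$.

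Applying Corollary~\ref{cor-jo} to $(I^N,\Id)$ with $k=N$ and the cutting surfaces $\mathcal S_1,\dots,\mathcal S_{N-1}$ produces a compact connected set $\mathcal C\subseteq\bigcap_{j=1}^{N-1}\mathcal S_j$ meeting both $[x_N=0]$ and $[x_N=1]$. Since $I^N$ is locally compact and $V$ is a closed set cutting the arcs between the same two faces, Lemma~\ref{lem-appr} delivers $z\in \mathcal C\cap V$. Membership in $V$ forces $z\in\mathcal H$, $\phi(z)\in I^N$ and $\phi_N(z)=z_N$; in particular $\phi_j(z)\in[0,1]$, so $\bar\phi_j(z)=\tilde\phi_j(z)=\phi_j(z)$, while $z\in\mathcal S_j$ gives $\bar\phi_j(z)=z_j$ for every $j<N$. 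Combining these yields $\phi(z)=z$, and $h(z)\in\mathcal K$ is the required fixed point of $\psi$.

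The hard part will be the construction of the transverse cutting surfaces $\mathcal S_j$ for $j<N$, since the stretching hypothesis is genuinely one-dimensional: it controls only paths joining the $x_N$-faces. The Tietze extension together with the truncation $\eta_{[0,1]}$ is the device that enforces the sign conditions needed for the cutting property in the other directions, and the final identification $\bar\phi_j(z)=\phi_j(z)$ relies crucially on the membership $z\in V$, which supplies the indispensable $\phi(z)\in I^N$.
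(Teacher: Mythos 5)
Your proposal is correct and takes essentially the same route as the paper: conjugate to the unit cube, Tietze-extend the conjugated map to build cutting surfaces in the non-privileged directions, apply Corollary~\ref{cor-jo} to obtain a continuum $\mathcal C$ joining the faces $[x_N=0]$ and $[x_N=1]$, and finish with Lemma~\ref{lem-appr}. Your one refinement is to package the stretching information in the privileged direction as an explicit cutting set $V$ and invoke the second conclusion of Lemma~\ref{lem-appr} outright, whereas the paper re-derives that conclusion inline by approximating $\mathcal C$ with paths $\gamma_\varepsilon$, applying Bolzano's theorem on each, and passing to the limit in $\varepsilon$; the two finishes are formally the same argument, with yours a touch more tidily organized and a closer mirror of the planar proof of Theorem~\ref{th-fp}. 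One small slip to fix: since $h^{-1}$ is defined only on $X$, the conjugated map $\phi:=h^{-1}\circ\psi\circ h$ actually lives on $\mathcal W:=h^{-1}\bigl(\mathcal K\cap\psi^{-1}(X)\bigr)$, not on all of $\mathcal H=h^{-1}(\mathcal K)$, so the Tietze extensions $\tilde\phi_j$ should be taken from $\mathcal W$ rather than $\mathcal H$; this is harmless for your argument because $V\subseteq\mathcal W$, so the crucial identification $\tilde\phi_j(z)=\phi_j(z)$ for $z\in V$ still holds, but it is exactly the reason the paper introduces $\mathcal W$ explicitly.
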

\begin{proof}
Let $h: I^N=[0,1]^N\to h\left(I^N\right)=X\subseteq Z$ be a homeomorphism such that $X_{\ell}= h([x_N=0]),\, X_{r}= h([x_N=1])$ and consider the compact subset of $I^N$
$${\mathcal W}:= h^{-1}({\mathcal K}\cap \psi^{-1}(X))$$
as well as the continuous mapping $\phi = (\phi_1,\dots,\phi_N): {\mathcal W}\to I^N$ defined by
$$\phi(x):= h^{-1}\bigl(\psi( h(x))\bigr ),\quad\forall\, x\in {\mathcal W}.$$
By Tietze-Urysohn Theorem \cite[p.87]{En-78} there exists a continuous map
$$\varphi = (\varphi_1,\dots,\varphi_N): I^N\to I^N\,,\quad \varphi\restriction_{\mathcal W}= \phi.$$
Let us introduce, for every $i = 1,\dots, N-1,$ the closed sets
$${\mathcal S}_i:= \left\{x=(x_1,\dots,x_{N-1},x_N)\in I^N\,: \, x_i = \varphi_i(x)\right\}\subseteq I^N.$$
Since $\varphi(I^N)\subseteq I^N,$ by the continuity of the $\varphi_i$'s, it is straightforward
to check that ${\mathcal S}_i$ cuts the arcs between $[x_i = 0]$ and $[x_i=1]$ in $I^N$
for $i=1,\dots,N-1.$ Indeed, if $\gamma: [0,1]\to I^N$ is a path with $\gamma_i(0) = 0$ and
$\gamma_i(1)=1,$ then, for the auxiliary function
$g: [0,1]\ni t\mapsto \gamma_i(t) - \varphi_i(\gamma(t)),$ we have $g(0) \leq 0 \leq g(1)$
and therefore Bolzano Theorem implies the existence of $s\in [0,1]$ such that
$\gamma_i(s) = \varphi_i(\gamma(s)),$ that is, ${\overline{\gamma}}\cap {\mathcal S}_i\ne\emptyset.$
The cutting property is thus proved.\\
Now Corollary \ref{cor-jo} guarantees that there is a continuum
\begin{equation}\label{eq-int}
{\mathcal C}\subseteq \bigcap_{i=1}^{N-1}{\mathcal S}_i
\end{equation}
such that
$${\mathcal C}\cap [x_N=0]\ne\emptyset,\quad {\mathcal C}\cap [x_N=1]\ne\emptyset.$$
Lemma \ref{lem-appr} implies that for every $\varepsilon > 0$ there exists a path
$\gamma_{\varepsilon}: [0,1]\to I^N$ such that
$$\gamma_{\varepsilon}(0) \in [x_N=0],\quad \gamma_{\varepsilon}(1) \in [x_N=1]\quad
\mbox{and }\; \gamma_{\varepsilon}(t)\in B({\mathcal C},\varepsilon)\cap I^N,\;\forall\, t\in [0,1].$$
By the stretching assumption \eqref{eq-fpn} and the definition of ${\mathcal W}$ and $\phi,$ there is a subpath
$\omega_{\varepsilon}$ of $\gamma_{\varepsilon}$ such that
$${\overline{\omega_{\varepsilon}}}\subseteq {\mathcal W}\;\mbox{ and }\;
\phi({\overline{\omega_{\varepsilon}}})\subseteq I^N,\; \mbox{ with }\;
\phi({\overline{\omega_{\varepsilon}}})\cap [x_N=0]\ne\emptyset,\;\;
\phi({\overline{\omega_{\varepsilon}}})\cap [x_N=1]\ne\emptyset.$$

\noindent
Bolzano Theorem applied to the continuous mapping $x\mapsto x_N - \varphi_N(x)$
on ${\overline{\omega_{\varepsilon}}}$ ensures the existence of a point
$${\tilde{x}\,}^{\varepsilon}= ({\tilde{x}\,}^{\varepsilon}_1,\dots,
{\tilde{x}\,}^{\varepsilon}_N)\in {\overline{\omega_{\varepsilon}}} \subseteq {\mathcal W}$$
such that
$${\tilde{x}\,}^{\varepsilon}_N = \varphi_N({\tilde{x}\,}^{\varepsilon}).$$
Taking $\varepsilon = \tfrac 1 n$ (for $n\in \mathbb N_0$) and letting $n\to \infty,$ by a standard compactness argument
we find a point
$${\tilde{x}} = ({\tilde{x}}_1,\dots,{\tilde{x}}_N)\in {\mathcal C} \cap {\mathcal W}$$
such that
$${\tilde{x}}_N = \varphi_N({\tilde{x}}).$$
By \eqref{eq-int}, recalling also the definition of the ${\mathcal S}_i$'s, we get
$${\tilde{x}} = \varphi({\tilde{x}})\in {\mathcal W}.$$
Then, since $\varphi\restriction_{\mathcal W}= \phi,$ by the relation
$$h( \phi(x) ) = \psi( h(x) ),\quad\forall\, x\in {\mathcal W},$$
we have that $h({\tilde{x}}) = \psi( h({\tilde{x}}) )\in h({\mathcal W})$ and therefore
$$z:= h({\tilde{x}})\in {\mathcal K}\cap \psi^{-1}(X)$$
is the desired fixed point for $\psi.$
\end{proof}

\noindent
Having proved Theorems \ref{th-invh} and \ref{th-fpn}, we have available the tools for
extending to any finite dimension the results obtained in Section \ref{sec-sap}
for the planar case \cite{PaZa-04a, PaZa-04b}. For brevity's sake we focus our attention only on
a few of them, that we present below in the more general setting. Notice that now we can complement the previous results
adding some information on the dimension of the cutting surfaces.

\clearpage

\begin{figure}[htbp]
\centering
\includegraphics[scale=0.37]{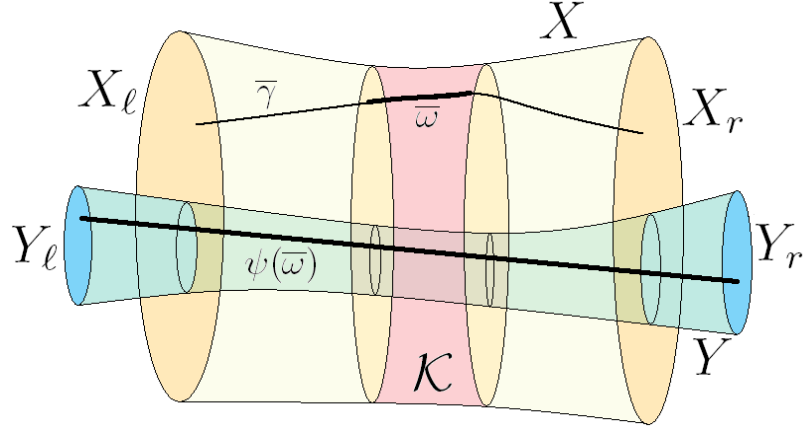}
\caption{\footnotesize {The tubular sets $X$ and $Y$
represent two generalized 3-dimensional rectangles, in which we
have indicated the compact set $\mathcal K$ and the boundary
sets $X_{\ell}$ and $X_{r},$ as well as $Y_{\ell}$ and $Y_{r}.$ The function $\psi$ is continuous on $\mathcal K$ and maps $X$ onto $Y=\psi(X),$ so that $Y_{\ell}=\psi(X_{\ell})$ and $Y_{r}=\psi(X_{r}).$ However, in this particular
case the map $\psi$ stretches the paths of $X$ not only across $Y,$
but also across $X$ itself and therefore the existence of a fixed
point for $\psi$ inside $\mathcal K$ is ensured by Theorem
\ref{th-fpn}.
}}
\end{figure}

\begin{theorem}\label{th-pern}
Let $Z$ be a metric space and $\psi: Z \supseteq D_{\psi}\to Z$ be a map defined on a set $D_{\psi}.$ Assume that
${\widetilde{X}}:= ({X},{X}^-)$
is an oriented $N$-dimensional rectangle of $Z.$ If ${\mathcal K_0},\dots,{\mathcal K_{m-1}}$
are $m\ge 2$ pairwise disjoint compact subsets of $X\cap D_{\psi}$ and
$$({\mathcal K}_i,\psi): {\widetilde{X}} \stretchx {\widetilde{X}}, \mbox{ for } i=0,\dots,m-1,$$
then the following conclusions hold:
\begin{itemize}
\item The map $\psi$ has at least a fixed point in ${\mathcal K}_i\,,\,i=0,\dots,m-1;$
\item For each two-sided sequence $(s_{h})_{h\in{\mathbb Z}}\in \{0,\dots,m-1\}^{\mathbb Z},$
there exists a sequence of points $(x_{h})_{h\in {\mathbb Z}}$ such that
$\psi(x_{h-1}) = x_{h}\in {\mathcal K}_{s_{h}}\,,\,\forall\, h\in {\mathbb Z}\,;$
\item For each sequence of $m$ symbols $\textbf{s}=(s_n)_n\in \{0,1,\dots,m-1\}^{\mathbb N},$
there exists a compact connected set ${\mathcal C}_{\textbf{s}}\subseteq {\mathcal K}_{s_0}$
which cuts the arcs between $X_{\ell}$ and $X_{r}$ in $X$ and such that,
for every $w\in {\mathcal C}_{\textbf{s}}\,,$
there is a sequence $(y_n)_{n}$ with $y_0 = w$ and
$$y_n \in {\mathcal K}_{s_n}\,,\;\; \psi(y_n) = y_{n+1}\,,\;\forall\, n\geq 0.$$
The dimension of ${\mathcal C}_{\textbf{s}}$ at any point is at least $N-1.$
Moreover, $\mbox{\rm dim}({\mathcal C}_{\textbf{s}}\cap \vartheta X)$ $\geq N - 2$ and
$\pi: h^{-1}({\mathcal C}_{\textbf{s}})\cap \partial I^N \to {\mathbb R}^{N-1}\setminus\{0\}$
is essential (where $\pi$ is defined as in \eqref{eq-prt} for $p_i=\tfrac 1 2,\,\forall\, i$ and $h$ is the homeomorphism defining $X$)\,;
\item Given an integer $j\ge 2$ and a $j+1$-uple $(s_0,\dots,s_j),\, s_i\in \{0,\dots,m-1\},$ for $i=0,\dots,j,$ and $s_0=s_j,$
then there exists a point $w\in{\mathcal K_{s_0}}$ such that
$${\psi}^{i}(w)\in {\mathcal K}_{s_i}\,,\,\forall i=1,\dots,j, \,\mbox{ and } \,\, {\psi}^{j}(w)=w.$$
\end{itemize}
\end{theorem}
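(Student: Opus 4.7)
The plan is to follow closely the strategy of Theorems \ref{th-per} and \ref{th-comp}, substituting Theorem \ref{th-fpn} for Theorem \ref{th-fp} and the cutting-surface machinery of Subsection \ref{sub-cs} for the planar Crossing Lemma. The first bullet follows at once from Theorem \ref{th-fpn} applied to each pair $({\mathcal K}_i,\psi)$. For the fourth bullet, I would first record the $N$-dimensional version of Lemma \ref{lem-comp}: the stretching-along-paths relation is preserved under composition of mappings, with a proof verbatim identical to the planar one since only the definition of stretching is invoked. Iterating this composition result $j$ times along the closed itinerary $(s_0,\dots,s_j=s_0)$ then yields $({\mathcal H},\psi^j): {\widetilde{X}} \stretchx {\widetilde{X}}$, where
$${\mathcal H}:=\{z \in {\mathcal K}_{s_0}: \psi^i(z)\in {\mathcal K}_{s_i},\, i=1,\dots,j-1\},$$
and Theorem \ref{th-fpn} furnishes the periodic point in ${\mathcal H}$.

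The third bullet carries the main content. Given an itinerary ${\mathbf s}=(s_n)_n$, I will define
$${\mathcal S}_{\mathbf s}:=\{z\in {\mathcal K}_{s_0}: \psi^n(z)\in {\mathcal K}_{s_n}\;\text{for all}\; n\ge 0\},$$
a compact subset of $X$, and show that ${\mathcal S}_{\mathbf s}$ cuts the arcs between $X_\ell$ and $X_r$ in $X$. This cutting property would be established by a nested-sequence argument modelled on the proof of Theorem \ref{th-comp}: given any path $\gamma_0:[0,1]\to X$ joining $X_\ell$ to $X_r$, the hypothesis $({\mathcal K}_{s_0},\psi): {\widetilde X}\stretchx {\widetilde X}$ provides a subinterval $[t_1',t_1'']\subseteq[0,1]$ on which $\gamma_0$ takes values in ${\mathcal K}_{s_0}$ and such that $\psi\circ\gamma_0$ restricted to $[t_1',t_1'']$ joins the two sides of $X^-$; iterating this procedure with $\psi$ replaced by $\psi^2,\psi^3,\dots$ yields decreasing nonempty compact sets $\Gamma_n\subseteq\overline{\gamma_0}$ whose points have the correct orbit placement through step $n$. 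Compactness then gives $\bigcap_n\Gamma_n\ne\emptyset$, and any point of this intersection lies in ${\mathcal S}_{\mathbf s}\cap\overline{\gamma_0}$.

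With the cutting property of ${\mathcal S}_{\mathbf s}$ secured, I would extract ${\mathcal C}_{\mathbf s}$ by invoking Theorem \ref{th-invh} with $n=1$ and the single index chosen to correspond to the expansive direction. This immediately delivers a compact connected set ${\mathcal C}_{\mathbf s}\subseteq{\mathcal S}_{\mathbf s}$ having dimension at least $N-1$ at every point, satisfying $\dim({\mathcal C}_{\mathbf s}\cap\vartheta X)\ge N-2$, and with the projection $\pi$ essential on $h^{-1}({\mathcal C}_{\mathbf s})\cap\partial I^N$. The step I expect to require most care is showing that ${\mathcal C}_{\mathbf s}$ itself retains the cutting property between $X_\ell$ and $X_r$: intuitively, the essentiality of $\pi$ forces ${\mathcal C}_{\mathbf s}$ to topologically separate the two faces, but one needs a careful degree-theoretic argument to convert this into the concrete statement that every path from $X_\ell$ to $X_r$ actually meets ${\mathcal C}_{\mathbf s}$. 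The remaining second bullet will then follow from the third by the standard diagonal extraction used at the end of the proof of Theorem \ref{th-comp}, producing bi-infinite orbits from one-sided ones.
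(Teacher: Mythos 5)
Your proposal follows the same route the paper takes: the paper in fact derives Theorem \ref{th-pern} from the more general Theorem \ref{th-compn}, whose proof it describes in one sentence as ``steps analogous to the ones in the proof of Theorem \ref{th-comp}, using Theorem \ref{th-fpn} in place of Theorem \ref{th-fp} and recalling Definition \ref{def-cut},'' with ``the estimates on the dimension of the continuum \dots from Theorem \ref{th-invh}.'' Your bullet-by-bullet plan (Theorem \ref{th-fpn} for the fixed points, the $N$-dimensional version of Lemma \ref{lem-comp} plus Theorem \ref{th-fpn} for the periodic points, the nested compact-set argument to show ${\mathcal S}_{\mathbf s}$ cuts the arcs, Theorem \ref{th-invh} with $n=1$ to extract the connected set with the stated dimension and essentiality, and the diagonal extraction for the two-sided orbits) is exactly that outline, and you were right to not re-prove Lemma \ref{lem-comp} in the $N$-dimensional setting, since its proof uses nothing but Definition \ref{def-sapn}.

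The one point you flag as delicate is a fair observation, and worth recording: Theorem \ref{th-invh} as stated furnishes a compact connected subset ${\mathcal Z}\subseteq {\mathcal S}_{\mathbf s}$ with the dimension bounds and the essentiality of $\pi\restriction_{h^{-1}({\mathcal Z})\cap\partial I^N}$, but it does not literally say that ${\mathcal Z}$ itself cuts the arcs between $X_\ell$ and $X_r$. The paper's proof of Theorem \ref{th-compn} does not address this either; it only invokes Theorem \ref{th-invh} for the dimension estimates, so this is a shared sketchiness, not a defect peculiar to your argument. If one wants the cutting property of ${\mathcal C}_{\mathbf s}$ explicitly, the essentiality of $\pi$ is indeed the right degree-theoretic input, but turning it into ``every path from $X_\ell$ to $X_r$ meets ${\mathcal C}_{\mathbf s}$'' requires a separate linking-type argument that neither you nor the paper supplies in detail. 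For the present theorem this does not affect the conclusions that carry the dynamical weight (the existence of the connected set of points with the prescribed forward itinerary, its dimension, and the essentiality statement), so your proposal should be regarded as matching the paper's proof at the paper's own level of rigor, with the subtlety correctly identified.
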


\noindent
Like in the two-dimensional framework, such result can be easily obtained as a corollary of the more general:
\begin{theorem}\label{th-compn}
Let $({\widetilde{X}_i})_{i\in {\mathbb Z}}\,\,($with ${\widetilde{X}_i} = (X_i,X^-_i)\,)$ be a double sequence of oriented $N$-dimensional rectangles of a metric space $Z$ and let $({\mathcal K}_i,\psi_i)_{i\in {\mathbb Z}}\,,$
with ${\mathcal K}_i \subseteq X_i\,,$ be a sequence such that
$$({\mathcal K}_i,\psi_i): {\widetilde{X}_i}  \stretchx {\widetilde{X}_{i+1}}\,,\quad\forall\, i\in {\mathbb Z}.$$
Let us denote by
$X^{i}_{\ell}$ and $X^{i}_{r}$ the two components of $X_i^-\,.$
Then the following conclusions hold:
\begin{itemize}
\item There is a sequence $(w_k)_{k\in {\mathbb Z}}$ such that $w_k\in {\mathcal K}_k$
and $\psi_k(w_k) = w_{k+1}\,,$ for all ${k\in {\mathbb Z}}\,;$
\item For each $j\in {\mathbb Z}$ there exists a compact connected set ${\mathcal C}_j\subseteq {\mathcal K}_j$
which cuts the arcs between $X^{j}_{\ell}$ and $X^{j}_{r}$ in $X_j$ and such that, for every $w\in {\mathcal C}_j\,,$
there is a sequence $(y_i)_{i\geq j}$ with $y_j = w$ and
$$y_i \in {\mathcal K}_i\,,\;\; \psi_{i}(y_i) = y_{i+1}\,,\;\forall\, i\geq j.$$
The dimension of ${\mathcal C}_j$ at any point is at least $N-1.$
Moreover, $\mbox{\rm dim}({\mathcal C}_j\cap \vartheta X_j)$ $\geq N - 2$ and
$\pi: h_j^{-1}({\mathcal C}_j)\cap \partial I^N \to {\mathbb R}^{N-1}\setminus\{0\}$
is essential (where $\pi$ is defined as in \eqref{eq-prt} for $p_i=\tfrac 1 2,\,\forall\, i$ and $h_j$ is the homeomorphism defining $X_j$);
\item If there are integers $h$ and $l,$ with $h < l,$ such that ${\widetilde{X}_h} = {\widetilde{X}_{l}}\,,$
then there exists a finite sequence $(z_i)_{h\leq i\leq l-1}\,,$ with $z_i\in {\mathcal K}_i$ and
$\psi_i(z_i) = z_{i+1}$ for each $i=h,\dots,l-1,$ such that $z_{l} = z_h\,,$ that is,
$z_h$ is a fixed point of $\psi_{l-1}\circ\dots\circ\psi_{h}\,.$
\end{itemize}
\end{theorem}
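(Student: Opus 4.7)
The plan is to prove the three conclusions in reverse order, following exactly the structure of the planar Theorem~\ref{th-comp} but upgrading each two-dimensional ingredient to its $N$-dimensional counterpart: Theorem~\ref{th-fp} becomes Theorem~\ref{th-fpn}, and the Crossing Lemma~\ref{lem-cr} is replaced by Lemma~\ref{lem-es} (or, equivalently, Theorem~\ref{th-invh}) applied with $n=1.$

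For the third conclusion I would first observe that Lemma~\ref{lem-comp} carries over verbatim to the $N$-dimensional setting, because Definition~\ref{def-sapn} coincides with Definition~\ref{def-sap} once the ambient rectangles are allowed to be higher-dimensional and the stretching condition is phrased purely in terms of paths. A routine induction on $l-h$ then gives
$$
\bigl(\mathcal{H},\,\psi_{l-1}\circ\cdots\circ\psi_h\bigr): \widetilde{X}_h \stretchx \widetilde{X}_l,
$$
where $\mathcal{H}:=\{z\in\mathcal{K}_h: \psi_i\circ\cdots\circ\psi_h(z)\in\mathcal{K}_{i+1},\;h\le i\le l-1\}.$ Since $\widetilde{X}_h = \widetilde{X}_l,$ Theorem~\ref{th-fpn} produces a point $z_h\in\mathcal{H}$ fixed by $\psi_{l-1}\circ\cdots\circ\psi_h,$ whose forward trajectory visits the prescribed $\mathcal{K}_i$'s by the very definition of $\mathcal{H}.$

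For the second conclusion I would assume $j=0$ without loss of generality and set
$$
\mathcal{S}:=\{z\in\mathcal{K}_0: \psi_n\circ\cdots\circ\psi_0(z)\in\mathcal{K}_{n+1},\;\forall\,n\ge 0\}.
$$
Pick any path $\gamma_0:[0,1]\to X_0$ with $\gamma_0(0)\in X^0_\ell$ and $\gamma_0(1)\in X^0_r,$ and iteratively define $\gamma_{k+1}:=\psi_k\circ\gamma_k$ on a decreasing nested sequence of compact subintervals $[t'_k,t''_k]$ exactly as in the proof of Theorem~\ref{th-comp}. The compact sets $\Gamma_{k+1}:=\gamma_0([t'_{k+1},t''_{k+1}])$ then satisfy $\Gamma_1\supseteq\Gamma_2\supseteq\dots$ and $\bigcap_k\Gamma_k\ne\emptyset,$ with every point of this intersection lying in $\mathcal{S}.$ Hence $\mathcal{S}$ cuts the arcs between $X^0_\ell$ and $X^0_r$ in $X_0.$ Transferring through $h_0^{-1}$ to the unit cube and applying Theorem~\ref{th-invh} with $n=1$ and $i_1=N$ (legitimate by Remark~\ref{rem-conv}), I would extract a compact connected subset $\mathcal{Z}\subseteq h_0^{-1}(\mathcal{S})$ of local dimension at least $N-1,$ with $\dim(\mathcal{Z}\cap\partial I^N)\ge N-2$ and essential boundary projection $\pi.$ The set $\mathcal{C}_j:=h_0(\mathcal{Z})$ is then the required continuum; every $w\in\mathcal{C}_j$ generates a valid forward orbit because $w\in\mathcal{S}.$ The first conclusion follows from the second by the standard diagonal compactness argument extending one-sided orbits consistent with arbitrarily long symbol blocks $(s_{-n},\dots,s_n)$ to bi-infinite orbits, and is identical to its planar analog.

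The main obstacle, in my view, is verifying that $\mathcal{C}_j$ genuinely \emph{cuts} the arcs between $X^j_\ell$ and $X^j_r,$ rather than merely sitting inside the arc-cutting set $\mathcal{S}.$ In the planar case the Crossing Lemma directly delivers a continuum which, by a Jordan-curve-type argument, automatically cuts the transverse arcs; in higher dimension $\mathcal{C}_j$ is only one connected component of a zero-set, and a priori some other component could carry all the ``cutting mass.'' I would close this gap by contradiction: if some path $\gamma$ from $X^j_\ell$ to $X^j_r$ avoided $\mathcal{C}_j,$ then $h_j^{-1}(\mathcal{C}_j)\cap\partial I^N$ would admit a deformation in $I^N\setminus h_j^{-1}(\mathcal{C}_j)$ that contradicts the essentiality of $\pi$ into $\mathbb{R}^{N-1}\setminus\{0\}$ supplied by Theorem~\ref{th-invh}. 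Making this homotopical step precise, and transporting it cleanly through $h_j$ back into the ambient space $Z,$ is the genuinely delicate point of the argument.
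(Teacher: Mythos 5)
Your proposal reproduces the paper's one-line proof of Theorem~\ref{th-compn} faithfully: prove the conclusions in reverse order exactly as in Theorem~\ref{th-comp}, using Theorem~\ref{th-fpn} in place of Theorem~\ref{th-fp} and Theorem~\ref{th-invh} (with $n=1$ and active index $N$, legitimate by Remark~\ref{rem-conv}) for the existence and dimension estimates of $\mathcal C_j$; the composition step via Lemma~\ref{lem-comp} and the closing diagonal argument carry over verbatim, and up to the second bullet your argument and the paper's coincide.

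The subtlety you flag --- whether $\mathcal C_j$ genuinely cuts the arcs between $X^j_\ell$ and $X^j_r$, rather than merely being a continuum sitting inside the cutting set $\mathcal S$ --- is real, and the paper's proof is just as silent on it as you fear. In the plane, the Crossing Lemma~\ref{lem-cr} delivers a continuum joining $\mathcal R^+_d$ and $\mathcal R^+_u$, and the classical two-dimensional separation theorem (a continuum joining two opposite sides of a square separates the other two) upgrades ``joins the $[\cdot]^+$ sides'' to ``cuts the arcs between the $[\cdot]^-$ sides'', which is the formulation Theorem~\ref{th-compn} adopts. For $N\ge 3$ and $n=1$ no such automatic upgrade is available: the three conclusions of Theorem~\ref{th-invh}, read literally --- local dimension $\ge N-1$, $\dim(\mathcal Z\cap\vartheta X)\ge N-2$, and essentiality of $\pi$ on the boundary trace --- do not force cutting. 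Already in $I^2$ a polygonal ``bridge'' meeting $\partial I^2$ at two points of opposite $\pi$-sign satisfies all three yet is missed by any path running near the left edge; for $N=2$ the bridge is excluded only because Lemma~\ref{lem-cr} and Theorem~\ref{th-z} independently give the joining conclusion, but for $n=1<N-1$ neither applies, and the paper cites only Theorem~\ref{th-invh}. Your proposed homotopy-contradiction is the right instinct (essentiality of $\pi$ is the only topological invariant available), but as written it is not yet a proof: the obstruction lives on $h_j^{-1}(\mathcal C_j)\cap\partial I^N$, whereas an avoided path is interior to $I^N$, and you have not explained how its existence produces a null-homotopy of that boundary restriction inside $\mathbb R^{N-1}\setminus\{0\}$. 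Closing this cleanly would require either the full degree/linking content of the Fitzpatrick--Massab\'o--Pejsachowicz theorem or a separate $N$-dimensional separation result, neither of which the paper records. Fortunately the cutting conclusion is a side remark not used downstream --- Theorem~\ref{th-ch} and its consequences invoke only the second and fourth bullets of Theorem~\ref{th-pern} --- so the gap you identified, while genuine, is localized.
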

\begin{proof}
The thesis follows by steps analogous to the ones in the proof of Theorem \ref{th-comp}, using Theorem
\ref{th-fpn} in place of Theorem \ref{th-fp} and recalling Definition \ref{def-cut}. The estimates on the
dimension of the continuum ${\mathcal C}_j$ come from Theorem \ref{th-invh}.
\end{proof}

\noindent
As a final remark, we observe that, instead of maps expansive just along one direction, it is possible to deal with the more general case of functions defined on $N$-dimensional rectangles (or homeomorphic images of them) with $u$ unstable directions (along which an expansion occurs) and $s=N-u$ stable directions (along which the system is compressive). With this respect, many works are available in the literature, like, for instance, \cite{BaCs-07,BaCs-08,GiRo-03,Ka-00,PoSzMI-01,ZgGi-04}.
In particular, we recall that a similar framework was analyzed in \cite{PiZa-05}, where the author and Zanolin introduced a ``deformation relation'' among generalized $N$-dimensional rectangles with some similarities to the stretching property in Definition \ref{def-sapn}. Indeed also that relation could be viewed (cf. \cite[Remark 3.4]{PiZa-05}) as an higher-dimensional counterpart of the planar stretching property in Definition \ref{def-sap} from \cite{PaZa-04a, PaZa-04b} and moreover it was suitable for the detection of fixed points, periodic points and chaotic dynamics, too. However, its definition required some assumptions on the topological degree of the map involved, while the conditions in Definition \ref{def-sapn} do not need any sophisticated topological tool.
Therefore, it is in this perspective of
simplicity that the above results from \cite{PiZa-07} have to be considered.

\chapter[Chaotic dynamics]{Chaotic dynamics for continuous maps}\label{ch-ch}

In the present chapter we show how to apply the method of
``stretching along the paths'' from Chapter \ref{ch-me} to prove the presence of chaotic dynamics in discrete-time dynamical systems. In order to do that, we first set our framework among more classical ones available in the literature, concerning the concepts of ``covering'' and ``crossing''. Indeed, this preliminary introductory discussion is meant to give an outline of the reasons that led various authors to deal with similar settings, as well as to quickly present the topics developed along Sections \ref{sec-de}--\ref{sec-ltm}, where the undefined terms will be rigorously explained.\\
Generally speaking, in the investigation of a dynamical system defined by a map $f$
on a metric space, it can be difficult
or impossible to find a direct proof of the presence of chaos, according to one or the other of the several, more or less equivalent, existing definitions. Hence, a canonical strategy is
to establish a conjugacy or semi-conjugacy
between $f,$ perhaps restricted to a suitable (positively) invariant subset of its domain,
and the shift map $\sigma:(s_i)_{i\in{\mathbb I}}\mapsto (s_{i+1})_{i\in{\mathbb I}}$ on
$\{0,\dots,m-1\}^{\mathbb I},$ for ${\mathbb I}={\mathbb N}$ or ${\mathbb I}={\mathbb Z},$ i.e. on the space of unilateral or bilateral sequences of $m\geq 2$ symbols,
which displays many chaotic features, like transitivity, sensitivity, positive entropy, etc. In such indirect manner, one can conclude that
$f$ possesses all the properties of $\sigma$ that are preserved by the conjugacy/semi-conjugacy
relation. In particular, the known fact (cf. Section \ref{sec-sd})
that $h_{\rm top}(f^n)=nh_{\rm top}(f)$ for $n\geq 1,$
where $h_{\rm top}(f)$ denotes the topological entropy of $f,$ guarantees that if a power
of a map is conjugate or semi-conjugate to $\sigma,$ then such map has positive entropy.
Accordingly, to show the existence of chaos (positive entropy) of a map $f$ it is
sufficient to look at any of its iterates.\\
A crucial role in this strategy is played by the study of a
topological property for maps which is called  ``crossing'' and,
broadly speaking, refers to the way in which the image of suitable sets under the
iterates of the maps
intersect the  original sets. In particular, the kind of
crossing known as  ``horseshoe'' has turned out to be a fundamental tool
of analysis to prove the existence of chaotic dynamics in a
well-defined manner. The name and the idea of horseshoe derive
from the celebrated article by Smale \cite{Sm-65} that provided a
mathematically rigorous and geometrically suggestive proof of the
existence of chaos for a special planar map. \\
More precisely (see \cite{De-89, Mo-73, Wi-88}
for the technical details),
in the horseshoe model \index{horseshoe, Smale} a square $S$ is first shrunk
uniformly in one direction and expanded in the other one.
Subsequently,  the elongated rectangle obtained in the previous step
is bent along the original square in order to cross it twice.
The resulting map $F$ is a diffeomorphism with $F$ and
$F^{-1}$ transforming the vertical and horizontal lines of the
square into similar lines crossing the domain.
In Smale's construction, the set
${\mathcal I}_S:=\{q\in S: F^k(q)\in S, \;\forall\, k\in {\mathbb Z}\},$
consisting of the points which remain in the square under all the
iterates of $F$ in both forward and backward time, is a compact
invariant set for $F$ which contains as a dense subset the
periodic points of $F$ and such that $F$ is sensitive on initial conditions and topologically
transitive on ${\mathcal I}_S.$ Actually, $F$ acts on ${\mathcal I}_S$ like the
shift map $\sigma$ on two symbols since $F\restriction_{{\mathcal I}_S}$ and $\sigma$ are conjugate.  As mentioned above, the conjugacy, which is given by a
homeomorphism $\pi:{\mathcal I}_S\to \{0,1\}^{\mathbb Z},$ with $\pi\circ
F=\sigma\circ \pi,$ allows to transfer to $F\restriction_{{\mathcal I}_S}$ the
well-known dynamical properties of the Bernoulli shift.\\
In the applications to concrete dynamical systems,
the presence of a complex behaviour for a given map $F$ can be verified by proving
the existence of a horseshoe structure either for the map itself
or for one of its iterates. This led some authors (see
\cite{BuWe-95}) to define a \textit{horseshoe} like a set
$\Lambda$ that is invariant under $F^{\,n_0},$ for some $n_0\geq
1,$ with the property that $F^{\,n_0}\restriction_{\Lambda}$ is topologically
conjugate to $\sigma$ (or, more generally, to a nontrivial
subshift of finite type). As pointed out by Burns and Weiss in
\cite{BuWe-95}, the difficulty in finding a horseshoe lies in
showing that the map $\pi: \Lambda\to \{0,1\}^{\mathbb Z}$ is injective. Such step,
in turns, often requires the verification of some assumptions on
$F,$ like being a diffeomorphism satisfying suitable hyperbolicity
conditions, which either are not fulfilled or are hard to check
for a given map.
Hence, more general and less stringent definitions of horseshoe have been
suggested to reproduce some geometrical features typical of
the Smale horseshoe while discarding the hyperbolicity conditions
associated with it, which are difficult or impossible to prove in practical cases.
This led to the study of the so-called ``topological horseshoes'' \index{horseshoe, topological}
\cite{BuWe-95,KeYo-01,ZgGi-04}.\\
The core of such field of research consists in providing an adequate
notion of crossing property for higher dimensional
dynamical  systems, so that a  map $F$ (or one of its iterates) satisfying certain geometrical
conditions is proved to be semi-conjugate to a full shift on $m\ge 2$ symbols.
This is enough to conclude that $F$ displays chaotic dynamics in the
sense that $F$ has, for instance, positive topological entropy.
More generally, one could show that $F$ is semi-conjugate
to a nontrivial subshift of finite type, by employing some tools from symbolic dynamics
\cite{Ki-98,LiMa-95}, as explained in Section \ref{sec-sd}.\\
In regard to the one-dimensional case, a classical example of horseshoe-type crossing property relies on the definition of ``covering''\index{covering relation}. We recall that,
according to \cite{BlGu-80}, given a continuous map $f: {\mathbb R}\to {\mathbb R}$
and two intervals $I,J\subseteq
{\mathbb R},$ we say that
\textit{$I$ $f$-covers $J$} if there exists a subinterval
$I_0\subseteq I$ such that $f(I_0) = J.$
We also say that
\textit{$I$ $f$-covers $J$ $m$ times} if there exist $m\ge 2$
subintervals $I_0\,,\dots, I_{m-1}\subseteq I,$ with pairwise
disjoint interiors, such that $f(I_{k}) = J,$ for $k=0,\dots,m-1.$
Special significance from the point of view of complex dynamics is assigned to the case in which $I$ $f$-covers $I$ $m$ times.
Different authors have investigated possible variants of the above definitions,
dealing with the framework in which there are $m\geq 2$
intervals $I_{0}\,,\dots,I_{m-1}$ such that
$I_i$ $f$-covers $I_j,$ for each $i,j\in\{0,\dots,m-1\},$ and some disjointness condition is imposed on them. For instance, the case $m=2$ with $I_0\,,$ $I_1$ open disjoint intervals
was taken by Glendinning in \cite{Gl-94} as a definition of
``horseshoe'' for $f,$ while Block and Coppel in \cite{BlCo-86,BlCo-92} define ``turbulence'' \index{turbulence} for $f$ the similar framework when $I_0$ and $I_1$ are compact intervals with disjoint interiors and ``strict turbulence''\index{turbulence, strict} when $I_0,\,I_1$ are compact and disjoint.
In such settings one obtains the typical features associated to the
concept of chaos (like, e.g., existence of periodic points of each period,
semi-conjugacy to the Bernoulli shift for the map $f$ or for some of its iterates and thus positive topological entropy,
ergodicity with respect to some invariant measure, sensitivity to initial conditions, transitivity). Besides the works already quoted,
see \cite{AuKi-01,BlTe-03,LaYo-77,Ru-87}.\\
An elementary introduction to these ideas is provided  by the
analysis of the logistic map $F:[0,1]\to\mathbb R,$ $F(x)=\mu x(1-x),$
with $\mu$ a positive real parameter.
It is easy to see that for
$\mu\ge 4$ the interval  $[0,1]$  $F$-covers $[0,1]$ twice,
whereas for
$\mu < 4$ the existence of multiple coverings can be established for some iterate of $F,$
as shown in Section \ref{sec-sd}.
The concepts  of covering and horseshoe can also be used to revisit some classical results in the theory of
one-dimensional unimodal maps, such as Li-Yorke celebrated Theorem \cite{LiYo-75}. For instance, \cite[Theorem 11.13]{Gl-94} shows that,
whenever a unimodal map $f$ has a point of period three, then its second iterate $f^2$ has a horseshoe.
However, it is in the higher-dimensional frameworks that the approach described above shows its full power.\\
The most natural way to extend the notion of multiple $f$-covering to the abstract setting
of a continuous self-map $f$ of a metric space $X$
may be that of assuming the existence of $m\ge 2$ compact sets $K_0\,,\dots,K_{m-1}\subseteq X$
such that $f(K_i)\supseteq \bigcup_{j=0}^{m-1} K_{j},\,\forall\, i=0,\dots,m-1.$
Moreover, in order to avoid trivial conclusions, one usually requires some disjointness conditions on the sets
$K_i$'s, such as
$K_i\cap K_j=\emptyset,
\;\forall\, i\ne j\, \left(\mbox{or, more generally, }\; \bigcap_{i=0}^{m-1} K_i=\emptyset\,\right).$
Blokh and Teoh \cite{BlTe-03} describe such framework  by saying that
$(f, K_0\,,\dots,K_{m-1})$ form a $m$-horseshoe (or a weak $m$-horseshoe, respectively).
Under these conditions it is possible to show the existence of some chaotic behaviour for the discrete
dynamical system generated by $f$ (see \cite{LaYo-77,Ru-87}). In particular, in \cite{BlTe-03}
it is proved that if a power of $f$ admits a weak $m$-horseshoe, then the topological entropy of $f$
is positive and there exists a set $B$ and a power $g$ of $f$ such that $B$ is $g$-invariant
and $g\restriction_{B}$ is semi-conjugate to the Bernoulli shift on $\{0,\dots,m-1\}^{\mathbb N}.$\\
In spite of the generality of the setting in which such results can be obtained
and their effectiveness for the one-dimensional framework, when
one tries to apply the theory to specific higher dimensional mathematical models
arising in applications, it may be expedient to follow
Burns and Weiss' suggestion \cite{BuWe-95} and replace
the previous covering relation with a weaker condition
of the form
$f(K_i) \;\, \mbox{``\,goes across\,''}\, \bigcup_{j=0}^{m-1} K_{j},\,\forall\, i=0,\dots,m-1,$
which does not require the map $f$ to be surjective on the $K_i$'s.\\
In our approach the expression ``\,goes across\,'' (called also ``Markov property'' \cite[p.291]{HaKa-03}) has to be understood
in relation to the stretching along the paths effect of a map $f$ from Definition \ref{def-sap} or Definition \ref{def-sapn} \footnote{By the similarity between the results obtained in Chapter \ref{ch-me} for
the planar and the $N$-dimensional settings, for the sake of generality, we will generally work in the higher
dimensional framework, maybe not indicating the dimension, when no confusion may occur.}. More precisely,
we assume that each path $\gamma$ in our generalized ($N$-dimensional) rectangle ${\mathcal R},$ joining the two sides
of ${\mathcal R}^-,$ intercepts every $K_i$ and is then expanded to a path $f\circ\gamma$
which crosses all the $K_j$'s. Further details can be found in Section \ref{sec-sd}.\\
Different characterizations of the concept of
``crossing'' have been suggested by various authors in order to
establish the presence of complex dynamics for continuous maps
in higher dimensional spaces (see, for instance, \cite{KeKoYo-01,KeYo-01,MiMr-95a,Sr-00,Zg-96,Zg-01,ZgGi-04} and the references therein). In this respect, the already discussed approach by Kennedy, Ko\c cak and Yorke in \cite{KeKoYo-01} and
Kennedy and Yorke in \cite{KeYo-01} is perhaps the most general, both with regard to the spaces considered and the restrictions on the maps involved.
However, as we have seen, the great generality of such setting comes with a price, in the sense that the existence of periodic points is not ensured. Other authors (for example in \cite{MiMr-95a,Sr-00,SrWo-97,WoZg-00,Zg-96,Zg-01,ZgGi-04}) have developed theories of topological horseshoes more focused on the search of
fixed and periodic points for maps defined on subsets of the $N$-dimensional
Euclidean space. The tools employed
in these and related works range from the Conley index \cite{MiMr-95a}
to the Lefschetz fixed point theory \cite{Sr-00}
and the topological degree \cite{ZgGi-04}.\\
Our framework may be looked at as an
intermediate point of view between the theory of topological horseshoes
developed by Kennedy and Yorke in \cite{KeYo-01} and the above-mentioned
works based on some more or less sophisticated fixed point index theories.
Indeed, compared to \cite{KeYo-01}, the method of stretching along the paths,
thanks to its specialized framework, allows one to obtain sharper results concerning the
existence of periodic points. On the other hand, our approach,
although mathematically rigorous, avoids the use of more advanced topological theories
and it is relatively easy to apply to specific models arising
in applications.\\
In more details, in Section \ref{sec-de} we describe the chaotic features that we are able to obtain with our method. Some tools from symbolic dynamics are introduced in Section \ref{sec-sd}, in order to collate our achievements on chaotic dynamics with other related results descending from classical approaches. Such discussion on the various notions of chaos is pursued further in Section \ref{sec-var}.
An alternative geometrical context for the applicability of the stretching along the paths method is presented in Section \ref{sec-ltm}, where we deal with the so-called ``Linked Twist Maps''.

\section{Definitions and main results}\label{sec-de}
In the literature various different notions of complex dynamics can be found, so that one could say ``as many authors, as many definitions of chaos'' \cite{BlGl-02}. However, some common features are shared by several of these definitions, such as the unpredictability of the future behaviour of the system under consideration. In particular, one of the most natural notions of chaos is related to the possibility of realizing a generic coin-flipping experiment \cite{Sm-98}. The definition of
chaos that we choose is adapted from that considered by
Kirchgraber and Stoffer in \cite{KiSt-89} under the name of
\textit{chaos in the coin-tossing sense}\index{chaos! coin-tossing}. In fact, exactly like in
\cite{KiSt-89}, our definition concerns the
possibility of reproducing, via the iterates of a given map
$\psi,$ any coin-flipping sequence
of two symbols. On the other hand, our definition, when compared to \cite{KiSt-89},
is enhanced by the possibility of realizing periodic sequences of
two symbols by means of periodic points of $\psi.$
More formally, we have:

\begin{definition}\label{def-ch}
\rm{ Let $X$ be a metric space,  $\psi: X\supseteq D_{\psi} \to X$
be a map and let ${\mathcal D}\subseteq D_{\psi}\,.$ We say that
\textit{$\psi$ induces chaotic dynamics on two symbols \index{chaotic dynamics on two symbols} on the set
${\mathcal D}$} if there exist two nonempty disjoint
compact sets
$${\mathcal K}_0,\, {\mathcal K}_1\subseteq {\mathcal D},$$
such that, for each two-sided sequence $(s_i)_{i\in {\mathbb Z}}
\in  \{0,1\}^{\mathbb Z},$ there exists a corresponding sequence
$(w_i)_{i\in {\mathbb Z}}\in {\mathcal D}^{\mathbb Z}$ such that
\begin{equation}\label{eq-ch}
w_i \,\in\, {\mathcal K}_{s_i}\;\;\mbox{ and }\;\, w_{i+1} =
\psi(w_i),\;\; \forall\, i\in {\mathbb Z},
\end{equation}
and, whenever $(s_i)_{i\in {\mathbb Z}}$ is a $k$-periodic
sequence (that is, $s_{i+k} = s_i\,,\forall i\in {\mathbb Z}$) for
some $k\geq 1,$ there exists a corresponding $k$-periodic sequence
$(w_i)_{i\in {\mathbb Z}}\in {\mathcal D}^{\mathbb Z}$ satisfying
\eqref{eq-ch}. When we want to emphasize the role of the sets ${\mathcal
K}_j$'s, we also say that \textit{$\psi$ induces chaotic dynamics
on two symbols on the set ${\mathcal D}$ relatively to ${\mathcal
K}_0$ and ${\mathcal K}_1$}.}
\end{definition}

\noindent
To get a feel of such definition,
we can imagine to associate the name $\mbox{`` head ''}= H$
to the set ${\mathcal K}_0$ and the name $\mbox{`` tail ''}= T$ to ${\mathcal K}_1\,.$
If we consider any sequence of symbols
$$(s_i)_{i\in{\mathbb Z}}\in \{0,1\}^{\mathbb Z} \cong \{H,T\}^{\mathbb Z}$$
so that, for each $i,$
$s_i$ is either `` head '' or `` tail '', then we have the same itinerary of heads and tails
realized through the map $\psi.$ Namely, there exists a sequence $(w_i)_{i\in{\mathbb Z}}$
of points of the metric space $X$ which is a full orbit for $\psi,$ i.e.
$$w_{i+1} = \psi(w_i),\forall\, i\in{\mathbb Z},$$
and such that $w_i\in {\mathcal K}_0$ or $w_i\in {\mathcal K}_1,$ according to the fact that
the $i$-th term of the sequence $(s_i)_i$ is `` head '' or `` tail ''.
Moreover, as already remarked, our definition extends that in \cite{KiSt-89} in the sense that any periodic
sequence of heads and tails can be realized by suitable points which are periodic points for $\psi.$
For instance, there exists a fixed point of $\psi$ in the set ${\mathcal K}_1$
corresponding to the constant sequence of symbols $s_i= \mbox{`` tail ''},\,\forall\,i\in{\mathbb Z}.$
There is also a point $w\in {\mathcal K}_0$ of period three with $\psi(w) \in {\mathcal K}_0$
and $\psi^2(w)\in {\mathcal K}_1\,,$ corresponding to the periodic sequence
$\dots HHT\,\,HHT\,\,HHT \dots,$ and so on.

\smallskip

\noindent
We stress that we have presented our definition of chaos with reference to just two symbols in order
to make the connection with the coin-flipping process clearer. On the other hand, it is possible
to define \textit{chaotic dynamics on $m$ symbols}\index{chaotic dynamics on $m$ symbols} for an arbitrary integer $m\ge 2$ in a completely
analogous manner, when there are $m$ pairwise disjoint compact sets
$\mathcal K_0,\dots,\mathcal K_{m-1}$ acting as $\mathcal K_0$ and $\mathcal K_1$ in Definition \ref{def-ch}. More precisely we have:
\begin{definition}\label{def-chm}
\rm{ Let $X$ be a metric space,  $\psi: X\supseteq D_{\psi} \to X$
be a map and let ${\mathcal D}\subseteq D_{\psi}\,.$ Let also $m\ge 2$ be an integer.
We say that \textit{$\psi$ induces chaotic dynamics on $m$ symbols on the set
${\mathcal D}$} if there exist $m$ nonempty pairwise disjoint
compact sets
$${\mathcal K}_0,\dots, {\mathcal K}_{m-1}\subseteq {\mathcal D},$$
such that, for each two-sided sequence $(s_i)_{i\in {\mathbb Z}}
\in  \{0,\dots,m-1\}^{\mathbb Z},$ there exists a corresponding sequence
$(w_i)_{i\in {\mathbb Z}}\in {\mathcal D}^{\mathbb Z}$ such that
\begin{equation}\label{eq-chm}
w_i \,\in\, {\mathcal K}_{s_i}\;\;\mbox{ and }\;\, w_{i+1} =
\psi(w_i),\;\; \forall\, i\in {\mathbb Z}
\end{equation}
and, whenever $(s_i)_{i\in {\mathbb Z}}$ is a $k$-periodic
sequence (that is, $s_{i+k} = s_i\,,\forall i\in {\mathbb Z}$) for
some $k\geq 1,$ there exists a corresponding $k$-periodic sequence
$(w_i)_{i\in {\mathbb Z}}\in {\mathcal D}^{\mathbb Z}$ satisfying
\eqref{eq-chm}. When we want to emphasize the role of the sets ${\mathcal
K}_j$'s, we also say that \textit{$\psi$ induces chaotic dynamics
on $m$ symbols on the set ${\mathcal D}$ relatively to ${\mathcal
K}_0,\dots,{\mathcal K}_{m-1}$}.}
\end{definition}

\noindent
Such variant of Definition \ref{def-ch} will be of particular importance when dealing with Linked Twist Maps (see Section \ref{sec-ltm}). Moreover,
the latter generalization agrees with the kind of chaotic behaviour detected
in various papers, where the dynamical systems are investigated
using some topological tools related to fixed point theory
\cite{SrWo-97,WoZg-00}. Hence, in order to collate our notion of chaos to other ones available in the literature, from now on we will consider Definition \ref{def-chm} instead of Definition \ref{def-ch}.\\
In regard to the results in \cite{KeKoYo-01,KeYo-01}, we recall that the authors therein also
deal with chaotic dynamics meant as the possibility of realizing any sequence of symbols. The difference, as
it should be clear from the discussion in Section \ref{sec-sap}, resides in the possibility of realizing periodic coin-flipping
sequences through periodic itineraries. Nonetheless, the kind of chaos detected in \cite{KeKoYo-01,KeYo-01} for a map $f$ satisfying the ``\,horseshoe hypotheses $\Omega$\,'' in \cite{KeYo-01} with crossing number $m\ge 2$ allows to
prove the existence of a compact $f$-invariant set $Q_I$ (i.e. $f(Q_I)=Q_I$),
such that $f\restriction_{Q_I}$ is semi-conjugate to the one-sided shift on $m$ symbols.
Furthermore, in \cite[Lemma 4]{KeKoYo-01} (Chaos Lemma), the existence of a smaller compact invariant set $Q_{*}\subseteq Q_I$ is obtained,
on which the map $f$ is sensitive and such that each forward itinerary on $m$ symbols
is realized by the $f$-itinerary generated by some point of $Q_{*}.$\\
Before showing in Theorem \ref{th-cons} what we are able to prove in this direction, let us see which is the link between the
method of stretching along the paths from Chapter \ref{ch-me} and the notion of chaotic dynamics in the sense
of Definition \ref{def-chm}:

\begin{theorem}\label{th-ch}
Let ${\widetilde{\mathcal R}}:= ({\mathcal
R},{\mathcal R}^-)$ be an oriented ($N$-dimensional) rectangle of a metric space
$X$ and let $\mathcal D\subseteq \mathcal R\cap D_{\psi}\,,$
with $D_{\psi}$ the domain of a map
$\psi: X \supseteq D_{\psi}\to X.$ If
${\mathcal K_0},\dots,{\mathcal K_{m-1}}$ are $m\ge 2$ pairwise disjoint compact
sets contained in ${\mathcal D}$ and
\begin{equation*}
({\mathcal K}_i,\psi): {\widetilde{\mathcal R}} \stretchx {\widetilde{\mathcal R}}, \mbox{ for } i=0,\dots,m-1,
\end{equation*}
then $\psi$ induces chaotic dynamics on $m$ symbols on the set
${\mathcal D}$ relatively to ${\mathcal K}_0,\dots,
{\mathcal K}_{m-1}.$
\end{theorem}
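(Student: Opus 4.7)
The plan is to derive Theorem~\ref{th-ch} as a direct corollary of Theorem~\ref{th-per} (or Theorem~\ref{th-pern} in the $N$-dimensional case), which already packages both conclusions demanded by Definition~\ref{def-chm}. The whole verification will reduce to a bookkeeping check that the conclusions of that theorem match, item by item, the two requirements of the definition, after which the deeper content---which ultimately rests on Theorem~\ref{th-comp} and the fixed point Theorem~\ref{th-fp}---can be invoked as a black box.

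First, I would address the realization of an arbitrary bilateral itinerary. Given any $(s_i)_{i\in\mathbb Z}\in\{0,\dots,m-1\}^{\mathbb Z}$, the second bullet of Theorem~\ref{th-per} directly supplies a sequence $(w_i)_{i\in\mathbb Z}$ with $\psi(w_{i-1})=w_i\in\mathcal K_{s_i}$ for every $i\in\mathbb Z$. Since $\mathcal K_{s_i}\subseteq\mathcal D$ by hypothesis, this is precisely condition~\eqref{eq-chm}.

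Second, for a $k$-periodic itinerary $(s_i)_{i\in\mathbb Z}$ with $s_{i+k}=s_i$, I would apply the fourth bullet of Theorem~\ref{th-per} to the $(k+1)$-tuple $(s_0,s_1,\dots,s_{k-1},s_0)$, which satisfies the endpoint condition $s_0=s_k$ required there. This yields a point $w\in\mathcal K_{s_0}$ with $\psi^{i}(w)\in\mathcal K_{s_i}$ for $i=1,\dots,k-1$ and $\psi^{k}(w)=w$. Setting $w_i:=\psi^{i}(w)$ for $0\le i<k$ and extending bilaterally by $w_{i+k}:=w_i$ produces a $k$-periodic full orbit of $\psi$ with $w_i\in\mathcal K_{s_i}\subseteq\mathcal D$ for every $i\in\mathbb Z$, as required. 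The degenerate case $k=1$ is handled separately by the first bullet of Theorem~\ref{th-per}, which provides a fixed point of $\psi$ inside each $\mathcal K_i$.

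There is essentially no obstacle in this argument; the only mild pitfall is the reindexing between the one-sided tuple $(s_0,\dots,s_k)$ used in Theorem~\ref{th-per} and the bilateral periodic sequence in Definition~\ref{def-chm}, which is harmless thanks to periodicity. All genuine difficulty has been absorbed into the prior work: the stretching along the paths method and the Crossing Lemma via Theorem~\ref{th-fp}, and the diagonal/compactness argument promoting one-sided itineraries to two-sided ones in Theorem~\ref{th-comp}. Consequently the present theorem is best presented as an immediate consequence, with the argument above serving more as a translation between two pieces of terminology than as a proof in its own right.
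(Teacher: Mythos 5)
Your proposal is correct and takes essentially the same approach as the paper, which simply observes that the thesis is a reformulation of the second and fourth conclusions of Theorem~\ref{th-per} (resp.\ Theorem~\ref{th-pern}). You have merely spelled out the bookkeeping that the paper leaves implicit, including the harmless special case $k=1$ covered by the first bullet.
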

\begin{proof}
Recalling Definition \ref{def-chm}, the thesis is just a reformulation of the second and the forth conclusions in Theorem \ref{th-per} (or in Theorem \ref{th-pern}, in the case of an $N$-dimensional setting)\footnote{Recalling also the third conclusion in Theorem \ref{th-per} (or in Theorem \ref{th-pern}), we could complement Theorem \ref{th-ch} with the further information about the existence, for any forward sequence on $m$ symbols, of a corresponding continuum joining the lower and the upper sides of $\mathcal R$ and consisting of the points whose forward $\psi$-itinerary realizes the given sequence. However, for the sake of simplicity and since in the applications we don't use this fact, we have decided to omit it. The same observation also applies to the results obtained in Section \ref{sec-ltm} in relation to the framework of the Linked Twist Maps.}.
\end{proof}

\noindent
The same conclusions on chaotic dynamics could be obtained in the framework of Theorem \ref{th-fpt}, when assuming condition \eqref{eq-intt}. As discussed in \cite{PiZa-05}, such remark looks useful in view of possible applications of our method to the detection of chaos via computer-assisted proofs.

\smallskip

\noindent
When we enter the setting of Definition \ref{def-chm},
many interesting properties for the map $\psi$
can be proved. They are gathered in Theorem \ref{th-cons} below. The precise
explanation of some concepts (like topological entropy, sensitivity, transitivity, etc.) will be given in Sections \ref{sec-sd}--\ref{sec-var}, where the reader can find a more detailed discussion about various classical notions of chaos considered in the literature.

\begin{theorem}\label{th-cons}
Let $\psi$ be a  map inducing chaotic dynamics on $m\ge 2$ symbols on a
set ${\mathcal D}\subseteq X$ and which is continuous on
$$
\mathcal K:=\bigcup_{i=0}^{m-1}\mathcal K_i\subseteq {\mathcal D},
$$
where $\mathcal K_0,\dots,\mathcal K_{m-1},{\mathcal D}$ and $X$ are as
in Definition \ref{def-chm}. Setting
\begin{equation}\label{eq-lam}
{\mathcal I}_{\infty}:=\bigcap_{n=0}^{\infty}\psi^{-n}(\mathcal K),
\end{equation}
then there exists a nonempty compact set
$${\mathcal I}\subseteq {\mathcal I}_{\infty} \subseteq {\mathcal K},$$
on  which the following are fulfilled:

\begin{itemize}
\item[$(i)$] ${\mathcal I}$
is invariant for $\psi$ (that is, $\psi({\mathcal I}) = {\mathcal I}$);

\item[$(ii)$] $\psi\restriction_{\mathcal I}$ is semi-conjugate to the one-sided Bernoulli shift on $m$ symbols, i.e.
there exists a continuous map $\pi$ of ${\mathcal I}$ onto $\Sigma_m^+:=\{0,\dots,m-1\}^{\mathbb N},$ endowed with the distance
\begin{equation}\label{eq-dist}
\hat d(\textbf{s}', \textbf{s}'') := \sum_{i\in {\mathbb N}} \frac{|s'_i - s''_i|}{m^{i + 1}}\,,
\end{equation}
for $\textbf{s}'=(s'_i)_{i\in {\mathbb N}}$ and
$\textbf{s}''=(s''_i)_{i\in {\mathbb N}}\in \Sigma_m^+\,,$
such that the diagram
\begin{equation}\label{diag-1}
\begin{diagram}
\node{{\mathcal I}} \arrow{e,t}{\psi} \arrow{s,l}{\pi}
      \node{{\mathcal I}} \arrow{s,r}{\pi} \\
\node{\Sigma_m^+} \arrow{e,b}{\sigma}
   \node{\Sigma_m^+}
\end{diagram}
\end{equation}
commutes, i.e $\pi\circ\psi=\sigma\circ\pi,$ where $\sigma:\Sigma_m^+\to\Sigma_m^+$ is the Bernoulli
shift defined by $\sigma((s_i)_i):=(s_{i+1})_i,\,\forall
i\in\mathbb N\,;$

\item[$(iii)$] The set $\mathcal P$ of the periodic points of $\psi\restriction_{{\mathcal I}_{\infty}}$ is dense in ${\mathcal I}$
and the preimage $\pi^{-1}(\textbf{s})\subseteq {\mathcal I}$ of
every
$k$-periodic sequence $\textbf{s} = (s_i)_{i\in {\mathbb N}}\in \Sigma_m^+$
contains at least one $k$-periodic point.
\end{itemize}

Furthermore, from conclusion $(ii)$ it follows that:

\begin{itemize}
\item[$(iv)$] $$h_{\rm top}(\psi)\ge h_{\rm top}(\psi\restriction_{\mathcal I})\geq h_{\rm top}(\sigma) = \log(m),$$
where $h_{\rm top}$ is the topological entropy;

\item[$(v)$] There exists a compact invariant set $\Lambda\subseteq {\mathcal I}$ such that $\psi\vert_{\Lambda}$ is
semi-conjugate to the one-sided Bernoulli shift on $m$ symbols, topologically transitive and has sensitive dependence on initial conditions.
\end{itemize}
\end{theorem}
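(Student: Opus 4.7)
The plan is to take $\mathcal{I}$ as the closure of the periodic orbits supplied by the chaos hypothesis and to use the itinerary map as the vehicle for the semi-conjugacy with the shift.

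Since $\psi$ is continuous on the compact set $\mathcal{K}=\bigcup_{i=0}^{m-1}\mathcal{K}_i$, each $\psi^{-n}(\mathcal{K})\cap \mathcal{K}$ is closed in $\mathcal{K}$, so $\mathcal{I}_{\infty}$ is a nested intersection of compacta and hence compact. The disjoint compacta $\mathcal{K}_0,\dots,\mathcal{K}_{m-1}$ have a positive mutual separation $\delta:=\min_{i\ne j}d(\mathcal{K}_i,\mathcal{K}_j)>0$, so the itinerary map
\[
\pi\colon \mathcal{I}_{\infty}\to \Sigma_m^{+},\qquad \pi(x):=(s_n(x))_{n\in \mathbb{N}},\qquad \psi^n(x)\in \mathcal{K}_{s_n(x)},
\]
is well defined, and its continuity follows from the $\delta$-separation combined with the continuity of each iterate $\psi^n$ on $\mathcal{I}_{\infty}\subseteq \mathcal{K}$. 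The intertwining relation $\pi\circ\psi=\sigma\circ\pi$ is immediate. By Definition \ref{def-chm}, each $k$-periodic $\mathbf{s}\in \Sigma_m^{+}$, once extended periodically to a bi-infinite sequence, yields a $k$-periodic point $w\in \mathcal{I}_{\infty}$ with $\pi(w)=\mathbf{s}$; let $\mathcal{P}$ denote the collection of all such periodic points.

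I then define $\mathcal{I}:=\overline{\mathcal{P}}\subseteq \mathcal{I}_{\infty}$: this set is compact, contains $\mathcal{P}$ as a dense subset by construction, and, since $\psi$ permutes $\mathcal{P}$ and is continuous on $\mathcal{I}_{\infty}$, one has $\psi(\mathcal{I})=\overline{\psi(\mathcal{P})}=\overline{\mathcal{P}}=\mathcal{I}$, which settles $(i)$ and the density statement of $(iii)$. The inclusion $\pi(\mathcal{I})\supseteq \pi(\mathcal{P})$ together with the density of periodic sequences in $\Sigma_m^{+}$ and the compactness of $\pi(\mathcal{I})$ forces $\pi(\mathcal{I})=\Sigma_m^{+}$, which, combined with the intertwining, proves the semi-conjugacy of $(ii)$; the second half of $(iii)$ is built into the very definition of $\mathcal{P}$. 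Conclusion $(iv)$ is then standard: topological entropy is non-decreasing under semi-conjugacies and $h_{\mathrm{top}}(\sigma)=\log m$ on $\Sigma_m^{+}$.

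For $(v)$, I pick a transitive recurrent point $\mathbf{s}^{*}\in \Sigma_m^{+}$ of $\sigma$ (these form a residual set since $\sigma$ is topologically mixing) and any lift $z\in \pi^{-1}(\mathbf{s}^{*})\cap \mathcal{I}$, and set $\Lambda:=\omega(z,\psi)\subseteq \mathcal{I}$. Then $\Lambda$ is compact and strongly invariant, and $\pi(\Lambda)=\omega(\mathbf{s}^{*},\sigma)=\Sigma_m^{+}$ by recurrence of $\mathbf{s}^{*}$, so $\pi|_{\Lambda}$ is still a semi-conjugacy onto the shift. Topological transitivity of $\psi|_{\Lambda}$ is obtained by extracting, via a Birkhoff/minimal set argument inside $\Lambda\cap \pi^{-1}(\mathbf{s}^{*})$, a point whose forward orbit is dense in $\Lambda$. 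Sensitivity stems from the positive expansivity of $\sigma$ on $\Sigma_m^{+}$: distinct $\pi$-images force a $\delta$-separation of the orbits after finitely many iterations, and since periodic sequences are dense in $\Sigma_m^{+}$, every neighborhood of every point of $\Lambda$ meets a fiber of $\pi$ distinct from the center's, which gives the sensitivity constant $\delta$. The principal technical hurdle is precisely this last step: synthesizing strong invariance, surjectivity of $\pi|_{\Lambda}$, and sensitivity on a single set $\Lambda$ requires a delicate handling of the fibers of $\pi$ together with a judicious choice of $\mathbf{s}^{*}$.
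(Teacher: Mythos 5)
Your treatment of conclusions $(i)$--$(iv)$ matches the paper's own argument essentially line for line: the itinerary map $\pi$ on $\mathcal{I}_\infty$, continuity via the positive separation of the $\mathcal{K}_i$, the definition $\mathcal{I}:=\overline{\mathcal{P}}$, invariance via $\psi$ permuting $\mathcal{P}$ and the closedness of $\psi|_{\mathcal{I}_\infty}$, and surjectivity of $\pi|_{\mathcal{I}}$ from compactness and density of periodic sequences. All of that is sound.

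The departure, and the gap, is in $(v)$. The paper farms this conclusion out to a stand-alone Auslander--Yorke theorem (Theorem \ref{th-ay}), applied with $(X,f)=(\mathcal{I},\psi|_{\mathcal{I}})$ and $(Y,g)=(\Sigma_m^+,\sigma)$. That proof begins with a Zorn's-lemma step: choose $\Lambda\subseteq\mathcal{I}$ \emph{minimal} with respect to being closed, positively invariant, and satisfying $\pi(\Lambda)=\Sigma_m^+$. Minimality is then what makes the $\omega$-limit argument close: $\omega(x_0^*)\subseteq\Lambda$ is closed, invariant, and projects onto $\Sigma_m^+$, hence equals $\Lambda$, so $x_0^*$ has dense forward orbit, and transitivity follows. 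You instead define $\Lambda:=\omega(z,\psi)$ outright and then try to recover a dense orbit ``via a Birkhoff/minimal set argument inside $\Lambda\cap\pi^{-1}(\mathbf{s}^*)$.'' That step does not go through as stated: $\Lambda\cap\pi^{-1}(\mathbf{s}^*)$ is not invariant, so a minimal-set argument cannot be run inside it; and any genuine minimal set $M\subseteq\Lambda$ you extract by Birkhoff will in general satisfy $\pi(M)\subsetneq\Sigma_m^+$ (the minimal subsystems of the full shift are very small), so you lose the surjectivity of $\pi|_M$ and hence the semi-conjugacy. A recurrent lift $z\in\pi^{-1}(\mathbf{s}^*)$ would rescue the argument, since then $z\in\omega(z)=\Lambda$ gives a dense orbit directly, but recurrence of $\mathbf{s}^*$ does not pass up to an arbitrary point of the fiber, and you have not produced a recurrent one. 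The clean fix is precisely the paper's: perform the Zorn minimization subject to the constraint $\pi(\Lambda)=\Sigma_m^+$ \emph{before} invoking the $\omega$-limit and sensitivity arguments. Your sensitivity sketch (propagating the shift's expansivity back through the factor map using the separation $\delta$ between the $\mathcal{K}_i$) is the right mechanism, but it also relies on the transitivity you have not yet secured, since one needs the instability of one point with dense orbit to spread to all of $\Lambda$.
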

\begin{proof}
Let us begin by checking that the set ${\mathcal I}_{\infty}$ in \eqref{eq-lam} is compact and nonempty. By the continuity of the map $\psi$ on $\mathcal K,$ it follows that ${\mathcal I}_{\infty}$ is closed and, being contained in the compact set $\mathcal K,$ it is compact, too. The fact that ${\mathcal I}_{\infty}$ is nonempty follows from Definition \ref{def-chm} on chaotic dynamics, by observing that $z\in {\mathcal I}_{\infty} \Leftrightarrow \psi^n(z)\in\mathcal K\,,\forall n\ge 0.$ This remark also implies that $\psi({\mathcal I}_{\infty})\subseteq {\mathcal I}_{\infty}:$ indeed, it is straightforward to see that if $z\in {\mathcal I}_{\infty},$ then also $\psi(z)\in{\mathcal I}_{\infty}.$\\
Calling ${\mathcal P}$ the subset of ${\mathcal I}_{\infty}$ consisting of the periodic points of $\psi\restriction_{{\mathcal I}_{\infty}}\,,$
that is,
\begin{equation}\label{eq-per}
{\mathcal P}:= \left\{w\in {{\mathcal I}_{\infty}}: \exists \, k\in\mathbb N_0, \; \psi^{k}(w) = w\right\},
\end{equation}
we claim that $\psi(\mathcal P)=\mathcal P.$ Indeed, if $z\in \mathcal P,$ then there exists $l\in\mathbb N_0$ such that $\psi^l(z)=z.$ Hence, on the one hand, $\psi(z)=\psi(\psi^l(z))=\psi^{l+1}(z)=\psi^l(\psi(z))$ and thus $\psi(z)\in\mathcal P,$ too. This shows that $\psi(\mathcal P)\subseteq\mathcal P.$ Notice that, repeating the same argument, it is possible to prove that if $z\in \mathcal P,$ then $\psi^h(z)\in\mathcal P,$ for any $h\ge 1.$ On the other hand, if $\psi^l(z)=z,$ for some $l\in\mathbb N_0,$ then two possibilities can occur for $l,$ that is, $l=1$ or $l\ge 2.$ In the former case we get $\psi(z)=z$ and so $z\in \psi(\mathcal P),$ while in the latter we obtain $z=\psi^l(z)=\psi(\psi^{l-1}(z)).$ Hence, since $\psi^{l-1}(z)\in\mathcal P$ whenever $z\in\mathcal P,$ we find again $z\in \psi(\mathcal P).$ In any case we have proved that, if $z\in \mathcal P,$ then $z\in \psi(\mathcal P),$ i.e. $\mathcal P\subseteq \psi(\mathcal P).$ The claim is thus checked.\\
At this point we observe that, since $\mathcal P$ is contained in the compact set ${\mathcal I}_{\infty},$ also
\begin{equation}\label{eq-i}
\mathcal I:= \overline{\mathcal P}\subseteq {\mathcal I}_{\infty},
\end{equation}
and moreover $\mathcal I$ is compact, as it is closed in a compact set. From $\psi(\mathcal P)=\mathcal P,$ it follows that
$$\psi(\mathcal I)=\psi(\overline{\mathcal P})\supseteq\psi(\mathcal P)=\mathcal P.$$
But again, by the compactness of $\psi(\mathcal I),$ it holds that
$$\psi(\mathcal I)\supseteq\overline{\mathcal P}=\mathcal I.$$
Let us show that the reverse inclusion is also fulfilled for $\mathcal I,$ that is $\psi(\mathcal I)\subseteq\mathcal I.$ Indeed, since $\psi$ is continuous, we have
$$\psi(\mathcal I)=\psi(\overline{\mathcal P})\subseteq\overline{\psi(\mathcal P)}=\overline{\mathcal P}=\mathcal I.$$
Hence, the invariance of $\mathcal I$ is verified, in agreement with conclusion $(i).$\\
Let us consider now the diagram
\begin{equation*}
\begin{diagram}
\node{{{\mathcal I}_{\infty}}} \arrow{e,t}{\psi} \arrow{s,l}{\pi}
      \node{{{\mathcal I}_{\infty}}} \arrow{s,r}{\pi} \\
\node{\Sigma_m^+} \arrow{e,b}{\sigma}
   \node{\Sigma_m^+}
\end{diagram}
\end{equation*}
with $(\Sigma_m^+,\sigma)$ the Bernoulli system, and define the map $\pi:{\mathcal I}_{\infty}\to \Sigma_m^+$ by associating to any $w\in{\mathcal I}_{\infty}$ the sequence $(s_n)_{n\in\mathbb N}\in \Sigma_m^+$ such that $s_n=j$ if $\psi^n(w)\in\mathcal K_j,$ for $j=0,\dots,m-1.$ More formally, we notice that for any $w\in{\mathcal I}_{\infty},$ there exists a forward itinerary $(w_i)_{i\in{\mathbb N}}$ such that
$w_0=w$ and $\psi(w_{i}) = w_{i+1}\in {\mathcal K},$ for every $i\in {\mathbb N}.$
Hence the function $g_1:{{\mathcal I}_{\infty}}\to {\mathcal I}_{\infty}^{\mathbb N},$ which maps any $w\in {{\mathcal I}_{\infty}}$ into the one-sided sequence of points from the set ${{\mathcal I}_{\infty}}$
$$\textbf{s}_w:= (w_i)_{i\in {\mathbb N}},\;\; \mbox{where} \;\; w_i:= \psi^{i}(w),\quad\forall\, i\in {\mathbb N},$$
with the usual convention $\psi^0 = \Id_{{\mathcal I}_{\infty}}$ and $\psi^1 = \psi,$ is well-defined.
Since the sets ${\mathcal K}_0,\dots,{\mathcal K}_{m-1}$ are pairwise disjoint, for every term $w_i$ of $\textbf{s}_w$ there exists a
unique index
$$s_i = s_i(w_i),\;\;\mbox{with}\;\; s_i\in \{0,\dots,m-1\},$$
such that
$w_i \in {\mathcal K}_{s_i}\,.$
Therefore the map $g_2:{\mathcal I}_{\infty}^{\mathbb N}\to \Sigma_m^+,$
\begin{equation*}
g_2: \textbf{s}_w \mapsto (s_i)_{i\in {\mathbb N}}\in \Sigma_m^+\,
\end{equation*}
is also well-defined. Thus, by Definition \ref{def-chm} the map
\begin{equation*}
\pi:= g_2\circ g_1: {{\mathcal I}_{\infty}}\to \Sigma_m^+
\end{equation*}
is a surjection that makes the diagram \eqref{diag-1} commute and
the preimage through $\pi$ of any $k$-periodic sequence in
$\Sigma_m^+$ contains at least one $k$-periodic point of
${\mathcal I}_{\infty}.$
To check that $\pi$ is continuous, we
prove the continuity in a generic $\bar z\in{\mathcal I}_{\infty},$ by showing
that for any $\varepsilon>0,$ there exists $\delta>0$ such that
$\forall z\in{\mathcal I}_{\infty}$ with $d_X(z,\bar z)<\delta,$ then $\hat
d(\pi(z),\pi(\bar z))<\varepsilon,$ where $d_X$ is the distance on $X$ and $\hat d$ is the metric defined
in \eqref{eq-dist}. Let us fix $\varepsilon >0$ and let $n\in
\mathbb N$ such that $0<1/m^n <\varepsilon.$ We notice that it is sufficient to prove that
$(\pi(z))_i=(\pi(\bar z))_i,$ for any $i=0,\dots,n.$ Indeed, if this is the case, by the definition of $\hat d,$ it follows that $\hat d(\pi(z),\pi(\bar z))\le 1/m^n
<\varepsilon.$ \\
Since $\bar z\in{\mathcal I}_{\infty},$ there exists a sequence
$(s_0,\dots,s_n)\in \{0,\dots,m-1\}^{n+1}$ such that
$$\bar z\in \mathcal K_{s_0},\,\psi(\bar z)\in \mathcal K_{s_1},\dots, \psi^n(\bar z)\in \mathcal K_{s_n}.$$
By the pairwise disjointness of the sets ${\mathcal K}_i$'s, it holds that
$$\eta:=\min\{d_X({\mathcal K}_i, {\mathcal K}_{j}):i,j=0,\dots,m-1\}>0.$$
Hence, for any $z\in {\mathcal I}_{\infty}$ with $d_X(z,\bar z)<\eta/2,$ it
follows that $z\in \mathcal K_{s_0},$ too.  By the continuity of
$\psi$ in $\bar z,$ there exists $\delta_1>0$ such that $\forall
z\in{\mathcal I}_{\infty}$ with $d_X(z,\bar z)<\delta_1,$ then
$d_X(\psi(z),\psi(\bar z))<\eta/2.$ But this means that $\psi(z)\in
\mathcal K_{s_1}.$ Analogously, by the continuity of $\psi^2$ in
$\bar z,$ there exists $\delta_2>0$ such that $\forall
z\in{\mathcal I}_{\infty}$ with $d_X(z,\bar z)<\delta_2,$ then
$d_X(\psi^2(z),\psi^2(\bar z))<\eta/2$ and thus $\psi^2(z)\in
\mathcal K_{s_2},$ for any such $z.$ Proceeding in such way until
the $n$-th iterate of $\psi$ and setting
$$\delta:=\min\left\{\frac{\eta}{2},\delta_1,\dots,\delta_n\right\}\,,$$
we find that, for any $z\in{\mathcal I}_{\infty}$ with $d_X(z,\bar z)<\delta,$ it holds that
$$z\in \mathcal K_{s_0},\,\psi(z)\in \mathcal K_{s_1},\dots, \psi^n(z)\in \mathcal K_{s_n},$$
exactly as for $\bar z.$ But this means that $(\pi(z))_i=(\pi(\bar z))_i,$ for any $i=0,\dots,n,$ and hence
$\hat d(\pi(z),\pi(\bar z))\le 1/m^n <\varepsilon.$ The continuity of $\pi$ is thus proved.\\
Considering in diagram \eqref{diag-1} the restriction of $\psi$ to $\mathcal P\subseteq {\mathcal I}_{\infty},$ we find the commutative diagram
\begin{equation*}
\begin{diagram}
\node{{\mathcal P}} \arrow{e,t}{\psi} \arrow{s,l}{\pi}
      \node{{\mathcal P}} \arrow{s,r}{\pi} \\
\node{{\mathcal P}_m^+} \arrow{e,b}{\sigma}
   \node{{\mathcal P}_m^+}
\end{diagram}
\end{equation*}
where ${\mathcal P}_m^+\subseteq \Sigma_m^+$ is the set of the
periodic sequences of $m$ symbols.  Notice that $\pi(\mathcal
P)={\mathcal P}_m^+$ thanks to Definition \ref{def-chm}. Recalling
the well-known fact that ${\mathcal P}_m^+$ is dense in
$\Sigma_m^+$ \cite[Proposition 1.9.1]{KaHa-95}, by the continuity of $\pi,$ it follows that
$$\pi(\mathcal I)=\pi(\overline{\mathcal P})\subseteq \overline{{\mathcal P}_m^+}=\Sigma_m^+.$$
On the other hand $\pi(\mathcal I)$ is a compact set containing $\pi(\mathcal P)={\mathcal P}_m^+,$ and hence
$$\pi(\mathcal I)\supseteq\overline{{\mathcal P}_m^+}=\Sigma_m^+.$$
Therefore, we can conclude that $\pi(\mathcal I)=\Sigma_m^+$ and the diagram
\begin{equation*}
\begin{diagram}
\node{{\mathcal I}} \arrow{e,t}{\psi} \arrow{s,l}{\pi}
      \node{{\mathcal I}} \arrow{s,r}{\pi} \\
\node{\Sigma_m^+} \arrow{e,b}{\sigma}
   \node{\Sigma_m^+}
\end{diagram}
\end{equation*}
still commutes. Moreover, the preimage through
$\pi$ of any $k$-periodic sequence in $\Sigma_m^+$ contains at least
one $k$-periodic point of $\mathcal I,$ as $\mathcal P\subseteq\mathcal I.$ Conclusions $(ii)$ and $(iii)$ are thus proved.\\
Assertion $(iv),$ about the positive topological entropy, comes from property $(ii)$ about the semi-conjugacy to the Bernoulli shift. For a proof, see \cite[Theorem 7.12]{Wa-82}.\\
Finally, conclusion $(v),$ about the existence of the compact invariant set
$\Lambda \subseteq {\mathcal I},$ follows by applying Theorem \ref{th-ay} with the positions $(X, f)=(\mathcal I,\psi\restriction_{\mathcal I})$ and $(Y, g)=(\Sigma_m^+,\sigma).$
\end{proof}

\noindent
We point out that results similar to Theorem \ref{th-cons}
are almost known in the literature (see e.g. \cite[Theorem 3]{Sr-00} for a related statement), maybe with exception of conclusion $(v).$
However, for the sake of
completeness, we have decided to prove it in full details. Notice
that, in frameworks more general than ours, the set ${\mathcal I}_{\infty}$ in
\eqref{eq-lam}, and a fortiori $\mathcal P$ in \eqref{eq-per}, can
be empty and thus it is not possible to define $\mathcal I$ as in
\eqref{eq-i} in order to have an invariant set.
In our case we can do like that since, by Definition \ref{def-chm},
the sequences of symbols are realized by points in ${\mathcal I}_{\infty}$ and the periodic sequences of symbols are reproduced by points in $\mathcal P.$
We also remark that the density of the periodic points of $\psi$ in $\mathcal I$ does not imply that the periodic points are dense in the smaller set $\Lambda,$ on which $\psi$ is transitive and sensitive. Therefore, we cannot conclude the presence of Devaney chaos (cf. Definition \ref{def-dev}) for $\psi$ on $\Lambda.$

\begin{remark}\label{rem-inj}
{\rm{If, in addition to the hypotheses of Theorem \ref{th-cons},
the map $\psi$ is also injective on $\mathcal K,$ then one can deal with bi-infinite sequences of $m$ symbols instead of forward ones and prove the stronger
property for $\psi$ of semi-conjugacy to the two-sided Bernoulli shift
$\sigma:\Sigma_m:=\{0,\dots,m-1\}^{\mathbb Z}\to\Sigma_m\,,$ defined as
$\sigma((s_i)_i):=(s_{i+1})_i\,,\,\forall i\in\mathbb Z\,.$ Indeed, it
is sufficient to replace the set ${\mathcal I}_{\infty}$ in \eqref{eq-lam} with
$${\mathcal I}_{\infty}:=\bigcap_{n=-\infty}^{\infty}\psi^{-n}(\mathcal K)$$ and consider
$\mathcal I$ as in \eqref{eq-i}, in order to obtain, via
similar steps, the analogue of the conclusions of Theorem \ref{th-cons} with respect to two-sided sequences, rather than
one-sided sequences. The precise statement reads as follows:

\begin{theorem}\label{th-inj}
Let $\psi$ be a  map inducing chaotic dynamics on $m\ge 2$ symbols on a
set ${\mathcal D}\subseteq X$ and which is continuous and injective on
$$
\mathcal K:=\bigcup_{i=0}^{m-1}\mathcal K_i\subseteq {\mathcal D},
$$
where $\mathcal K_0,\dots,\mathcal K_{m-1},{\mathcal D}$ and $X$ are as
in Definition \ref{def-chm}. Setting
\begin{equation*}
{\mathcal I}_{\infty}:=\bigcap_{n=-\infty}^{\infty}\psi^{-n}(\mathcal K),
\end{equation*}
then there exists a nonempty compact set
$${\mathcal I}\subseteq {\mathcal I}_{\infty} \subseteq {\mathcal K},$$
on  which the following are fulfilled:

\begin{itemize}
\item[$(i)$] ${\mathcal I}$
is invariant for $\psi$ (that is, $\psi({\mathcal I}) = {\mathcal I}$);

\item[$(ii)$] $\psi\restriction_{\mathcal I}$ is semi-conjugate to the two-sided Bernoulli shift on $m$ symbols, i.e.
there exists a continuous map $\pi$ of ${\mathcal I}$ onto $\Sigma_m:=\{0,\dots,m-1\}^{\mathbb Z},$ endowed with the distance
\begin{equation*}
\hat d(\textbf{s}', \textbf{s}'') := \sum_{i\in {\mathbb Z}} \frac{|s'_i - s''_i|}{m^{|i| + 1}}\,,
\end{equation*}
for $\textbf{s}'=(s'_i)_{i\in {\mathbb Z}}$ and
$\textbf{s}''=(s''_i)_{i\in {\mathbb Z}}\in \Sigma_m\,,$
such that the diagram
\begin{equation*}
\begin{diagram}
\node{{\mathcal I}} \arrow{e,t}{\psi} \arrow{s,l}{\pi}
      \node{{\mathcal I}} \arrow{s,r}{\pi} \\
\node{\Sigma_m} \arrow{e,b}{\sigma}
   \node{\Sigma_m}
\end{diagram}
\end{equation*}
commutes, i.e $\pi\circ\psi=\sigma\circ\pi,$ where $\sigma:\Sigma_m\to\Sigma_m$ is the Bernoulli
shift defined by $\sigma((s_i)_i):=(s_{i+1})_i,\,\forall
i\in\mathbb Z\,;$

\item[$(iii)$] The set $\mathcal P$ of the periodic points of $\psi\restriction_{{\mathcal I}_{\infty}}$ is dense in ${\mathcal I}$
and the preimage $\pi^{-1}(\textbf{s})\subseteq {\mathcal I}$ of
every
$k$-periodic sequence $\textbf{s} = (s_i)_{i\in {\mathbb N}}\in \Sigma_m$
contains at least one $k$-periodic point.
\end{itemize}

Furthermore, from conclusion $(ii)$ it follows that:

\begin{itemize}
\item[$(iv)$] $$h_{\rm top}(\psi)\ge h_{\rm top}(\psi\restriction_{\mathcal I})\geq h_{\rm top}(\sigma) = \log(m),$$
where $h_{\rm top}$ is the topological entropy;

\item[$(v)$] There exists a compact invariant set $\Lambda\subseteq {\mathcal I}$ such that $\psi\vert_{\Lambda}$ is
semi-conjugate to the two-sided Bernoulli shift on $m$ symbols, topologically transitive and has sensitive dependence on initial conditions.
\end{itemize}
\end{theorem}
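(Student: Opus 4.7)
The plan is to mimic the proof of Theorem \ref{th-cons}, replacing forward orbits with bi-infinite orbits throughout and exploiting injectivity of $\psi$ on $\mathcal{K}$ to make backward iterates uniquely defined. First I would check that ${\mathcal I}_{\infty}:=\bigcap_{n\in\mathbb{Z}}\psi^{-n}(\mathcal{K})$ is compact and nonempty: compactness follows because each $\psi^{-n}(\mathcal{K})$ is closed in $\mathcal{K}$ (by continuity of $\psi$ on $\mathcal{K}$ for $n\ge 0$, and because $\psi^n(\mathcal{K})$ is compact, hence closed, for $n<0$) and the intersection lies in the compact set $\mathcal{K}$; non-emptiness is a direct consequence of Definition \ref{def-chm}, since any bi-infinite symbol sequence produces a full two-sided orbit $(w_i)_{i\in\mathbb{Z}}\subseteq\mathcal{K}$, and each such $w_0$ lies in ${\mathcal I}_{\infty}$. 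The crucial role of injectivity is that, for $x\in{\mathcal I}_{\infty}$, the backward orbit $(w_{-k})_{k\ge 1}$ with $\psi(w_{-k-1})=w_{-k}$ and $w_0=x$ is \emph{unique}: if two different backward orbits existed in $\mathcal{K}$, we would contradict $\psi\restriction_{\mathcal K}$ being injective.

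Next I would set $\mathcal{P}:=\{w\in{\mathcal I}_{\infty}: \exists k\ge 1,\; \psi^k(w)=w\}$ and prove $\psi(\mathcal{P})=\mathcal{P}$ exactly as in the proof of Theorem \ref{th-cons}; then $\mathcal{I}:=\overline{\mathcal{P}}$ is compact and, by the continuity of $\psi$ on $\mathcal{K}\supseteq{\mathcal I}_{\infty}$, the identity $\psi(\mathcal{I})=\mathcal{I}$ follows from the chain $\mathcal{P}=\psi(\mathcal{P})\subseteq\psi(\mathcal{I})\subseteq\overline{\psi(\mathcal{P})}=\overline{\mathcal{P}}=\mathcal{I}$, yielding $(i)$. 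For $(ii)$, I would define $\pi\colon{\mathcal I}_{\infty}\to\Sigma_m$ by $\pi(x)=(s_i)_{i\in\mathbb{Z}}$, where $s_i$ is the unique index in $\{0,\dots,m-1\}$ with $\psi^i(x)\in\mathcal{K}_{s_i}$ for $i\ge 0$ (well-defined by pairwise disjointness of the $\mathcal{K}_j$'s), and $s_{-k}$ is the unique index with $w_{-k}\in\mathcal{K}_{s_{-k}}$ along the (injectively determined) backward orbit. Surjectivity onto $\Sigma_m$ and the commutation $\pi\circ\psi=\sigma\circ\pi$ are immediate from Definition \ref{def-chm}, and the preimage of a $k$-periodic two-sided sequence contains a $k$-periodic point.

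The main obstacle is continuity of $\pi$ in the backward direction, which is the one genuinely new point compared to Theorem \ref{th-cons}. Given $\bar z\in{\mathcal I}_{\infty}$ and $\varepsilon>0$, fix $n$ with $1/m^n<\varepsilon$; by the new metric on $\Sigma_m$, it suffices to match the coordinates $s_{-n},\dots,s_n$ of $\pi(z)$ and $\pi(\bar z)$. With $\eta:=\min\{d_X(\mathcal{K}_i,\mathcal{K}_j):i\ne j\}>0$, the forward coordinates are handled as in Theorem \ref{th-cons} by continuity of $\psi^0,\dots,\psi^n$ on $\bar z$. For the backward coordinates I would argue indirectly: if $z_k\to\bar z$ in ${\mathcal I}_{\infty}$ but the $-j$-th coordinate of $\pi(z_k)$ never matched that of $\pi(\bar z)$, then passing to a subsequence one could extract, by compactness of $\mathcal{K}$, a limit backward orbit of $\bar z$ different from the one defining $\pi(\bar z)$; using continuity of $\psi$ on $\mathcal{K}$ together with injectivity this produces two distinct orbits of $\bar z$ in $\mathcal{K}$, a contradiction. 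Once continuity of $\pi$ is established, the rest carries over verbatim from Theorem \ref{th-cons}: density of periodic sequences in $\Sigma_m$ together with continuity and surjectivity of $\pi$ on $\mathcal{P}$ gives $\pi(\mathcal{I})=\Sigma_m$, and $(iii)$ follows; $(iv)$ uses the standard fact $h_{\mathrm{top}}(\sigma)=\log m$ on $\Sigma_m$ together with the semi-conjugacy (\cite[Theorem~7.12]{Wa-82}); and $(v)$ is obtained by invoking Theorem \ref{th-ay} with $(X,f)=(\mathcal{I},\psi\restriction_{\mathcal{I}})$ and $(Y,g)=(\Sigma_m,\sigma)$, extracting the compact invariant $\Lambda\subseteq\mathcal{I}$ on which $\psi$ is topologically transitive and sensitive.
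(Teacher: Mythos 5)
Your proposal is correct and matches the approach the paper intends: in the text, Theorem~\ref{th-inj} is stated inside Remark~\ref{rem-inj} with essentially no proof, just the instruction to replace ${\mathcal I}_\infty$ by the two-sided intersection and repeat the steps of Theorem~\ref{th-cons}. You carry that out and, crucially, you supply the one genuinely new ingredient — continuity of $\pi$ in the negative coordinates — which is where injectivity earns its keep. Your contradiction argument there is sound, though it can be stated slightly more cleanly: if $z_k\to\bar z$ with $w^k_{-j}$ along a subsequence staying in a wrong $\mathcal K_t$, pass to a further subsequence so that $w^k_{-i}\to v_{-i}\in\mathcal K$ for $i=1,\dots,j$; then $(v_{-i})$ is a finite backward orbit of $\bar z$ in $\mathcal K$, and injectivity of $\psi\restriction_{\mathcal K}$ forces $v_{-i}=\bar w_{-i}$ for each $i$, so in particular $v_{-j}=\bar w_{-j}\in\mathcal K_{\bar s_{-j}}$, contradicting $v_{-j}\in\mathcal K_t$ (the $\mathcal K_i$'s being closed and pairwise disjoint). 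Similarly, the compactness of ${\mathcal I}_\infty$ is most transparently obtained by a Tychonoff/diagonal argument on the space of bi-infinite orbits in $\mathcal K^{\mathbb Z}$ rather than by trying to make sense of $\psi^{-n}(\mathcal K)$ for negative $n$ separately, since $\psi^{|n|}(\mathcal K)$ need not a priori be contained in $D_\psi$ along the way; but this is a presentational point and does not affect correctness. Everything else — the definition and invariance of $\mathcal I=\overline{\mathcal P}$, surjectivity of $\pi$ on the periodic points, the entropy estimate via \cite[Theorem 7.12]{Wa-82}, and the invocation of Theorem~\ref{th-ay} for $(v)$ — carries over verbatim from the one-sided case, exactly as you describe.
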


\noindent
For further details see \cite{PiZa-08}, where some alternative approaches are expounded, such as that in \cite{LWSr-02}.\\
The present observation applies, for instance, when dealing with the original Smale horseshoe map, introduced at the beginning of Chapter \ref{ch-ch}. Another field of applicability is that of the ODEs with periodic coefficients. Namely, in such a case, if one looks for periodic solutions, the corresponding map $\psi$ is the homeomorphism called Poincar\'e map associated to the system. Some examples will be presented in Chapter \ref{ch-ode}.
}}

\hfill$\lhd$\\
\end{remark}

\noindent
To conclude the discussion about the results in \cite{KeKoYo-01}, we notice that, thanks to the fact that the kind of chaos in Definition \ref{def-chm} is stricter than the one detected in \cite{KeKoYo-01,KeYo-01}, we are able to obtain a result (Theorem \ref{th-cons}) that covers the Chaos Lemma (\cite[Lemma 4]{KeKoYo-01}), adding further information. In fact, the set $\mathcal I$ in Theorem \ref{th-cons} plays the role of $Q_I$ in \cite{KeKoYo-01} and
the subsets $\Lambda\subseteq \mathcal I$ and $Q_{*}\subseteq Q_I$ share some common features, like the invariance and the sensitivity of the maps defined on them, but we have in addition the density of the periodic points on $\mathcal I,$ while, as already observed, in \cite{KeKoYo-01, KeYo-01} the existence of periodic points is not ensured at all. Also the transitivity is missing in the statement of the Chaos Lemma (\cite[Lemma 4]{KeKoYo-01}), even if, after a look at its proof, it is evident that such property holds in that framework, too. Indeed the invariant set $Q_{*}$ is the $\omega$-limit set of a certain point $x^*$ of $Q_I$ and thus the orbit of $x^*$ is dense in $Q_{*}.$ On the other hand, by the invariance of $Q_{*},$ this is enough to infer the transitivity on $Q_{*}.$ As we shall see in Section \ref{sec-var}, the same argument is employed in the proof of Theorem \ref{th-ay}, where the reader can find the missing definitions and details.

\medskip

\noindent
Finally, a natural question that can arise is whether our stretching condition in Theorem \ref{th-ch} is strong enough to imply a conjugacy to the Bernoulli shift, rather than just a semi-conjugacy as stated in Theorem \ref{th-cons}. The answer in general is negative. However, it is possible to add further assumptions in order to get it. A first step in this direction is represented by some recent works by Zgliczy\'nski and collaborators \cite{CaZg->,Zg-09}, where they combine the theory of  covering relations from \cite{ZgGi-04} with certain cone conditions to establish the properties implied by hyperbolicity, such as the existence of stable and unstable manifolds. Moreover, in their framework, every periodic sequence of symbols is realized by the itinerary generated by a unique point: in symbols, using the notation introduced in the statement of Theorem \ref{th-cons}, this means that the preimage $\pi^{-1}(\textbf{s})\subseteq {\mathcal I}$ of
every $k$-periodic sequence $\textbf{s} = (s_i)_{i\in {\mathbb N}}\in \Sigma_m^+$
contains exactly one $k$-periodic point.

\section{Symbolic dynamics}\label{sec-sd}
In this section we furnish some tools from symbolic dynamics that will be useful to investigate more deeply the relationship between the notion of chaos in the sense of Definition \ref{def-chm} and other ones widely considered in the literature. In order to accomplish such task, we need to recall a few basic definitions: some of them have already been introduced in Section \ref{sec-de}, but we prefer to gather them here for the sake of consistency.

\smallskip

\noindent
Given an integer $m\ge 2,$ we denote by
$\Sigma_m:= \{0,\dots,m-1\}^{\mathbb Z}$
the set
of the two-sided sequences of $m$ symbols and by
$\Sigma_m^+:=\{0,\dots,m-1\}^{\mathbb N}$ the set of one-sided sequences of $m$ symbols. These compact spaces are usually endowed with the distance
\begin{equation}\label{eq-sd}
\hat d(\textbf{s}', \textbf{s}'') := \sum_{i\in {\mathbb I}} \frac{|s'_i - s''_i|}{m^{|i| + 1}}\,,\quad
\mbox{ for }\; \textbf{s}'=(s'_i)_{i\in {\mathbb I}}\,,\;
\textbf{s}''=(s''_i)_{i\in {\mathbb I}}\,,
\end{equation}
where ${\mathbb I} = {\mathbb Z}$ or ${\mathbb I} = {\mathbb N},$ respectively.
The metric in \eqref{eq-sd} could be replaced with
$$\tilde d(\textbf{s}', \textbf{s}'') := \sum_{i\in {\mathbb I}} \frac{d(s'_i, s''_i)}{m^{|i| + 1}}\,,\quad
\mbox{ for }\; \textbf{s}'=(s'_i)_{i\in {\mathbb I}}\,,\;
\textbf{s}''=(s''_i)_{i\in {\mathbb I}}\,,$$
where $d(\cdot\,,\cdot)$ is the discrete distance on $\{0,\dots, m-1\},$ that is,
$d(s'_i, s''_i)=0$ for $s'_i=s''_i$ and $d(s'_i, s''_i)=1$ for
$s'_i\ne s''_i.$ The significance of this second choice reveals when one needs to look at the elements from $\{0,\dots, m-1\}$ as symbols instead of numbers.\\
On such spaces we define the \textit{one-sided Bernoulli shift}\index{Bernoulli shift, one-sided} $\sigma: \Sigma_m^+\to \Sigma_m^+$ and the \textit{two-sided Bernoulli shift} $\sigma: \Sigma_m\to \Sigma_m$ \index{Bernoulli shift, two-sided} on $m$ symbols as $\sigma((s_i)_i):=(s_{i+1})_i,$ $\forall
i\in\mathbb I,$ for $\mathbb I=\mathbb N$ or $\mathbb I=\mathbb Z,$ respectively. Both maps are continuous and the two-sided shift is a homeomorphism.\\
A precious tool for the detection of complex dynamics is the \textit{topological entropy}\index{entropy, topological} and indeed its positivity is generally considered as one of the trademarks of chaos. Such object can be introduced for any continuous self-map $f$ of a compact topological space $X$ and we indicate it with the symbol $h_{\rm top}(f).$ Its original definition due to Adler, Konheim and McAndrew \cite{AdKoMA-65} is based on the open coverings. More precisely, for an open cover $\alpha$ of $X,$ we define the \textit{entropy of $\alpha$} as $H(\alpha):=\log N(\alpha),$ where $N(\alpha)$ is the minimal number of elements in a finite subcover of $\alpha.$ Given two open covers $\alpha$ and $\beta$ of $X,$ we define their \textit{join} $\alpha\vee\beta$ as the open cover of $X$ made by all sets of the form $A\cap B,$ with $A\in\alpha$ and $B\in\beta.$ Similarly one can define the join $\vee_{i=1}^n \alpha_i$ of any finite collection of open covers of $X.$ If $\alpha$ is an open cover of $X$ and $f:X\to X$ a continuous map, we denote by $f^{-1}\alpha$ the open cover consisting of all sets $f^{-1}(A),$ with $A\in\alpha.$ By $\vee_{i=0}^{n-1} f^{-i}\alpha$ we mean $\alpha\vee f^{-1}\alpha\vee\dots\vee f^{-n+1}\alpha.$
Finally, we have:
$$h_{\rm top}(f):=\sup_{\alpha}\left(\lim_{n\to\infty}\frac{1}{n}\left(H\left(\vee_{i=0}^{n-1} f^{-i}\alpha\right)\right)\right),$$
where $\alpha$ ranges over all open covers of $X.$\\
Among the several properties of the topological entropy, we recall just the ones that are useful in view of the subsequent discussion.\\
In regard to the (one-sided or two-sided) Bernoulli shift $\sigma$ on $m$ symbols, it holds that
$$h_{\rm top}(\sigma) = \log(m).$$
Given a continuous self-map $f$ of a compact topological space $X$ and a \textit{invariant}\index{invariant set} (resp. \textit{positively invariant})\index{invariant set, positively} subset $\mathcal I\subseteq X,$
i.e such that $f(\mathcal I)=\mathcal I$ (resp. $f(\mathcal I)\subseteq\mathcal I$), then
\begin{equation}\label{eq-gr}
h_{\rm top}(f)\ge h_{\rm top}(f\restriction_{\mathcal I}).
\end{equation}
Denoting by $f^n$ the $n$-th iterate of the continuous self-map $f$ of a compact topological space $X,$ we have
\begin{equation}\label{eq-it}
h_{\rm top}(f^n)=nh_{\rm top}(f),\,\forall n\geq 1.
\end{equation}
Given two continuous self-maps $f:X\to X$ and $g:Y\to Y$ of the compact topological spaces $X$ and $Y$ and a continuous onto map $\phi:X\to Y$ that makes the diagram
\begin{equation}\label{diag-comm}
\begin{diagram}
\node{{X}} \arrow{e,t}{f} \arrow{s,l}{\phi}
      \node{{X}} \arrow{s,r}{\phi} \\
\node{Y} \arrow{e,b}{g}
   \node{Y}
\end{diagram}
\end{equation}
\textit{commute}\index{commuting diagram}, i.e. such that $\phi\circ f=g\circ\phi,$ then it holds that
$$h_{\rm top}(f)\ge h_{\rm top}(g).$$
If $\phi$ is also injective, the above inequality is indeed an equality.\\
When the diagram in \eqref{diag-comm} commutes, we say that $f$ and $g$ are \textit{topologically semi-conjugate} and that $\phi$ is a \textit{semi-conjugacy} between them. If $\phi$ is also one-to-one, then $f$ and $g$ are called \textit{topologically conjugate} and $\phi$ is named \textit{conjugacy}\index{conjugacy, semi-conjugacy}.\\
Thus, when for a continuous self-map $f$ of a compact topological space $X$ and a (positively) invariant subset $\mathcal I\subseteq X$ it holds that
$f\restriction_{\mathcal I}$ is semi-conjugate to the (one-sided or two-sided) Bernoulli shift $\sigma$ on $m$ symbols, then
\begin{equation}\label{eq-hch}
h_{\rm top} (f) \ge h_{\rm top}(f\restriction_{\mathcal I})\geq h_{\rm top}(\sigma) = \log(m).
\end{equation}
If $f\restriction_{\mathcal I}$ is conjugate to $\sigma,$ then the second inequality is indeed an equality.\\
We notice that, although the topological entropy can be defined for continuous self-maps of topological spaces, we confine ourselves to the case of metric spaces. More precisely, when dealing with chaotic dynamics, we will consider \textit{dynamical systems}\index{dynamical system}, i.e. couples $(X,f),$ where $X$ is a compact metric space and $f:X\to X$ is continuous and surjective.\\
For further features of $h_{\rm top}$ and additional details, see \cite{AdKoMA-65,KaHa-95,Wa-82}.
With reference to the case of compact metric spaces, alternative definitions of entropy can be found in \cite{Bo-71,Di-70}.
\medskip

\noindent
Let us begin our excursus on complex dynamics by returning on the concepts of ``covering'' and ``crossing'' introduced at the beginning of Chapter \ref{ch-ch} and fundamental in the theory of symbolic dynamics. \\
For the reader's convenience, we recall that, given a continuous mapping
$f: {\mathbb R}\to {\mathbb R}$ and two intervals $I,J\subseteq {\mathbb R},$
we say that \textit{$I$ $f$-covers $J$}\index{covering relation} if $f(I)\supseteq J,$ or equivalently, if there exists a subinterval $I_0\subseteq I$ such that $f(I_0) = J.$ We also say that
\textit{$I$ $f$-covers $J$ $m$ times} if there are $m\ge 2$
subintervals $I_0\,,\dots, I_{m-1}\subseteq I,$ with pairwise
disjoint interiors, such that $f(I_{k}) = J,$ for $k=0,\dots,m-1$ (see \cite{BlGu-80}).
In particular, we will focus on the case in which $I=J$ and there exist $m$ compact intervals $I_{0}\,,\dots,I_{m-1}\subseteq I,$ with pairwise disjoint interiors, such that
$I_i$ $f$-covers $I_j,$ for some (maybe all) $i,j\in\{0,\dots,m-1\}.$\\
The natural extension of the concept of $f$-covering to the setting of a continuous self-map $f$ of a generic metric space $X$ can be obtained by replacing intervals with compact subsets of $X$ in the previous definitions. Indeed, one can assume the existence of $m$ pairwise disjoint compact sets $C_{0}\,,\dots,C_{m-1}\subseteq X$ such that
$f(C_i)\supseteq C_j\,,$ for some $i,j\in\{0,\dots,m-1\}.$ On the other hand,
as suggested in \cite{BuWe-95}, instead of the above covering relation, one could deal with a weaker condition of the form
$f(C_i) \;\, \mbox{``\,goes across\,''}\, C_{j},$
which does not require the map $f$ to be surjective on the $C_j$'s. For example, in our approach, we interpret the expression ``\,goes across\,''
in terms of the stretching along the paths effect from Definition \ref{def-sap} or Definition \ref{def-sapn}, that is, $f({C}_i)$ ``\,goes across\,'' ${C}_j$ means for us that $f:\widetilde{C}_i\stretchx \widetilde{C}_j\,,$ where $\widetilde{C}_i$ and $\widetilde{C}_j$ are (N-dimensional) oriented rectangles of $X.$ In this respect, an interesting framework is the one described in Definition \ref{def-cn} for $\widetilde A=\widetilde B=\widetilde{\mathcal R},$ or in the statement of Theorem \ref{th-ch} with $X=\mathbb R^2.$ \footnote{The case of sets homeomorphic to $[0,1]^2$ but contained in a generic metric space $X$ could be considered as well. However, since in the applications the space $X$ is the Euclidean plane and the generalized rectangles are compact regions bounded by graphs of continuous functions, for simplicity we confine ourselves to the planar setting.} In such situation, there exist $m\ge 2$ pairwise disjoint compact subsets ${\mathcal K}_0,\dots,{\mathcal K}_{m-1}$ of a generalized planar rectangle $\mathcal R$ for which $(\mathcal K_i,\psi):\widetilde{\mathcal R}\stretchx \widetilde{\mathcal R}$ holds, where $\widetilde{\mathcal R}=(\mathcal R,\mathcal R^-).$ Then, in view of Remark \ref{rem-mp}, it is possible to find $m$ pairwise disjoint vertical slabs $\widetilde{\mathcal R}_i$ of $\widetilde{\mathcal R}$ (cf. Definition \ref{def-hv}), with $\mathcal R_i\supseteq {\mathcal K}_i,$ for $i=0,\dots,m-1,$
which satisfy the relation $\psi:\widetilde{\mathcal R}_i\stretchx \widetilde{\mathcal R}_j,\,\forall\, i,j\in \{0,\dots,m-1\}.$ \\
The pretext for considering the above covering and crossing relations comes from the previously observed fact that, whenever a map $\psi$ induces chaotic dynamics in the sense of Definition \ref{def-chm}, then its entropy is positive (cf. conclusion $(iv)$ in Theorem \ref{th-cons}) and this happens, for instance, when the stretching condition in Theorem \ref{th-ch} is fulfilled.
Actually, we are going to show that a positive entropy can be obtained even under weaker assumptions.\\
For clarity's sake, we first explain the idea in the one-dimensional setting, in order to simplify the comprehension of the generic $N$-dimensional case (for $N\ge 2$). Hence, let us suppose that $f:I\to I$ is a continuous self-mapping of a compact interval $I\subset\mathbb R$ and assume there exists $a\in I$ such that, calling $b=f(a),\,c=f(b)=f^2(a)$ and $d=f(c)=f^2(b)=f^3(a),$ it holds that
\begin{equation}\label{eq-ly}
d\le a< b< c \quad \mbox{or} \quad  d\ge a> b> c\,.
\end{equation}
In particular, if $d=a$ then the map $f$ has in $I$ a point of period three. This is in fact the case mentioned in the title of the well-known paper \cite{LiYo-75}. Notice that, setting
\begin{equation}\label{eq-ints}
I_0:=[\min{\{a,b\}},\max{\{a,b\}}] \quad \mbox{and} \quad I_1:=[\min{\{b,c\}},\max{\{b,c\}}],
\end{equation}
it follows that $I_0\, f$-covers $I_1$ and $I_1\, f$-covers $I_0,$ as well as $I_1\, f$-covers $I_1.$ Under condition \eqref{eq-ly}, the authors in \cite{LiYo-75} can prove the presence in $I$ of periodic points of any period and also the existence of an uncountable scrambled set. We will return on this concept in Definition \ref{def-ly}. Before that, we want to show the positivity of the topological entropy of $f$ in the just described framework. To such end, we need some tools from the theory of symbolic dynamics \cite{Ki-98,LiMa-95}.

\smallskip

\noindent
Given a continuous map $g:J\to J$ defined on a compact interval $J\subset\mathbb R$ and $n\ge 2$ closed subintervals $J_0,\dots,J_{n-1}\subseteq J,$ with pairwise disjoint interiors, we associate to the dynamical system $(J,g)$ the $n\times n$ \textit{transition matrix}\index{matrix! transition} $T=T(i,k),$ for $i,k=0,\dots,n-1,$ defined as
\begin{equation}\label{eq-tm}
T(i,k)= \left\{ \begin{array}{ll}
1 & \textrm{ if $J_{i}\,\, g$-covers $J_{k}\,,$}\\
0 & \textrm{ else}.
\end{array} \right.
\end{equation}
Moreover, let
\begin{equation}\label{eq-sht}
\Sigma^+_T:=\{(s_i)_{i\in \mathbb N}\in\Sigma_n^+:T(s_k,s_{k+1})=1,\,\forall k\in\mathbb N\}
\end{equation}
be the \textit{space of the admissible sequences for} $T$ and
\begin{equation}\label{eq-sft}
\sigma_T:\Sigma^+_T\to\Sigma^+_T,\quad \sigma_T:=\sigma\restriction_{\Sigma^+_T}
\end{equation}
be the \textit{subshift of finite type for the matrix}\index{subshift of finite type} $T.$ In particular, according to \cite {BuWe-95}, we say that such subshift of finite type is \textit{nontrivial}\index{subshift of finite type, nontrivial} if $T$ is an \textit{irreducible} matrix\index{matrix! irreducible} (that is, for every couple of integers $i,\,k\in \{0,\dots, n-1\}$ there exists a positive integer $l$ such that $T^l(i,k)>0$), which is not a permutation on $n$ symbols\footnote{Notice that, when $T$ is the matrix whose entries are all $1,$ then $\Sigma^+_T=\Sigma_n^+$ and $\sigma_T$ coincides with the classical Bernoulli shift $\sigma,$ also known as \textit{full shift}\index{full shift} \cite{Ki-98}.}. The space $\Sigma^+_T$ inherits the metric from $\Sigma_n^+$ and, with this choice, $\sigma_T$ is continuous.
Furthermore, it is possible to prove (cf. \cite[Observation 1.4.2]{Ki-98}) that, when $T$ is irreducible, $h_{\rm top}(\sigma_T)= \log(\overline\lambda),$ where $\overline\lambda$ is the largest real eigenvalue of $T$ in absolute value, also called \textit{Perron eigenvalue of T}.
Therefore, if there exists a (positively) invariant set ${\mathcal I}^*\subseteq J$ such that $g\restriction_{{\mathcal I}^*}$ is semi-conjugate to $\sigma_T,$ it holds that
\begin{equation}\label{eq-irr}
h_{\rm top}(g)\ge h_{\rm top}(g\restriction_{{\mathcal I}^*})\ge h_{\rm top}(\sigma_T) = \log(\overline\lambda).
\end{equation}
In the case under investigation, we can take
\begin{equation}\label{eq-invt}
{\mathcal I}^*:=\bigcap_{i=0}^{+\infty}g^{-i}(J)
\end{equation}
and define the semi-conjugacy $\pi:{{\mathcal I}^*}\to \Sigma^+_T$ between $g\restriction_{{\mathcal I}^*}$ and $\sigma_T$ as
\begin{equation}\label{eq-pi}
(\pi(x))_i=k \quad \mbox{iff} \quad g^i(x)\in J_k\,, \quad \mbox{for} \quad i\in\mathbb N.
\end{equation}
We recall that, according to \cite[Lemma 1.2]{BuWe-95}, for the Perron eigenvalue it holds that $\overline\lambda$ is strictly greater than $1$ unless $T$ is a permutation matrix (otherwise, $\overline\lambda=1$). Thus, the topological entropy of any nontrivial subshift of finite type is positive.

\smallskip

\noindent
We have now available all the ingredients to deal with a map $f$ with the properties expressed in \eqref{eq-ly}. The transition matrix for $f$ associated to the intervals $I_0$ and $I_1$ in \eqref{eq-ints} is

\begin{equation}\label{mat-f}
T_f=\left( \begin{array}{cc}
0 & 1\\
1   & 1
\end{array} \right).
\end{equation}

\noindent
Defining the (positively) invariant set ${\mathcal I}^*$ for $f$ as in \eqref{eq-invt}, it is possible to obtain a semi-conjugacy as in \eqref{eq-pi} between $f\restriction_{{\mathcal I}^*}$ and the subshift of finite type $\sigma_{T_f}$ for the matrix $T_f.$ Thus, we can conclude that
\begin{equation}\label{eq-gold}
h_{\rm top}(f)\ge h_{\rm top}(\sigma_{T_f})= \log\left(\frac{1+\sqrt{5}}{2}\right),
\end{equation}
since the Perron eigenvalue of $T_f$ is $\overline\lambda=\frac{1+\sqrt{5}}{2}.$ This quantity coincides with the \textit{golden mean ratio} and the map $\sigma_{T_f}$ is also known under the name of \textit{golden mean shift}.

\medskip

\noindent
We stress that more general situations can be handled in a similar manner. For instance,
a further case that can be considered is when for a continuous map $g:J\to J,$ some multiple coverings among the subintervals $J_0,\dots,J_{n-1}$ of $J$ occur. In such a framework, the transition matrix $T$ is replaced by the $n\times n$ \textit{adjacency matrix}\index{matrix! adjacency} $A=A(i,k),$ for $i,k=0,\dots,n-1,$ defined as
\begin{equation}\label{eq-am}
A(i,k)= \left\{ \begin{array}{ll}
l & \textrm{ if $J_{i}\,\, g$-covers $J_{k}\,\,l$ times\,,}\\
0 & \textrm{ if $J_{i}$ does not $g$-cover $J_{k}$}\,,
\end{array} \right.
\end{equation}
where $l$ is a positive integer\footnote{If $l=1,$ condition ``$J_{i}\,\, g$-covers $J_{k}\,\,l$ times'' simply reads as ``$J_{i}\,\, g$-covers $J_{k}$''.}.
The entries of $A$ are now nonnegative integers, possibly different from $0,1.$
Also in this case, one can associate a suitable subshift of finite type to $A.$ To understand how to act, it is necessary to recall some notions from graph theory \cite{LiMa-95,Ro-99}.\\ A \textit{graph} $G$ consists of a finite set $\mathcal V=\mathcal V(G)$ of \textit{vertices} and of a finite set of \textit{edges} $\mathcal E=\mathcal E(G)$ among them. Every edge $e\in\mathcal E(G)$ starts at a vertex denoted by $i(e)$ (\textit{initial state}) and ends at a vertex denoted by $t(e)$ (\textit{terminal state}), which can possibly coincide with $i(e).$ Of course, there may be several edges between the same initial and terminal states. In the special case that $A=A(i,k)$ is an adjacency matrix, we call \textit{graph of} $A$ the graph $G=G(A),$ with vertex set $\mathcal V(G)=\{0,\dots,n-1\}$ and with $A(i,k)$ edges with initial state in $i$ and terminal state in $k,$ for $i,k=0,\dots,n-1.$ The cardinality of  the set of the edges is $|\mathcal E(G)|=\sum_{i,k\in\{0,\dots,n-1\}}A(i,k).$ We name $N$ this quantity, so that $\mathcal E(G)=\{e_1,\dots,e_N\}$ is the set of the edges. \\
The shift $\sigma_T$ in \eqref{eq-sft} for the transition matrix $T$ in \eqref{eq-tm} is called  a \textit{vertex subshift}. We want now to define an \textit{edge subshift} for the adjacency matrix $A$ in \eqref{eq-am}. In order to realize it, we construct the transition matrix $T'=T'(i,k)$ on $\mathcal E(G),$ that is, the $N\times N$ matrix with entries
\begin{equation*}
T'(i,k)= \left\{ \begin{array}{ll}
1 & \textrm{ if $t(e_i)=i(e_k),$ }\\
0 & \textrm{ else}.
\end{array} \right.
\end{equation*}
The desired edge subshift for $A$ corresponds to the vertex subshift for $T',$ $\sigma_{T'}:\Sigma^+_{T'}\to\Sigma^+_{T'}\,,\,\sigma_{T'}:=\sigma\restriction_{\Sigma^+_{T'}}\,,$  where the definition of $\Sigma^+_{T'}$ is analogous to the one of $\Sigma^+_{T}$ in \eqref{eq-sht} \footnote{Observe that, when the entries of $A$ are already from $\{0,1\},$ it is still possible to construct an associated transition matrix $T'$ as described above, that will be possibly different from $A$ (also the dimensions $n$ and $N$ won't coincide in general). On the other hand, it is not difficult to show that the vertex subshift $\sigma_A$ and the corresponding edge subshift $\sigma_{T'}$ are conjugate. Indeed, the conjugacy $h:\Sigma^+_{T'}\to\Sigma^+_A$ can be defined as $h((e_j)_{j\in \mathbb N})=(v_j)_{j\in \mathbb N},$ where $v_j=i(e_j),\,\forall j\in\mathbb N,$ with $v_j\in\mathcal V$ and $e_j\in\mathcal E.$}.\\
The edge subshifts can thus be seen as a counterpart of the vertex subshifts in relation to matrices with nonnegative integer entries, possibly different from $0,1.$ An example of matrices of such kind is represented by the powers of a given transition matrix and it is particularly important to handle them when dealing with the iterates of a given function.\\
Notice that, both in the case of edge subshifts and in the case of vertex subshifts, we have confined ourselves to the one-sided sequences of symbols, since we do not assume $g$ to be one-to-one on its domain. Analogous results hold true for bi-infinite sequences under the additional hypothesis of injectivity for $g$ (cf. Remark \ref{rem-inj}).

\smallskip

\noindent
If, in place of self-maps on intervals, one deals with functions
$f:X\to X$ defined on a generic metric space $X,$ and considers, instead of the one-dimensional covering relation for intervals, some covering or crossing relation among pairwise disjoint compact subsets (or at least compact subsets with pairwise disjoint interiors, in the case of Markov partitions \cite{Ro-04}) of $X,$ then, to such framework, it is possible to associate a transition matrix or an adjacency matrix exactly as in \eqref{eq-tm} and \eqref{eq-am}, respectively, with intervals replaced by compact sets. \\
In particular, when the hypotheses of Theorem \ref{th-ch} are fulfilled with $X=\mathbb R^2$ and $m=2,$
we have already observed that there exist two vertical slabs $\widetilde{\mathcal R}_0$ and $\widetilde{\mathcal R}_1$ of $\widetilde{\mathcal R},$ with $\mathcal R_0\supseteq \mathcal K_0$ and $\mathcal R_1\supseteq \mathcal K_1,$ which satisfy $\psi:\widetilde{\mathcal R}_i\stretchx\widetilde{\mathcal R}_j,$ for $i,j=0,1.$
Thus, the transition matrix for $\psi$ associated to $\widetilde{\mathcal R}_0$ and $\widetilde{\mathcal R}_1$ is

\begin{equation}\label{mat-psi}
T_{\psi}=\left( \begin{array}{cc}
1 & 1\\
1   & 1
\end{array} \right).
\end{equation}

\noindent
Analogous conclusions can be drawn when restricting our stretching along the paths relation to the one-dimensional setting. In such a
case, a pair ${\widetilde I}=(I,I^-),$ where $I=[a,b]$ is a
compact interval and $I^- = \{a,b\}$ is the set of its extreme
points, may be seen as a degenerate kind of oriented rectangle.
Accordingly, the stretching property $({\mathcal K},\psi): {\widetilde I}\stretchx {\widetilde I}$
is equivalent to the fact that ${\mathcal K}$ contains a compact interval $I_0$ such that $\psi(I_0) = I.$ This shows that, in the one-dimensional case,
our stretching property reduces to the classical covering relation discussed above. Moreover, noticing that, in the just described framework, we have $(I_0,\psi): {\widetilde I}\stretchx {\widetilde I},$ then, if $I$ contains two disjoint compact subintervals $I_0$ and $I_1$
such that $\psi(I_i) = I,$ for $i=0,1,$ we enter the setting of
Theorem \ref{th-ch}, with ${\widetilde I}$ playing the role of ${\widetilde{\mathcal R}}$
and with ${\mathcal K}_i := I_i\,.$

\bigskip

\noindent
A look at the matrices $T_f$ in \eqref{mat-f} and $T_{\psi}$ in \eqref{mat-psi} shows that the assumptions of Theorem \ref{th-ch}, when restricted to the one-dimensional setting, are stronger than the conditions in \eqref{eq-ly} \footnote{At first sight, this conclusion may seem rather obvious, since a map $\psi$ as in Theorem \ref{th-ch} has periodic points of all periods and thus, in particular, also a point of period three. However, things are not so simple. Indeed, as a consequence of the \v{S}arkowskii Theorem \cite{St-77}, any continuous self-map $f$ defined on a compact interval and possessing a point of period three has periodic points of each period.
Moreover, the conditions in \eqref{eq-ly} do not necessarily imply the existence of a point of period three, being in fact more general. This is the reason that led us to introduce some elements from symbolic dynamics in order to compare our framework to the one in \cite{LiYo-75}.}. Recalling \eqref{eq-gold}, this verifies our claim that a positive topological entropy can be obtained under hypotheses weaker than the ones in Theorem \ref{th-ch}. On the other hand, with respect to \cite{LiYo-75}, the assumptions in Theorem \ref{th-ch} allow to obtain sharper consequences from a dynamical point of view (cf. Section \ref{sec-var}).

\medskip

\noindent
Before abandoning the one-dimensional setting, let us employ the just introduced tools from symbolic dynamics to study the behaviour of the logistic map\index{logistic map}
\begin{equation}\label{eq-log}
F:[0,1]\to\mathbb R,\,\,
F(x):=\mu x(1-x),
\end{equation}
when the parameter $\mu$ ranges in $[0,+\infty].$\\
It is easy to see that for
$\mu > 4$ the interval $[0,1]$  $F$-covers $[0,1]$ twice. Indeed, the maximum of $F$ is attained when $x=1/2$ and $F(1/2)=\mu/4,$ so that $F(1/2)\ge 1$ for $\mu \ge 4.$ Since $F(0)=F(1)=0,$ setting $\alpha:=F^{-1}(\{1\})\cap ([0,1/2])$ and $\beta:=F^{-1}(\{1\})\cap ([1/2,1]),$ we have $({I_i},F): {\widetilde I}\stretchx {\widetilde I},\,i=0,1,$ where $\widetilde I:=([0,1],\{0,1\}),$ $I_0:=[0,\alpha]$ and $I_1:=[\beta,1].$ \footnote{Actually, $[0,1]$  $F$-covers $[0,1]$ twice also for the ``watershed'' value $\mu =4.$ However, in this case $I_0$ and $I_1$ are not disjoint and thus Theorem \ref{th-ch} does not apply directly.}
Unfortunately, in such a case, $F$ does not map $[0,1]$ into itself and,
for almost every initial point,
the iterates $F^{n}$ of the map limit to $-\infty$ as $n\to
\infty.$ This generates problems with numerical simulations: indeed, although Theorems \ref{th-ch} and \ref{th-cons} ensure the existence of a chaotic invariant set in $[0,1],$ such region is not visible on the computer screen, because almost all points eventually leave the domain through the iterates of $F.$

\begin{figure}[ht]
\centering
\includegraphics[scale=0.25]{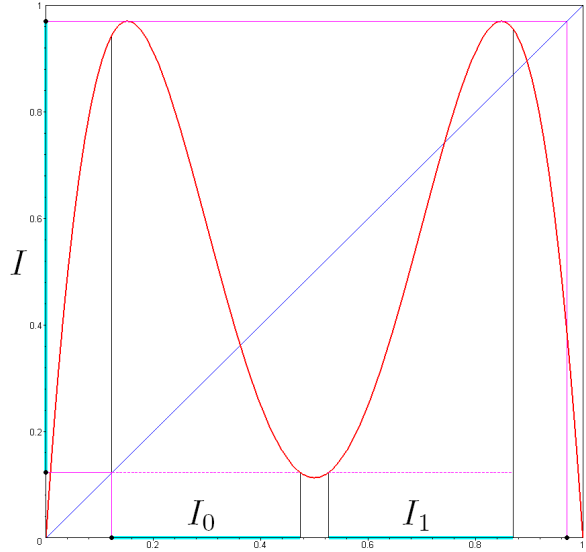}
\caption{\footnotesize {The graph of the second iterate
of the logistic map $F$ in \eqref{eq-log} for $\mu=3.88.$
}}
\label{fig-log}
\end{figure}

\noindent
When $\mu \le 4,$ the existence of multiple coverings can be established for some iterate of $F$ through geometric arguments analogous to the ones employed for the case $\mu > 4,$ that allow to conclude that the conditions of Theorem \ref{th-ch} are verified for sufficiently large values of the parameter $\mu \le 4.$
A corresponding example is illustrated in Figure
\ref{fig-log},
which depicts the second iterate of
$F$ for $\mu=3.88.$
The interval $I\subseteq [0,1],$ drawn with a thicker line on the $y$-axis, is oriented by choosing as $I^-$ its extreme points. For
$\widetilde I= (I, I^-)$ and calling $I_0$ and $I_1$ the two subintervals of $I$
highlighted on the $x$-axis, it is immediate to verify that $F^2(I_i)=I,$ for $i=0,1,$ and hence $(I_i, F^2):\widetilde I\stretchx\widetilde I,\,i=0,1.$
Therefore, the map $F^2$ induces
chaotic dynamics on two symbols relatively to $I_0,\,I_1$ and
possesses all the chaotic properties listed in Theorem \ref{th-cons}. In particular, from this something can still be deduced for $F.$ Indeed, by the semi-conjugacy between $F^2$ restricted to a suitable invariant set and the one-sided shift $\sigma$ on two symbols (cf. Theorem \ref{th-cons}, conclusions $(ii)$ and $(v)$), it holds that
$$h_{\rm top} \left(F^2\right) \ge \log(2)$$
and, since by \eqref{eq-it} we have $h_{\rm top} \left(F^2\right)=2 h_{\rm top} (F),$ it follows
$$h_{\rm top} (F) \ge \log(\sqrt 2)>0.$$

\bigskip

\noindent
We conclude this section by reconsidering Definition \ref{def-chm} and Theorem \ref{th-ch} in the light of the above results from symbolic dynamics. \\
Indeed, when a map satisfies the hypotheses of Theorem \ref{th-ch} with $X=\mathbb R^2$ and for some $m\ge 2,$ then the associated transition matrix has all elements equal to 1 (see \eqref{mat-psi} for the case $m=2$). On the other hand, one could face the more general situation in which the transition matrix $T$ associated to the system is irreducible, but possibly contains also some 0's \cite{LWSr-02}. The topological entropy can then be estimated as in \eqref{eq-irr} and one could say that the map $\psi$ induces chaotic dynamics when the topological entropy of the corresponding subshift of finite type $\sigma_T$ is positive. As already mentioned, this happens, for example, when $\sigma_T$ is nontrivial \cite{BuWe-95}. \\
According to \cite{KaHa-95,Ki-98}, if $T$ is irreducible, the map $\sigma_T$ is Devaney chaotic (cf. Definition \ref{def-dev}) on $\Sigma^+_T$ and thus it displays transitivity and sensitivity with respect to initial conditions and the set of periodic points of $\sigma_T$ is dense in $\Sigma^+_T.$ However, in order to proceed as in the proof of Theorem \ref{th-ch} and transfer the chaotic features of $\sigma_T$ to $\psi$ (via Theorem \ref{th-ay}), we need the periodic sequences of $\sigma_T$ to be realized by periodic points of $\psi.$ In view of Theorem \ref{th-ch}, this happens, for instance, when our stretching relation is fulfilled with respect to some oriented rectangles, but maybe not all the possible coverings are realized, so that the conclusions of Theorem \ref{th-cons} hold with the full shift $\sigma$ on $\Sigma^+_m$ replaced by $\sigma_T$ on $\Sigma^+_T.$\\
At last we notice that a concrete framework in which it is possible to obtain chaotic dynamics in the sense of Definition \ref{def-chm} is when there exist $m\ge 2$ pairwise disjoint subsets $C_i,\,i=0,\dots,m-1,$ of a metric space $X,$ each with the Fixed Point Property (FPP) \footnote{We say that a topological space $W$ has FPP if every continuous self-map of $W$ has a fixed point in $W.$} and such that there exists a homeomorphism $\varphi$ defined on $\bigcup_{i=0}^{m-1}C_i,$ with $\varphi(C_i)\supseteq C_j,\forall\, i,j=0,\dots,m-1.$ Then, since $\varphi(C_i)\supseteq C_i$ is equivalent to $\varphi^{-1}(C_i)\subseteq C_i,$  by the continuity of $\varphi^{-1}:C_i\to C_i\,,$ it follows that there exists $x^*\in C_i$ with $\varphi^{-1}(x^*)=x^*,$ i.e. $\varphi(x^*)=x^*.$ Thus, the presence of at least a fixed point is ensured in each $C_i,\,i=0,\dots,m-1.$ Working with the iterates of the map $\varphi,$ the existence of periodic points in the $C_i$'s can be shown in an analogous manner. To check that the periodic sequences on $m$ symbols are realized by periodic itineraries, it is then sufficient to follow the same steps as in the proof of the second and the forth conclusions in Theorem \ref{th-per}. Due to the invertibility of $\varphi,$ we observe that it is possible to deal with two-sided sequences (cf. Remark \ref{rem-inj}).
In view of the previous discussion, one could also consider the more general case in which $\varphi(C_i)\supseteq C_j,$ for \textit{some} $i,j=0,\dots,m-1,$ but the associated transition matrix $T$ is still irreducible. Obviously, in this setting, only the admissible (one-sided or two-sided) sequences
can be realized.

\section{On various notions of chaos}\label{sec-var}

Let us pursue further the discussion on chaotic dynamics started in Sections \ref{sec-de}--\ref{sec-sd}. In particular, we will try to show the mutual relationships among some of the most classical definitions of chaos (such as the ones by Li-Yorke, Devaney, etc.), considering also the notion of chaotic dynamics in the sense of Definition \ref{def-chm}.\\
Firstly, we would like to conclude the comparison between our results in Section \ref{sec-de} and the achievements in \cite{LiYo-75}.
As we have seen, the presence of a point of period three for a continuous self-map $f$ defined on a compact interval, or more generally the conditions \eqref{eq-ly} from \cite{LiYo-75}, ensure the positivity of the topological entropy for $f.$ Thanks to some tools from symbolic dynamics, we have also realized that our assumptions in Theorem \ref{th-ch} are stronger than the ones in \eqref{eq-ly}. Now we are going to show that the
consequences of Theorem \ref{th-ch}, listed in Theorem \ref{th-cons}, are strictly sharper than the conclusions in \cite{LiYo-75}.\\
The main result obtained by Li and Yorke in \cite{LiYo-75} is often recalled as a particular case of the \v{S}arkowskii Theorem \cite{St-77} but, actually, the authors in \cite{LiYo-75} proved much more than the presence of periodic points of any period for an interval map possessing a point of period three. Indeed, the existence of an uncountable \textit{scrambled} set (cf. Definition \ref{def-ly}) was obtained for such a map, which was called ``chaotic'' in \cite{LiYo-75} for the first time in the literature, even if the precise corresponding definition of chaos (now known as \textit{Li-Yorke chaos}) was not given there.

\begin{definition}\label{def-ly}
\rm
{Let $(X,d_X)$ be a metric space and $f:X\to X$ be a continuous map.
We say that $S\subseteq X$ is a \textit{scrambled set for $f$}\index{scrambled set}
if for any $x,y\in S,$ with $x\ne y,$ it holds that
$$ \liminf_{n\to\infty}d_X(f^n(x), f^n(y))=0 \quad \mbox{and} \quad \limsup_{n\to\infty}d_X(f^n(x), f^n(y))>0.$$
If the set $S$ is uncountable, we say that $f$ is \textit{chaotic in the sense of Li-Yorke}\index{chaos! Li-Yorke}.
}
\end{definition}

\noindent
We remark that according to \cite{LiYo-75} the scrambled set $S$ should satisfy an
extra assumption, i.e.
$$\limsup_{n\to\infty}d_X(f^n(x),f^n(p))>0,$$
for any $x\in S$ and for any periodic point $p\in X.$
However, in \cite{AuKi-01} this condition has been proved to be redundant
in any compact metric space and therefore it is usually omitted.\\
We also point out that the original framework in \cite{LiYo-75} was
one-dimensional. The subsequent extension to generic metric spaces
is due to different authors (see e.g. \cite{AuKi-01, BlGl-02,
HuYe-02, Ko-04}) that have collated the concept of chaos from \cite{LiYo-75}
to other ones available in the literature. We will try to present some of
these connections in the next pages.\\
We warn the reader that, in what follows, the term chaotic will be referred without distinction to a dynamical system, meant as a couple $(X,f),$
where $X$ is a compact metric space and $f:X\to X$ is continuous and surjective, as well as only to the map $f$ defining it. In this respect, with a slight abuse of terminology, some properties of the map $f$ (such as transitivity, sensitivity, etc.) will be transferred to the system $(X,f).$
When we need to specify the distance $d_X$ on $X,$ we will also write $(X,f, d_X)$ in place of $(X,f).$
Since the map $f$ has to be onto, if we are in the framework described in Theorem \ref{th-cons}, the dynamical system we usually consider is given by $(\mathcal I, \psi\restriction_{\mathcal I}),$ where $\mathcal I$ is the invariant set in \eqref{eq-i}.

\smallskip

\noindent
In order to understand the relationship between the kind of chaos expressed in Definition \ref{def-chm} and the Li-Yorke chaos, a key role is played by the topological entropy. Indeed, as we have seen in Theorem \ref{th-cons}, conclusion $(iv),$ thanks to the
semi-conjugacy with the Bernoulli shift, the topological entropy of $\psi$ is positive
in the setting described in Definition \ref{def-chm}. On the other hand, in
\cite[Theorem 2.3]{BlGl-02} it is established that any dynamical system with
positive topological entropy admits an uncountable scrambled set
and therefore it is chaotic in the sense of Li-Yorke. Hence, we can
conclude that our notion of chaos is stronger than the one in
Definition \ref{def-ly}, since any system chaotic according to
Definition \ref{def-chm} is also Li-Yorke chaotic, while the vice versa does not
hold in general. In fact, there exist maps Li-Yorke chaotic but with zero
topological entropy: for an example on the unit interval, see
\cite{Sm-86}.

\smallskip

\noindent
After the discussion on the concepts introduced in \cite{LiYo-75} and
on the results obtained therein,
the next feature related to the presence of chaos we take
into consideration is the \textit{sensitivity with respect to initial data}.
This is one of the three requirements in Devaney's definition of
chaos, together with the \textit{topological transitivity} and the \textit{density
of periodic points}, and it is maybe the most intuitive among them.
Indeed, it is pretty natural to associate the idea of chaos to
a certain unpredictability of the forward behaviour of the system under consideration.
The sensitivity with respect to initial data expresses exactly
such concept: no matter how close two points start, there exists
an instant in the future in which they are at a given positive
distance. However, although its intuitiveness, the sensitivity on initial
conditions has been proved
to be redundant in Devaney's definition of chaos in
any infinite metric space \cite{BaBr-92} \footnote{We recall that, in the special case of intervals,
also the density of periodic points is superfluous according to
\cite{VeBu-94} and therefore Devaney chaos coincides with transitivity in
the one-dimensional framework.}. \\
In view of the subsequent
treatment, we present the complete definition of \textit{Devaney chaos} for
the reader's convenience, even if at first we will focus only on the
third condition.
\begin{definition}\label{def-dev}
\rm{Given a metric space $(X,d_X)$ and a continuous function $f:X\to X,$ we say that $f$ is \textit{chaotic in the sense of Devaney} \index{chaos! Devaney}if:
\begin{itemize}
\item  $f$ is \textit{topologically transitive}\index{transitivity, topological}, i.e. for any
couple of nonempty open subsets $U,\,V\subseteq X$  there exists an
integer $n\ge 1$ such that $U\cap f^n(V)\ne\emptyset\,;$ \item The
set of the periodic points for $f$ is dense in $X;$ \item $f$ is
\textit{sensitive with respect to initial data} (or $f$ displays
\textit{sensitive dependence on initial conditions}) \index{sensitive dependence}\textit{on} $X,$ i.e.
there exists $\delta > 0$ such that for any $x\in X$ there is a
sequence $(x_i)_{i\in\mathbb N}$ of points in $X$ such that
$x_i\to x$ when $i\to\infty$ and
for each $i\in\mathbb N$ there exists a positive integer
$m_i$ with $d_X(f^{m_i}(x_i), f^{m_i}(x))\ge\delta\,.$
\end{itemize}
}
\end{definition}

\noindent
We stress that some authors call Devaney chaotic maps for
which the three above conditions hold true only with respect to a
compact positively invariant subset of the domain \cite{AuKi-01, Li-93}.
We also remark that sometimes a map $f$ on $X$ is named topologically transitive if there exists a dense orbit for $f$ in $X$ \cite{KoSn-97}. In the case of compact metric spaces without
isolated points, the two definitions turn out to be equivalent, but in general
they are independent. The precise relationship between such
notions can be found in \cite[Proposition (1.1)]{Si-92}: in any metric space without isolated points the existence of a dense orbit implies the topological
transitivity; vice versa, in separable and second category metric
spaces, the topological transitivity implies the
existence of a dense orbit. According to \cite[Lemma 3]{AuYo-80}, another case in which the two definitions of transitivity coincide is when $f$ is onto and this happens, for instance, in any dynamical system. Such fact will find an application in Theorem \ref{th-ay}.\\
About the sensitivity, we notice that it can be equivalently defined using neighborhoods instead of sequences: given a metric space $(X,d_X),$
the continuous map $f:X\to X$ is said to display sensitive dependence on initial conditions if there exists
$\delta > 0$ such that, for any $x\in X$ and for every open set $X\supseteq O_x\ni x,$ there exist $y\in O_x$ and
a positive integer $m$ with $d_X(f^m(x), f^m(y))\ge\delta$ \cite{Si-92}.
We also recall that it is possible to give a pointwise version of the definition of sensitivity, using neighborhoods or sequences. As regards this latter characterization, any $x\in X$ admitting a sequence $(x_i)_{i\in\mathbb N}$ of points of $X$ with $x_i\to x$ for $i\to\infty$ and a sequence $(m_i)_{i\in\mathbb N}$
of positive integers with $d_X(f^{m_i}(x_i), f^{m_i}(x))\ge\delta,\,\forall i\in\mathbb N$ and for some $\delta >0,$ is sometimes called \textit{$\delta$-unstable}
(or simply \textit{unstable})\index{unstable point} \cite{AuYo-80}. For an equivalent definition of unstable point based on neighborhoods, see \cite{BlGl-02}, where it is also observed that, in general, the instability of $f$ at every point of $X$ does not
imply $f$ to be sensitive on $X.$ Indeed there could be no positive $\delta$ such that all points of $X$ are $\delta$-unstable. However, the previous inference is true if the map is e.g. transitive \cite{BlGl-02}, such as in Theorem \ref{th-ay}, where we use the concept of sensitivity in both its equivalent versions and also the definition of unstable point based on sequences.

\medskip

\noindent The positivity of the topological entropy and the
sensitivity on initial conditions are someway related, since both are
signals of a certain instability of the system. The
topological entropy is however a ``locally detectable'' feature, in the sense that, according to \eqref{eq-gr}, it is
sufficient to find a (positively) invariant subset of the domain where it is
positive in order to infer its positivity on the whole domain.
Therefore, in general, we cannot expect the system to be sensitive
at each point if the entropy is positive. Adding a global property, such as transitivity, then this
implication holds true. Indeed in \cite{BlGl-02} it is argued that any
transitive map with positive topological entropy displays
sensitivity with respect to initial data. On the other hand,
if we are content with the presence of sensitivity only on an invariant subset of
the domain, then some authors have obtained corresponding results by considering,
instead of the positivity of the entropy, the stronger property of chaos in the sense of
coin-tossing (cf. Section \ref{sec-de}),
or at least the semi-conjugacy to the Bernoulli shift $\sigma$ for the map $f$ defining
the dynamical system or one of its iterates. For example,
the already cited \cite[Lemma 4]{KeKoYo-01} (Chaos Lemma), under hypotheses for a map $f$ similar
to the ones for $\psi$ in Theorem \ref{th-ch},
establishes the existence of a compact $f$-invariant set $Q_{*},$ on which $f$ is sensitive and such that each forward itinerary on $m$ symbols
is realized by the itinerary generated by some point of $Q_{*}$ (here $m$ is the crossing number introduced in Subsection \ref{sub-sap}).
A similar result is mentioned, without proof, by Aulbach and
Kieninger in \cite{AuKi-01} and it asserts that if a continuous self-map $f$ of a compact
metric space $X$ is \textit{chaotic in the sense of
Block and Coppel}\index{chaos! Block-Coppel}, that is, if there exist an iterate
$f^k$ (with $k\ge 1$) of $f$ and a compact subset $Y\subseteq X$
positively $f^k$-invariant, such that $f^k\restriction_Y$ is
semi-conjugate to the one-sided Bernoulli shift on two symbols \footnote{We observe that, although the definition of Block-Coppel chaos is presented in \cite{AuKi-01} with respect to the shift on two symbols, in all the related results stated below, one could deal with the more general case in which $m\ge 2$ symbols are involved. Indeed, the situation is exactly the same such as for Definitions \ref{def-ch} and \ref{def-chm}, where the difference is only a formal matter.}, then $f\restriction_Z$ is Block-Coppel
chaotic, transitive and sensitive on $Z,$ where $Z$ is a suitable compact positively $f$-invariant subset of $X.$
As suggested in \cite{AuKi-01}, such
conclusions rely on \cite[Theorem 3]{AuYo-80}. Since the proof of this latter
result is only sketched in \cite{AuYo-80}, we furnish the
details in Theorem \ref{th-ay} below for the sake of completeness. Notice that the original requirement on the existence of a dense orbit in the statement of \cite[Theorem 3]{AuYo-80} has been here replaced with the one of transitivity from Definition \ref{def-dev}, since such notions coincide in every dynamical system (cf. \cite[Lemma 3]{AuYo-80}). Moreover, we have replaced the pointwise instability in \cite[Theorem 3]{AuYo-80} with the (generally) stronger property of sensitivity: indeed, as mentioned before, such concepts turn out to be equivalent in any transitive system \cite{BlGl-02}
\footnote{Actually, the sensitivity on initial conditions is equivalent to the instability of a point with dense orbit \cite{AuYo-80}, as it will emerge from the proof of Theorem \ref{th-ay}.}. Finally, we stress that the set $X_0$ in the statement of \cite[Theorem 3]{AuYo-80} is claimed to be only positively invariant. However, after a look at its proof, since by construction $X_0$ is the $\omega$-limit set of a certain point of $X,$ it turns out to be invariant by the compactness of $X$ (cf. \cite[Theorem 5.5]{Wa-82}).

\begin{theorem}[Auslander-Yorke]\label{th-ay}
Let $(X, f, d_X)$ and $(Y,g, d_Y)$ be dynamical
systems and let $\pi:X\to Y$ be a continuous and surjective map
such that $\pi\circ f=g\circ\pi.$ Assume that $g$ is sensitive and transitive on $Y.$ Then there exists a closed
$f$-invariant subset $X_0\subseteq X$
such that $\pi(X_0)=Y$ and such that $f$ is sensitive and transitive on $X_0.$
\end{theorem}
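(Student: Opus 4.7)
The plan is to construct $X_0$ as a minimal element of the family of closed $f$-invariant subsets of $X$ that still project onto $Y$, and then read off the required properties from this minimality.

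First I would invoke the transitivity of $g$ on the dynamical system $(Y,g)$ to produce, by a standard Baire argument (or directly via \cite[Lemma 3]{AuYo-80}), a residual set $T_g \subseteq Y$ of points with dense forward $g$-orbit. Next I would apply Zorn's lemma to the family $\mathcal C$ of nonempty closed $f$-invariant subsets $K \subseteq X$ with $\pi(K) = Y$, ordered by reverse inclusion: $\mathcal C$ is nonempty (since $X \in \mathcal C$), and the intersection of a descending chain stays in $\mathcal C$ by a fibrewise finite-intersection argument (for each $y \in Y$, the descending compact sets $\pi^{-1}(y) \cap K_\alpha$ have nonempty intersection). Let $X_0$ be a minimal element of $\mathcal C$; minimality forces $f(X_0) = X_0$, since $f(X_0)$ is closed, $f$-invariant and projects onto $g(Y)=Y$.

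From minimality I would then derive two facts. First, every $x \in X_0$ with $\pi(x) \in T_g$ has dense forward $f$-orbit in $X_0$: $\overline{O^+(x)}$ is a closed $f$-invariant subset whose image under the closed map $\pi$ is closed and contains the dense orbit $O^+(\pi(x))$, hence equals $Y$, so $\overline{O^+(x)} \in \mathcal C$ and minimality forces $\overline{O^+(x)} = X_0$. Second, $X_0$ has no isolated points: sensitivity of $g$ forces $Y$ to be perfect, and if $z \in X_0$ were isolated then, writing $z = f^k(x)$ for some transitive $x$, the point $z$ itself would be transitive (its $\pi$-image is a shift of a transitive point in the perfect $Y$), so $z$ would lie in $\overline{O^+(f(z))} = X_0$, making $z$ recurrent; an isolated recurrent point is periodic, whence $\pi(z)$ would be periodic with finite orbit, contradicting the density of $O^+(\pi(z))$ in the infinite $Y$. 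Topological transitivity of $f|_{X_0}$ then follows from the standard fact that, in a perfect compact metric space, the existence of a dense forward orbit is equivalent to topological transitivity, using that removing finitely many points from a dense set in a perfect space keeps it dense.

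The hard part will be establishing sensitivity. The plan is to exhibit a single pointwise unstable point in $X_0$ and then invoke the classical equivalence, valid in any topologically transitive system, between sensitivity and the existence of a single unstable point \cite{BlGl-02}. Let $\delta$ be the sensitivity constant of $g$ and, by uniform continuity of $\pi$ on the compact $X$, fix $\eta > 0$ so that $d_X(a,b) < \eta$ implies $d_Y(\pi(a),\pi(b)) < \delta$. Pick any transitive $x \in X_0$. Sensitivity of $g$ at $\pi(x)$ supplies a sequence $\hat y_i \to \pi(x)$ in $Y$ and indices $m_i$ with $d_Y(g^{m_i}(\pi(x)), g^{m_i}(\hat y_i)) \geq \delta$. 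Lift each $\hat y_i$ to $x_i \in \pi^{-1}(\hat y_i) \cap X_0$ (possible because $\pi(X_0)=Y$) and extract a convergent subsequence $x_{i_j} \to x^* \in X_0$. The crux of the argument, and the main obstacle, is that these lifts need not converge to $x$ itself, only to \emph{some} point of the fiber $\pi^{-1}(\pi(x)) \cap X_0$; the remedy is to prove instability at $x^*$ rather than at $x$. The contrapositive of the uniform continuity bound yields $d_X(f^{m_{i_j}}(x_{i_j}), f^{m_{i_j}}(x^*)) \geq \eta$, since the $\pi$-images of these points, namely $g^{m_{i_j}}(\hat y_{i_j})$ and $g^{m_{i_j}}(\pi(x))$, differ by at least $\delta$; moreover $x_{i_j} \neq x^*$ because $\pi(x_{i_j}) = \hat y_{i_j} \neq \pi(x) = \pi(x^*)$. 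Thus $x^*$ is pointwise unstable for $f|_{X_0}$ with constant $\eta$, and the transitivity proved in the previous step promotes this to sensitivity on all of $X_0$.
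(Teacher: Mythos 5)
Your proof is correct and shares the overall skeleton of the paper's: Zorn's lemma to extract a minimal closed positively $f$-invariant $X_0$ with $\pi(X_0)=Y$ (with the same fibrewise finite-intersection argument for chains), a lifted transitive point, and uniform continuity of $\pi$ combined with compactness to manufacture an unstable point $x^*\in X_0$. You diverge in three places, each substantive enough to note. For invariance, you observe directly that $f(X_0)$ is closed, positively invariant, projects onto $g(Y)=Y$, and lies in $X_0$, so $f(X_0)=X_0$ by Zorn-minimality; the paper instead proves $\omega(x_0^*)=X_0$ and extracts invariance from the invariance of $\omega$-limit sets. For transitivity, you establish that $X_0$ is perfect before applying ``dense orbit in a perfect compact space $\Rightarrow$ transitive''; your argument (an isolated point $z$ lies on the dense orbit, has $\pi(z)\in T_g$ because $Y$ is perfect, hence is recurrent, hence periodic, forcing the dense orbit of $\pi(z)$ in the uncountable $Y$ to be finite) is the more careful route, since the paper at this point appeals to the characterisation of \emph{dynamical} minimality (``every orbit dense'') which its Zorn construction does not literally deliver. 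For sensitivity, once $x^*$ is shown to be $\eta$-unstable you delegate the promotion from ``transitive $+$ one non-equicontinuity point'' to ``sensitive'' to \cite{AuYo-80,BlGl-02}; the paper reproves this propagation step from scratch, showing the whole dense orbit of its unstable point is $\bar\delta$-unstable and that the closure of the $\bar\delta$-unstable set is $\bar\delta/2$-unstable, which yields a uniform constant self-containedly --- the authors' explicit aim, since they note the argument in \cite{AuYo-80} is only sketched. A small observation: the transitivity of the $x$ you pick in the sensitivity step is never used; any $x\in X_0$ would serve, because instability is located at the limit $x^*$ and the promotion to sensitivity comes from transitivity of the system, not of the chosen point.
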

\begin{proof}
First of all we use Zorn Lemma to show the existence of
a closed positively $f$-invariant  subset $X_0\subseteq X$ such
that $\pi(X_0)=Y$ and $X_0$ is minimal with respect to these
properties. Indeed, let us consider a chain $\mathcal
C=\{X_i\}_{i\in \mathcal F},$ whose elements are closed positively
$f$-invariant subsets of $X$ and $\pi(X_i)=Y,\,\forall i\in
\mathcal F.$ Then we have to check that $X_{\infty}:=\bigcap_{i\in
\mathcal F} X_i$ is still closed, positively $f$-invariant and
$\pi(X_{\infty})=Y.$ Notice that $X_{\infty}$ is nonempty by the
strong Cantor property, valid for compact spaces (in fact,
equivalent to compactness in any topological space)\footnote{We
recall that a topological space $X$ has the strong Cantor
property if, for any family $\{C_{\alpha}\}_{\alpha\in \mathcal
I}$ of closed subsets of X with the finite intersection property
(i.e. such that, for every finite subset $\mathcal J\subseteq
\mathcal I,$ it holds that $\bigcap_{\alpha\in \mathcal J}
C_{\alpha}\ne\emptyset$), we have $\bigcap_{\alpha\in \mathcal I}
C_{\alpha}\ne\emptyset.$}. Since $X_{\infty}$ is the intersection of closed
sets, it is closed, too. The positive $f$-invariance of $X_{\infty}$ follows easily by observing that
$$f(X_{\infty})\subseteq \bigcap_{i\in \mathcal F}f(X_i)\subseteq \bigcap_{i\in \mathcal F}X_i=X_{\infty}.$$
In order to verify that $\pi(X_{\infty})=Y,$ we have to check two
inclusions. Since $\pi(X_i)=Y,\,\forall i\in \mathcal F,$ then
$\pi(X_{\infty})\subseteq\bigcap_{i\in \mathcal
F}\pi(X_i)= \bigcap_{i\in \mathcal F}Y=Y.$ For the reverse
inclusion, let us fix $y\in Y$ and consider the compact sets
$C_i:=\pi^{-1}(y)\cap X_i,$ $i\in\mathcal F.$ Notice that they are
nonempty as $\pi(X_i)=Y,\,\forall i\in \mathcal F.$ Moreover the
family $\{C_{i}\}_{i\in \mathcal F}$ has the finite intersection
property: indeed, recalling that $\mathcal C$ is a chain, given
$C_{i_1},\dots,C_{i_k},$ with $i_1,\dots,i_k \in\mathcal F,$ it holds that
$$\bigcap_{j=1}^k C_{i_j}=C_{i^*}, \quad \mbox{for some} \quad   i^*=i_1,\dots,i_k\,$$
and thus such intersection is nonempty. By the strong Cantor property, there exists
$$ x\in \bigcap_{i\in \mathcal F}C_i=\bigcap_{i\in \mathcal F}\left(\pi^{-1}(y)\cap X_i\right)=\pi^{-1}(y)\cap
\left(\bigcap_{i\in \mathcal F}X_i\right)=\pi^{-1}(y)\cap X_{\infty},$$ that
is, there exists $ x\in X_{\infty}$  such that $\pi( x)=y.$ Hence,
$Y\subseteq \pi(X_{\infty}).$
The existence of the minimal set $X_0$ is then ensured by Zorn Lemma.\\
Notice that, since $\pi(X_0)=Y$ and $f(X_0)\subseteq X_0,$ the
relation $\pi\restriction_{X_0}\circ f\restriction_{X_0}=g\circ\pi\restriction_{X_0}$ holds. Indeed, for any $x_0\in X_0\subseteq X,$ we have $\pi(f\restriction_{X_0}(x_0))=\pi(f(x_0))=g(\pi(x_0))=g(\pi\restriction_{X_0}(x_0)).$
On the other hand, since $f(x_0)\in X_0,$ then $\pi(f\restriction_{X_0}(x_0))=\pi\restriction_{X_0}(f\restriction_{X_0}(x_0)),$ from which $\pi\restriction_{X_0}(f\restriction_{X_0}(x_0))=g(\pi\restriction_{X_0}(x_0)).$ The validity of $\pi\restriction_{X_0}\circ f\restriction_{X_0}=g\circ\pi\restriction_{X_0}$ is thus checked.\\
By the transitivity of the onto map $g$ on $Y,$ there exists
$y^*\in Y$ with dense $g$-orbit, that is,
$\overline{\gamma(y^*)}=Y,$ for
$\gamma(y^*):=\{g^n(y^*):n\in\mathbb N\}.$ Let $x_0^*\in X_0$ be
such that $\pi(x_0^*)=y^*.$ We will show that
$$\pi(\omega(x_0^*))=\omega(y^*)=Y\,,$$
where, for a dynamical system $(Z,l)$ with $z\in Z,$ we have
$$\omega(z):=\{x\in Z: \exists\, n_j\nearrow\infty \mbox{ with } l^{n_j}(z)\to x\}.$$
According to \cite[Theorem 5.5]{Wa-82}, it holds that $\omega(z),$ called \textit{$\omega$-limit set of} \index{$\omega$-limit set} $z,$ is closed nonempty and invariant by the compactness of $Z.$
Let us start with the verification of
$\omega(y^*)=Y.$ By the surjectivity of $g$ on $Y,$ there exists
$\tilde y\in Y$ such that $g(\tilde y)=y^*.$ Since the $g$-orbit
of $y^*$ is dense in $Y$ and recalling that
$\overline{\gamma(y^*)}=\gamma(y^*)\cup \omega(y^*),$ there are
two possibilities for $\tilde y,$ that is, $\tilde y\in
\gamma(y^*)$ or $\tilde y\in \omega(y^*).$ In the former case, we
find $\tilde y=g^k(y^*),$ for some $k\ge 0.$ Since $g(\tilde y)=y^*,$ this means that $g^{k+1}(y^*)=y^*$ and thus $y^*$ is a periodic
point, from which
$Y=\overline{\gamma(y^*)}=\gamma(y^*)=\omega(y^*)$ is a finite
set. On the other hand, if $\tilde y\in\omega(y^*),$ it follows
that $g(\tilde y)=y^*\in \omega(y^*),$ by the invariance of $\omega(y^*).$ Recalling that it is also closed, then $Y=\overline{\gamma(y^*)}\subseteq
\omega(y^*).$ Therefore we find $\omega(y^*)=Y$ also in the
latter case.\\
As regards the equality
$\pi(\omega(x_0^*))=\omega(y^*),$ at first we observe that, since
$\pi\restriction_{X_0}\circ f\restriction_{X_0}=g\circ\pi\restriction_{X_0},$ then it follows that
$\pi\restriction_{X_0}\circ (f\restriction_{X_0})^m=g^m\circ\pi\restriction_{X_0},\\\forall m\ge
2.$ Let us start with the inclusion
$\pi(\omega(x_0^*))\subseteq\omega(y^*).$ If $x^*\in
\omega(x_0^*),$ then there exists a sequence $n_j\nearrow \infty$ such
that $f^{n_j}(x_0^*)\to x^*.$ By the continuity of the map $\pi,$
it holds that $\pi(f^{n_j}(x_0^*))\to \pi(x^*)$ for $j\to \infty,$ but
$\pi(f^{n_j}(x_0^*))=g^{n_j}(\pi(x_0^*))=g^{n_j}(y^*),$ i.e. $\lim_{j\to\infty}g^{n_j}(y^*)=\pi(x^*)$
and thus $\pi(x^*)\in\omega(y^*).$ For the reverse inclusion, we have to check that any $\bar y\in\omega (y^*)$
can be written as $\bar y=\pi(\bar x),$ with $\bar x\in \omega(x_0^*).$ Indeed, if $\bar y\in\omega (y^*),$
then there exists a sequence $m_k\nearrow \infty$ such that $g^{m_k}(y^*)\to \bar y.$ Therefore $\pi(f^{m_k}(x_0^*))=g^{m_k}(\pi(x_0^*))=g^{m_k}(y^*)\to \bar y$ as $k\to \infty.$
Since $(f^{m_k}(x_0^*))_{k\in\mathbb N}$ is a sequence contained in the compact set $X_0,$
there exist a subsequence $m_{k_j}\nearrow\infty$ of $(m_k)_{k\in\mathbb N}$ and $\bar x\in X_0$
such that $f^{m_{k_j}}(x_0^*)\to \bar x$ for $j\to\infty.$ This means that $\bar x\in \omega(x_0^*).$ Notice that also for
the subsequence $(m_{k_j})_{j\in\mathbb N}$ it holds that $\pi(f^{m_{k_j}}(x_0^*))=g^{m_{k_j}}(\pi(x_0^*))=g^{m_{k_j}}(y^*)\to \bar y$ as $j\to\infty.$ Hence, passing to the limit,
we find $\pi(\bar x)=\lim_{j\to \infty}g^{m_{k_j}}(\pi(x_0^*))=\bar y,$ from which one obtains $\bar y=\pi(\bar x),$
with $\bar x\in \omega(x_0^*).$ The equality $\pi(\omega(x_0^*))=\omega(y^*)$ is thus established.\\
Then, since $\pi(\omega(x_0^*))=Y$ and recalling that $\omega(x_0^*)$ is a closed
$f$-invariant subset of $X_0,$ by the minimality of $X_0$ it follows that $\omega(x_0^*)=X_0.$ By the invariance of $\omega(x_0^*),$ then $f(\omega(x_0^*))=\omega(x_0^*)$ and so $X_0$ is invariant, too. \\
Moreover, from $\overline{\gamma(x_0^*)}=\gamma(x_0^*)\cup \omega(x_0^*),$ we find that the $f$-orbit of $x_0^*$ is dense in $X_0.$ In fact, by the minimality of $X_0,$ the $f$-orbit of any point of $X_0$ is dense in $X_0$ \footnote{A subset $M$ of the dynamical system $(Z,l)$ is called \textit{minimal} if it is closed nonempty positively $l$-invariant and contains no proper closed nonempty positively $l$-invariant subsets. An equivalent definition is that $M$ is minimal if each of its points has dense $l$-orbit in $M.$}. \\
Let us check now that $f$ is sensitive on $X_0.$ By the sensitivity
of $g$ on $Y,$ there exists $\varepsilon>0$ such that for every
$y\in Y$ with dense $g$-orbit (actually, for any point of $Y$),
there exist a sequence $(y_j)_{j\in\mathbb N}$ of points of $Y$
and a sequence $(n_j)_{j\in\mathbb N}$ of positive integers such
that $y_j\to y,$ but
$d_Y(g^{n_j}(y),g^{n_j}(y_j))>\varepsilon,$ $\forall j\in\mathbb
N.$ Let $(x_j)_{j\in\mathbb N}$ be a sequence in $X_0$ such that $\pi(x_j)=y_j,\,\forall
j\in\mathbb N,$ and let $(x_{j_k})_{k\in\mathbb N}$ be a converging
subsequence of $(x_j)_{j\in\mathbb N},$ with $x_{j_k}\to \overline{x_0},$ for
some $\overline{x_0}\in X_0.$ By the continuity of $\pi$ it follows that
$\pi(\overline{x_0})=y.$ We want to show that $\overline{x_0}$ is unstable, i.e that there exists $\bar\delta>0$
such that
$d_X(f^{n_{j_k}}(\overline{x_0}),f^{n_{j_k}}(x_{j_k}))>\bar\delta,$ $\forall
k\in\mathbb N,$ where $(n_{j_k})_{k\in\mathbb N}$ is the subsequence of $(n_j)_{j\in\mathbb N}$ corresponding to $(y_{j_k})_{k\in\mathbb N}.$ Indeed, if by contradiction there exists a subsequence of $(n_{j_k}),$ that by notational convenience we still denote by $(n_{j_k}),$ such that
$d_X(f^{n_{j_k}}(\overline{x_0}),f^{n_{j_k}}(x_{j_k}))\to 0$ for
$k\to\infty,$ by the continuity of $\pi,$ we would find
$0<\varepsilon<d_Y(g^{n_{j_k}}(y),g^{n_{j_k}}(y_{j_k}))=d_Y(g^{n_{j_k}}(\pi(\overline{x_0})),
g^{n_{j_k}}(\pi(x_{j_k})))=$ $d_Y(\pi(f^{n_{j_k}}(\overline{x_0})),\pi(f^{n_{j_k}}(x_{j_k})))\to
0.$ Therefore
$d_X(f^{n_{j_k}}(\overline{x_0}),f^{n_{j_k}}(x_{j_k}))>
\bar\delta,$ $\forall k\in\mathbb N,$ for some $\bar\delta>0,$ and $\overline{x_0}\in X_0$ is an
unstable point. Moreover, since $X_0$ is minimal, the $f$-orbit of $\overline{x_0}$ is dense in $X_0.$ Let us prove that these two facts about $\overline{x_0}$ are sufficient to conclude that there exists a $\tilde\delta>0$ such that any point of $X_0$ is $\tilde\delta$-unstable.\\
By the continuity of
$f,$ since $\overline{x_0}$ is $\bar\delta$-unstable, then also $f(\overline{x_0})$ is and
hence the whole $f$-orbit $\gamma(\overline{x_0})$ is $\bar\delta$-unstable.
Calling $\mathcal U_{\bar\delta}\subseteq X_0$ the set of
$\bar\delta$-unstable points of $X_0,$ we show that
$\overline{\mathcal U_{\bar\delta}}\subseteq \mathcal U_{{\bar\delta}/2}.$
If this is true, since $X_0=\overline{\mathcal U_{\bar\delta}},$
the thesis is achieved for $\tilde\delta:={\bar\delta}/2.$ So let
$x\in \overline{\mathcal U_{\bar\delta}}.$ Then there exists a
sequence $(x_i)_{i\in\mathbb N}$ in ${\mathcal U_{\bar\delta}}$ with
$x_i\to x.$ For any such $x_i$ and for every open set $X_0\supseteq
O_i\ni x_i,$ there exist $\widetilde{x_i}\in O_i$ and a positive
integer $m_i$  with $d_X(f^{m_i}(x_i),
f^{m_i}(\widetilde{x_i}))\ge\bar\delta.$ In particular we can choose
$O_i:=B(x_i,1/i)\cap X_0,$ so that
$d_X(x_i,\widetilde{x_i})<1/i\to 0,$ as $i\to\infty.$ Now there
are two alternatives: if $d_X(f^{m_i}(x_i),
f^{m_i}(x))\ge{\bar\delta}/2,$ then $x$ is ${\bar\delta}/2$-unstable and we
are done. Otherwise it holds that $d_X(f^{m_i}(x),
f^{m_i}(\widetilde{x_i}))\ge{\bar\delta}/2.$ Indeed, if this would fail,
then $d_X(f^{m_i}(x_i), f^{m_i}(\widetilde{x_i}))\le
d_X(f^{m_i}(x_i), f^{m_i}(x))+d_X(f^{m_i}(x),
f^{m_i}(\widetilde{x_i}))<\bar\delta,$ a contradiction. Since $\widetilde{x_i}\to x,$ it follows also in this latter case that
$x\in \mathcal U_{{\bar\delta}/2}.$ The proof is complete.
\end{proof}

\noindent
By the similarity between our point of view and the frameworks
in \cite{AuKi-01,KeKoYo-01}, it is not difficult to
show that both the above quoted results on chaotic invariant
sets obtained therein still hold in the setting of Definition
\ref{def-chm}. Therefore the sensitivity is for us
ensured at least on some subset of the domain. More precisely, as
discussed in Section \ref{sec-de}, since our
notion of chaos in Definition \ref{def-chm} is stricter than the one considered in \cite{KeKoYo-01,KeYo-01}, we can prove the existence of
the chaotic invariant set $Q_{*}$ mentioned above, exactly as in
\cite[Lemma 4]{KeKoYo-01}. On the other hand, with reference to
\cite{AuKi-01} and \cite{AuYo-80}, we notice that any map chaotic according to Definition
\ref{def-ch} is also chaotic in the sense of Block-Coppel, thanks
to Theorem \ref{th-cons}, conclusion $(ii).$
Thus, following the suggestion in \cite{AuKi-01},
we have used Theorem \ref{th-ay} to get the existence of the
set $\Lambda$ in Theorem \ref{th-cons}, conclusion $(v).$
More precisely, we point out that in Theorem \ref{th-cons}, conclusion $(ii),$
a property stronger than the notion of chaos in the sense of Block-Coppel is obtained. Indeed,
the semi-conjugacy with the Bernoulli shift is established for the map $\psi$ itself and
not for one of its iterates. The same remark applies to Theorem \ref{th-cons}, conclusion $(v),$ where again a sharper feature than the Block-Coppel chaos is deduced.\\
Pursuing further this discussion on the Block-Coppel chaos, we notice that every system $(X, f)$ Block-Coppel chaotic has positive topological entropy. Indeed, by the postulated semi-conjugacy between an iterate $f^k$ (with $k\ge 1$) of the map $f,$
restricted to a suitable positively invariant subset of the domain, and the one-sided
Bernoulli shift on two symbols, by \eqref{eq-it} and \eqref{eq-hch} it follows that
$k h_{\rm top}(f)=h_{\rm top}(f^k)\ge \log(2),$ from which $h_{\rm top}(f)\ge \log(2)/k>0.$ Thus, by the previously quoted
\cite[Theorem 2.3]{BlGl-02}, any such system is also Li-Yorke chaotic. However, we observe that the Block-Coppel chaos
is strictly weaker than chaos in the sense of coin-tossing and, a fortiori,
also than the concept in Definition \ref{def-ch}: indeed, according to
\cite[Remark 3.2]{AuKi-01}, there exist systems $(X,f)$ such that
$f^2$ restricted to some $f$-invariant subset of $X$ is
semi-conjugate to the one-sided Bernoulli shift on two symbols, while such
property does not hold for $f.$

\smallskip
\noindent
Returning back to the analysis on the relationship between the topological entropy and the sensitivity on initial conditions,
we recall that, except for the special case of continuous self-mappings of compact intervals,
in general the sensitivity does not imply a positive entropy \cite{Ko-04}. Actually, even more can be said.
Indeed, if instead of the sensitivity alone, we take into account the definition of Devaney chaos in its completeness,
then it is possible to prove that the Devaney chaoticity and the positivity of topological entropy are independent,
as none of the two implies the other \cite{BaSn-03, KwMi-05}. In particular this means that, in generic metric spaces,
Devaney chaos does not imply chaos in the sense of Definition \ref{def-chm} (because, otherwise, the topological entropy would be positive in
any Devaney chaotic system). Vice versa it is not clear if our notion of chaos implies the one by Devaney.
Indeed, on the one hand, in \cite{AuKi-01} it is presented a dynamical system Block-Coppel chaotic,
but not chaotic according to Devaney, since it has no periodic points. On the other hand, we have already noticed
that our notion of chaos is strictly stronger than the one in the Block-Coppel sense.\\
When confining ourselves to the one-dimensional case, most of the definitions of chaos are known to be equivalent,
while it is in the higher dimensional setting that the relationship among them becomes more involved. Indeed, in \cite{Li-93}
it was proved that for a continuous self-mapping $f$ of a compact interval to have positive topological entropy
is equivalent to be chaotic in the sense of Devaney on some closed (positively) invariant subset of the domain. The positivity of the topological
entropy is also equivalent to the fact that some iterate of $f$ is turbulent (or even strictly turbulent, cf. Section \ref{sec-sd} for the corresponding definitions) or to the chaoticity in the Block-Coppel sense \cite{AuKi-01}. Such equivalence, however,
does not extend to chaos in the sense of Li-Yorke: indeed, as already pointed out, there exist interval
maps Li-Yorke chaotic, but with zero topological entropy \cite{Sm-86}.\\
When moving to more general frameworks,
the previous discussion should suggest that several links among the various notions of chaos get lost.
Nonetheless something can still be said. In addition to
the facts already expounded (e.g., chaos according to Definition \ref{def-ch} $\Rightarrow$ chaos in the sense of coin-tossing $\Rightarrow$ Block-Coppel chaos $\Rightarrow
h_{\rm top}>0\Rightarrow$ Li-Yorke chaos), we mention that Devaney
chaos implies Li-Yorke chaos in any compact metric space:
actually, in order to prove
this implication, the hypothesis on the density of periodic points
in Definition \ref{def-dev} could be replaced with the weaker
condition that at least one periodic point does exist
\cite{HuYe-02}. On the other hand this weaker requirement
is necessary, because transitivity and sensitivity alone are not
sufficient to imply  Li-Yorke chaos and vice versa,
as shown in \cite{BlGl-02}. We recall that a dynamical system that
is both sensitive and transitive is sometimes named
\textit{Auslander-Yorke chaotic}\index{chaos! Auslander-Yorke} \cite{AuYo-80,BlGl-02}. Therefore
we can rephrase the previous sentence by saying that the concepts
of Li-Yorke chaos and Auslander-Yorke chaos are independent.

\medskip

\noindent
Of course, the above treatment is just meant to give an idea of the intricate network of connections among some of the most well-known definitions of chaos. A useful and almost complete survey on the existing relationships can be found in \cite{Mo-04}.

\section{Linked Twist Maps}\label{sec-ltm}

In this last section we present a different geometrical context where it is possible to apply our method of ``stretching along the paths'' from Section \ref{sec-sap} and the corresponding results on chaotic dynamics from Section \ref{sec-de}. More precisely we will be concerned with the study of the so-called \textit{Linked Twist Maps}\index{Linked Twist Maps} (for short, LTMs). In such framework, instead of considering a single map that expands the arcs along a domain homeomorphic to a rectangle, one deals with a geometric configuration characterized by
the alternation of two planar homeomorphisms (or diffeomorphisms) which twist two
circular annuli (or two families of them \cite{Pr-86}) intersecting in two disjoint generalized rectangles $\mathcal A$ and $\mathcal B.$ Each annulus is turned onto itself by a homeomorphism
which leaves the boundaries of the annulus invariant.
Both the maps act in their domain so that a twist effect is produced.
This happens, for instance, when the angular speed is monotone with respect
to the radius. Considering the composition of the two movements
in the common regions, we obtain a resulting function which is what we call a ``linked twist map'' (see \cite{Wi-99, WiOt-04} for a detailed description of the geometry of the domain
of a LTM). Such kind of maps furnish a geometrical setting for the existence of Smale horseshoes: in fact, under certain conditions,
it is possible to prove the presence of a Smale horseshoe inside ${\mathcal A}$
and $\mathcal B$ \cite{De-78}.
Usual assumptions on the twist mappings require, among others, their smoothness,
monotonicity of the angular speed with respect to the radial coordinate and preservation of the Lebesgue measure. On the other hand, since our approach is purely topological, we just need a twist condition on the boundary (cf. Example \ref{ex-ltm1}).\\
In the past decades a growing interest has concerned LTMs. In the 80s they were studied from a theoretical point of view by
Devaney \cite{De-78}, Burton and Easton \cite{BuEa-80} and Przytycki \cite{Pr-83, Pr-86}
(just to cite a few contributions in this direction), proving some mathematical properties
like ergodicity, hyperbolicity and conjugacy to the Bernoulli shift.
However, as observed in \cite{De-78},
such maps naturally appear in various different applicative contexts, like for instance in mathematical models for particle motions in a magnetic field,
as well as in differential geometry, in the study of diffeomorphisms of surfaces. Special configurations
related to LTMs can also be found in the restricted three-body problem
\cite[pp.90--94]{Mo-73}.
In more recent years, thanks to the work of Ottino, Sturman and Wiggins, significant applications of LTMs have been performed in the area of
fluid mixing \cite{St-06, StOtWi-06, Wi-99, WiOt-04}.

\bigskip

\noindent
Our purpose is to adapt the general results from Sections \ref{sec-sap}--\ref{sec-de} to a geometrical framework which is connected to and generalizes (in a direction explained in Example \ref{ex-ltm2}) the case of the LTMs.
The corresponding  Theorems \ref{th-ltma} and \ref{th-ltmb}
below will be applied to some nonlinear ODEs with periodic coefficients in Chapter \ref{ch-ode}.

\begin{theorem}\label{th-ltma}
Let $X$ be a metric space and
assume that $\varphi: X\supseteq D_{\varphi}\to X$ and
$\psi: X\supseteq D_{\psi}\to X$ are continuous maps defined on the sets $D_{\varphi}$ and $D_{\psi},$ respectively. Let also
${\widetilde{\mathcal A}} := ({\mathcal A},{\mathcal A}^-)$ and
${\widetilde{\mathcal B}} := ({\mathcal B},{\mathcal B}^-)$ be oriented rectangles of $X.$
Suppose that the following conditions are satisfied:
\begin{itemize}
\item[$\quad (H_{\varphi})\;\;$] There are $m\ge 2$ pairwise disjoint compact sets
${\mathcal H}_0\,,\dots, {\mathcal H}_{m-1}\,\subseteq {\mathcal A}\cap D_{\varphi}$
such that
$\displaystyle{({\mathcal H}_i,\varphi): {\widetilde{\mathcal A}}
\stretchx\, {\widetilde{\mathcal B}}},$ for $i=0,\dots,m-1\,;$
\\
\item[$(H_{\psi})\;\;$] ${\mathcal B}\subseteq D_{\psi}$ and
$\psi\,: {\widetilde{\mathcal B}} \stretchx {\widetilde{\mathcal A}}\,.$
\end{itemize}
Then the map $\phi:=\psi\circ \varphi$ induces chaotic dynamics on $m$ symbols
in the set $\mathcal H\cap{\varphi}^{-1}(\mathcal B),$ where
${\mathcal H}:= \bigcup_{i=0}^{m-1} {\mathcal H}_i,$ and thus satisfies properties $(i)$-$(v)$ from Theorem \ref{th-cons} (or the stronger properties $(i)$-$(v)$ from Theorem \ref{th-inj}, if $\phi$ is also injective on $\mathcal H\cap{\varphi}^{-1}(\mathcal B)$ \footnote{This is for instance the case in Chapter \ref{ch-ode}, since there we deal with the Poincar\'e map, that is a homeomorphism.}).
\end{theorem}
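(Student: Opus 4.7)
The plan is to reduce Theorem \ref{th-ltma} to the already-established Theorem \ref{th-ch} by applying the composition lemma for the stretching relation.

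First, I would rephrase hypothesis $(H_\psi)$ as $(\mathcal B,\psi): \widetilde{\mathcal B} \stretchx \widetilde{\mathcal A}$, which is legitimate because $\mathcal B \subseteq D_\psi$ and $\psi$ is continuous on $\mathcal B$. Then, for every $i=0,\dots,m-1$, I would invoke Lemma \ref{lem-comp} with $\varphi, \psi$ in the roles there indicated and with the triple of oriented rectangles $(\widetilde{\mathcal A}, \widetilde{\mathcal B}, \widetilde{\mathcal A})$, together with the compact sets $\mathcal H = \mathcal H_i$ and $\mathcal K = \mathcal B$. This yields
\[
(\mathcal K_i, \phi): \widetilde{\mathcal A} \stretchx \widetilde{\mathcal A}, \qquad \mathcal K_i := \mathcal H_i \cap \varphi^{-1}(\mathcal B),
\]
for each $i=0,\dots,m-1$, where $\phi = \psi\circ\varphi$ is by construction continuous on every $\mathcal K_i$.

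Next, I would check that the sets $\mathcal K_0,\dots,\mathcal K_{m-1}$ are pairwise disjoint compact subsets of $\mathcal A \cap D_\phi$. Pairwise disjointness is inherited from that of the $\mathcal H_i$'s. Compactness of each $\mathcal K_i$ follows from the fact that $\mathcal H_i$ is compact and $\varphi^{-1}(\mathcal B)\cap \mathcal H_i = (\varphi\restriction_{\mathcal H_i})^{-1}(\mathcal B)$ is closed in $\mathcal H_i$, by the continuity of $\varphi$ on $\mathcal H_i$ and the closedness of $\mathcal B$. The inclusion in $D_\phi$ is immediate since on $\mathcal K_i$ we have $\varphi$ defined and $\varphi(\mathcal K_i)\subseteq \mathcal B \subseteq D_\psi$.

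With the stretching $(\mathcal K_i,\phi):\widetilde{\mathcal A} \stretchx \widetilde{\mathcal A}$ verified for $m$ pairwise disjoint compact sets, I would apply Theorem \ref{th-ch} directly with $\psi$ replaced by $\phi$ and $\widetilde{\mathcal R} = \widetilde{\mathcal A}$, to conclude that $\phi$ induces chaotic dynamics on $m$ symbols on $\mathcal D := \bigcup_{i=0}^{m-1}\mathcal K_i = \mathcal H \cap \varphi^{-1}(\mathcal B)$ relatively to $\mathcal K_0,\dots,\mathcal K_{m-1}$. Since $\phi$ is continuous on this set (as noted above), Theorem \ref{th-cons} gives conclusions $(i)$--$(v)$; if moreover $\phi$ is injective on $\mathcal H \cap \varphi^{-1}(\mathcal B)$, Theorem \ref{th-inj} (Remark \ref{rem-inj}) upgrades the semi-conjugacy to the two-sided Bernoulli shift.

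No step is really the obstacle: the whole proof is an assembly, and the only minor point that deserves explicit verification is that the composition lemma produces the \emph{correct} compact localization $\mathcal K_i$, whose pairwise disjointness propagates the multiplicity from the $\mathcal H_i$'s through the composition. Once this is observed, the chaotic conclusions are a black-box application of Theorems \ref{th-ch} and \ref{th-cons}.
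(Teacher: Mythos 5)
Your proof is correct and follows essentially the same route as the paper's: establish $(\mathcal H_i\cap\varphi^{-1}(\mathcal B),\phi):\widetilde{\mathcal A}\stretchx\widetilde{\mathcal A}$ for each $i$ and then invoke Theorem \ref{th-ch}. The only difference is that you obtain the composed stretching relation by citing Lemma \ref{lem-comp} directly, whereas the paper re-runs the path-composition argument inline; your version is slightly more economical but rests on the same idea.
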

\begin{proof}
We show that
\begin{equation}\label{eq-st}
({\mathcal H}_i\cap \varphi^{-1}(\mathcal B),\phi): {\widetilde{\mathcal A}}
\stretchx {\widetilde{\mathcal A}}\,,\;\;\forall\, i=0,\dots,m-1,
\end{equation}
from which the thesis about the chaotic dynamics is an immediate consequence of Theorem \ref{th-ch}.\\
To check condition \eqref{eq-st}, let us consider a path
$\gamma: [0,1]\to {\mathcal A}$ such that $\gamma(0)\in {\mathcal A}^-_{\ell}$
and $\gamma(1)\in {\mathcal A}^-_{r}$ (or with $\gamma(0)\in {\mathcal A}^-_{r}$
and $\gamma(1)\in {\mathcal A}^-_{\ell}$) and let us fix $i\in\{0,\dots,m-1\}.$
By $(H_{\varphi}),$ there exists a compact interval $[t',t'']\subseteq [0,1]$ such that
$\gamma(t)\in {\mathcal H}_i$ and $\varphi(\gamma(t))\in {\mathcal B},$ for every $t\in [t',t''],$ with $\varphi(\gamma(t'))$ and $\varphi(\gamma(t''))$ belonging to different
components of ${\mathcal B}^-.$
Define now
$$\omega: [t',t'']\to {\mathcal B},\quad \omega(t):= \varphi(\gamma(t)).$$
By $(H_{\psi})$ there is a compact interval $[s',s'']\subseteq [t',t'']$ such that
$\psi(\omega(t))\in {\mathcal A},$ for every $t\in [s',s''],$ with
$\psi(\omega(s'))$ and $\psi(\omega(s''))$ belonging to different components of ${\mathcal A}^-.$
\\
Rewriting all in terms of $\gamma,$ we have thus proved that
$$\gamma(t)\in {\mathcal H}_i\cap \varphi^{-1}(\mathcal B)\quad\mbox{and }\; \phi(\gamma(t))\in {\mathcal A},\quad\forall\, t\in [s',s''],$$
with
$\phi(\gamma(s'))$ and $\phi(\gamma(s''))$ belonging to different components of ${\mathcal A}^-.$ The continuity of the composite mapping $\phi=\psi\circ\varphi$ on ${\mathcal H}_i\cap \varphi^{-1}(\mathcal B)$ follows from the continuity of $\varphi$ on $D_{\varphi}\supseteq {\mathcal H}_i$ and from the continuity of $\psi$ on $D_{\psi}\supseteq \mathcal B.$
By the arbitrariness of the path $\gamma$ and of $i\in \{0,\dots,m-1\},$ the verification of \eqref{eq-st} is complete.
\end{proof}

\noindent
We remark that in condition $(H_{\varphi})$ we could consider the case in which there exists a set $\mathcal D\subseteq \mathcal A\cap D_{\varphi}$ with $\mathcal H=\bigcup_{i=0}^{m-1} {\mathcal H}_i\subseteq \mathcal D,$ in analogy with Theorem \ref{th-ch}. From the proof of Theorem \ref{th-ltma} it is also clear that the continuity of $\varphi$ on $D_{\varphi}$ and of $\psi$ on $D_{\psi}$ could be weakened requiring that $\varphi$ is continuous only on $\mathcal H$ and $\psi$ on $\mathcal B.$ For simplicity's sake and in view of the applications in Section \ref{sec-sb}, we have preferred to present the easier framework.\\
Moreover, as regards condition $(H_{\psi}),$ we could assume that the stretching condition therein holds with respect to a compact set $\mathcal K\subseteq {\mathcal B}\subseteq D_{\psi},$ that is, $(\mathcal K,\psi): {\widetilde{\mathcal B}} \stretchx {\widetilde{\mathcal A}}\,.$ Then, the chaotic dynamics would be localized in the set
$${\mathcal H}^*:=
\bigcup_{i=0}^{m-1} \left({\mathcal H}_i\cap \varphi^{-1}({\mathcal K})\right).$$
Such case is analyzed in the more general Theorem \ref{th-ltmb}.

\medskip

\noindent
We present now two simple examples for the application of Theorem \ref{th-ltma},
which are meant to show how this result is well-fit for studying LTMs.
Example \ref{ex-ltm1} is fairly classical as it concerns two overlapping annuli
subject to twist rotations, while Example \ref{ex-ltm2} describes the composition of a map which twists an annulus
with a longitudinal motion along a strip.
A similar geometric setting was already considered by Kennedy and Yorke in \cite{KeYo-97} in the framework of the theory of fluid mixing, studying planar functions obtained as composition of a squeezing map and a stirring rotation.\\
We stress that such examples are only of ``pedagogical'' nature and, in fact, the chaotic-like dynamics that we obtain could be also proved using different approaches already developed in various papers (like, for instance, \cite{MiMr-95a, Sr-00, Zg-96, ZgGi-04}).

\begin{example}\label{ex-ltm1}
{\rm{A classical kind of LTM is represented by the composition of two planar maps
$\varphi$ and $\psi$ which act as
twist rotations around two given points.
For instance, a possible choice is that of considering two continuous functions
expressed by means of complex variables as
\begin{equation}\label{eq-cphi}
\varphi(z):= - r + (z + r)\, e^{\imath(c_1 + d_1 |z + r|)}
\end{equation}
and
\begin{equation}\label{eq-cpsi}
\psi(z):= r + (z - r)\, e^{\imath(c_2 + d_2 |z - r|)},
\end{equation}
where $\imath$ is the imaginary unit, while $r > 0,$ $c_j$ $(j=1,2)$ and $d_j\ne 0$
are real coefficients. Such maps twist around
the centers $(-r,0)$
and $(r,0),$ respectively. We denote by $p_1$ and $p_2$
the inner and the outer radii for the annulus around $(-r,0)$
and by $q_1$ and $q_2$
the inner and the outer radii for the annulus around $(r,0).$\\
For a suitable choice of $r,$ of the radii $p_1 < p_2$ and
$q_1 < q_2\,,$ as well as of the
parameters determining $\varphi$ and $\psi\,,$ it is possible to apply
Theorem \ref{th-ltma} in order to obtain chaotic dynamics
(see Figures \ref{fig:04}--\ref{fig:05}).
}}
\end{example}

\begin{figure}[htbp]
\centering
\includegraphics[scale=0.22]{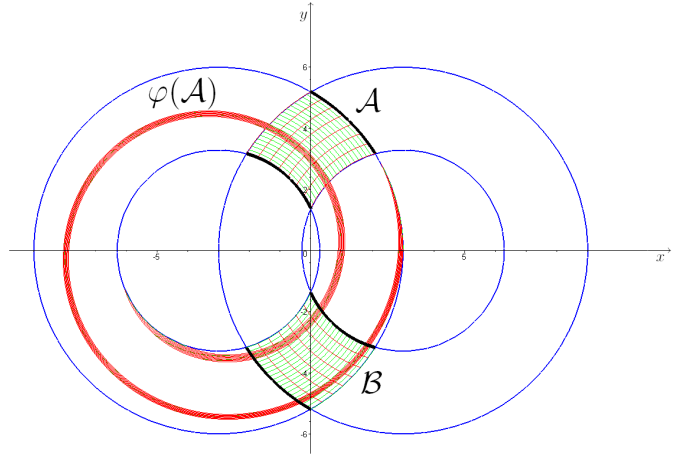}
\caption{\footnotesize {A pictorial comment to
condition $(H_{\varphi})$ of Theorem \ref{th-ltma},
with reference to Example \ref{ex-ltm1}. In order to determine the two annuli,
we have set $r=3,$
$p_1=q_1= 3.3$ and $p_2= q_2= 6$
(this choice is just to simplify the explanation,
since no special symmetry is needed).
For the map $\varphi$ in \eqref{eq-cphi} we have taken $c_1=-1.5$ and $d_1=3.3.$ We choose as generalized rectangle ${\mathcal A}$ in
Theorem \ref{th-ltma} the upper intersection of the two annuli
and select as components of ${\mathcal A}^-$ the intersections of ${\mathcal A}$ with
the inner and outer boundaries of the annulus at the left-hand side.
The set ${\mathcal B}$ is defined as the
the lower intersection of the two annuli and the two components
of ${\mathcal B}^-$ are the intersections of ${\mathcal B}$ with
the inner and outer boundaries of the annulus at the right-hand side.
The sets ${\mathcal A}^-$ and ${\mathcal B}^-$ are drawn with thicker lines.
The narrow strip spiralling inside the left annulus
is the image of ${\mathcal A}$ under $\varphi\,.$ Clearly, any path
in ${\mathcal A}$ joining
the two components of ${\mathcal A}^-$ is transformed by $\varphi$ onto a path
crossing ${\mathcal B}$ twice in the correct manner (that is, from a side of ${\mathcal B}^-$ to the other side).
}}
\label{fig:04}
\end{figure}

\begin{figure}[htbp]
\centering
\includegraphics[scale=0.22]{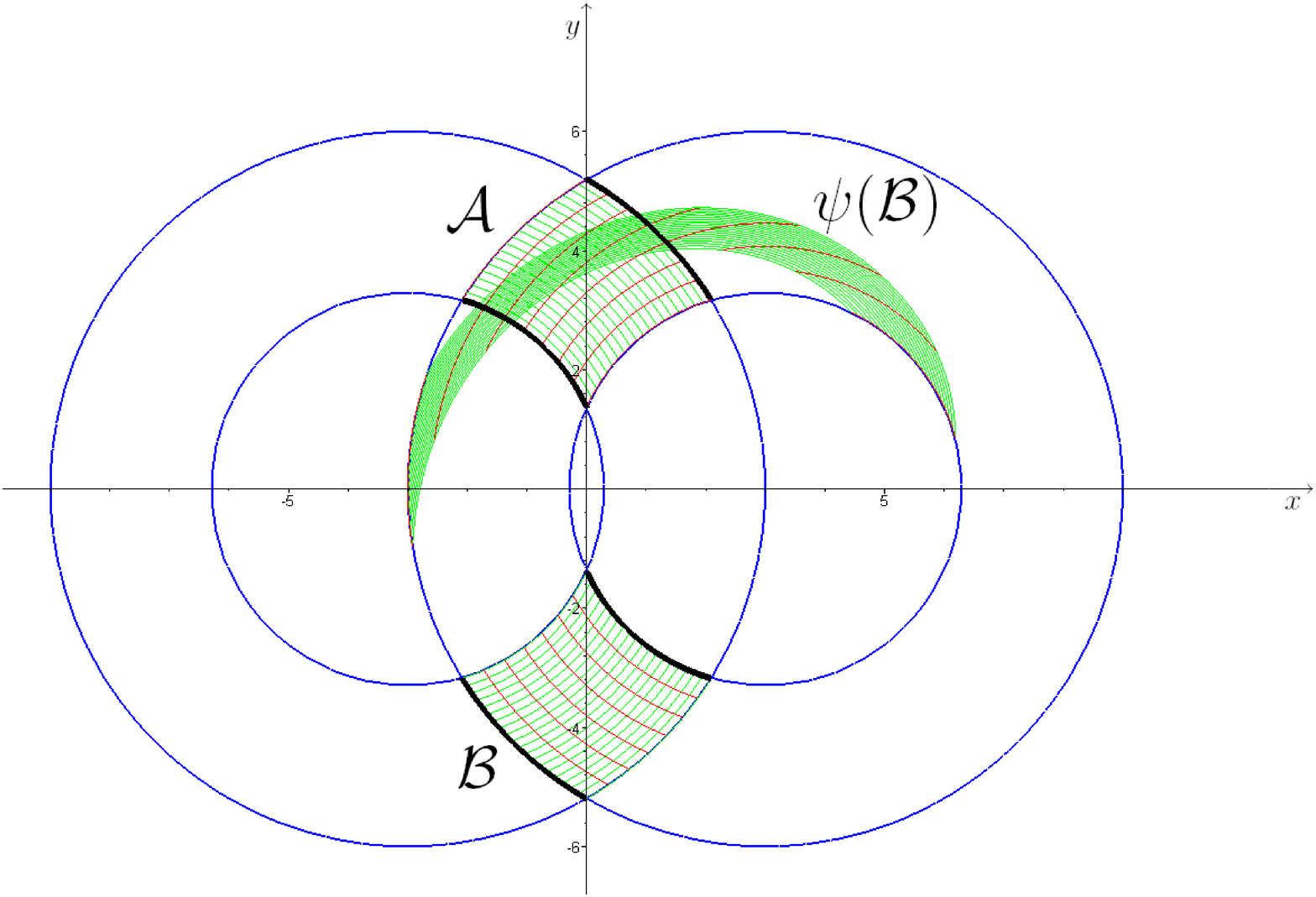}
\caption{\footnotesize {A pictorial comment to
condition $(H_{\psi})$ of Theorem \ref{th-ltma},
with reference to Example \ref{ex-ltm1}.
For $r, p_1, p_2, q_1, q_2$ as
in Figure \ref{fig:04}, we have also fixed the parameters of
$\psi$ in \eqref{eq-cpsi} by taking $c_2=0$ and $d_2=0.9.$ It is evident that
the image of ${\mathcal B}$ under
$\psi$ crosses ${\mathcal A}$ once in the
correct way.
}}
\label{fig:04bis}
\end{figure}

\begin{figure}[htbp]
\centering
\includegraphics[scale=0.3]{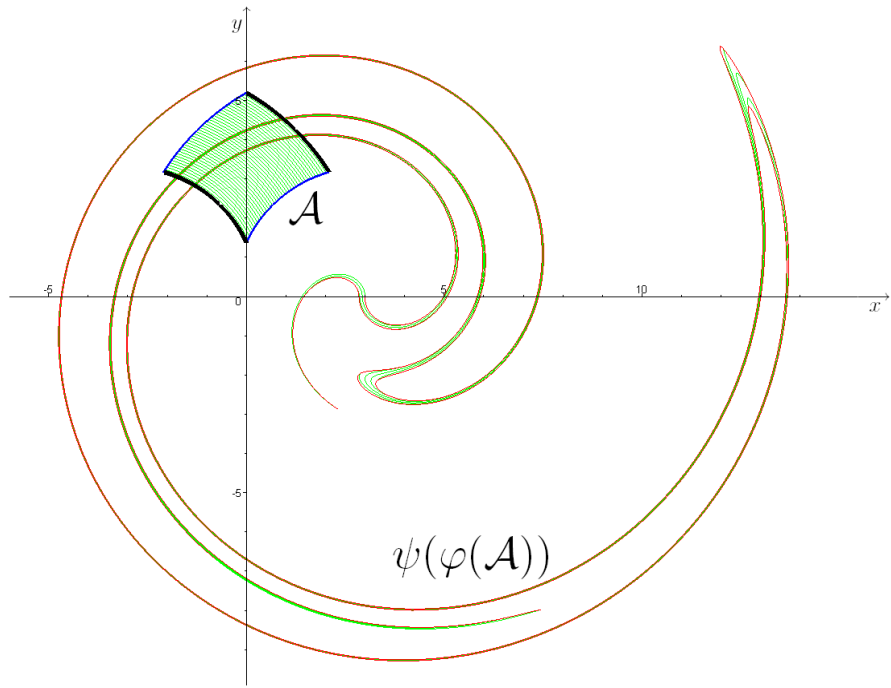}
\caption{\footnotesize {For $r, p_1, p_2, q_1, q_2,\varphi$ as in Figure \ref{fig:04} and $\psi$ as
in Figure \ref{fig:04bis}, we show
the image of ${\mathcal A}$ under the composite map
$\psi\circ \varphi\,.$ It is clear that $\psi(\varphi({\mathcal A}))$
crosses ${\mathcal A}$ twice in the
correct way.
}}
\label{fig:05}
\end{figure}

\clearpage

\begin{example}\label{ex-ltm2}
{\rm{
A nonstandard LTM is represented by the composition of two planar maps
$\varphi$ and $\psi,$ where $\varphi$ is a twist
rotation around a given point and $\psi$ produces a longitudinal motion along a strip
with different velocities on the upper and lower components of the boundary of the strip.\\
First we introduce, for any pair
of real numbers $a < b,$ the
real valued function
$$Pr_{[a,b]}(t):= \frac{1}{b-a}\, \min\{b - a,\max\{0,t - a\}\},$$
and then we set
$$f(t):= c_1 + d_1 Pr_{[p_1,p_2]}(t), \quad g(t):= c_2 + d_2 Pr_{[q_1,q_2]}(t),$$
where $c_j$ ($j=1,2$) and $d_j\ne 0$
are real coefficients, while
$$0 < p_1 < p_2\quad \mbox{and } - p_1 < q_1 < q_2 < p_1$$
are real parameters which determine the inner and the outer radii of the circular
annulus
$$C[p_1,p_2] := \{(x,y): \, p_1^2 \leq x^2 + y^2 \leq p_2^2\}$$
and the position of the strip
$$S[q_1,q_2] :=  \{(x,y): \, q_1 \leq y \leq q_2\}.$$
Finally we define two continuous maps
expressed by means of complex variables as
\begin{equation}\label{eq-cphi2}
\varphi(z):= z\, e^{\imath f(|z|)}
\end{equation}
(which twists around the origin) and, for $\Im(z)$ the imaginary part of $z,$
\begin{equation}\label{eq-cpsi2}
\psi(z):= z + g(\Im(z))
\end{equation}
(which shifts the points along the horizontal lines).
\\
For a suitable choice of the
parameters determining $\varphi$ and $\psi\,,$ it is possible to apply
Theorem \ref{th-ltma} in order to obtain chaotic dynamics
(see Figures \ref{fig:06}--\ref{fig:08}).
}}
\end{example}

\begin{figure}[htbp]
\centering
\includegraphics[scale=0.25]{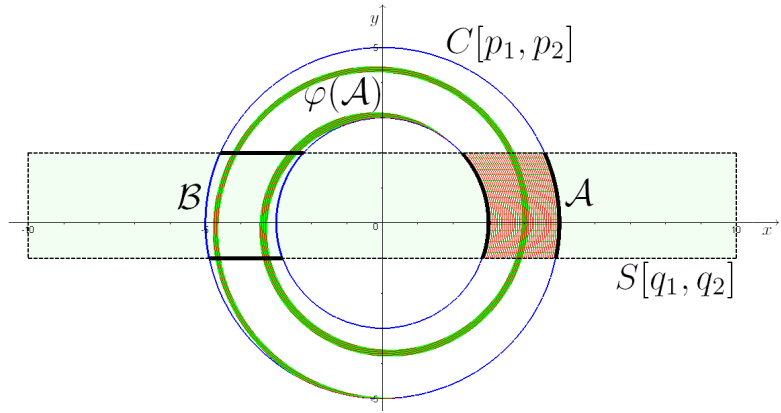}
\caption{\footnotesize {
A pictorial comment to
condition $(H_{\varphi})$ of Theorem \ref{th-ltma},
with reference to Example \ref{ex-ltm2}.
We have set $p_1= 3,$
$p_2= 5,$ $q_1= -1$ and $q_2= 2$ in order to determine the circular
annulus $C[p_1,p_2]$ centered at the origin and the strip $S[q_1,q_2]$
which goes across it.
For the map $\varphi$ in \eqref{eq-cphi2} we have taken
$c_1= 0.4\,\pi$ and $d_1= 3\,\pi.$
We choose as set ${\mathcal A}$ in
Theorem \ref{th-ltma} the right-hand side intersection of the annulus with
the strip
and select as components of ${\mathcal A}^-$
the intersections of ${\mathcal A}$ with
the inner and outer boundaries of the annulus.
The set ${\mathcal B}$ is defined as the
left-hand side intersection of the annulus with the strip
and the two components
of ${\mathcal B}^-$ are the intersections of ${\mathcal B}$ with
the lower and upper sides of the strip. The sets ${\mathcal A}^-$ and ${\mathcal B}^-$ are drawn with thicker lines.
The narrow band spiralling inside the annulus
is the image of ${\mathcal A}$ under $\varphi\,.$ Clearly, any path
in ${\mathcal A}$ joining
the two components of ${\mathcal A}^-$ is transformed by $\varphi$ onto a path
crossing ${\mathcal B}$ twice in the correct manner.
}}
\label{fig:06}
\end{figure}

\begin{figure}[htbp]
\centering
\includegraphics[scale=0.3]{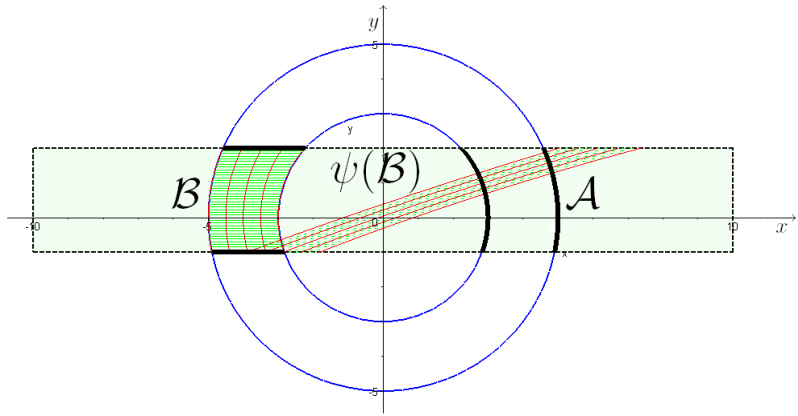}
\caption{\footnotesize {A pictorial comment to
condition $(H_{\psi})$ of Theorem \ref{th-ltma},
with reference to Example \ref{ex-ltm2}.
The sets $C[p_1,p_2]$ and $S[q_1,q_2]$ and
the oriented rectangles
${\widetilde{\mathcal A}}$ and
${\widetilde{\mathcal B}}$
are as in Figure \ref{fig:06}.
For the map $\psi$ in \eqref{eq-cpsi2}, we have chosen
$c_2=1$ and $d_2= 8.6.$
The parallelogram-shaped narrow figure inside the strip $S[q_1,q_2]$
is the image of ${\mathcal B}$ under $\psi\,.$ Any path
in ${\mathcal B}$ joining
the two components of ${\mathcal B}^-$ is transformed by $\psi$ onto a path
crossing ${\mathcal A}$ once in the right way.
}}
\label{fig:07}
\end{figure}

\begin{figure}[htbp]
\centering
\includegraphics[scale=0.22]{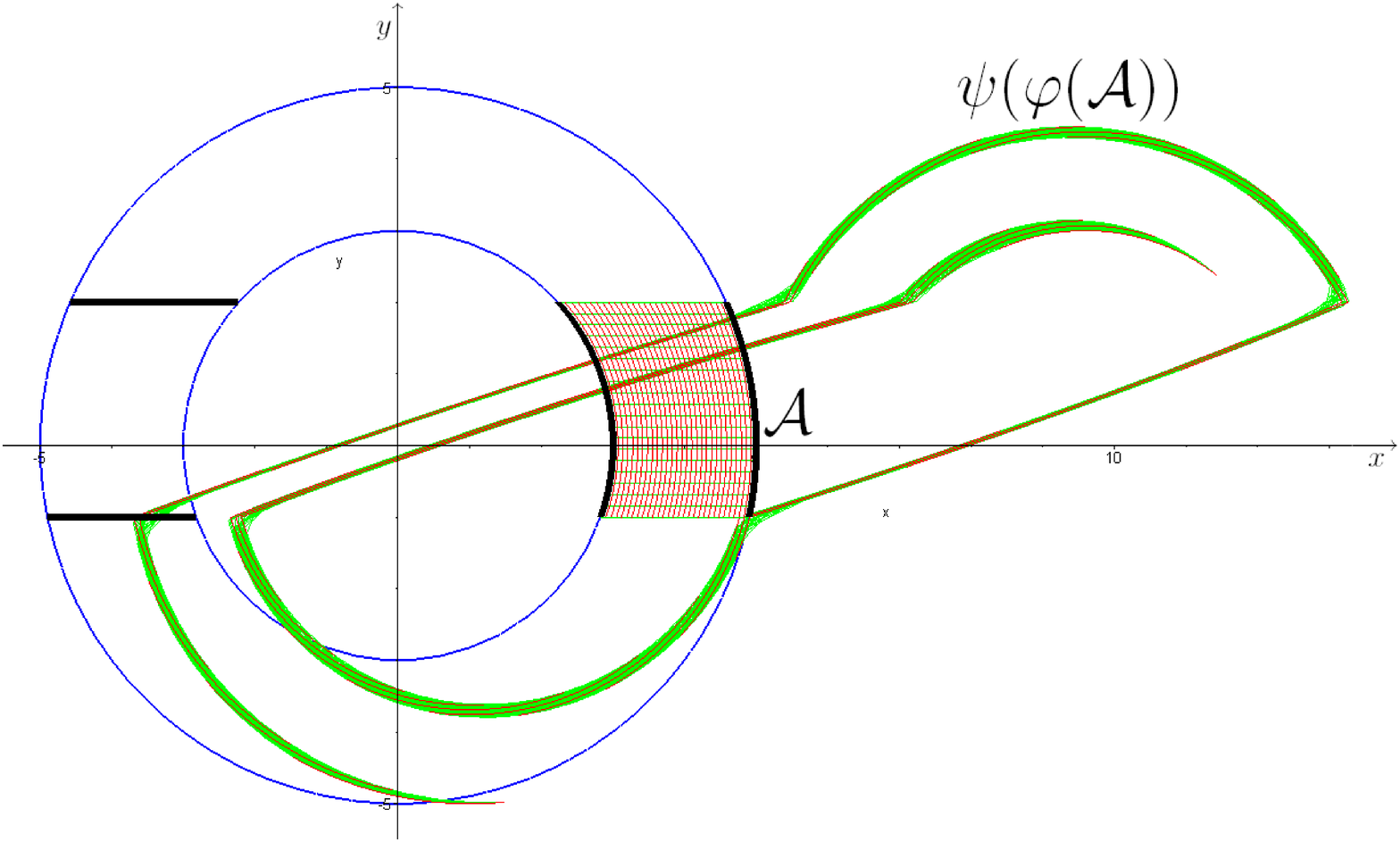}
\caption{\footnotesize{For $p_1, p_2, q_1, q_2$ and $\varphi$ as
in Figure \ref{fig:06} and $\psi$ as
in Figure \ref{fig:07}, we show
the image of ${\mathcal A}$ under the composite map
$\psi\circ \varphi\,.$ It is evident that $\psi(\varphi({\mathcal A}))$
crosses ${\mathcal A}$ twice in the
correct manner.
}}
\label{fig:08}
\end{figure}

\begin{figure}[htbp]
\centering
\includegraphics[scale=0.25]{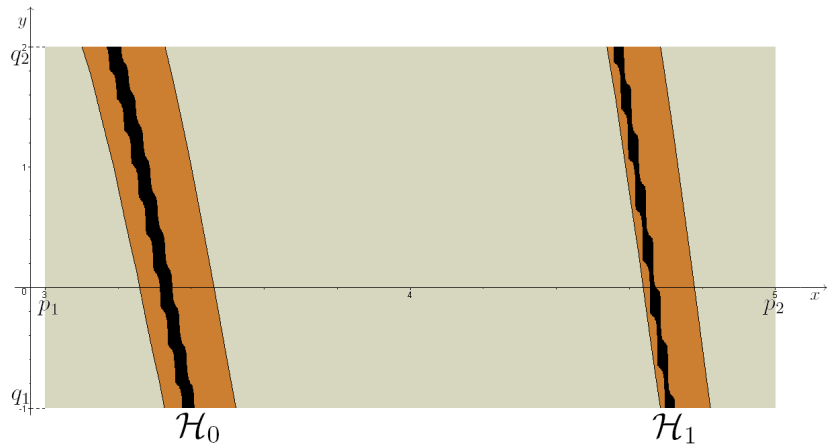}
\caption{\footnotesize {In the simple case of Example \ref{ex-ltm2},
for $\varphi$ and $\psi$ with the coefficients chosen
for drawing Figure \ref{fig:08}, we can determine the sets
${\mathcal H}_0$ and ${\mathcal H}_1$ from Theorem \ref{th-ltma}.
Indeed, inside the set ${\mathcal A},$ transformed in the
rectangle $[p_1,p_2]\times [q_1,q_2]$ by a suitable change of coordinates, we have
found two subsets (drawn with a darker color) whose image under
$\varphi$ is contained in ${\mathcal B}.$ Inside these subsets, we have
localized two smaller darker domains such that their images under $\psi\circ \varphi$
are contained again in ${\mathcal A}.$ Clearly, any path contained
in the rectangle $[p_1,p_2]\times [q_1,q_2]$ and joining the left and
the right sides contains two subpaths (inside the smaller darker subsets) which are stretched by the composite mapping onto paths connecting again the left and the right sides of the rectangle. Such smaller subsets can thus be taken as ${\mathcal H}_0$ and ${\mathcal H}_1.$
Up to a homeomorphism between ${\mathcal A}$ and $[p_1,p_2]\times [q_1,q_2],$
this is precisely what happens in $({\mathcal A},{\mathcal A}^-)$
for $\psi\circ \varphi\,.$
}}
\label{fig:09}
\end{figure}

\clearpage

\noindent
With a straightforward modification in the proof of Theorem \ref{th-ltma}, one could check
that $(H_{\varphi})$ and $(H_{\psi})$ imply the
existence of chaotic dynamics also for $\varphi\circ\psi.$ Actually, both Theorem \ref{th-ltma}
and such variant of it can be obtained as corollaries of the following more general result,
whose proof is only sketched, by the similarity with the verification of Theorem \ref{th-ltma}.

\begin{theorem}\label{th-ltmb}
Let $X$ be a metric space and
assume that $\varphi: X\supseteq D_{\varphi}\to X$ and
$\psi: X\supseteq D_{\psi}\to X$ are continuous maps defined on the sets $D_{\varphi}$ and $D_{\psi},$ respectively. Let also
${\widetilde{\mathcal A}} := ({\mathcal A},{\mathcal A}^-)$ and
${\widetilde{\mathcal B}} := ({\mathcal B},{\mathcal B}^-)$ be oriented rectangles of $X.$
Suppose that the following conditions are satisfied:
\begin{itemize}
\item[$(H_{\varphi})\;\;$] There are $m\geq 1$ pairwise disjoint compact sets
${\mathcal H}_0\,,\dots, {\mathcal H}_{m-1}\,\subseteq {\mathcal A}\cap D_{\varphi}$
such that
$\displaystyle{({\mathcal H}_i,\varphi): {\widetilde{\mathcal A}}
\stretchx\, {\widetilde{\mathcal B}}},$ for $i=0,\dots,m-1\,;$
\\
\item[$(H_{\psi})\;\;$] There exist $l\geq 1$ pairwise disjoint compact sets
${\mathcal K}_0\,,\dots, {\mathcal K}_{l-1}\,\subseteq {\mathcal B}\cap D_{\psi}$
such that
$\displaystyle{({\mathcal K}_i,\psi): {\widetilde{\mathcal B}}
\stretchx\, {\widetilde{\mathcal A}}},$ for $i=0,\dots,l-1\,.$
\end{itemize}
If at least one between $m$ and $l$ is greater or equal than $2,$
then the composite map $\phi:=\psi\circ \varphi$ induces chaotic dynamics on $m \times l$ symbols
in the set
$${\mathcal H}^*:=
\bigcup_{i,j} {\mathcal H}'_{i,j}\,,\,\mbox{with }\;
{\mathcal H}'_{i,j}\,:= {\mathcal H}_i\cap \varphi^{-1}({\mathcal K}_j)\,, i=0,\dots,m-1,\, j=0,\dots,l-1,$$ and thus satisfies properties $(i)$-$(v)$ from Theorem \ref{th-cons}.
\end{theorem}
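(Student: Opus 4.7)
The plan is to proceed along the same lines as the proof of Theorem \ref{th-ltma}, just keeping track of a double index $(i,j)$ instead of a single one. The goal is to show that for every $(i,j)\in\{0,\dots,m-1\}\times\{0,\dots,l-1\}$ one has
\begin{equation*}
({\mathcal H}'_{i,j},\phi)\,:\,{\widetilde{\mathcal A}} \stretchx {\widetilde{\mathcal A}},
\end{equation*}
and that the family $\{{\mathcal H}'_{i,j}\}$ consists of $m\cdot l\ge 2$ pairwise disjoint compact subsets of ${\mathcal A}$. Once these facts are established, the conclusion about chaotic dynamics on $m\times l$ symbols on the set ${\mathcal H}^*$ follows immediately from Theorem \ref{th-ch}, together with the consequences listed in Theorem \ref{th-cons}.

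First I would check the topological properties of the ${\mathcal H}'_{i,j}$'s. Compactness is straightforward: each $\mathcal H_i$ is compact, $\varphi$ is continuous on $\mathcal H_i\subseteq D_\varphi$ by $(H_\varphi)$, and $\mathcal K_j$ is closed; therefore ${\mathcal H}'_{i,j}=\mathcal H_i\cap\varphi^{-1}(\mathcal K_j)$ is closed in the compact set $\mathcal H_i$, hence compact. For pairwise disjointness, consider $(i,j)\ne (i',j')$: if $i\ne i'$, then $\mathcal H_i\cap\mathcal H_{i'}=\emptyset$ by $(H_\varphi)$, so $\mathcal H'_{i,j}\cap\mathcal H'_{i',j'}=\emptyset$; if instead $i=i'$ and $j\ne j'$, then any $x\in\mathcal H'_{i,j}\cap\mathcal H'_{i,j'}$ would give $\varphi(x)\in\mathcal K_j\cap\mathcal K_{j'}=\emptyset$ by $(H_\psi)$, a contradiction.

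Next I would verify the stretching relation. Fix $(i,j)$ and let $\gamma:[0,1]\to\mathcal A$ be a path with endpoints on different components of $\mathcal A^-$. Applying $(H_\varphi)$ with the compact set $\mathcal H_i$, one obtains $[t',t'']\subseteq [0,1]$ such that $\gamma(t)\in\mathcal H_i$ and $\varphi(\gamma(t))\in\mathcal B$ for every $t\in[t',t'']$, and $\varphi(\gamma(t'))$, $\varphi(\gamma(t''))$ lie on opposite sides of $\mathcal B^-$. Defining $\omega:[t',t'']\to\mathcal B$ by $\omega(t):=\varphi(\gamma(t))$, this is a path in $\mathcal B$ joining the two sides of $\mathcal B^-$, so by $(H_\psi)$ with the compact set $\mathcal K_j$ there exists $[s',s'']\subseteq[t',t'']$ with $\omega(t)\in\mathcal K_j$, $\psi(\omega(t))\in\mathcal A$ for all $t\in[s',s'']$, and $\psi(\omega(s'))$, $\psi(\omega(s''))$ on different components of $\mathcal A^-$. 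Translating back in terms of $\gamma$, this gives $\gamma(t)\in\mathcal H_i\cap\varphi^{-1}(\mathcal K_j)=\mathcal H'_{i,j}$ and $\phi(\gamma(t))\in\mathcal A$ for all $t\in[s',s'']$, with $\phi(\gamma(s'))$ and $\phi(\gamma(s''))$ on different components of $\mathcal A^-$. Continuity of $\phi$ on $\mathcal H'_{i,j}$ follows from continuity of $\varphi$ on $\mathcal H_i$ and of $\psi$ on $\mathcal K_j$. By the arbitrariness of $\gamma$, the stretching property $({\mathcal H}'_{i,j},\phi):\widetilde{\mathcal A}\stretchx\widetilde{\mathcal A}$ is verified.

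Since the $m\cdot l$ compact sets $\mathcal H'_{i,j}$ are pairwise disjoint and contained in $\mathcal A\cap D_\phi$, and $m\cdot l\ge 2$ by hypothesis, Theorem \ref{th-ch} applied with respect to them yields chaotic dynamics on $m\times l$ symbols for $\phi$ on the set $\mathcal H^*$, along with all the consequences of Theorem \ref{th-cons}. As should be clear from this outline, the proof is essentially routine once one has the composition lemma-style argument of Theorem \ref{th-ltma} in hand; no genuine obstacle appears, and the only point demanding a moment of attention is the bookkeeping that ensures pairwise disjointness of the $\mathcal H'_{i,j}$'s in both cases $i\ne i'$ and $i=i'$, $j\ne j'$.
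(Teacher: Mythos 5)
Your proof is correct and follows exactly the approach the paper takes: the paper's own proof simply reduces the claim to showing $({\mathcal H}'_{i,j},\phi):\widetilde{\mathcal A}\stretchx\widetilde{\mathcal A}$ for all $(i,j)$ and then declares the verification analogous to Theorem~\ref{th-ltma}, which is precisely what you carry out in detail. Your additional checks of compactness and pairwise disjointness of the $\mathcal H'_{i,j}$'s are the implicit bookkeeping the paper leaves to the reader.
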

\begin{proof}
In order to get the thesis, it suffices to show that
$$({\mathcal H}'_{i,j},\phi): {\widetilde{\mathcal A}}
\stretchx {\widetilde{\mathcal A}}\,,\;\;\forall\, i=0,\dots,m-1\, \mbox{ and }\,j=0,\dots,l-1.$$
Such condition can be checked by steps analogous to the ones in Theorem \ref{th-ltma}.
The details are omitted since they are straightforward.
\end{proof}

\noindent
As we shall see, this latter result will find an application in Section \ref{sec-pp} when dealing with a periodic version of the Volterra predator-prey model.

\part{Applications}

\chapter{Applications to difference equations}\label{ch-de}
In order to present possible applications of the ``stretching along the paths'' method introduced in Part \ref{pa-ec}, we start with some discrete-time economic models, for which we show the presence of a chaotic behaviour in a rigorous manner, without the need of computer simulations. In fact, when studying problems
arising in different areas of research, there is plenty of
numerical evidence of complex dynamics \cite{AgEl-04,CsGa-06,FJJB-08,GaZg-01,He-76,KrSu-04,LiLi-04,Ll-95,TrCo-80,YaLi-05,Zg-97}, but, if we except the
simpler one-dimensional case, few rigorous proofs of the
existence of chaos in one or the other of its main
characterizations can be found in the literature.\\
In more details, in Section \ref{sec-olg} we deal with some models of the ``Overlapping Generations'' class (henceforth OLG). This name comes from the fact that one
considers an economy with a population divided into ``young'' and
``old''.
In order to make the comprehension of our techniques easier, we start with the analysis of some one-dimensional cases in Subsection \ref{sub-1d} and we move to planar systems in Subsection \ref{sub-2d}.\\
On the other hand, in Section \ref{sec-dg} we consider a bidimensional model of a ``Duopoly Game'' (from now on DG) where the players have heterogeneous expectations. By
``duopoly'', economists denote a market form characterized by
the presence of two firms. The term ``game''
refers to the fact that the players - in our case the firms - make
their decisions, or ``moves'', reacting to each other actual or
expected moves, following a more or less sophisticated strategy.
In the present framework, we deal with a dynamic game where
moves are repeated in time, at discrete, uniform intervals.
Finally, the expression ``heterogeneous expectations'' simply means that, in the absence of precise information about its competitor's future decisions, each firm is assumed to employ a different strategy.\\
In the next sections we will mainly focus on the mathematical aspects of the treatment, limiting the explanation of the economic interpretation to what is barely necessary to the understanding of our main argument.

\section{OLG Models}\label{sec-olg}
\subsection{Unidimensional examples}\label{sub-1d}

Just to fix ideas, we start by presenting a
one-dimensional application of the Stretching Along the Paths method to a basic pure exchange overlapping generations (OLG)
model. In this simple case, we do not expect to establish any new result
but the exercise is intended to introduce the reader to our techniques in a clear and intuitive manner.\\
The model deals with
an economy with a constant population living two periods of unit
time length and, at each moment in time, being divided into two equally
numerous classes of persons, labeled respectively ``young'' and
``old''. Because individuals in each class are assumed to be
otherwise identical, we shall describe the situation in terms of ``the
young (old) representative agent''. There is no production but constant,
nonnegative endowments of the unique, perishable consumption good are
distributed at the beginning of each period to young and old.\\
For each generation, the young representative agent maximizes a
concave utility function over a two-period life, subject to a
budget constraint. We assume that the market for the consumption
good is always in equilibrium (demand = supply) and that agents'
expectations are fulfilled (perfect foresight).\\
Let the notation be as follows:
$$
\begin{array}{llll}
c_t\,\,\geq 0 : \mbox{young agent's consumption at time $t$}& \\
g_t\,\,\geq 0 : \mbox{old agent's consumption at time $t$}& \\
w_0\geq 0: \mbox{young agent's endowment}&\\
w_1\geq 0: \mbox{old agent's endowment}&\\
\rho_t\,\,> 0 : \mbox{interest factor at time $t$}& \\
 \qquad\quad\,\,=\mbox{exchange rate between present and future consumption}\\
 U(c_t, g_{t+1})=u_1(c_t)+u_2(g_{t+1}): \mbox{utility functions (the
same for all agents).}
\end{array}
$$
A mathematical formulation of such problem can be written as
$$
\begin{array}{llll}
&{\displaystyle{\max_{c_t, g_{t+1}}^{} \{u_1(c_t)+ u_2(g_{t+1})\}}}\\
&\qquad\mbox{s.t.}\ g_{t+1}\leq w_1+\rho_t(w_0-c_t),\\ &\qquad
c_t,\, g_{t+1}\geq 0,\,\, \forall t\geq 0
\end{array}
\eqno(\mbox{P1})
$$
and the market-clearing condition is
\begin{equation}\label{eq-mcc}
c_t+g_t=w_0+w_1,\,\, \forall t\geq 0.
\end{equation}
From the first order conditions of $(\mbox{P1})$ and
from \eqref{eq-mcc}, we deduce that the optimal choice for the young
agent's current saving (dis-saving), $(w_0-c_t),$ equal to the old agent's
current dis-saving (saving), $(g_t-w_1),$ must satisfy the equation
\begin{equation}\label{eq-sadi}
H(c_{t+1}, c_t)={\mathcal U}(c_{t+1})+{\mathcal V}(c_t)=0,
\end{equation}
where ${\mathcal U}(c)=u_2'(c)(w_1-g)$ and ${\mathcal V}(c)=u_1'(c)(c-w_0).$\\
Whether or not from \eqref{eq-sadi} we can derive a difference equation moving
forward in time depends on whether the function $\mathcal U$ is
invertible. To visualize the situation, let us adopt a
quasilinear-quadratic pair of utility functions following \cite{BeDa-82}, namely
\begin{equation*}
u_1(c)=ac-(b/2)c^{2}\,,\quad u_2(g)=g.
\end{equation*}
If, for simplicity's sake, we put $a=b=\mu$ and, rescaling endowments,
$w_0=0,$ we obtain the difference equation
\begin{equation}\label{eq-de}
c_{t+1}=F(c_t)=\mu c_t(1-c_t),
\end{equation}
i.e. the much-studied ``logistic map''. Starting from an arbitrary
initial condition $c_0,$ equation \eqref{eq-de} determines sequences of
young agents' consumption forward in time. Through the equilibrium
condition \eqref{eq-mcc}, it will determine old agents'
consumption as well. According to \cite{Ga-73},
this is the ``classical case'', where the young agents dis-save
and borrow and the old ones save and lend\footnote{Notice that,
whereas $w_0$ is unimportant here, $w_1$ must be chosen in such a
way that, whenever $c_t\geq 0,$ so is $g_t,$ i.e. $w_1\geq
c_{\mbox{max}}=F_{\mu}(1/2).$}. Alternatively, using the pair of
functions
$$
u_1(c)=-\mu\, e^{-c}\,, \quad u_2(g)=g,
$$
we get the difference equation
\begin{equation}\label{eq-OLG1dim}
c_{t+1}=F(c_t)=\mu\, c_t\, e^{-c_t}.
\end{equation}
The logistic map has already been analyzed in Section \ref{sec-sd}
and thus we deal with it only briefly here. As we have seen, if we consider the second iterate
of the function $F$ in \eqref{eq-de}, by a simple geometric
argument it can be established that the conditions of Theorem
\ref{th-ch} are verified for sufficiently large values of the
parameter $\mu \leq 4.$ For instance, in Figure \ref{fig-log} we
visualized $F^2$ for $\mu=3.88:$ with reference to that picture,
using the terminology from Theorem \ref{th-ch}, we showed that
$F^2$ induces chaotic dynamics on two symbols and
consequently possesses all the chaotic properties listed in
Theorem \ref{th-cons}. \\
We recall that the proof that
$F:[0,1]\to {\mathbb R}$ satisfies the hypotheses of Theorem \ref{th-ch} and generates
chaotic dynamics on two symbols is trivial for $\mu >4.$ Unfortunately, in this
case almost all points of $[0,1]$ limit to $-\infty$
through the iterates of $F.$

\begin{figure}[ht]\label{fig-3}
\centering
\includegraphics[scale=0.23]{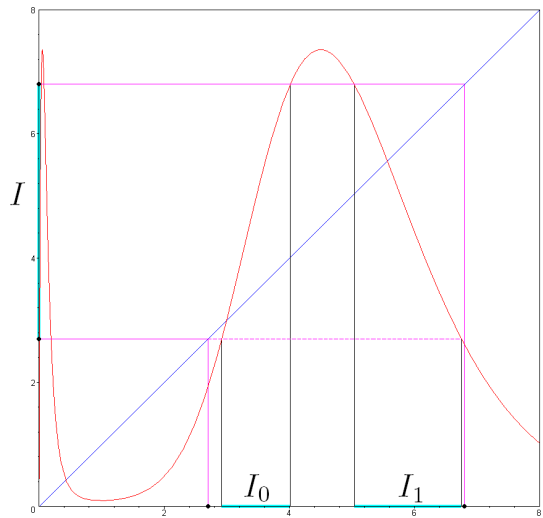}
\caption{\footnotesize{Partial representation of the graph of the second iterate
of the map $F$ in \eqref{eq-OLG1dim}, with $\mu=20.$
}}
\label{fig-olg1d}
\end{figure}

\noindent An entirely analogous line of reasoning can be followed
for the map $F$ from \eqref{eq-OLG1dim}, for which we draw the graph of the second iterate in Figure \ref{fig-olg1d} with $\mu=20.$
We enter the framework of Theorem \ref{th-ch} with the positions
$$\widetilde{\mathcal R}=\widetilde I= (I, I^-),\quad {\mathcal
K_0}=I_0\quad \mbox{ and } \quad{\mathcal K_1}=I_1\,,$$ where $I, I_0$ and $I_1$ are the
intervals highlighted on the coordinate axes in Figure \ref{fig-olg1d}, so that
we can promptly verify that
$$ ({I}_i,F^2): {\widetilde{I}} \stretchx {\widetilde{I}}, \quad i=0,1.$$
Hence, the map $F^2$ in \eqref{eq-OLG1dim} induces
chaotic dynamics on two symbols relatively to $I_0$ and $I_1$ and, consequently, has the chaotic features described in Theorem \ref{th-cons}.

\subsection{A planar model}\label{sub-2d}
We continue our treatment with a bidimensional OLG model with production, such as that
discussed in \cite{Me-92} and \cite{Re-86}.\\
Economically, there are two basic differences between this framework
and the one discussed in the previous subsection. The first one is that there are no
endowments. Instead, in each period $t,$ a certain amount of the
single homogeneous output $x$ (call it ``harvest corn'') is
produced  by means of inputs of current labor $l$ and capital
(``seed corn'') $k$ is saved and invested in the previous period. The
stock of capital depreciates entirely during one period of time.
The second difference concerns the fact that only young people work, earning a unit wage $w,$ and save, while only
old people consume. The supply of labor at each time $t$ is
determined as a solution of a problem of intertemporal constrained
maximization of the young agent's utility function, whose
arguments are labor (at time $t$) and consumption (at time
$t+1$) \footnote{To give the problem a beginning, an exception must
be made for the first generation, which is assumed to be born old
at $t=1$ and to be endowed with a given amount of ``harvest
corn'', entirely consumed in that period.}.\\
To simplify the analysis, we assume a linear, fixed coefficient
technology, i.e., formally, we adopt the production function

\begin{equation}\label{eq-pf1}
x_t=\min\{a\,l_t,b\,k_{t-1}\},
\end{equation}
where the constants $a$ and $b$ denote the output/labor and the
output/capital coefficients, respectively, and
\begin{equation}\label{eq-pf2}
k_t=x_t - c_t\,.
\end{equation}
In order to ensure that the economy is ``viable'', we shall suppose that
$b>1,$ i.e. that the fixed requirement of ``seed corn'' per unit of
``harvest corn'' is less than one. Moreover, to economize
in the use of parameters and without loss of generality, we shall
choose the unit of measure of labor so that $a=1.$\\
Under these hypotheses, the young agent's problem can be formalized
as follows:
$$
\begin{array}{llll}
&{\displaystyle{\max_{c_{t+1},l_t}\{u(c_{t+1})-v(l_t)\}}}\\
& \qquad\mbox{s.t.}\ c_{t+1} \le \rho_{t+1}(w_t\, l_t-c_t),\\
& \qquad c_t,\, l_t,\, k_t\, \ge 0,\,\, \forall t\ge 0,
\end{array}
\eqno(\mbox{P2})
$$
where $u$ and $-v$ are the two separable components of the utility
function with
$$u'(c)>0\,,\,\, u''(c)\leq 0 \quad \mbox{and} \quad v'(l)>0\,,\,\, v''(l)>0.$$
From the first order conditions of problem $(\mbox{P2}),$ we get
\begin{equation}\label{eq-u}
\mathcal U(c_{t+1}) = \mathcal V(l_t),
\end{equation}
where $\mathcal U(c) = u'(c)\,c$ and $\mathcal V(l)=v'(l)\,l.$
Moreover, from \eqref{eq-pf1}--\eqref{eq-pf2}, we have
\begin{equation}\label{eq-l}
l_{t+1}=b(l_t-c_t).
\end{equation}
Considering that, under the postulated assumptions, $\mathcal
V(l)$ is always invertible, whether conditions
\eqref{eq-u}--\eqref{eq-l} can actually define forward
discrete-time dynamics of the state variables $(c,l)$ or not
depends entirely on the properties of $\mathcal U(c)$
and therefore on those of $u(c).$\\
If $\mathcal U(c)$ is invertible,
then simple calculations lead to the difference equation
\begin{equation}\label{eq-crra}
(c_{t+1}, l_{t+1})=F(c_t, l_t)=[\,\mathcal U^{-1}(\mathcal V(l_t)),
b(l_t-c_t)].
\end{equation}
Although for this case there is a rigorous proof of the
existence of periodic solution in \cite{Re-86}, the most
interesting dynamics are found when $\mathcal U(c)$ is not
invertible. We thus concentrate on the latter situation and, in particular, we
assume the ``constant absolute risk aversion (CARA)''
utility function
$$
u(c) = - \mu\, e^{-c},
$$
where $\mu$ is a positive parameter, whence
$$
\mathcal U(c)=u'(c)c=\mu\, c\,e^{-c}.
$$
We also adopt the labor (dis)utility function
$$
v(l)=\frac{1}{\beta}\,l^{\beta},
$$
with $\beta>1.$ Now $\mathcal U(c)$ has a ``one-hump'' form and consequently it does not
admit a global inverse $\mathcal U^{-1}(c).$ Hence, we cannot define forward dynamics as we did with
\eqref{eq-crra}. Since $\mathcal V(l)$ is invertible, however, we
could write the system of equations
\begin{equation}\label{eq-sys}
(c_t, l_t)=F(c_{t+1}, l_{t+1})=\left\{\begin{array}{ll}
                                g_0(c_{t+1})-\frac{1}{b}l_{t+1}\\
                                g_0(c_{t+1}), \end{array} \right.
\end{equation}
where
\begin{equation}\label{eq-g0}
g_0(c):={\mathcal V}^{-1}[\,{\mathcal U}(c)].
\end{equation}
Given the initial conditions for $l$ and $c,$ system \eqref{eq-sys} determines
the dynamics of the variables $(c, l)$ ``in the past''.

\smallskip

\noindent
In what follows, we apply the stretching along the paths method to prove that, for
certain parameter configurations, the dynamical system
\eqref{eq-sys} (actually, a generalization of it) exhibits chaos. We will then suggest how to use Inverse Limit Theory \cite{MeRa-07} to show that chaotic dynamics backward in time imply chaotic dynamics forward in time.\\
In order to make reference to the treatment of Section
\ref{sec-sap} easier and to deal with a
more general case,
we rewrite system \eqref{eq-sys},
defining the continuous planar map $F:(\mathbb R^+)^2\to\mathbb R^2$ of the first quadrant as
\begin{equation}\label{eq-olg}
F(x,y)=(F_1(x,y),F_2(x,y)):=\Bigl(g(x)-\frac{y}{b},g(x)\Bigr),
\end{equation}
where $b > 1$ is a constant and $g(x)$ is a continuous real-valued
function assuming nonnegative values for $x\in [0,+\infty)$ and
vanishing in $0.$ Moreover, we suppose that $g$ has a positive
maximum value $M$ which is achieved at a unique $\bar x\in\,(0,
+\infty).$ Notice that the required properties for $g$ are
satisfied by any unimodal map with $g(0)=0$ and therefore they do
not necessarily restrict the choice
to the map $g_0$ in \eqref{eq-g0}.\\
In Theorem \ref{th-1} below, we prove that the stretching along the paths property for
the map $F$ in \eqref{eq-olg} is satisfied when taking as generalized rectangle a
member of the family of trapezoidal regions described analytically
by
\begin{equation}\label{eq-dom1}
\mathcal R=\mathcal R(K, M):=\left\{(x,y)\in\mathbb R^2: 0\le x\le K,\; x\le y\le M\right\},
\end{equation}
with $M=g(\bar x)$ the maximum of the map $g$ as discussed above
and $K$ a suitable parameter. The stretching property will be checked
under the technical hypothesis that there exists $K>\bar x$ such
that
\begin{equation}\label{eq-gen}
b g(K)\le K < M\Bigl(1-\frac{1}{b}\Bigr).
\end{equation}
Notice that, in order for \eqref{eq-gen} to hold for some
value of $K > \bar{x},$ we must have
$$M = g(\bar{x}) > \frac{b \bar{x}}{b -1}\,.$$

\begin{figure}[ht]\label{fig-4}
\centering
\includegraphics[scale=0.21]{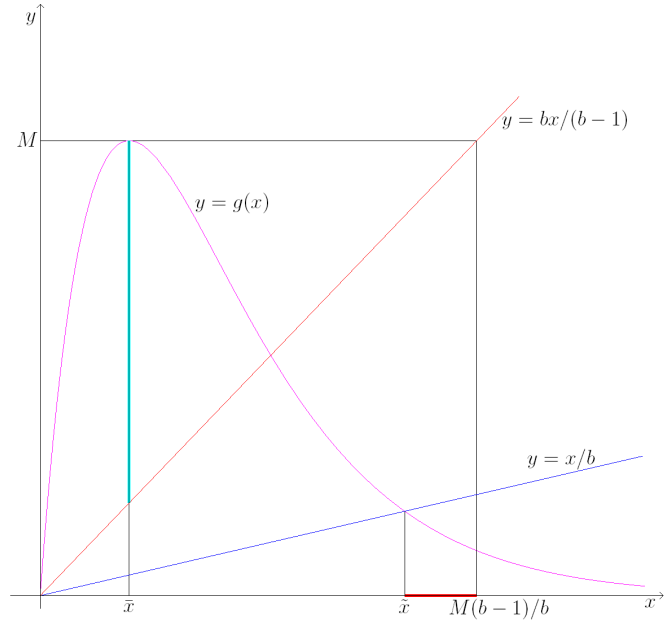}
\caption{\footnotesize{A visual representation of conditions \eqref{eq-gen}.
}}
\label{fig-fc}
\end{figure}

\noindent
Figure \ref{fig-fc} provides a graphical illustration of conditions
\eqref{eq-gen}. Let $\tilde x$ be the maximal solution of the
equation $x=bg(x).$ If the maximum $M$ of the map $g$ lies on
the thicker vertical half-line, that is, if $M=g(\bar x)>b\bar
x/(b-1),$ and so $\tilde x>\bar x>0,$ then any point in the
segment $[\tilde x,M(b-1)/b)$ on the $x$-axis
can be taken as the right edge $K$ for the trapezoid $\mathcal R.$\\
In the special case of system \eqref{eq-sys} with the choice
\eqref{eq-g0}, the function $g$ has the form
\begin{equation}\label{eq-g}
g(x)=g_0(x):=(\mu x\exp(-x))^{1/\beta},
\end{equation}
with $\mu>0$ and $\beta>1.$ Thus $\bar x=1$ and $M=\left(
\frac{\mu}{e}\right)^{1/\beta}.$ It follows that, for any given value of
$b>1,$ the condition $M=g(\bar x)>b\bar x/(b-1)$ is satisfied for
sufficiently large values of $\mu,$ i.e. when the unimodal curve of $g$ is
sufficiently steep. Also \eqref{eq-gen} holds true for every $\mu$ large
enough. For example, if we fix $b=2,\,\beta=1.3,$ the conditions of
Theorem \ref{th-1} are fulfilled for $\mu=80,$ $b=2,\,\beta=1.3$ (to which
there corresponds $M=g(1)\approx 13.48474370$) and $K=6.$ These are indeed the parameters used in Figures \ref{fig-5}--\ref{fig-7}.

\smallskip

\noindent
The generalized rectangle in \eqref{eq-dom1} can be oriented by setting
\begin{equation}\label{eq-lr}
\mathcal R^-_{\ell}:= \{0\}\times [0,M] \quad  \mbox{ and } \quad \mathcal R^-_{r}:=\{K\}\times [K,M]\,,
\end{equation}
and, according to our notation, their union will be denoted by $\mathcal R^-.$

\smallskip

\noindent
Our main result for system \eqref{eq-sys} (and, more generally, for the map in \eqref{eq-olg}) is the following:

\begin{theorem}\label{th-1}
For the map $F$ defined in \eqref{eq-olg} and for any generalized rectangle ${\mathcal R}=\mathcal R(K, M)$ belonging to the family described in \eqref{eq-dom1}, with
\begin{equation}\label{eq-par}
M=g(\bar x),\quad b g(K)\le K < M\Bigl(1-\frac{1}{b}\Bigr),\quad
K>\bar x,
\end{equation}
and oriented as in \eqref{eq-lr}, there are two disjoint compact subsets ${\mathcal K}_0={\mathcal K}_0(\mathcal R)$ and ${\mathcal K}_1={\mathcal K}_1(\mathcal R)$ of $\mathcal R$ such that
\begin{equation}\label{eq-k}
({\mathcal K}_i,F): {\widetilde{\mathcal R}} \stretchx {\widetilde{\mathcal R}}, \mbox{ for } i=0,1.
\end{equation}
Hence, the map $F$ induces chaotic dynamics on two symbols on
${\mathcal R}$ relatively to $\mathcal K_0$ and $\mathcal K_1$ and
has all the properties listed in Theorem \ref{th-cons}.
\end{theorem}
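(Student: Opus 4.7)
\emph{Plan.} The plan is to apply Theorem~\ref{th-ch} directly, by exhibiting two disjoint compact subsets $\mathcal K_0,\mathcal K_1\subseteq\mathcal R$ and verifying $(\mathcal K_i,F)\colon\widetilde{\mathcal R}\stretchx\widetilde{\mathcal R}$ for $i=0,1$. Guided by the unimodal shape of $g$ (whose graph enters $\mathcal R$ twice, on either side of its maximum $\bar x$), I would split about $\bar x$ and set
\[
\mathcal K_0:=\{(x,y)\in\mathcal R:\,0\le x\le\bar x,\ 0\le g(x)-y/b\le K\},
\]
\[
\mathcal K_1:=\{(x,y)\in\mathcal R:\,\bar x\le x\le K,\ 0\le g(x)-y/b\le K\}.
\]
Both are closed in the compact set $\mathcal R$, hence compact, and $F$ is continuous on them. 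Disjointness uses \eqref{eq-par}: a common point must have $x=\bar x$, so $g(x)=M$, forcing $y\in[b(M-K),bM]$; the hypothesis $K<M(1-1/b)$ rearranges exactly to $b(M-K)>M$, leaving no admissible $y\in[\bar x,M]$.

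Next I would fix any path $\gamma=(\gamma_1,\gamma_2)\colon[0,1]\to\mathcal R$ joining the two sides of $\mathcal R^-$, say $\gamma_1(0)=0$ and $\gamma_1(1)=K>\bar x$. By continuity and IVT I would select $t_*:=\min\{t\in[0,1]:\gamma_1(t)=\bar x\}$ and $t_{**}:=\max\{t\in[0,1]:\gamma_1(t)=\bar x\}$, so that $\gamma_1\le\bar x$ on $[0,t_*]$ and $\gamma_1\ge\bar x$ on $[t_{**},1]$. Introduce $h(t):=F_1(\gamma(t))=g(\gamma_1(t))-\gamma_2(t)/b$. Using $\gamma_2\le M$ on $\mathcal R$ together with $K<M(1-1/b)$, one finds $h(t_*),\,h(t_{**})\ge M-M/b>K$, while $h(0)=-\gamma_2(0)/b\le 0$ and $h(1)=g(K)-\gamma_2(1)/b\le g(K)-K/b\le 0$ thanks to $bg(K)\le K$ and $\gamma_2(1)\ge K$.

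The stretching now follows by the standard ``last-zero / first-$K$'' extraction: on $[0,t_*]$ set $t'':=\min\{t:h(t)=K\}$ and $t':=\max\{t\le t'':h(t)=0\}$. Then $h([t',t''])\subseteq[0,K]$, and $\gamma_1(t)\le\bar x$ on $[t',t'']$, hence $\gamma([t',t''])\subseteq\mathcal K_0$. The inclusion $F(\gamma(t))\in\mathcal R$ is automatic because $F_2=g(\gamma_1)\le M$ and $F_2-F_1=\gamma_2/b\ge 0$, while $F_1\in[0,K]$ by construction. At the endpoints, $F(\gamma(t'))=(0,g(\gamma_1(t')))\in\mathcal R^-_\ell$ and $F(\gamma(t''))=(K,\,g(\gamma_1(t'')))\in\mathcal R^-_r$, the latter because $g(\gamma_1(t''))=K+\gamma_2(t'')/b\in[K,M]$. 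A symmetric extraction on $[t_{**},1]$ produces the analogous subpath inside $\mathcal K_1$. Conditions \eqref{eq-k} are then met and Theorem~\ref{th-ch} yields the chaotic dynamics on two symbols and, through Theorem~\ref{th-cons}, all the listed features.

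\emph{Main obstacle.} The only genuinely delicate point is the disjointness of $\mathcal K_0$ and $\mathcal K_1$, which rests entirely on the strict inequality $K<M(1-1/b)$ built into \eqref{eq-par}; after that, the one bit of care required is to pick $t_*,t_{**}$ as the \emph{extreme} times at which $\gamma_1=\bar x$ (rather than any such time), so that the extracted sub-interval remains on the correct side of $\bar x$ and its image endpoints land on the prescribed sides of $\mathcal R^-$.
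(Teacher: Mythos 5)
Your proof is correct and follows essentially the same route as the paper. The paper splits $\mathcal R$ at $x=\bar x$ into the two half-trapezoids $\mathcal R_0,\mathcal R_1$ and sets $\mathcal K_i$ as their intersection with the set of points whose $F$-image stays in $\mathcal R$ (which, since $F_2\le M$ and $F_2-F_1=y/b\ge 0$ automatically, reduces to exactly your condition $0\le F_1\le K$); its conditions $(C1)$, $(C2)$, $(C3')$, $(C4)$ encode the same sign and range inequalities you verify, and you simply spell out the ``last-zero/first-$K$'' IVT subpath extraction which the paper leaves implicit.
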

\begin{proof}
We are going to show that the conditions on the parameters
$K$ and $M$ in \eqref{eq-par} are sufficient to guarantee that
the image under the map $F$ of any path $\gamma=(\gamma_1,\gamma_2):[0,1]\to\mathcal R,$ joining in ${\mathcal R}$
the sides $\mathcal R^-_{\ell}$ and $\mathcal R^-_{r}$ defined in \eqref{eq-lr}, satisfies the following:
\begin{itemize}
\item [$(C1)$] $F_1(\gamma(0))\le 0\,;$
\item [$(C2)$] $F_1(\gamma(1))\le 0\,;$
\item [$(C3)$] $\exists \,\,\bar t\in (0,1): F_1(\gamma(\bar t))> K\,;$
\item [$(C4)$] $F_2(\gamma(t))\subseteq
[F_1(\gamma(t)),M],\,\forall t\in [0,1].$
\end{itemize}
The first three requirements together imply an expansion along the
$x$-axis which is accompanied by a folding.
Indeed, the image $F\circ
\gamma$ of any path $\gamma$ joining ${\mathcal R}^-_{\ell}$ and
${\mathcal R}^-_r$ crosses a first time the trapezoid ${\mathcal
R}\,,$ for $t\in (0,\bar t),$ and then crosses ${\mathcal R}$ back
again, for $t\in(\bar t,1).$ Condition $(C4)$
implies a contraction along the $y$-axis. \\
In order to simplify
the exposition, we will replace $(C3)$ with the stronger condition
\begin{itemize}
\item [$(C3^{'})$] ${{F_1(\bar x,y)>K,\,\forall\, y\in [\bar x,M]\,,}}$
\end{itemize}
i.e. we ask that the inequality in $(C3)$ holds for any $\bar t\in
(0,1)$ such that $\gamma_1(\bar t)=\bar x.$ Notice that, since
$K>\bar x,$
the vertical segment $\{\bar x\}\times [\bar x, M]$ is contained in $\mathcal R.$ \\
Let us now define

$$\mathcal R_0:=\left\{(x,y)\in\mathbb R^2: 0\le x\le \bar x,\; x\le y\le M\right\},$$
$$\mathcal R_1:=\left\{(x,y)\in\mathbb R^2: \bar x\le x\le K,\; x\le y\le M\right\}$$
and
$$\mathcal K_0:=\mathcal R_0\cap F(\mathcal R)\,,\qquad \mathcal K_1:=\mathcal R_1\cap F(\mathcal R)\,.$$
Then, we claim that conditions $(C1),(C2),(C3^{'})$ and
$(C4)$ imply the stretching property in
\eqref{eq-k}. \\
First of all consider that, by condition $(C3^{'}),$ the sets $\mathcal K_0$
and $\mathcal K_1$ are disjoint because $\{\bar x\}\times [\bar x,
M]$ is mapped by $F$ outside $\mathcal R.$
Furthermore $(C1),\,(C2)$ and $(C3^{'})$ ensure that, for every
path $\gamma: [0,1]\to {\mathcal R}$ with $\gamma(0)\in
{\mathcal R}^-_{\ell}$ and $\gamma(1)\in {\mathcal R}^-_{r}$ (or
$\gamma(0)\in {\mathcal R}^-_{r}$ and $\gamma(1)\in {\mathcal
R}^-_{\ell}$), there exist two disjoint subintervals
$[t_0',t_0''],\, [t_1',t_1'']\subseteq [0,1]$ such that
$$\gamma(t)\in {\mathcal K_0},\,\, \forall\, t\in [t_0',t_0''],\quad\gamma(t)\in {\mathcal K_1},\,\,\forall\, t\in [t_1',t_1'']\,,$$
with $F(\gamma(t_0'))$ and
$F(\gamma(t_0'')),$ as well as $F(\gamma(t_1'))$ and $F(\gamma(t_1'')),$ belonging to different components of ${\mathcal R}^-.$
Finally, from $(C4)$ it follows that $F(\gamma(t))\in {\mathcal R},$ \\
$\forall\, t\in [t_0',t_0'']\cup [t_1',t_1''].$ Recalling Definition \ref{def-sap}, we can conclude that, under the postulated conditions, \eqref{eq-k}
is verified and thus the last part of the statement is a straightforward consequence of Theorem \ref{th-ch}. Observe that $F$ is continuous on ${\mathcal K}_0\cup {\mathcal K}_1$ because it is continuous on $(\mathbb R^+)^2.$\\
Next, we want to prove that \eqref{eq-par} implies conditions
$(C1)$--$(C3')$ and $(C4).$ For this purpose,  let us consider any path $\gamma=(\gamma_1,\gamma_2):[0,1]\to\mathcal R$
such that $\gamma(0)\in\mathcal R^-_{\ell}$ and
$\gamma(1)\in\mathcal R^-_{r}$ (the case in which $\gamma(0)\in
{\mathcal R}^-_{r}$ and $\gamma(1)\in
{\mathcal R}^-_{\ell}$ can be treated analogously).\\
Since, in the present situation,
$$F_1(\gamma(0))=-\frac{\gamma_2(0)}{b}\le 0,$$
$(C1)$ is automatically satisfied.\\
For the validity of $(C2),$ we need to establish that
$$F_1(\gamma(1))=g(K)-\frac{\gamma_2(1)}{b}\le 0,$$
but, since $\gamma(1)\in {\mathcal R}^-_{r},$ it follows that
${\displaystyle{\frac{\gamma_2(1)}{b}=\frac{ K}{b}}}$
and therefore $(C2)$ holds if
\begin{equation*}\label{eq-cond1}
F_1(\gamma(1))=g(K)-\frac{K}{b}\le 0,
\end{equation*}
that is, if
\begin{equation}\label{eq-cond1n}
b g(K)\le K.
\end{equation}
To prove $(C3^{'}),$ we must show that
\begin{equation*}
g(\bar x)-\frac{\gamma_2(\bar t)}{b}> K,
\end{equation*}
$\forall \, \bar t\in (0,1)$ with $\gamma_1(\bar t)=\bar x.$
Recalling that $M=g(\bar x)$ is the maximum value for the map $g$
and noticing that $$M-\frac{\gamma_2(\bar t)}{b}>
M\left(1-\frac{1}{b}\right),$$
$(C3')$ is fulfilled if
\begin{equation}\label{eq-cond2}
M\left(1-\frac{1}{b}\right)> K.
\end{equation}
Finally, for $(C4)$ we have to consider two inequalities.
First, since both $b$ and $\gamma_2(t),\,t\in [0,1],$ are
nonnegative, the inequality $F_2(\gamma(t))\ge F_1(\gamma(t)),$
$\forall t\in [0,1],$ or equivalently
$$g(\gamma_1(t))\ge g(\gamma_1(t))-\frac{\gamma_2(t)}{b},\,\forall t\in [0,1],$$
is satisfied. Second, the inequality $F_2(\gamma(t))\le M,\,\forall t\in [0,1],$ or equivalently
$$g(\gamma_1(t))\le M,\,\forall t\in [0,1],$$
is always fulfilled by the choice of $M.$ \\
Summing up, in view of \eqref{eq-cond1n} and \eqref{eq-cond2}, the assumptions
sufficient to imply $(C1),\,(C2),\,(C3^{'})$ and $(C4)$ are
\begin{equation*}
\left\{
\begin{array}{ll}
g(K)-\frac{K}{b}\le 0\\
M\left(1-\frac{1}{b}\right)> K,\\
\end{array}
\right.
\end{equation*}
which are fulfilled for any positive $K$ such that
$$b g(K)\le K< M\Bigl(1-\frac{1}{b}\Bigr),$$
as postulated in \eqref{eq-par}. The proof is
complete.
\end{proof}

\noindent
Figures \ref{fig-5}--\ref{fig-7} provide a visual representation of the stretching along the paths property for system \eqref{eq-sys}.\\

\begin{figure}[ht]
\centering
\includegraphics[width=6in,height=4.5in]{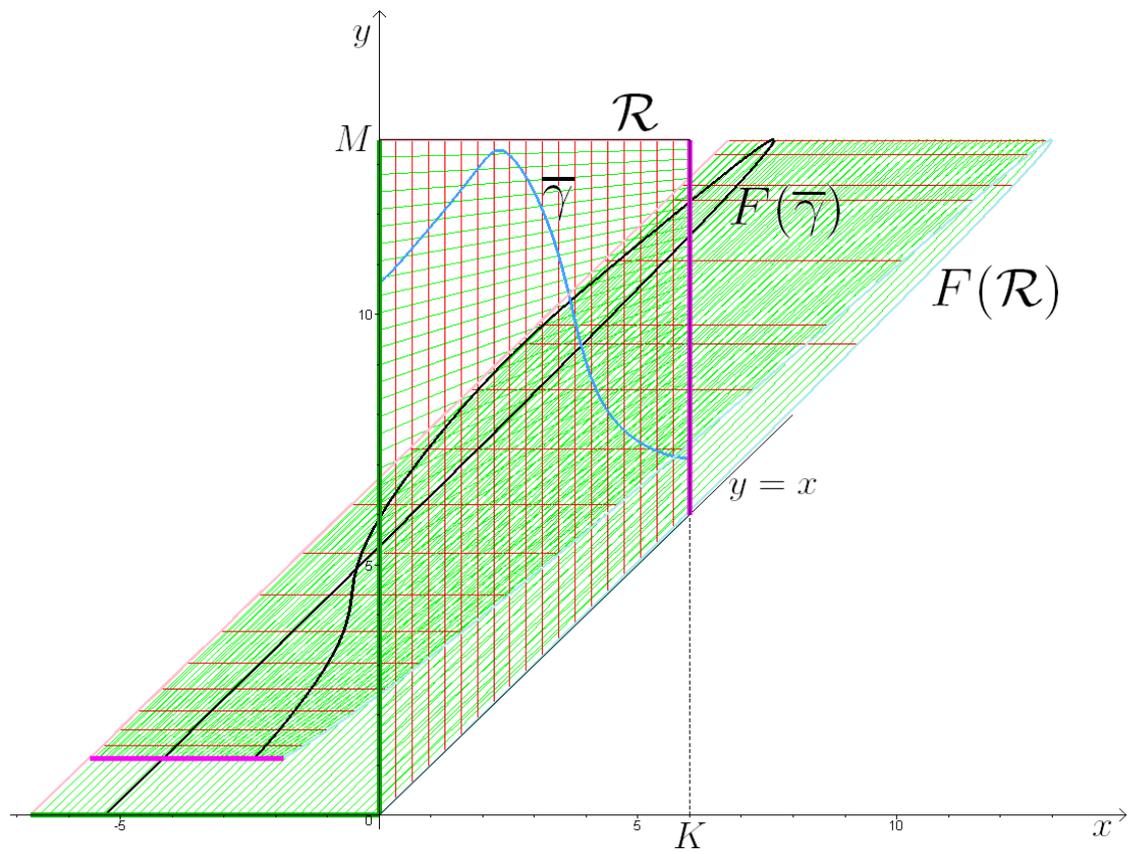}
\caption{\footnotesize{The trapezoid ${\mathcal R},$ oriented in the usual
left-right manner, and its image under the map $F$ in
\eqref{eq-olg}, with $g$ as in \eqref{eq-g}. An arbitrary path
$\gamma$ joining in $\mathcal R$ the two components of the
boundary set $\mathcal R^-$ is transformed by $F$ so that its
image intersects ${\mathcal R}$ twice.
}}
\label{fig-5}
\end{figure}

\clearpage

\begin{figure}[ht]
\centering
\includegraphics[scale=0.35]{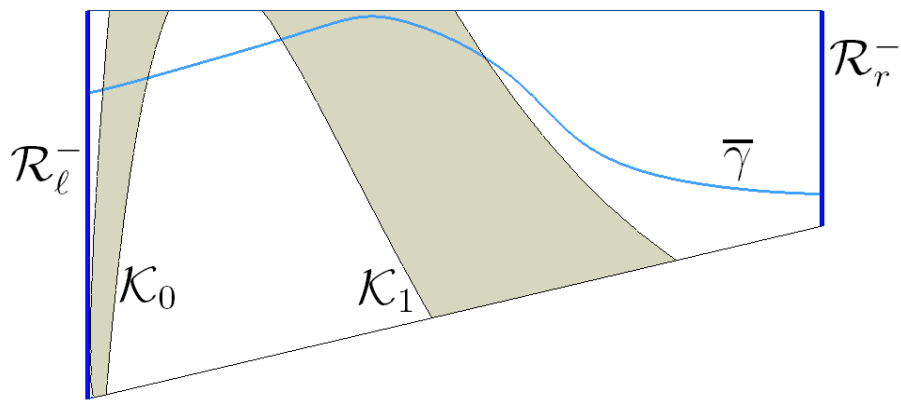}
\caption{\footnotesize{A zoom (with a magnification in the $x$-direction) of the
generalized rectangle ${\mathcal R}$ from Figure \ref{fig-5}, showing
the intersections between the path $\gamma$ and the two compact
sets $\mathcal K_0$ and $\mathcal K_1,$ which are drawn here with a darker color.
}}
\label{fig-6}
\end{figure}

\begin{figure}[ht]
\centering
\includegraphics[scale=0.35]{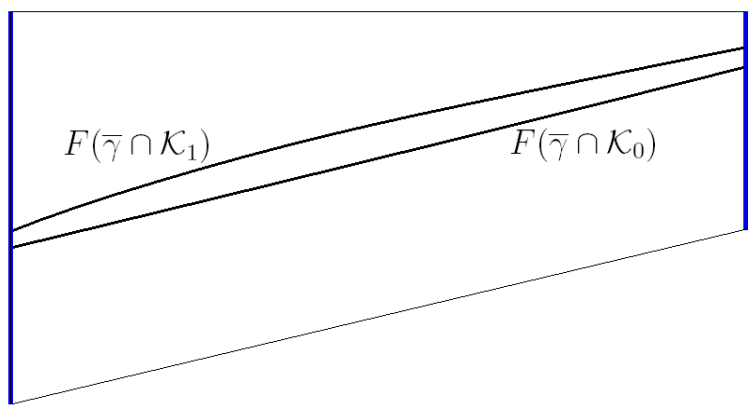}
\caption{\footnotesize{An illustration of the components $F(\overline{\gamma}\cap\mathcal K_0)$
and $F(\overline{\gamma}\cap\mathcal K_1)$ of the image of the path $\gamma$
under the map $F.$
}}
\label{fig-7}
\end{figure}

\clearpage

\noindent
As we have already observed while introducing Theorem \ref{th-1}, for the particular choice of $g=g_0$ adopted in \eqref{eq-g}, condition \eqref{eq-par} is
always satisfied when $\mu$ is sufficiently large.

\smallskip

\noindent
The inequalities in \eqref{eq-par}
define
geometrical conditions depending on the choice of the generalized
rectangle ${\mathcal R}$ (i.e. the trapezoid in \eqref{eq-dom1}).\\
The question naturally arises how a different
${\mathcal R}$ would affect the corresponding conditions on the system
parameters. A systematic investigation of this question would lead
us afar. Hence, we limit ourselves to the analysis of an
interesting alternative choice for
$\mathcal R,$ specifically related to the crucial function $g=g_0$
in \eqref{eq-g}.\\
The new $\mathcal R,$ depicted in Figure \ref{fig-8}, is a subset of the
region below the graph
of the map $x\mapsto bg(x),$ whose
definition is
\begin{equation}\label{eq-new}
\mathcal R=\mathcal R(\nu, d):=\left\{(x,y)\in\mathbb R^2: 0\le y\le b
g(x),\, x\le y\le\nu x+d\right\},
\end{equation}
with $\nu$ and $d$ suitably fixed parameters. Such domain
can be oriented by taking as $\mathcal R^-_{\ell}$ and $\mathcal
R^-_{r}$ the left and the right components of the intersection between the graph of
$y=bg(x)$ and $\mathcal R,$ respectively.
An illustration of this choice for $\mathcal R$ with the parameters
$\mu=14.5,\,b=2,\,\beta=1/0.95,\,\nu=0.6$ and $d \approx
3.662313254$ is furnished by Figure \ref{fig-8}. Moreover, Figure \ref{fig-9} shows  that
the image under $F$ of an arbitrary path $\gamma$ joining $\mathcal R^-_{\ell}$ and $\mathcal
R^-_{r}$ in $\mathcal R$ intersects twice
$\mathcal R$ and suggests a strong similarity between this case
and the one discussed in Theorem \ref{th-1}.
In order to avoid tedious repetitions, however, we will not
provide here a proof that the stretching property and all its
implications indeed obtain for ${\mathcal R}$ in \eqref{eq-new}.

\begin{figure}[ht]
\centering
\includegraphics[scale=0.19]{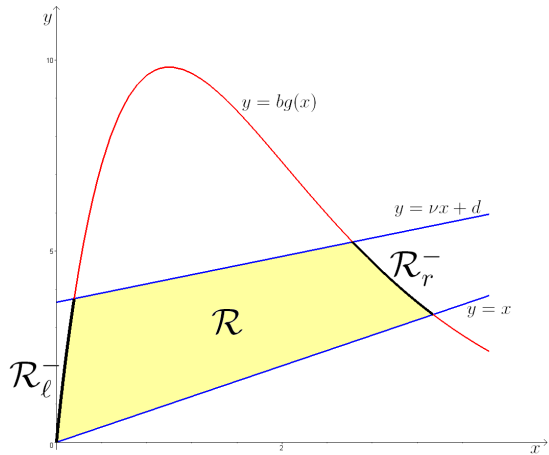}
\caption{\footnotesize{A different choice of the oriented rectangle
$\widetilde{\mathcal R}$
 for system \eqref{eq-sys}
with the two components of $\mathcal R^-$ indicated with  thicker
lines.
}}
\label{fig-8}
\end{figure}

\begin{figure}[ht]
\centering
\includegraphics[width=3.7in,height=2.8in]{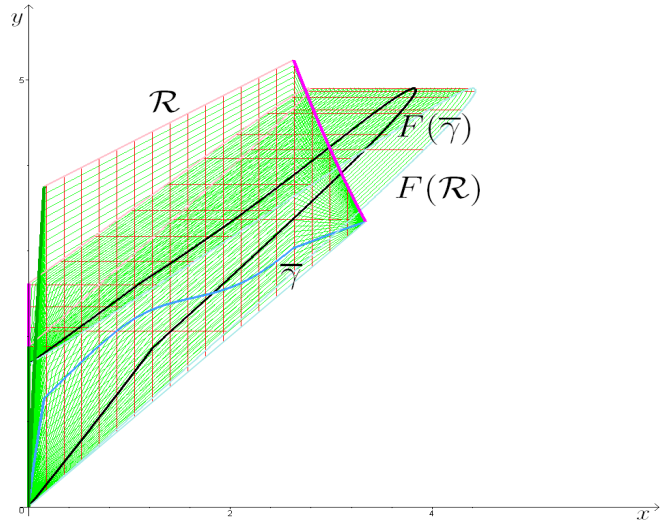}
\caption{\footnotesize{The generalized rectangle $\mathcal R$ represented in
Figure \ref{fig-8} together with an arbitrary path $\gamma$ joining the two
components of the boundary set $\mathcal R^-$ and their images
under $F.$
}}
\label{fig-9}
\end{figure}

\smallskip

\noindent
To conclude, we have thus shown that, for a suitable choice of the generalized rectangle $\mathcal R,$ the dynamics of the
map $F$ controlling our OLG model are therein chaotic in the sense of Theorems \ref{th-ch} and \ref{th-cons}.
The economic interpretation of this result, however, is
complicated by the fact that the iterations of $F$ move
\textit{backward in time}, while economic agents care about the
future not the past. Moreover, in the case under investigation,
$F$ is non-invertible and  therefore the study of the
\textit{forward in time} dynamics implicitly defined by $F$ is not trivial. Inverse Limit Theory \index{Inverse Limit Theory} provides
the ideal mathematical tool to solve such problem, as it allows, under certain hypotheses, to transfer the chaotic features of the map $F$ to a forward-moving map $\sigma$ \footnote{We have chosen to name this map as the classical Bernoulli shift (cf. Section \ref{sec-sd}), as it acts as a (one-sided) shift on a suitable space of sequences. The application-oriented
reader can find a brief introduction to the subject and basic
bibliography in \cite{MeRa-07}.}. The asymptotic
behaviour of the iterations $\sigma^n$ of $\sigma,$ in the limit for
$n\rightarrow \infty,$ can be taken as a representation of the
long-run dynamics of the system. The details of this process are described in \cite{MePiZa->}. We also mention that Inverse Limit Theory allows to deal with another question related to the chaotic dynamics of the model above. Indeed, so far we have only discussed the \textit{existence} of chaotic sets, not
their \textit{attractiveness}. Here, we would like to add the
following brief observation. We have proved
the existence of backward moving chaos for values of  parameters
of the map $F$ for which the numerical simulations ``explode'' and
the chaotic set $\Lambda$ from Theorem \ref{th-cons} is therefore not observable on the
computer screen. This fact suggests that $\Lambda$ is a repeller
for the map $F.$ Hence, the corresponding inverse limit set is a
natural candidate for an attractor, in one or the other of the various existing definitions, of the forward moving map
$\sigma.$ The cautious formulation of such statement is motivated
by the fact that, in general, $F$ may have many (even infinitely
many) repellers, while only one, or some of the corresponding
subsets of the inverse limit space are  attractors for $\sigma.$ This
question is discussed at great length in \cite{MeRa-07}, where
general results are found for the case in which the bonding maps
are unimodal functions of the interval.

\section{Duopoly Games}\label{sec-dg}

We now apply our stretching along the paths method to detect chaotic dynamics for a planar economic model belonging to the class of ``duopoly games'', drawn from  Agiza and Elsadany's paper \cite{AgEl-04}.\\
The economy consists of two firms producing an identical commodity
at a constant unit cost, not necessarily
equal for the two firms. The commodity is sold in a single market
at a price which depends on total output through a given ``inverse
demand function'', known to both firms. The goal of each firm is
the maximization of profits, i.e. the difference between revenue
and cost. The problem of any firm is to decide at each time $t$
how much to produce at time $(t+1)$ on the basis of the limited
information available and, in particular, the  expectations about
its competitor's future decision.\\
Different hypotheses about demand and cost functions and
firms' expectations lead to different formulations of the
problem. For the model discussed here the following notation and
assumptions are postulated:\vskip .5cm

\noindent 1. \bf Notation   \rm \vskip .25cm

$x_t:$ output of Firm 1 at time $t\,;$

$y_t:$ output of Firm 2 at time $t\,;$

$p:$ unit price of the single commodity\,. \vskip .5cm

\noindent \bf  2. Inverse demand function \rm \vskip .25cm

$p=a - b(x+y),$ where $a$ and $b$ are positive constants\,. \vskip .5cm

\noindent \bf 3. Technology \rm \vskip .25cm

The unit cost of production for firm $i$  is equal to  $c_i, \,i=1,2,$
where $c_1, c_2$ are positive constants, not necessarily equal. \vskip .5cm

\clearpage

\noindent \bf 4. Expectations \rm \vskip .25cm

In the presence of incomplete information concerning its competitor's
future decisions (and therefore about future prices), each firm is assumed
to use a different strategy, namely: \vskip .25cm

\noindent \bf Firm 1: bounded rationality. \rm At each time $t,$
Firm 1 changes its output  by an amount proportional to the marginal profit (= derivative of
profit calculated at the known values of output and price at time $t$).
\vskip .25cm

\noindent \bf Firm 2: adaptive expectations. \rm At each time $t,$
Firm 2 changes its output  by an amount proportional to the difference between  the  previous output $y_t$ and the ``na\"\i ve expectations value'' (calculated by
maximizing profits on the assumption that both firms  will keep output unchanged).

\medskip

\noindent
The result of these assumptions is a system of two difference equations in
the variables $x$ and $y,$ as follows:

\begin{equation*}
\left\{
\begin{array}{ll}
x_{t+1}= x_t(1+\alpha a -\alpha b y_t -\alpha c_1 -2\alpha b x_t)\\
y_{t+1}= (1-\nu)y_t+\frac{\nu}{2b}(a-c_2-b x_t),\\
\end{array}
\right. \eqno{(DG)}
\end{equation*}
where $\alpha$ is a positive parameter denoting the speed of Firm 1's adjustment to changes in profit, $\nu\in [0,1]$ is Firm 2's rate of adaptation and $a,\,b,\,c_1,\,c_2$ are positive constants.\\
In \cite{AgEl-04}, Agiza and Elsadany discuss the
equilibrium solutions of system $(DG)$ and their stability and
provide numerical evidence of the existence of chaotic dynamics.
Here we integrate those authors' analysis,
rigorously proving that, for certain parameter configurations,
system $(DG)$ exhibits chaotic behaviour in the precise sense
discussed in Section \ref{sec-de}. Notice, however, that we only prove \textit{existence} of invariant, chaotic sets, not their \textit{attractiveness}. \\
In order to apply our method
to the study of system $(DG),$ it is expedient to represent it
in the form of a continuous map $F=(F_1,F_2):(\mathbb R^+)^2\to\mathbb R^2,$ with components
\begin{equation}\label{eq-dg}
\begin{array}{ll}
F_1(x,y):=x(1+\alpha a-\alpha b y-\alpha c_1-2\alpha b x),\\
F_2(x,y):=(1-\nu)y+\frac{\nu}{2b}(a-c_2-b x).
\end{array}
\end{equation}
In Theorem \ref{th-2} below we show that the stretching property for the map $F$ is satisfied when taking a generalized rectangle $\mathcal R=\mathcal R(P,Q)$ from the family of rectangular trapezia of the first quadrant described analytically by
\begin{equation}\label{eq-tr}
\mathcal R:=\left\{(x,y)\in\mathbb R^2: P\le y\le Q,\; 0\le x\le \frac{a-c_1+1/\alpha}{2b}-\frac{y}{2}\right\},
\end{equation}
with $a,\,b,\,c_1$ and $\alpha$ as in $(DG)$ and $P,\,Q$ satisfying the restrictions
\begin{equation}\label{eq-pq}
0\le P:=\frac{a+c_1-2c_2-1/\alpha}{3b}<Q:=\frac{a-c_2}{2b}< \frac{a-c_1+1/\alpha}{b}\,.
\end{equation}
Notice that, for this choice, the set $\mathcal R$ is nonempty.\\
A visual illustration of the above conditions is furnished by Figure \ref{fig-10}, where unexplained notation is as in the proof of Theorem
\ref{th-2}. In analogy to \cite{AgEl-04}, in Figures \ref{fig-10}--\ref{fig-12} we have fixed the parameters as
$a=10,\, b=0.5,\, c_1=3,$ $c_2=5,\,\alpha=26/27$ and $\nu=0.5.$\\
\begin{figure}[ht]
\centering
\includegraphics[scale=0.2]{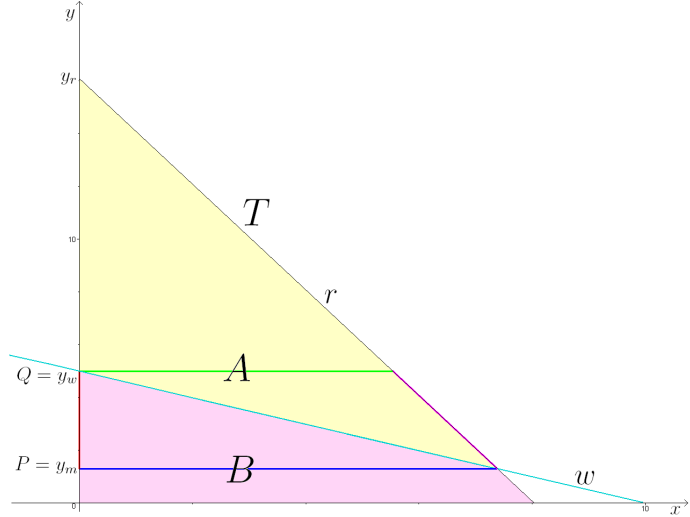}
\caption{\footnotesize{Geometric representation of some conditions in Theorem
\ref{th-2}.  The triangular region $T$ is delimited by the
coordinate axes and by the straight line $r.$ The ``watershed'' line
$w$ separates $T$ into the two regions $A$ (yellow, in the colored version) and $B$
(pink). The values of $P$ and $Q$ are defined by the intersections
of $w$ with $r$ and the $y$-axis, respectively.
}}
\label{fig-10}
\end{figure}

\smallskip

\noindent
A trapezoid ${\mathcal R}$ as in \eqref{eq-tr} can be oriented
by setting
\begin{equation}\label{eq-ori}
\mathcal R^-_{\ell}:= \{0\}\times [P,Q],\,\,
\mathcal
R^-_{r}:=\left\{\left(\frac{a-c_1+1/\alpha}{2b}-\frac{y}{2},y\right):y\in
[P,Q]\right\}.
\end{equation}
The choice of ${\mathcal R}^- ={\mathcal R}^-_{\ell} \cup
{\mathcal R}^-_r$ is depicted in Figure \ref{fig-11}.

\begin{figure}[ht]
\centering
\includegraphics[scale=0.2]{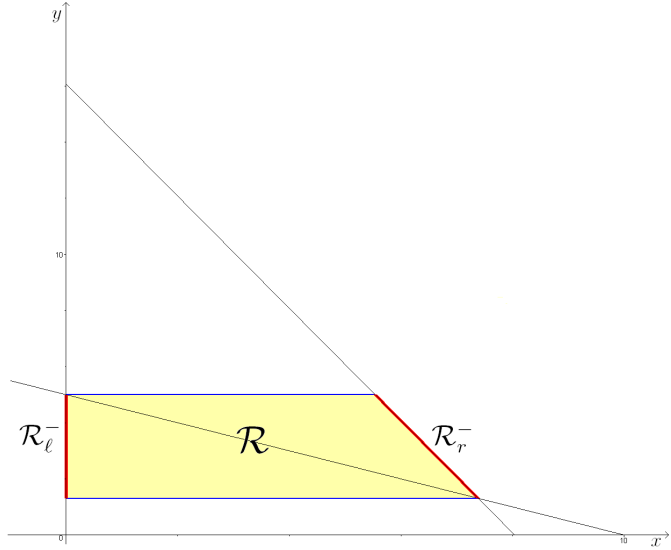}
\caption{\footnotesize{ A suitable generalized rectangle $\mathcal
R$ for system $(DG),$ according to conditions  \eqref{eq-tr} and
\eqref{eq-pq}. It is oriented in the usual left-right manner, by
taking as $[\cdot]^{-}$-set the two ``vertical'' segments
$\mathcal R^-_{\ell}$ and $\mathcal R^-_{r}$ defined in
\eqref{eq-ori}.
}}
\label{fig-11}
\end{figure}

\smallskip

\noindent Our result on system $(DG)$ can be stated as follows:
\begin{theorem}\label{th-2}
If  the parameters of the map $F$ defined in \eqref{eq-dg} satisfy the conditions
\begin{equation}\label{eq-cond}
a-2c_1+c_2 > \frac{26}{3\alpha}\,,\qquad
a+c_1-2c_2-\frac{1}{\alpha}\ge 0\,,
\end{equation}
then, for any generalized rectangle ${\mathcal R}=\mathcal R(P, Q)$ belonging to the family described in \eqref{eq-tr}, with
\begin{equation}\label{eq-ine}
P:=\frac{a+c_1-2c_2-1/\alpha}{3b} \quad \mbox{and} \quad Q:=\frac{a-c_2}{2b}\,,
\end{equation}
and oriented as in \eqref{eq-ori}, there exist two disjoint compact subsets ${\mathcal K}_0={\mathcal K}_0(\mathcal R)$ and ${\mathcal K}_1={\mathcal K}_1(\mathcal R)$ of $\mathcal R$ such that
\begin{equation}\label{eq-ks}
({\mathcal K}_i,F): {\widetilde{\mathcal R}} \stretchx {\widetilde{\mathcal R}}, \mbox{ for } i=0,1.
\end{equation}
Hence, the map $F$ induces chaotic dynamics on two symbols in
$\widetilde{\mathcal R}$ relatively to $\mathcal K_0$ and
$\mathcal K_1$ and has all the properties listed in Theorem
\ref{th-cons}.
\end{theorem}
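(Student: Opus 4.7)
My plan is to mirror the proof of Theorem \ref{th-1}, verifying analogues of the four conditions $(C1)$, $(C2)$, $(C3')$, $(C4)$ used there. The pivotal algebraic observation is that $F_1$ factors as
\[
F_1(x,y)=\alpha\,x\bigl(a-c_1+1/\alpha-by-2bx\bigr),
\]
so that it vanishes both on the left side $\mathcal{R}^{-}_{\ell}=\{0\}\times[P,Q]$ (first factor zero) and, by the very definition of the slanted side in \eqref{eq-ori}, on $\mathcal{R}^{-}_{r}$ (second factor zero). Hence for any path $\gamma:[0,1]\to\mathcal{R}$ joining the two sides we have $F_1(\gamma(0))=F_1(\gamma(1))=0$, which provides the analogues of $(C1)$ and $(C2)$ for free.

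Next I would introduce the ``watershed'' curve $w=\{(x^{*}(y),y):y\in[P,Q]\}$, with $x^{*}(y):=(a-c_1+1/\alpha-by)/(4b)$ the maximizer of $F_1(\cdot,y)$ and corresponding maximum value $F_1(x^{*}(y),y)=\alpha(a-c_1+1/\alpha-by)^{2}/(8b)$. Since $x^{*}(y)$ equals half of the right-boundary abscissa, $w$ lies strictly inside $\mathcal{R}$ and every admissible $\gamma$ crosses it at some $\bar t\in(0,1)$. The analogue of $(C3')$ then asks for
\[
F_1(\gamma(\bar t))\,>\,\tfrac{1}{2b}\bigl(a-c_1+1/\alpha-b\,F_2(\gamma(\bar t))\bigr),
\]
i.e.\ the image point lies strictly to the right of $\mathcal{R}$ at its own height. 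Inserting $x=x^{*}(y)$, using $F_2(x,y)=(1-\nu)y+\nu Q-\nu x/2$, and bounding uniformly in $y\in[P,Q]$, a direct computation reduces this to the scalar inequality $a-2c_1+c_2>26/(3\alpha)$, which is the first hypothesis in \eqref{eq-cond}. I expect this step to be the main obstacle, since the right-hand side features $F_2(\gamma(\bar t))$ rather than $\gamma_2(\bar t)$, so the expansion estimate along $w$ has to be combined with the $F_2$-contraction estimate derived below before a clean single-parameter condition emerges.

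For the transverse direction I would verify $(C4)$, namely $F_2(\mathcal{R})\subseteq[P,Q]$. The rewriting $F_2(x,y)=(1-\nu)y+\nu Q-\nu x/2$ makes $F_2$ non-decreasing in $y$ and non-increasing in $x$; its maximum on $\mathcal{R}$ is thus $F_2(0,Q)=Q$ and its minimum is attained at the lower-right corner $(x_{\max}(P),P)$ with $x_{\max}(P)=(a-c_1+1/\alpha-bP)/(2b)$. Using the explicit values in \eqref{eq-ine}, the minimum collapses to exactly $P$ via the algebraic identity $4bQ-3bP=a-c_1+1/\alpha$. The second hypothesis $a+c_1-2c_2-1/\alpha\ge 0$ guarantees $P\ge 0$, and together with the first it also yields $0\le P<Q<(a-c_1+1/\alpha)/b$, so that $\mathcal{R}$ is a well-defined trapezoid consistent with \eqref{eq-pq}.

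Finally, setting
\[
\mathcal{K}_0:=\{(x,y)\in\mathcal{R}:x\le x^{*}(y),\,F(x,y)\in\mathcal{R}\},\;\mathcal{K}_1:=\{(x,y)\in\mathcal{R}:x\ge x^{*}(y),\,F(x,y)\in\mathcal{R}\},
\]
one obtains compact sets (closed in $\mathcal{R}$ by continuity of $F$) which are disjoint thanks to $(C3')$, since every point of $w$ is sent strictly outside $\mathcal{R}$. Arguing as in Theorem \ref{th-1}, for each admissible $\gamma$ conditions $(C1),(C2),(C3')$ produce two disjoint subintervals $[t'_i,t''_i]\subseteq[0,1]$ with $\gamma([t'_i,t''_i])\subseteq\mathcal{K}_i$ on which $F\circ\gamma$ joins the two components of $\mathcal{R}^{-}$, while $(C4)$ keeps $F\circ\gamma$ inside $\mathcal{R}$. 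This is precisely $(\mathcal{K}_i,F):\widetilde{\mathcal{R}}\stretchx\widetilde{\mathcal{R}}$ for $i=0,1$, and Theorem \ref{th-ch} together with Theorem \ref{th-cons} then yields the announced chaotic dynamics on two symbols.
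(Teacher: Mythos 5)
Your proposal follows the paper's proof closely and most of the individual computations are correct: the factorization of $F_1$ that gives $(C1)$ and $(C2)$ with equality, the identity $4bQ-3bP=a-c_1+1/\alpha$ and the consequent exact collapse of $\min_{\mathcal R}F_2$ to $P$ via the monotonicity of $F_2$ in $x$ and $y$ (a cleaner verification of $(C4)$ than the paper's two-step argument through the ``watershed'' line), and the definition of $\mathcal K_i$ as $\mathcal R_i\cap F^{-1}(\mathcal R)$ (which is in fact the sensible reading of the paper's formula).

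The genuine gap is in your formulation of $(C3')$. You require the image point to lie strictly beyond the \emph{slanted} right boundary of $\mathcal R$ at its own height, i.e.\ $F_1(\bar x,y)>\frac{a-c_1+1/\alpha}{2b}-\frac{F_2(\bar x,y)}{2}$, and claim that ``a direct computation'' reduces this to $a-2c_1+c_2>26/(3\alpha)$. As stated, that cannot be right: the right-hand side of your inequality contains $F_2(\bar x,y)=(1-\nu)y+\nu Q-\nu\bar x/2$, so it depends on $\nu$, whereas the target threshold $26/(3\alpha)$ does not. You acknowledge this tension (``the expansion estimate along $w$ has to be combined with the $F_2$-contraction estimate'') but never carry it out. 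The way to close the gap — which is precisely what the paper does — is to notice that $(C4)$, which you have already established, gives $F_2(\bar x,y)\ge P$ and hence $\frac{a-c_1+1/\alpha}{2b}-\frac{F_2(\bar x,y)}{2}\le\frac{a-c_1+1/\alpha}{2b}-\frac{P}{2}=x_M$, a constant independent of $\nu$. It therefore suffices to verify the stronger, $\nu$-free inequality $F_1(\bar x,y)>x_M$ for all $y\in[P,Q]$. Since $F_1(\bar x,y)=\alpha(a-c_1+1/\alpha-by)^2/(8b)$ is decreasing in $y$ on the relevant range, the worst case is $y=Q=y_w$, and $x_M=x_m=\frac{a-2c_1+c_2+2/\alpha}{3b}$; substituting and simplifying then produces exactly the threshold $a-2c_1+c_2>26/(3\alpha)$. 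Until this reduction is carried out (or $(C3')$ is reformulated with the constant target $x_M$ from the outset), your proof is incomplete at the critical step that produces the parameter condition.
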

\begin{proof}
We are going to show that, when choosing $P$ and $Q$ as in
\eqref{eq-ine}, conditions \eqref{eq-cond} are sufficient to
guarantee that the image under the map $F$ of any path
$\gamma=(\gamma_1,\gamma_2):[0,1]\to\mathcal R,$ joining in
${\mathcal R}$ the sides $\mathcal R^-_{\ell}$ and $\mathcal
R^-_{r}$ defined in \eqref{eq-ori}, satisfies the following:
\begin{itemize}
\item [$(C1)$] $F_1(\gamma(0))\le 0\,;$
\item [$(C2)$] $F_1(\gamma(1))\le 0\,;$
\item [$(C3)$] $\exists \,\, t^*\in \, (0,1): F_1(\gamma(t^*))>x_M:=\max\{x:(x,y)\in \mathcal R^-_{r}\}\,;$
\item [$(C4)$] $F_2(\gamma(t))\subseteq
[P,Q],\,\forall t\in [0,1].$
\end{itemize}
Broadly speaking, conditions $(C1)$--$(C3)$ describe an expansion
with folding along the $x$-coordinate. In fact, the image under $F$ of
any path $\gamma$ joining ${\mathcal R}^-_{\ell}$ and
${\mathcal R}^-_r$ in $\mathcal R$ crosses a first time the trapezoid ${\mathcal
R}\,,$ for $t\in (0,t^*),$ and then crosses ${\mathcal R}$ back
again, for $t\in(t^*,1).$ Condition $(C4)$ implies a
contraction along the $y$-coordinate.\\
In order to simplify the exposition, we replace $(C3)$ with
the stronger condition
\begin{itemize}
\item [$(C3^{'})$] ${\displaystyle{F_1\left(\frac{a-c_1+1/\alpha}{4b}-\frac{y}{4},y\right)>x_M,\,\forall y\in [P,Q],}}$
\end{itemize}
i.e. we require that the inequality in $(C3)$ holds for any
$t^*\in (0,1)$ such that
$\gamma(t^*)=(\frac{a-c_1+1/\alpha}{4b}-\frac{y}{4},y),$ for some
$y\in [P,Q].$ \\
Recalling that $\mathcal
R^-_{r}:=\left\{\left(\frac{a-c_1+1/\alpha}{2b}-\frac{y}{2},y\right):y\in
[P,Q]\right\},$ taking $P,Q$ as in \eqref{eq-pq} and considering
that
$0<\frac{a-c_1+1/\alpha}{4b}-\frac{y}{4}<\frac{a-c_1+1/\alpha}{2b}-\frac{y}{2},\,\forall
y\in [P,Q],$ we can conclude that the segment
\begin{equation}\label{eq-spm}
S:=\left\{\left(\frac{a-c_1+1/\alpha}{4b}-\frac{y}{4},y\right):y\in [P,Q]\right\}
\end{equation}
is contained in $\mathcal R.$ A graphical illustration of this fact is provided by Figure \ref{fig-12}.\\
Defining
$$\mathcal R_0:=\left\{(x,y)\in\mathbb R^2: 0\le x\le \frac{a-c_1+1/\alpha}{4b}-\frac{y}{4}\,,\; P\le y\le Q\right\},$$
\begin{figure}[ht]
\centering
\includegraphics[scale=0.23]{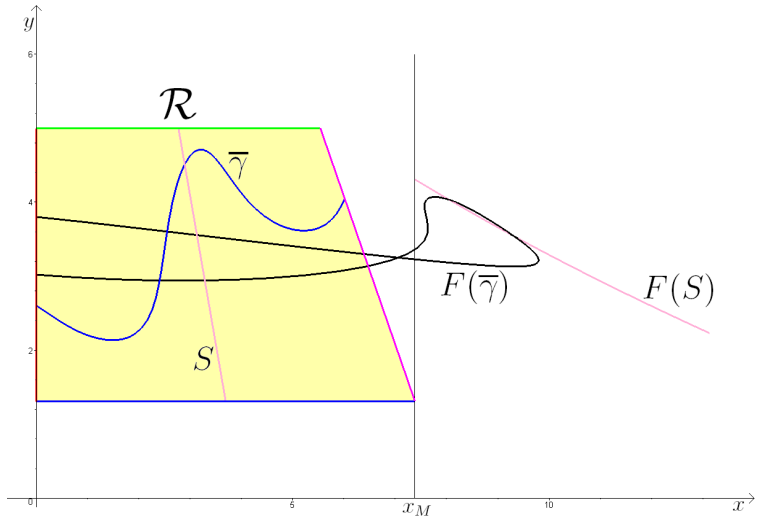}
\caption{\footnotesize{ With reference to the trapezoid $\mathcal R$ in Figure \ref{fig-11},
reproduced here at a different scale, we show that the image under
$F$ of an arbitrary path $\gamma$ joining in $\mathcal R$ the two
components of the boundary set $\mathcal R^-$ intersects twice
$\mathcal R.$ This is due to the fact that the vertical sides
$\mathcal R^-_{\ell}$ and $\mathcal R^-_{r}$ are mapped by $F$
into the $y$-axis and the segment $S,$ joining the middle points of
the bases of the trapezoid, is mapped by $F$ beyond the vertical
line $y=x_M,$ in conformity with condition $(C3^{'}).$
}}
\label{fig-12}
\end{figure}
$$\mathcal R_1:=\left\{(x,y)\in\mathbb R^2: \frac{a-c_1+1/\alpha}{4b}-\frac{y}{4}\le x\le \frac{a-c_1+1/\alpha}{2b}-\frac{y}{2}\,, P\le y\le Q\right\}$$
and
$$\mathcal K_0:=\mathcal R_0\cap F(\mathcal R)\,, \qquad  \mathcal K_1:=\mathcal R_1
\cap F(\mathcal R),$$
we claim that $(C1),(C2),(C3^{'})$ and
$(C4)$ together imply \eqref{eq-ks}. \\
Firstly, notice that $\mathcal K_0$
and $\mathcal K_1$ are disjoint because, by
condition $(C3^{'}),$ the segment $S$ in
\eqref{eq-spm} is mapped by $F$ outside $\mathcal R.$ Furthermore, $(C1),\,(C2)$ and
$(C3^{'})$ imply that, for any path $\gamma: [0,1]\to {\mathcal R}$ with
$\gamma(0)\in {\mathcal R}^-_{\ell}$ and $\gamma(1)\in
{\mathcal R}^-_{r}$ (or $\gamma(0)\in {\mathcal R}^-_{r}$ and
$\gamma(1)\in {\mathcal R}^-_{\ell}$), there exist two disjoint
subintervals $[t_0',t_0''],\, [t_1',t_1'']\subseteq [0,1]$ such
that
$$\gamma(t)\in {\mathcal K_0},\,\,
\forall\, t\in [t_0',t_0''],\quad \gamma(t)\in {\mathcal K_1},\,\,
\forall\, t\in [t_1',t_1''],$$
with $F(\gamma(t_0'))$ and
$F(\gamma(t_0'')),$ as well as $F(\gamma(t_1'))$ and
$F(\gamma(t_1'')),$ belonging to different components of ${\mathcal R}^-.$ Finally from $(C4)$ it follows that $F(\gamma(t))\in {\mathcal R},$
$\forall\, t\in [t_0',t_0'']\cup [t_1',t_1''].$ Recalling Definition \ref{def-sap}, the stretching condition \eqref{eq-ks} is achieved and our claim is thus checked.
The last part of the statement is now an immediate consequence of Theorem \ref{th-ch}.
Observe that $F$ is continuous on ${\mathcal K}_0\cup {\mathcal K}_1,$ as it is continuous on $(\mathbb R^+)^2.$

\smallskip

\noindent
The next step is to verify that a choice of the parameters as in \eqref{eq-cond}
implies conditions $(C1),(C2),(C3^{'})$ and
$(C4).$ In so doing, we will prove that the inequalities in $(C1)$ and $(C2)$ are indeed equalities.\\
First of all, we have to check  that the set $\mathcal R(P,Q)$
defined in \eqref{eq-tr} and \eqref{eq-pq} is contained in the
region $\left\{(x,y)\in (\mathbb R^+)^2:F(x,y)\in (\mathbb R^+)^2\right\}$
\footnote{Such condition guarantees that the invariant chaotic set
discussed in Theorem \ref{th-cons}
lies entirely in the first quadrant and therefore makes economic sense for the application in question.}.\\
For this purpose, let us first consider  $F_1.$ Setting
\begin{equation*}
M:= 1+\alpha a-\alpha b y-\alpha c_1 \quad \mbox{ and } \quad N:=2\alpha b\, ,
\end{equation*}
for any given value of $y$ it is possible to rewrite $F_1$ as the continuous, one-dimensional map
\begin{equation*}
\phi:\mathbb R_+\to \mathbb R,\quad \phi(x):=x(M-Nx),
\end{equation*}
which can be looked at as  a parabola intersecting  the $x$-axis in $0$ and $M/N$ and
lying in the first quadrant in between. Notice  that, under the postulated assumptions on the model parameters,  $N$ is positive, while $M=M(y)> 0$ if $y< (a-c_1+1/\alpha)/b.$ This inequality and the requirement that  $y$  is nonnegative define  the restriction
\begin{equation*}
0\le y < y_r:=\frac{a-c_1+1/\alpha}{b}\,.
\end{equation*}
If such condition is satisfied, $M/N$ is positive and we can restrict our map $\phi$ to the interval $[0,M/N],$ a geometric configuration analogous to that of the logistic map discussed in Section \ref{sec-sd}. In terms of the planar coordinates $(x,y),$ restricting $\phi$ to $[0,M/N]$ means that we consider only the values of $x$ such that
$$0\le x\le \frac{M}{N}=\frac{1+\alpha a-\alpha b y-\alpha c_1}{2\alpha b}=\frac{a-c_1+1/\alpha}{2b}-\frac{y}{2}\,.$$
The conditions on the variables $x$ and $y$ determine a triangular region $T$ in the first quadrant, lying between the coordinate axes and the straight line $x=\frac{a-c_1+1/\alpha}{2b}-\frac{y}{2},$ or equivalently
\begin{equation*}
y=\frac{a-c_1+1/\alpha}{b}-2x\,,
\end{equation*}
a straight line that we will denote by  $r$  (cf. Figure \ref{fig-10}). In
view of the above discussion, the point $(0,y_r)$ does not belong
to $T.$ Observe that the trapezoid $\mathcal R$ in \eqref{eq-tr}
lies is this region for any $0\le P<Q< y_r$ \footnote{The case
$P=Q$ has to be excluded, because otherwise the generalized
rectangle $\mathcal R$ would reduce to a segment and it could not
be homeomorphic to the unit square of $\mathbb R^2.$ For the same
reason we will impose strict inequalities for the $y$-coordinates also in \eqref{eq-y0}.
Similar considerations led us to require $M/N>0,$ instead of
$M/N\ge 0.$} and thus $F_1(x,y)$ is nonnegative on $\mathcal R.$
Furthermore, since $\phi(0)=\phi(M/N)=0,$ for any path
$\gamma=(\gamma_1,\gamma_2):[0,1]\to\mathbb R^2$ with
$\gamma_1(0)=0$ and $\gamma(1)$ belonging to the straight line
$r$ it follows that $F_1(\gamma(0))=0=F_1(\gamma(1)).$ Since
$\mathcal R^-_{\ell}$ is contained in the $y$-axis and $\mathcal
R^-_{r}$ in the line $r,$
this implies that $(C1)$ and $(C2)$ are fulfilled for every choice of $P$ and $Q$ in $[0,y_r)$ (with $P<Q$).\\
In order to understand when $F_2(x,y)\ge 0,$ let us consider the stronger condition
$F_2(x,y)=(1-\nu)y+\frac{\nu}{2b}(a-c_2-b x)\ge y,$ which
is  satisfied if the point $(x,y)$ lies below
the straight line denoted by $w$ in Figure \ref{fig-10} and  defined by the
equation

\begin{equation}\label{eq-ws}
y=\frac{a-c_2}{2b}-\frac{x}{2}\,.
\end{equation}
Recalling that we are confined to the triangular region $T,$ in order to have an intersection between $w$ and the two segments
\begin{equation}\label{eq-se}
\{0\}\times[0,y_r) \quad \mbox{ and } \quad  \left\{\left(\frac{a-c_1+1/\alpha}{2b}-\frac{y}{2},y\right):y\in [0,y_r)\right\},
\end{equation}
we have to impose the following conditions on the $y$-coordinate
\begin{equation}\label{eq-y0}
y_r=\frac{a-c_1+1/\alpha}{b}> y_w:=\frac{a-c_2}{2b}>y_m\ge 0\,,
\end{equation}
where
\begin{equation}\label{eq-m}
(x_m,y_m):=\left(\frac{a-2 c_1+c_2+2/\alpha}{3b}\,,\frac{a+c_1-2c_2-1/\alpha}{3b}\right)
\end{equation}
is the point of intersection of $r$ and $w$  and $(0,y_w)$ is the point of intersection between $w$ and the $y$-axis.
Conditions \eqref{eq-y0} yield
\begin{equation*}
a-2 c_1+c_2+\frac{2}{\alpha}> 0
\end{equation*}
and
\begin{equation}\label{eq-h3}
a+c_1-2 c_2-\frac{1}{\alpha}\ge 0.
\end{equation}
As we will see below, the intersection between $w$ and the segments in \eqref{eq-se} is fundamental for the validity of condition $(C4).$ Calling
$$\mathcal R^{+}_{d}:=\left[0,\frac{a-c_1+1/\alpha}{2b}-\frac{P}{2}\right]\times \{P\}$$  and
$$\mathcal R^{+}_{u}:=\left[0,\frac{a-c_1+1/\alpha}{2b}-\frac{Q}{2}\right]\times \{Q\}$$
the lower and upper sides of the trapezoid $\mathcal R,$ respectively, we notice that, for $P,\, Q$ in $[0,y_r),$ the arcs $\mathcal R^+_{d}$ and $\mathcal R^+_{u}$ are horizontal segments contained in $T.$
Moreover, denoting by $\partial\mathcal R$ the boundary of $\mathcal R$
(that in our case coincides with the contour $\vartheta{\mathcal R}$), it holds that
$$\partial\mathcal R=\mathcal R^{+}_{d}\cup\mathcal R^{-}_{r}\cup\mathcal R^{+}_{u}\cup\mathcal R^{-}_{\ell}.$$
Choosing $\mathcal R^{+}_{d}$ and $\mathcal R^{+}_{u}$ as particular paths $\gamma=(\gamma_1,\gamma_2):[0,1]\to\mathcal R,$ joining in ${\mathcal R}$ the sides $\mathcal R^-_{\ell}$ and $\mathcal R^-_{r},$ in order to have condition $(C4)$ fulfilled, we need
$$F_2(x,P)\ge P,\,\,\quad\forall x\in \left[0,\frac{a-c_1+1/\alpha}{2b}-\frac{P}{2}\right],$$
as well as
$$F_2(x,Q)\le Q,\,\,\quad\forall x\in \left[0,\frac{a-c_1+1/\alpha}{2b}-\frac{Q}{2}\right].$$
But it is possible only if $\mathcal R^+_{d}$ lies below the ``watershed'' line $w$ in \eqref{eq-ws} and for $\mathcal R^+_{u}$ above it: more precisely, in view also of \eqref{eq-y0},
\begin{equation}\label{eq-y}
y_r>Q\ge y_w> y_m\ge P\ge 0\,.
\end{equation}
We will prove that such conditions are sufficient for the validity of $(C4)\,.$\\
Before that, however, we return for a while to the problem of the
nonnegativity of $F_2$ in  $\mathcal R.$ For this purpose,  let
us denote by  $A$ the subset of $T$ above the line $w$ and by $B$
its complement  in $T$ (see Figure \ref{fig-10}). Since $F_2(x,y)\ge y\ge 0$
in $B,$ here there are no further conditions to impose in order to
have $F_2$ nonnegative. On the other hand, as regards the region
$A,$ recalling the notation from \eqref{eq-m}, we require that

\begin{equation}\label{eq-min}
\begin{array}{lll}
{\displaystyle{\min_{(x,y)\in A}F_2(x,y)}}&=&{{(1-\nu)y_m+\frac{\nu}{2b}(a-c_2-b x_m)=}}\\{}\\
&=&{{(1-\nu)\Bigl(\frac{a+c_1-2c_2-1/\alpha}{3b}\Bigr)+\nu\Bigl(\frac{a+c_1-2c_2-1/\alpha}{3b}\Bigr)=}}\\{}\\
&=&{{\frac{a+c_1-2c_2-1/\alpha}{3b}\ge 0}}\,,
\end{array}
\end{equation}
which follows from \eqref{eq-h3} and therefore it does not add any new restriction on the parameters.\\
Having settled the question of the sign of $F_2$ on $\mathcal R,$ we return to $(C4)$ and
notice that it  is equivalent to
\begin{equation*}
\max_{(x,y)\in\mathcal R}F_2(x,y)\le Q \quad \mbox{ and } \quad
\min_{(x,y)\in\mathcal R}F_2(x,y)\ge P\,.
\end{equation*}
Since $F_2(x,y)\le y$ on $A$ and $F_2(x,y)\ge y$ on $B,$ it suffices to ask that
\begin{equation*}
\max_{(x,y)\in B\cap \mathcal R}F_2(x,y)\le Q\quad \mbox{ and }
\quad \min_{(x,y)\in A\cap\mathcal R}F_2(x,y)\ge P\,.
\end{equation*}
From the previous discussion, it follows that $\mathcal R^{+}_{u}$ must lie in $A$ and  $\mathcal R^{+}_{d}$ in $B,$ respectively, and thus  both $A\cap \mathcal R$ and $B\cap \mathcal R$ are nonempty. Actually, instead of the above inequalities, we will investigate the stricter conditions
\begin{equation*}
\max_{(x,y)\in B}F_2(x,y)\le Q \quad \mbox{ and }
\quad\min_{(x,y)\in A}F_2(x,y)\ge P\,,
\end{equation*}
which, as we shall see, do not introduce  any new restriction on the model parameters.\\
As concerns $\max_{(x,y)\in B}F_2(x,y)\le Q,$ recalling the expression of $F_2$ from \eqref{eq-dg}, we have
$$\max_{(x,y)\in B}F_2(x,y)=(1-\nu)y_w+\frac{\nu}{2b}(a-c_2)=(1-\nu)\frac{a-c_2}{2b}+\frac{\nu}{2b}(a-c_2)=\frac{a-c_2}{2b}\,,$$
from which it follows that
$$y_w\left(=\frac{a-c_2}{2b}\right)\le Q,$$
as in \eqref{eq-y}. As concerns  $\min_{(x,y)\in A}F_2(x,y)\ge P,$
from \eqref{eq-min} we obtain
\begin{equation*}
\min_{(x,y)\in A}F_2(x,y)=y_m=\frac{a+c_1-2 c_2-1/\alpha}{3b}\ge
P\,,
\end{equation*}
again as in \eqref{eq-y}.\\
Combining all the restrictions on the construction of $\mathcal R$ found so far, we can write
$$
0\le x\le\frac{a-c_1+1/\alpha}{2b}-\frac{y}{2},\,\;\;\forall y\in [P,Q],
$$
where
$$0\le P\le y_m<y_w\le Q<y_r\,.$$
Such conditions are satisfied under the following restrictions on the model parameters:
\begin{equation}\label{eq-2}
a-2 c_1+c_2+\frac{2}{\alpha}> 0 \quad \mbox{and} \quad a+c_1-2 c_2-\frac{1}{\alpha}\ge 0.
\end{equation}
At this point, we focus on  the remaining problem, that is, the validity of $(C3^{'}),$ and we start by checking that the choice
$$P=y_m:=\frac{a+c_1-2c_2-1/\alpha}{3b} \quad \mbox{and} \quad Q=y_w:=\frac{a-c_2}{2b}$$
is  in some sense optimal  among those allowed by \eqref{eq-y}. To such aim, we will leave for the moment $P,\,Q\in [0,y_r)$
unspecified, as in \eqref{eq-y}.
Recalling that
$$\mathcal R^-_{r}:=\left\{\left(\frac{a-c_1+1/\alpha}{2b}-\frac{y}{2},y\right):y\in [P,Q]\right\},$$ we can write
$$x_M:=\max\{x:(x,y)\in \mathcal R^-_{r}\}=\frac{a-c_1+1/\alpha}{2b}-\frac{P}{2}\,.$$
Clearly, the smaller $x_M,$ the more likely is $(C3^{'})$ to hold. Since $0\le P \le y_m,$ the optimal choice for $P$ is $P=y_m,$ as claimed. As concerns  $Q,$ let us consider it as a parameter $\bar y$ varying in $[y_w,y_r).$ An easy calculation shows that the maximum of the function $F_1$ on the horizontal segment $[0,\frac{a-c_1+1/\alpha}{2b}-\frac{\bar y}{2}]\times\{\bar y\}$ is attained in correspondence of its middle point $\bar x,$ that is for
$$\bar x:=\frac{a-c_1+1/\alpha}{4b}-\frac{\bar y}{4}\,.$$
Evaluating the function $F_1$ in $(\bar x,\bar y),$ we find
$$F_1(\bar x,\bar y)= \frac{\alpha\,(a-b\bar y- c_1+1/\alpha)^2}{8b}\,.$$
To fulfill condition  $(C3^{'}),$ we must have $F_1(\bar x,\bar y)>x_M,$ for any $\bar y\in [P,Q].$ Hence, $Q$ should be chosen as small as possible, i.e. $Q=y_w.$ \\
At this point, having fixed $P$ and $Q,$ from  the above considerations we gather that $(C3^{'})$ is satisfied if, for any $\bar y\in [P,Q],$
\begin{eqnarray*}
\max\left\{F_1(x,\bar y): x\in
\left[0,\frac{a-c_1+1/\alpha}{2b}-\frac{\bar y}{2}\right]\right\}&=
F_1(\bar x,\bar y)>x_M&=\\
&=\frac{a-c_1+1/\alpha}{2b}-\frac{y_m}{2}&=x_m\,.
\end{eqnarray*}
This requirement yields
$$\frac{\alpha\,(a-b y_w- c_1+1/\alpha)^2}{8b}=\frac{\alpha\,(a-2 c_1+c_2+2/\alpha)^2}{32\,b}>\frac{a-2 c_1+c_2+2/\alpha}{3b}$$
and, in view of \eqref{eq-2},
\begin{equation}\label{eq-last}
a-2 c_1+c_2>\frac{26}{3\alpha}\,,
\end{equation}
which corresponds to the first condition in \eqref{eq-cond}. Notice that \eqref{eq-last} implies the first inequality in \eqref{eq-2}.
Thus we conclude that, from \eqref{eq-last} and the second
condition in \eqref{eq-2} (which are precisely \eqref{eq-cond}),
the validity of $(C1),(C2),(C3^{'})$ and
$(C4)$ follows. \\
The proof is complete.
\end{proof}

\chapter{Examples from the ODEs}\label{ch-ode}
In the present chapter we apply the results from Part \ref{pa-ec} to some nonlinear ODE models with periodic coefficients. In particular we deal with planar systems of the first order or with second order scalar equations: since any second order equation can be written as a system of two equations of the first order, in this introductory discussion we will generally refer to planar systems.\\
The systems we consider are studied through a combination of a careful but elementary phase-plane analysis with the results on chaotic dynamics for LTMs from Section \ref{sec-ltm}. We recall that similar strategy has already been employed in \cite{PaZa-09,ZaZa->} with respect to second order equations.
More precisely, we prove in a rigorous way (i.e. without the need of computer
assistance, differently from several works on related topics \cite{BaCs-08,BaGe-01,GaZg-98,GaGe-07,Lo-63,MiMr-95b,PoRaVi-07,Wi-03,YaLi-06,Zg-97}) the presence of infinitely many periodic solutions for our systems, as well as of a chaotic behaviour in the sense of Definition \ref{def-chm} for the associated Poincar{\'e} map $\Psi.$ In fact, a classical approach (see \cite{Kr-68}) to show the existence of periodic solutions (harmonics or subharmonics) of non-autonomous differential systems like
\begin{equation}\label{eq-na}
\dot\zeta = f(t,\zeta),
\end{equation}
where $f: {\mathbb R} \times {{\mathbb R}}^N\to {{\mathbb R}}^N$ is a continuous vector field which is
$T$-periodic in the time-variable, that is, $f(t + T,z) = f(t,z),\,\forall (t,z) \in {\mathbb R} \times {{\mathbb R}}^N,$
under the assumption of uniqueness of the solutions for the Cauchy problems,
is based on the search of the fixed points
for the Poincar\'e map\index{Poincar\'e map} $\Psi=\Psi_T$ or for its iterates, where
$$\Psi: z\mapsto \zeta(t_0 + T;t_0,z)$$
and $\zeta(\cdot\,; t_0,z)$ is the solution of \eqref{eq-na} satisfying the initial condition $\zeta(t_0) = z\in {{\mathbb R}}^N.$
As a consequence of the fundamental theory of ODEs, it turns out
that $\Psi$ is a homeomorphism of its domain
(which is an open subset of ${\mathbb R}^N)$ onto its image.
Applying our Stretching Along the Paths method to $\Psi,$ we are led back to work with discrete dynamical systems. The difference with the examples from Chapter \ref{ch-de} is that, since the Poincar\'{e} map is a homeomorphism, it is possible to prove a semi-conjugacy to the two-sided Bernoulli shift (cf. Remark \ref{rem-inj}), while we recall that the controlling functions of the models in Chapter \ref{ch-de} are not injective.\\
We notice that the kind of chaos in Definition \ref{def-chm}, when considered in relation to the case of the Poincar\'{e} map,
looks similar to other ones detected in the literature on complex dynamics
for ODEs with periodic coefficients (see, for instance, \cite{CaDaPa-02, SrWo-97}).\\
In more details, in Section \ref{sec-pp} we furnish an application of the results on LTMs from Section \ref{sec-ltm} to a modified version of Volterra predator-prey model, in which a periodic harvesting is included \cite{PiZa-08}. Indeed, when the seasons with fishing alternate with the ones without harvesting in a periodic fashion, it is possible to prove the presence of chaotic features for the system, provided that we have the freedom to tune the switching times between the two regimes.
Analogous conclusions could be drawn for those time-periodic planar Kolmogorov systems \cite{Ko-36}
$$x'= X(t,x,y),\quad y'= Y(t,x,y),$$
which possess dynamical features similar to the ones of Volterra model. \\
On the other hand, in Section \ref{sec-sb} we employ the tools from Section \ref{sec-ltm} to show a complex behaviour for a nonlinear second order scalar equation of the form
\begin{equation*}
x'' + f(t,x) = 0,
\end{equation*}
with $f: {\mathbb R}\times {\mathbb R}\to {\mathbb R}$ a map $T$-periodic in the
$t$-variable and satisfying the Carath\'{e}odory assumptions.
In particular, we will deal with a simplified version of the
Lazer-McKenna suspension bridges model \cite{PaPiZa-08, PaZa-08}, belonging to the class of the
periodically perturbed Duffing equations
\begin{equation}\label{eq-pd}
x'' + g(x) = p(t),
\end{equation}
where $g: {\mathbb R}\to {\mathbb R}$ is locally Lipschitz
and $p(\cdot) : {\mathbb R} \to {\mathbb R}$ is a locally
integrable function such that $p(t + T) = p(t),$ for almost every
$t\in {\mathbb R}.$
However, we stress that our approach is in principle applicable also to more general
equations, including examples in which a term depending on $x'$ is present
in \eqref{eq-pd}.\\
As a final remark, we recall that the detection of chaotic dynamics for ODEs is the ``natural'' field of applicability for the stretching along the paths method.
Indeed its development has been
motivated by the analysis in \cite{PaZa-00} of the Poincar\'{e} map associated to
the nonlinear scalar Hill's type second order equation
\begin{equation}\label{eq-hill}
x'' + a(t) g(x) =0,
\end{equation}
in the case that $a(t)$ is a sign-changing continuous weight and
$g: {\mathbb R}\to {\mathbb R}$ is a function with superlinear growth at infinity.
Looking in the phase-plane at the solutions $(x(t),x'(t))$ of \eqref{eq-hill}, a stretching property along the paths was detected.
Namely, two topological planar rectangles
were found such that
every path joining two opposite sides of any one of the two rectangles contained a subpath which was expanded by the flow
across the same rectangle or the other one.
This was the key lemma in \cite{PaZa-00} in order to prove the
presence of a complicated oscillatory behaviour for the solutions of \eqref{eq-hill}.\\

\section{Predator-Prey Model}\label{sec-pp}
\subsection{The effects of a periodic harvesting}\label{sub-ph}

The classical Volterra predator-prey model\index{Volterra predator-prey model} concerns the first order planar differential system
\begin{equation*}
\left\{
\begin{array}{ll}
x'= x( a - by)\\
y'= y(-c + dx),\\
\end{array}
\right.
\eqno{(E_0)}
\end{equation*}
where
$a, \;b, \;c, \;d \, > 0$
are constant coefficients. The study of system $(E_0)$ is confined to the open first quadrant
$({\mathbb R}^+_0)^2$
of the plane, since
$x(t) > 0$ and $y(t) > 0$ represent the size (number of individuals or density) of the
prey and the predator populations, respectively.
Such model was proposed by Vito Volterra in 1926 answering D'Ancona's
question about the percentage of selachians and food fish caught in the northern Adriatic
Sea during a range of years covering the period of the World War I
(see \cite{Br-93,Ma-04} for a more detailed historical account).
\\
System $(E_0)$ is conservative and its phase-portrait is that of a global center at the
point
\begin{equation*}
P_0\,:= \left( \frac{c}{d},\frac{a}{b}\right),
\end{equation*}
surrounded by periodic orbits (run in the counterclockwise sense), which are the level lines of the first integral
\begin{equation*}
{\mathcal E}_0(x,y):= d x - c \log x + b y - a \log y,
\end{equation*}
that we call ``energy'' in analogy to mechanical systems.
The choice of the sign in the definition of the first integral implies that
${\mathcal E}_0(x,y)$ achieves a strict absolute minimum at the point $P_0\,.$
\\
According to Volterra's analysis of $(E_0),$ the average of a periodic solution
$(x(t),y(t)),$ evaluated over a time-interval corresponding to its natural period,
coincides with the coordinates of the point $P_0\,.$

\smallskip

\noindent
In order to include the effects of fishing in the model,
one can suppose that, during the harvesting time, both the prey and
the predator populations are
reduced at a rate proportional to the size of the population itself. This
assumption leads to the new system
\begin{equation*}
\left\{
\begin{array}{ll}
x'= x( a_{\mu} - by)\\
y'= y(-c_{\mu} + dx),\\
\end{array}
\right.
\eqno{(E_{\mu})}
\end{equation*}
where
$$a_{\mu}:= a - \mu\quad\mbox{and} \quad c_{\mu}:= c + \mu$$
are the modified growth coefficients which take into account the fishing rates $-\mu x(t)$ and
$- \mu y(t),$ respectively. The parameter $\mu$ is assumed to be positive but small enough
$(\mu < a$) in order to prevent the extinction of the populations. System $(E_{\mu})$
has the same form like $(E_0)$ and hence its phase-portrait is that of a global center
at
\begin{equation*}
P_{\mu}:= \left( \frac{c + \mu}{d},\frac{a - \mu}{b}\right).
\end{equation*}
The periodic orbits surrounding $P_{\mu}$ are the level lines of the first integral
\begin{equation*}
{\mathcal E}_{\mu}(x,y):= d x - c_{\mu} \log x + b y - a_{\mu} \log y.
\end{equation*}
The coordinates of $P_{\mu}$ coincide with the average values of the prey and the predator populations
under the effect of fishing (see Figure
\ref{fig-v1}). A comparison between the coordinates
of $P_0$ and $P_{\mu}$  motivates the
conclusion (\textit{Volterra's principle})
that a moderate harvesting has a favorable effect for the prey population
\cite{Br-93}.

\begin{figure}[ht]
\centering
\includegraphics[scale=0.22]{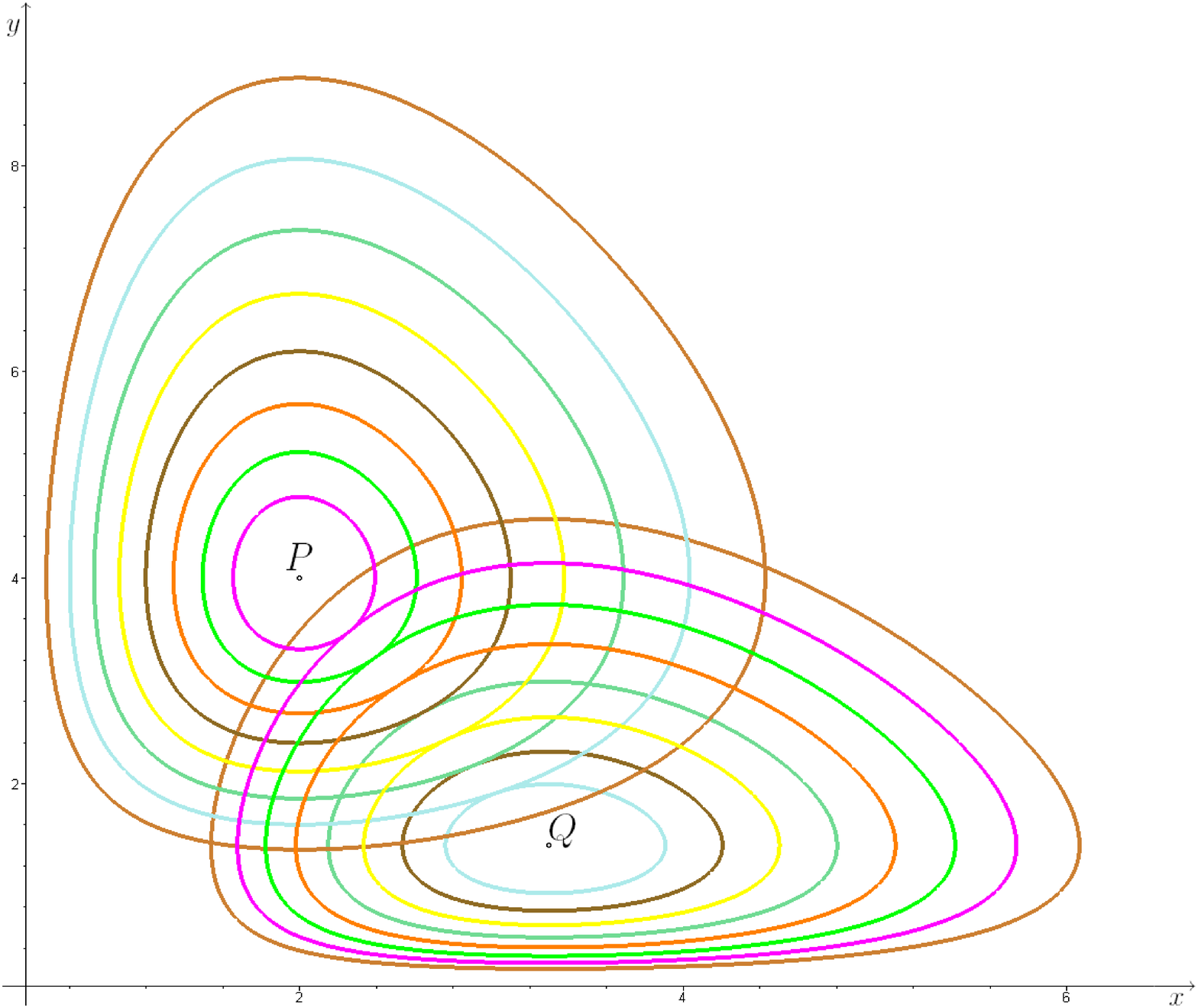}
\caption{\footnotesize {In this picture we show some periodic orbits of
the Volterra system $(E_0)$ with center at $P=P_0,$
as well as of the
perturbed system $(E_{\mu})$ with center at $Q= P_{\mu}\,$
(for a certain $\mu \in \,]0,a[\,$).
}}
\label{fig-v1}
\end{figure}

\smallskip

\noindent
If we are interested in incorporating the consequences of a cyclic environment in the Volterra original model $(E_0),$ we can assume a seasonal effect on the coefficients, which leads us to consider a system of the form
\begin{equation*}
\left\{
\begin{array}{ll}
x'= x( a(t) - b(t) y)\\
y'= y(-c(t) + d(t) x),\\
\end{array}
\right.
\eqno{(E)}
\end{equation*}
where $a(\cdot), b(\cdot), c(\cdot), d(\cdot): {\mathbb R}\to {\mathbb R}$
are periodic functions with a common period $T > 0.$
In such a framework, it is natural to look for harmonic\index{solution, harmonic}  (i.e. $T$-periodic)
or $m$-th order subharmonic\index{solution, subharmonic} (i.e. $mT$-periodic, for some integer $m\geq 2,$ with $mT$ the minimal period in the set $\{jT:j=1,2,\dots\}$) solutions with
range in the open first quadrant (\textit{positive solutions}).\\
The past forty years have witnessed a growing attention towards such kind of models and several results have been obtained about the existence, multiplicity and stability of periodic solutions for Lotka-Volterra type predator-prey systems with periodic coefficients \cite{AmOr-94,BuHu-91,BuFr-81,Cu-77,DiHuZa-95,DiZa-96,Ha-82,HaMa-91,LGOrTi-96,Ta-87}.
Further references are available in \cite{PiZa-08}, where the reader can also find historical details about the fortune of this model as well as some related results from \cite{DiHuZa-95,DiZa-96}.

\smallskip

\noindent
Let us come back for a moment to the original Volterra system with constant coefficients and
suppose that the interaction between the two populations
is governed by system $(E_0)$ for a certain period
of the season (corresponding to a time-interval of length $r_0$)
and by system $(E_{\mu})$ for the rest of the season (corresponding to a time-interval of length $r_{\mu}$).
Assume also that such alternation between $(E_0)$ and $(E_{\mu})$ occurs in a
periodic fashion, so that
$$T:=r_0 + r_{\mu}$$
is the period of the season. In other terms, first we consider system $(E_0)$ for $t\in [0,r_0[\,.$ Next we switch
to system $(E_{\mu})$ at time $r_0$ and assume that $(E_{\mu})$
rules the dynamics for $t\in [r_0,T[\,.$ Finally, we suppose that
we switch back to system $(E_0)$ at time $t=T$ and repeat the cycle with $T$-periodicity.\\
Such two-state alternating behaviour can be
equivalently described in terms of system $(E),$ by assuming
$$
a(t)= {\hat{a}}_{\mu}(t):= \left\{
\begin{array}{llll}
a\,\quad &\mbox{for } \, 0\leq t < r_0\,,\\
a - \mu\,\quad &\mbox{for } \, r_0\leq t < T\,,
\end{array}
\right.
$$
$$
c(t)={\hat{c}}_{\mu}(t):= \left\{
\begin{array}{llll}
c\,\quad &\mbox{for } \, 0\leq t < r_0\,,\\
c + \mu\,\quad &\mbox{for } \, r_0\leq t < T\,,
\end{array}
\right.
$$
as well as
$$b(t)\equiv b,\;\; d(t)\equiv d,$$
with $a,b,c,d$ positive constants and $\mu$ a parameter with $0 < \mu < a.$
Hence we can consider the system
\begin{equation*}
\left\{
\begin{array}{ll}
x'= x( {\hat{a}}_{\mu}(t) - b y)\\
y'= y(-{\hat{c}}_{\mu}(t) + d x),\\
\end{array}
\right.
\eqno{(E^*)}
\end{equation*}
where the piecewise constant functions ${\hat{a}}_{\mu}$ and ${\hat{c}}_{\mu}$ are supposed to be
extended to the whole real line by $T$-periodicity.

\smallskip

\noindent
It is our aim now to prove that $(E^*)$ generates chaotic dynamics in the sense of Definition \ref{def-chm}. To this end, as explained at the beginning of Chapter \ref{ch-ode}, we apply the results on LTMs from Section \ref{sec-ltm} to the Poincar\'{e} map
$$\Psi: ({\mathbb R}^+_0)^2 \to ({\mathbb R}^+_0)^2,\quad
\Psi(z):= \zeta(T,z),$$
where $\zeta(\cdot,z) = (x(\cdot,z),y(\cdot,z))$ is the solution of system $(E^*)$
starting from $z = (x_0,y_0)\in ({\mathbb R}^+_0)^2$ at the time $t=0.$
As a consequence, we will have ensured all the chaotic features listed in Theorem \ref{th-inj} (like, for instance,
a semi-conjugacy to the two-sided Bernoulli shift and thus positive topological entropy, sensitivity with respect to initial conditions, topological transitivity, a compact invariant set containing a dense subset of periodic points).

\begin{figure}[ht]
\centering
\includegraphics[scale=0.17]{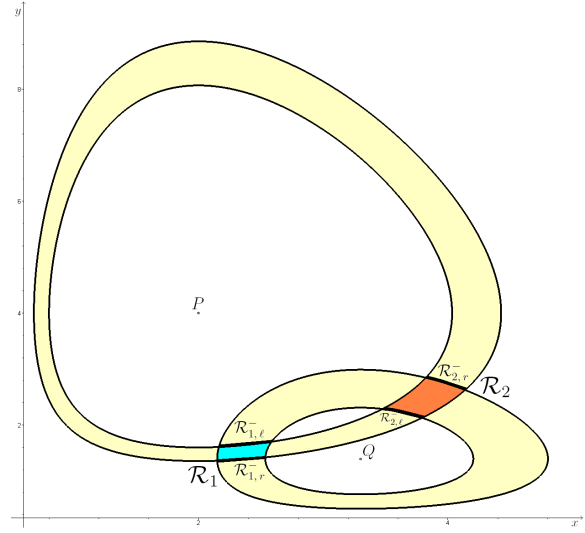}
\caption{\footnotesize {The two annular regions $\mathcal A_P$ and
$\mathcal A_Q$ (centered at $P$ and $Q,$ respectively) are linked
together. We have drawn with a darker color the two
rectangular sets $\mathcal R_1$ and $\mathcal R_2$ where they
meet. The boundary sets ${\mathcal R}_{1}^-={\mathcal R}^-_{1,\,{\ell}}\cup{\mathcal R}^-_{1,\,{r}}$ and ${\mathcal R}_{2}^-={\mathcal R}^-_{2,\,{\ell}}\cup{\mathcal R}^-_{2,\,{r}}$ have been indicated with a thicker line.
}}
\label{fig-v2}
\end{figure}

\noindent
With the aid of Figure
\ref{fig-v2}
we try to explain how to enter the setting of Theorem \ref{th-ltmb} for the switching system
$(E^*).$ \\
As a first step, we take two closed overlapping annuli
consisting of level lines of the first integrals associated to system
$(E_0)$ and $(E_{\mu}),$ respectively. In particular, the inner
and outer boundaries of each annulus are closed trajectories
surrounding the equilibrium point ($P=P_0$ for system $(E_0)$ and
$Q=P_{\mu}$ for system $(E_{\mu})$). Such annuli, that we call
from now on ${\mathcal A}_P$ and ${\mathcal A}_Q,$ intersect in
two compact disjoint sets ${\mathcal R}_1$ and
${\mathcal R}_2\,,$ which are generalized rectangles. The way in which we label the two regions (as ${\mathcal R}_1$/${\mathcal R}_2$) is completely arbitrary.
However, the choice of an order will effect some details in
the argument that we describe below. Whenever we enter a
framework like that visualized in Figure
\ref{fig-v2},
we say that the annuli
${\mathcal A}_P$ and ${\mathcal A}_Q$ are \textit{linked
together}\index{linked together annuli}. Technical conditions on the energy level lines defining
${\mathcal A}_P$ and ${\mathcal A}_Q\,,$ sufficient to guarantee the linking between them, are presented in Subsection \ref{sub-td}.\\
As a second step, we give an ``orientation'' to ${\mathcal R}_i$ (for $i=1,2$)
by selecting the boundary sets ${\mathcal R}_{i}^-={\mathcal R}^-_{i,\,{\ell}}\cup{\mathcal R}^-_{i,\,{r}}.$ In the specific example
of Figure
\ref{fig-v2},
we take as ${\mathcal R}_1^-$ the intersection of
${\mathcal R}_1$ with the inner and outer boundaries of ${\mathcal
A}_P$ and as ${\mathcal R}_2^-$ the intersection of ${\mathcal
R}_2$ with the inner and outer boundaries of ${\mathcal A}_Q\,.$
The way in which we name (as left/right) the two
components of ${\mathcal R}_i^-$ is inessential for the rest of
the discussion. Just to fix ideas, let us say that we choose as ${\mathcal R}^-_{1,\,{\ell}}$ the component of ${\mathcal R}_1^-$ which is closer to
$P$ and as ${\mathcal R}^-_{2,\,{\ell}}$ the component of ${\mathcal R}_2^-$ which is closer to $Q$ (of course, the ``right'' components ${\mathcal R}^-_{1,\,{r}}$ and ${\mathcal R}^-_{2,\,{r}}$ are the remaining ones).\\
As a third step, we observe that the Poincar\'{e} map associated to $(E^*)$ can be
decomposed as
$$\Psi= \Psi_{\mu}\circ \Psi_0\,,$$
where $\Psi_0$ is the Poincar\'{e} map of system $(E_0)$ on the time-interval $[0,r_0]$
and $\Psi_{\mu}$ is the Poincar\'{e} map for $(E_{\mu})$ on the time-interval $[0,r_{\mu}]= [0,T-r_0].$ Consider a path $\gamma: [0,1]\to {\mathcal R}_1$ with $\gamma(0)\in
{\mathcal R}^-_{1,\,{\ell}}$ and $\gamma(1)\in
{\mathcal R}^-_{1,\,{r}}\,.$ As we will see in Subsection \ref{sub-td},
the points of ${\mathcal R}^-_{1,\,{\ell}}$ move faster than those belonging to
${\mathcal R}^-_{1,\,{r}}$ under the action of system $(E_0).$ Hence, for a
choice of the first switching time $r_0$ large enough, it is possible to
make the path
$$[0,1]\ni s\mapsto \Psi_0(\gamma(s))$$
turn in a spiral-like fashion inside the annulus ${\mathcal A}_P$
and cross at least twice the rectangular region ${\mathcal R}_2$ from
${\mathcal R}^-_{2,\,{\ell}}$ to ${\mathcal R}^-_{2,\,{r}}\,.$
Thus, we can select two subintervals of $[0,1]$ such that $\Psi_0\circ \gamma$
restricted to each of them is a path contained in ${\mathcal R}_2$
and connecting the two components of ${\mathcal R}_2^-\,.$ We observe that the points of ${\mathcal R}^-_{2,\,{\ell}}$ move faster than those belonging to
${\mathcal R}^-_{2,\,{r}}$ under the action of system $(E_{\mu}).$
Therefore, we can repeat the same argument as above and conclude that, for a suitable
choice of $r_{\mu}=T-r_0$ sufficiently large, we can transform, via $\Psi_{\mu}\,,$ any path in ${\mathcal R}_2\,$
joining the two components of ${\mathcal R}_2^-\,$ onto a path which crosses at least once
${\mathcal R}_1$ from ${\mathcal R}^-_{1,\,{\ell}}$ to ${\mathcal R}^-_{1,\,{r}}\,.$\\
As a final step, we complete the proof about the existence of chaotic-like dynamics
by applying Theorem \ref{th-ltma}. Actually, in order to obtain a more complex behaviour, in place of Theorem \ref{th-ltma} it is possible to employ the more general Theorem \ref{th-ltmb}.\\
In fact, our main result can be stated as follows.

\begin{theorem}\label{th-mr}
For any choice of positive constants $a,b,c,d, \mu$ with $\mu < a$
and for every pair $({\mathcal A}_P,{\mathcal A}_Q)$ of linked together annuli,
the following conclusion holds:
\\
For every integer $m\geq 2,$
there exist two positive constants $\alpha$ and $\beta$ such that,
for each
$$r_0 > \alpha\quad\mbox{ and }\quad r_{\mu} > \beta,$$
the Poincar\'{e} map associated to system $(E^*)$ induces chaotic
dynamics on $m$ symbols in ${\mathcal R}_1$ and ${\mathcal
R}_2\,.$
\end{theorem}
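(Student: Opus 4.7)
My plan is to apply Theorem \ref{th-ltmb} to the factorization $\Psi = \Psi_{\mu} \circ \Psi_0$ of the Poincar\'e map, taking $\widetilde{\mathcal{A}} = \widetilde{\mathcal{R}}_1$ and $\widetilde{\mathcal{B}} = \widetilde{\mathcal{R}}_2$. First I would carry out a preliminary phase-plane analysis of the two autonomous subsystems $(E_0)$ and $(E_{\mu})$. Since $\mathcal{A}_P$ is a union of closed orbits of $(E_0)$ surrounding the center $P$, it is invariant under $\Psi_0$, and each such orbit has a well-defined minimal period $\tau_0(h)$ depending continuously on the energy level $h = \mathcal{E}_0$. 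Introducing an angular coordinate $\theta$ around $P$ (e.g.\ the argument relative to $P$), a point starting on the orbit at level $h$ undergoes, in time $t$, a total angular rotation that, up to a bounded error, equals $2\pi t / \tau_0(h)$. The key ``twist'' ingredient - which I would quote from Subsection \ref{sub-td} - is that $\tau_0$ is strictly monotone in $h$, so the two components $\mathcal{R}_{1,\ell}^-$ and $\mathcal{R}_{1,r}^-$, lying on different level lines of $\mathcal{E}_0$, are traversed by the flow at different angular speeds. An analogous picture holds for $\mathcal{A}_Q$ and $\Psi_{\mu}$, with center $Q$ and period function $\tau_{\mu}(h)$.

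Second, I would verify $(H_{\varphi})$ for $\varphi = \Psi_0$. Fix $m \geq 2$ and let $\gamma:[0,1] \to \mathcal{R}_1$ be any path with $\gamma(0) \in \mathcal{R}_{1,\ell}^-$ and $\gamma(1) \in \mathcal{R}_{1,r}^-$. The whole image $\Psi_0(\overline{\gamma})$ remains inside $\mathcal{A}_P$ by invariance. By the strict monotonicity of $\tau_0$ and compactness of $\mathcal{R}_1$, there is a constant $\delta > 0$ such that the difference between the angular speeds along $\gamma(0)$ and $\gamma(1)$ is bounded below by $\delta$; consequently, for $r_0$ large enough (this determines $\alpha$), the total angular displacement along the two endpoints under $\Psi_0$ differs by at least $2\pi m$. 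Since $\mathcal{R}_2$ sits on the opposite side of $\mathcal{A}_P$ and $\mathcal{R}_{2,\ell}^-$, $\mathcal{R}_{2,r}^-$ cut across $\mathcal{A}_P$ from its inner to its outer boundary, the image $\Psi_0 \circ \gamma$ must wrap across $\mathcal{R}_2$ at least $m$ times, connecting $\mathcal{R}_{2,\ell}^-$ to $\mathcal{R}_{2,r}^-$. A standard argument based on the intermediate value theorem applied to the angular coordinate then produces $m$ pairwise disjoint compact subintervals $[t_i', t_i''] \subseteq [0,1]$ whose images under $\gamma$ are contained in pairwise disjoint compact sets $\mathcal{H}_0, \dots, \mathcal{H}_{m-1} \subseteq \mathcal{R}_1$ satisfying $(\mathcal{H}_i, \Psi_0): \widetilde{\mathcal{R}}_1 \stretchx \widetilde{\mathcal{R}}_2$ for $i = 0, \dots, m-1$.

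Third, I would verify $(H_{\psi})$ for $\psi = \Psi_{\mu}$ by the same argument with the roles of the two annuli exchanged: for $r_{\mu}$ exceeding a suitable $\beta > 0$, any path in $\mathcal{R}_2$ joining the two sides of $\mathcal{R}_2^-$ is stretched by $\Psi_{\mu}$ across $\mathcal{R}_1$ at least once. This provides $l \geq 1$ compact sets $\mathcal{K}_0, \dots, \mathcal{K}_{l-1} \subseteq \mathcal{R}_2$ with $(\mathcal{K}_j, \Psi_{\mu}): \widetilde{\mathcal{R}}_2 \stretchx \widetilde{\mathcal{R}}_1$. Both $\Psi_0$ and $\Psi_{\mu}$ are homeomorphisms of $(\mathbb{R}_0^+)^2$ onto themselves, so continuity and injectivity are automatic. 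Theorem \ref{th-ltmb} then delivers chaotic dynamics on $m \cdot l \geq m$ symbols for $\Psi = \Psi_{\mu} \circ \Psi_0$ localized in $\mathcal{R}_1$. The corresponding conclusion on $\mathcal{R}_2$ follows by a symmetric argument: viewing $(E^*)$ with the initial time shifted to $r_0$ conjugates $\Psi$ to $\Psi_0 \circ \Psi_{\mu}$, which by the same analysis induces chaos on $m$ symbols in $\mathcal{R}_2$.

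The main obstacle is the quantitative twist step: one must extract from the monotonicity of $\tau_0$ (and $\tau_{\mu}$) a uniform lower bound on the difference in angular velocities between the inner and outer boundaries of $\mathcal{R}_1$ (resp.\ $\mathcal{R}_2$), and control the bounded remainder between ``angle swept'' and $2\pi t / \tau$ along a non-circular orbit. Although the monotonicity of the period map for Volterra-type centers is classical, converting it into the explicit choice of $\alpha$ and $\beta$ giving crossing number $m$ requires care, and is essentially the content of the auxiliary technical development announced in Subsection \ref{sub-td}.
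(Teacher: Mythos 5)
Your proposal coincides with the paper's proof in all essentials: the decomposition $\Psi = \Psi_\mu \circ \Psi_0$, the role of the strictly monotone period maps $\tau_0$ and $\tau_\mu$ as the twist mechanism, the rotation-number argument that converts the twist into $m$ transversal crossings of $\mathcal{R}_2$ by $\Psi_0\circ\gamma$ once $r_0$ is large enough (and similarly for $\Psi_\mu$ from $\mathcal{R}_2$ to $\mathcal{R}_1$), and the concluding appeal to Theorem \ref{th-ltmb}. The quantitative step you flag as the main obstacle is made explicit in the paper by passing to polar (Pr\"ufer) coordinates centered at $P$ (resp.\ $Q$), deriving the bound
$\rot_0(t,\gamma(0)) - \rot_0(t,\gamma(1)) > t\,\frac{\tau_0(\ell_2)-\tau_0(\ell_1)}{\tau_0(\ell_1)\,\tau_0(\ell_2)} - 2$,
reading off $\alpha$ and $\beta$ as in \eqref{eq-al} and \eqref{eq-be}, and then applying Bolzano's theorem to the angular coordinate to produce the compact sets $\mathcal{H}_i$ and $\mathcal{K}_j$. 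The one place where your argument departs from the paper and is not quite right as written is the treatment of $\mathcal{R}_2$: the time-shift observation shows that $\Psi_0\circ\Psi_\mu$ induces chaos in $\mathcal{R}_2$, and although $\Psi_0\circ\Psi_\mu$ is conjugate to $\Psi=\Psi_\mu\circ\Psi_0$ via $\Psi_\mu$, the conjugating homeomorphism carries the chaotic invariant set out of $\mathcal{R}_2$, so this does not by itself give chaos for $\Psi$ \emph{localized in} $\mathcal{R}_2$ as the statement requires. The paper instead simply reruns the identical LTM scheme with the labels of $\mathcal{R}^b$ and $\mathcal{R}^t$ exchanged (legitimate because the orbits are closed and the construction is symmetric under this relabeling), which directly produces a chaotic invariant set for $\Psi$ itself inside $\mathcal{R}^t$; your proof can be repaired in the same way.
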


\noindent
As remarked in \cite{PaPiZa-08}, if we consider Definition \ref{def-chm} and its consequences in the context of concrete examples of ODEs (for instance when $\psi$ turns out to be the Poincar\'{e} map), condition \eqref{eq-chm} may sometimes be interpreted in terms of the oscillatory behaviour of the solutions. Such situation occurred in \cite{PaZa-04a, PaZa-08} and takes place also for system $(E^*).$
Indeed, as it will be clear from the proof of Theorem \ref{th-mr}, it is possible to draw
more precise conclusions in the statement of our main result.
Namely, the following additional properties can be obtained:

\smallskip

\noindent
\textit{
For every decomposition of the integer $m\geq 2$ as
$$m= m_1\, m_2\,,\quad\mbox{with } \; m_1\,, m_2\, \in {\mathbb N}\,,$$
there exist integers $\kappa_1\,, \kappa_2\, \geq 1\,\,
($with $\kappa_1 = \kappa_1(r_0,m_1)$ and $\kappa_2 = \kappa_2(r_{\mu},m_2)\,)$
such that, for each two-sided sequence of symbols
$${\bf{s}} = (s_i)_{i\in{\mathbb Z}}= (p_i,q_i)_{i\in{\mathbb Z}}\in
\{0,\dots,m_1-1\}^{{\mathbb Z}}\times\{0,\dots,m_2-1\}^{{\mathbb Z}}\,,$$
there exists a solution
$$\zeta_{\bf{s}}(\cdot) = \bigl( x_{\bf{s}}(\cdot),y_{\bf{s}}(\cdot) \bigr)$$
of $(E^*)$ with
$\zeta_{\bf{s}}(0)\in {\mathcal R}_1$ such that $\zeta_{\bf{s}}(t)$ crosses  ${\mathcal R}_2$
exactly $\kappa_1 + p_i+1$ times for $t\in \,]iT,r_0 + iT [$ and crosses
${\mathcal R}_1$ exactly $\kappa_2 + q_i+1$ times for $t\in \,]r_0 + iT,(i+1)T[\,.$
Moreover, if $(s_i)_{i\in{\mathbb Z}}= (p_i,q_i)_{i\in{\mathbb Z}}$ is a periodic sequence,
that is, $s_{i + k} = s_i\,,$ for some $k\geq 1,$ then $\zeta_{\bf{s}}(t + k T) = \zeta_{\bf{s}}(t),$
$\forall\,t\in {\mathbb R}.$
\\
In particular, taking $m = m_1\geq 2$ and $m_2 =1,$ we obtain the dynamics on the set
of $m$ symbols
$\{0,\dots,m-1\} \equiv \{0,\dots,m-1\}\times\{0\}$ as in the statement of Theorem \ref{th-mr}.
}
\smallskip

\noindent
The solutions $(x(t),y(t))$ are meant in the
Carath\'{e}odory sense, i.e. $(x(t),y(t))$ is absolutely continuous and satisfies system $(E^*)$ for almost every $t \in {\mathbb R}.$
Of course, such solutions are of class $C^1$ if the coefficients are continuous.\\
The constants $\alpha$ and $\beta$ in Theorem \ref{th-mr}, representing the lower bounds for $r_0$ and $r_{\mu},$ can be estimated in terms of $m_1$ and $m_2$ and other geometric parameters, such as the fundamental periods of the orbits bounding the linked annuli
(see \eqref{eq-al} and \eqref{eq-be}\,).

\medskip

\noindent
We end this introductory discussion with a few observations about our main result.\\
First of all we notice that, according to Theorem \ref{th-mr},
there is an abundance of chaotic regimes for system $(E^*),$
provided that the time-interval lengths $r_0$ and $r_{\mu}$ (and,
consequently, the period $T$) are sufficiently large. More precisely, we
are able to prove the existence of chaotic invariant sets inside
each intersection of two annular regions linked together. One
could conjecture the presence of Smale horseshoes contained in
such intersections, like in the classical case of linked twist
maps with circular domains \cite{De-78}, even if we recall that our purely topological approach just requires to check a twist
hypothesis on the boundary, without the need of verifying any
hyperbolicity condition. This does not prevent the
possibility of a further deeper analysis using more complex
computations. \\
We also stress that our result is stable with
respect to small perturbations of the coefficients. In fact,
as it will emerge from the proof, whenever $r_0 > \alpha$ and
$r_{\mu} > \beta$ are chosen so as to achieve the conclusion of
Theorem \ref{th-mr}, it follows that there exists a constant
$\varepsilon > 0$ such that Theorem \ref{th-mr} applies to
equation $(E)$ too, provided that
$$
\int_0^T |a(t) - {\hat{a}}_{\mu}(t)|\,dt < \varepsilon,\quad
\int_0^T |c(t) - {\hat{c}}_{\mu}(t)|\,dt < \varepsilon,
$$
$$
\int_0^T |b(t) - b|\,dt < \varepsilon,\quad
\int_0^T |d(t) - d|\,dt < \varepsilon.
$$
Here the $T$-periodic coefficients may be in $L^1([0,T])$ or even continuous or smooth
functions, possibly of class $C^{\infty}.$\\
A final remark concerns the fact that, in our model, we have assumed that
the harvesting period starts and ends for both the species at the same moment.
With this respect, one could face a more general situation
in which some phase-shift between the two harvesting intervals occurs.
Such cases have been already explored in some biological models,
mostly from a numerical
point of view: see \cite{Na-86} for an example on competing species
and \cite{RiMu-93} for a predator-prey system.
If we assume a phase-shift in the periodic coefficients, that is, if we
consider
$$a(t):= {\hat{a}}_{\mu}(t - \theta_1)\quad\mbox{ and } \quad c(t):= {\hat{c}}_{\mu}(t - \theta_2),$$
for some $0<\theta_1,\theta_2<T,$
and we also suppose that the length $r_0$ of the time-intervals without harvesting
may differ for the two species (say $r_0 = r_a\in \, ]0,T[$ in the definition of
${\hat{a}}_{\mu}$ and $r_0 = r_c\in \, ]0,T[$ in the definition of
${\hat{c}}_{\mu}$), then
the geometry of our problem turns out to be a combination of linked twist maps on two, three or four
annuli, which are mutually linked together.
In this manner, we increase the possibility of chaotic configurations, provided that the system
is subject to the different regimes for sufficiently long time. For a pictorial comment, see Figure
\ref{fig-v3},
where all the possible links among four annuli are realized.

\begin{figure}[ht]
\centering
\includegraphics[scale=0.22]{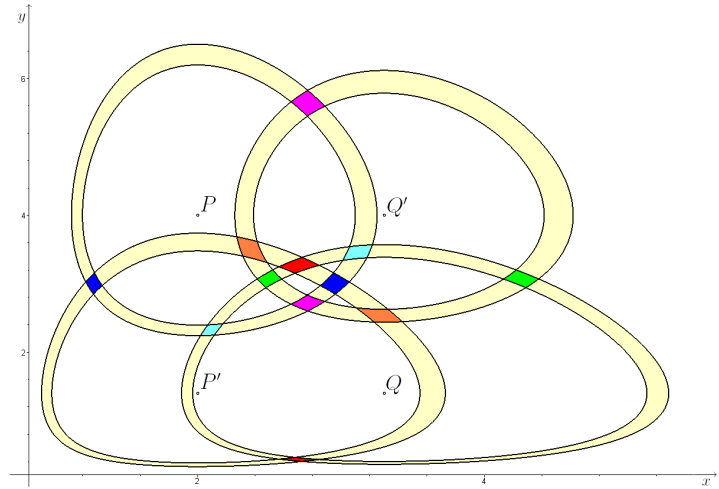}
\caption{\footnotesize {We have depicted four linked annular regions bounded by
energy level lines corresponding to Volterra systems with centers at
$P=P_0,$ $P'= (c/d,a_{\mu}/b),$
$Q= P_{\mu}\,$ and $Q'=(c_{\mu}/d,a/b),$
by putting in evidence the regions of mutual intersection,
where it is possible to locate the chaotic invariant sets.
}}
\label{fig-v3}
\end{figure}

\subsection{Technical details and proofs}\label{sub-td}

Let us consider system $(E_0)$ and let
$$\ell > \chi_0:={\mathcal E}_0(P_0) = \min\{{\mathcal E}_0(x,y): \, x>0, y> 0\}.$$
The level line
$$\Gamma_0(\ell):=\left\{(x,y)\in ({\mathbb R}_0^+)^2\,:\, {\mathcal E}_0(x,y) = \ell\right\}$$
is a closed orbit (surrounding $P_0$) which is run counterclockwise,
completing one turn in a fundamental period that we denote by $\tau_0(\ell).$
According to classical results on the period of the Lotka-Volterra system
\cite{Ro-85,Wa-86}, the map
$$\tau_0\,: \,]\chi_0,+\infty[\,\to {\mathbb R}$$
is strictly increasing
with $\tau_{0}(+\infty) = +\infty$ and satisfies
$$\lim_{\ell\to \chi_0^+} \tau_0(\ell) = T_0\,:=\frac{2\pi}{\sqrt{a c}}\,.$$
Similarly, considering system $(E_{\mu})$ with $0 < \mu < a\,,$
we denote by $\tau_{\mu}(h)$ the minimal period associated to the orbit
$$\Gamma_{\mu}(h):=\left\{(x,y)\in ({\mathbb R}_0^+)^2\,:\, {\mathcal E}_{\mu}(x,y) = h\right\},$$
for
$$h > \chi_{\mu}:={\mathcal E}_{\mu}(P_{\mu}) = \min\{{\mathcal E}_{\mu}(x,y): \, x>0, y> 0\}.$$
Also in this case the map $h\mapsto \tau_{\mu}(h)$ is strictly increasing
with $\tau_{\mu}(+\infty) = +\infty$ and
$$\lim_{h\to \chi_{\mu}^+} \tau_{\mu}(h) = T_{\mu}\,:=\frac{2\pi}{\sqrt{a_{\mu} c_{\mu}}}\,.$$

\medskip

\noindent
Before giving the details for the proof of our main result, we
describe conditions on the energy level lines of  two annuli
$\mathcal A_P$ and $\mathcal A_Q,$ centered at $P=P_0= \left(
\frac{c}{d},\frac{a}{b}\right)$ and $Q=P_{\mu}= \left( \frac{c +
\mu}{d},\frac{a - \mu}{b}\right),$ respectively, sufficient to
ensure that they are linked together\index{linked together annuli}. With this respect, we have
to consider the intersections among the closed orbits around the
two equilibria and the straight line $r$ passing through the
points $P$ and $Q,$ whose equation is $by+dx-a-c=0.$ We introduce an
orientation on such line by defining an order ``$\,\preceq\,$'' among its points.
More precisely, we set $A\preceq B$ (resp. $A\prec B$) if
and only if $x_A \leq x_B$ (resp. $x_A < x_B$), where $A=(x_A,y_A),\, B=(x_B,y_B).$ In this manner,
the order on $r$ is that inherited from the oriented $x$-axis, by projecting the points of $r$
onto the abscissa.
Assume now we
have two closed orbits $\Gamma_0(\ell_1)$ and $\Gamma_0(\ell_2)$
for system $(E_0),$ with $\chi_0<\ell_1<\ell_2.$ Let us call the
intersection points among $r$ and such level lines
$P_{1,-}\,,P_{1,+}\,,$ with reference to $\ell_1,$ and $P_{2,-}\,,P_{2,+}\,,$ with reference to
$\ell_2,$ where
$$P_{2,-}\,\prec\, P_{1,-}\, \prec \,P \prec \,P_{1,+}\, \prec\, P_{2,+}\,.$$
Analogously, when we consider two orbits $\Gamma_{\mu}(h_1)$ and
$\Gamma_{\mu}(h_2)$ for system $(E_{\mu}),$ with
$\chi_{\mu}<h_1<h_2,$ we name the intersection points among $r$ and
these level lines $Q_{1,-}\,,Q_{1,+},$ with reference to $h_1,$ and
$Q_{2,-}\,,Q_{2,+},$ with reference to $h_2,$ where
$$Q_{2,-}\,\prec\, Q_{1,-}\, \prec\, Q \prec\, Q_{1,+}\, \prec \,Q_{2,+}\,.$$
Then the two annuli $\mathcal
A_P$ and $\mathcal A_Q$ turn out to be linked together if
$$P_{2,-}\, \prec\, P_{1,-}\, \preceq\, Q_{2,-}\, \prec\, Q_{1,-}\, \preceq \,P_{1,+}\,
\prec\, P_{2,+}\, \preceq \,Q_{1,+}\, \prec\, Q_{2,+}\,.$$

\smallskip

\noindent {\textit{Proof of Theorem \ref{th-mr}.}} Consistently
with the notation introduced above, we denote by $\Gamma_0(\ell),$ with $\ell\in
[\ell_1,\ell_2],$ for some $\chi_0<\ell_1<\ell_2,$ the level lines filling $\mathcal A_P,$ so that
$${\mathcal A}_P=\bigcup_{\ell_1\le\ell\le\ell_2}\Gamma_0(\ell).$$
Analogously,
we indicate the level lines
filling $\mathcal A_Q$  by $\Gamma_{\mu}(h),$ with $h\in [h_1,h_2],$ for some
$\chi_{\mu}<h_1<h_2,$ so that
we can write
$${\mathcal A}_Q=\bigcup_{h_1\le h\le h_2}\Gamma_{\mu}(h).$$

\begin{figure}[ht]
\centering
\includegraphics[scale=0.25]{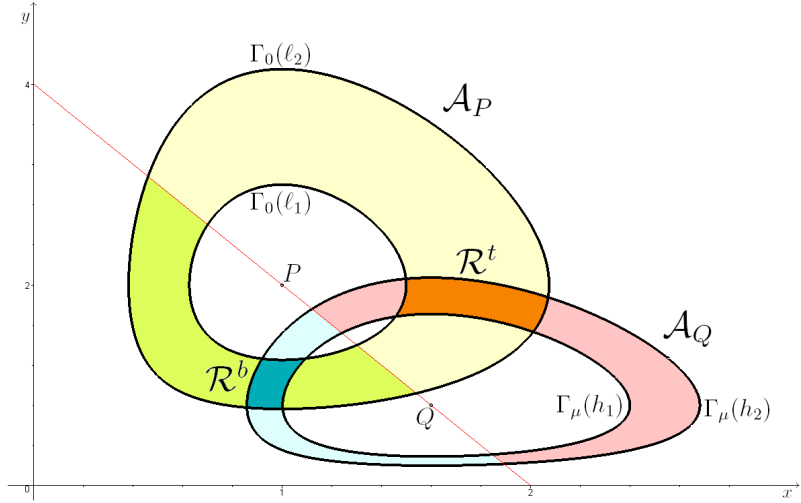}
\caption{\footnotesize{For the two linked annuli in the picture (one around
$P$ and the other around $Q$), we have drawn in
different colors the upper and lower
parts (with respect to the dashed line $r$), as well as the intersection regions ${\mathcal R}^t$
and ${\mathcal R}^b$ between them.
As a guideline for the proof, we recall that ${\mathcal A}_P$ is the annulus around $P,$ having as inner and outer
boundaries the energy level lines $\Gamma_0(\ell_1)$ and $\Gamma_0(\ell_2).$
Similarly, ${\mathcal A}_Q$ is the annulus around $Q,$ having as inner and outer
boundaries the energy level lines $\Gamma_{\mu}(h_1)$ and $\Gamma_{\mu}(h_2).$
}}
\label{fig-v4}
\end{figure}

\smallskip

\noindent
By construction, such annular regions turn out to be invariant for
the dynamical systems generated by $(E_0)$ and $(E_{\mu}),$
respectively. We consider now the two regions in which each
annulus is cut by the line $r$ (passing through $P$ and $Q$) and
we call such sets $\mathcal A_P^t,\,\mathcal A_P^b,\,\mathcal
A_Q^t$ and $\mathcal A_Q^b,$ in order to have ${\mathcal
A_P}=\mathcal A_P^t\cup \mathcal A_P^b$ and ${\mathcal
A_Q}=\mathcal A_Q^t\cup \mathcal A_Q^b,$ where the sets with superscript
$t$ are the ``upper'' ones and the sets with superscript $b$ are the
``lower'' ones, with respect to the line $r.$ We name
$\mathcal R^b$ the rectangular region in which $\mathcal A_P^b$
and $\mathcal A_Q^b$ meet and analogously we denote by
$\mathcal R^t$
the rectangular region belonging to the intersection between $\mathcal A_P^t$ and $\mathcal A_Q^t$ (see Figure
\ref{fig-v4}).

\smallskip

\noindent
Let
$$m_1\geq 2\,\quad\mbox{and}\quad m_2\geq 1$$
be two fixed integers. The case $m_1 = 1$ and $m_2\geq 2$
can be treated in a similar manner and therefore is omitted.\\
As a first step, we are interested in the solutions of system
$(E_0)$ starting from $\mathcal A_P^b$  and crossing $\mathcal
A_P^t$ at least $m_1$ times. After having performed the
rototranslation of the plane $\mathbb R^2$ that brings the origin
to the point $P$ and makes the $x$-axis coincide with the line
$r,$ whose equations are
\begin{equation*}
\left\{
\begin{array}{ll}
\tilde{x}= (x- \frac{c}{d}) \cos\vartheta+(y-\frac{a}{b}) \sin\vartheta\\
\tilde{y}= (\frac{c}{d}-x) \sin\vartheta+(y-\frac{a}{b}) \cos\vartheta ,\\
\end{array}
\right.
\end{equation*}
where $\vartheta:= \arctan(\frac{d}{b}),$ it is possible to use the
Pr\"ufer transformation and introduce generalized polar
coordinates, so that we can express the solution $\zeta(\cdot,z) =
(x(\cdot,z),y(\cdot,z))$ of system $(E_0)$ with initial point in
$z=(x_0,y_0)\in\mathcal A_P^b$ through the radial coordinate
$\rho(t,z)$ and the angular coordinate $\theta(t,z).$ Thus we
can assume that $\theta(0,z)\in [-\pi,0].$ For any $t\in
[0,r_0]$ and $z\in\mathcal A_P^b,$ let us introduce also the
\textit{rotation number}\index{rotation number}, that is the quantity
$$\rot_0(t,z):=\frac{\theta (t,z)-\theta (0,z)}{2\pi}\,,$$
that indicates the normalized angular displacement along the orbit
of system $(E_0)$ starting at $z,$ during the time-interval
$[0,t].$
The continuous dependence of the solutions from the initial data implies that the
function $(t,z)\mapsto \theta(t,z)$ and consequently
the map $(t,z)\mapsto \rot_0(t,z)$ are continuous. From the definition of rotation number
and the star-shapedness of the level lines of ${\mathcal E}_0$
with respect to the point $P,$ we have that for every $z\in \Gamma_0(\ell)$ the following
properties hold:

$$
\begin{array}{ll}
\forall \, j\in {\mathbb Z}\,:\;\; \rot_0(t,z) = j \, &\Longleftrightarrow  \, t = j \, \tau_0(\ell)\,,\\
\forall \, j\in {\mathbb Z}\,:\;\; j < \rot_0(t,z) < j+1 \, &\Longleftrightarrow \, j \,\tau_0(\ell) < t <
(j + 1)\, \tau_0(\ell)
\end{array}
$$
(if the annuli were not star-shaped, the inference `` $\Longleftarrow $ '' would still be true).
Although we have implicitly assumed that $\ell_1 \leq \ell \leq \ell_2\,,$ such properties hold
for every $\ell > \chi_0\,.$ \\
Observe that, thanks to the fact that the time-map $\tau_0$ is strictly
increasing, we know that $\tau_0(\ell_1)<\tau_0(\ell_2).$ We shall use this
condition to show that a twist property for the rotation number holds for sufficiently
large time-intervals. Indeed
we claim that, if we choose a switching time $r_0\ge\alpha,$ where
\begin{equation}\label{eq-al}
\alpha:=\frac{(m_1+ 3 + \tfrac{1}{2})\,\tau_0(\ell_1)\,\tau_0(\ell_2)}{\tau_0(\ell_2)-\tau_0(\ell_1)}\,,
\end{equation}
then, for any path $\gamma:[0,1]\to {\mathcal A_P},$ with
$\gamma(0)\in\Gamma_0(\ell_1)$ and $\gamma(1)\in\Gamma_0(\ell_2),$ the interval inclusion
\begin{equation}\label{eq-n*}
[\theta(r_0,\gamma(1)),\theta(r_0,\gamma(0))]\supseteq [2\pi
n^*,2\pi(n^*+m_1)-\pi]
\end{equation}
is fulfilled for some $n^*=n^*(r_0)\in {\mathbb N}.$
To check our claim, at first we notice that, for
a path $\gamma(s)$ as above, it holds that
$\rot_0(t,\gamma(0)) \geq \lfloor t/\tau_0(\ell_1)\rfloor$ and
$\rot_0(t,\gamma(1)) \leq \lceil t/\tau_0(\ell_2)\rceil,$ for every $t> 0,$
and so
$$\rot_0(t,\gamma(0)) - \rot_0(t,\gamma(1)) > t\, \frac{\tau_0(\ell_2) - \tau_0(\ell_1)}{\tau_0(\ell_1)\,\tau_0(\ell_2)}\, - 2,
\quad
\forall \, t> 0.$$
Hence, for $t\geq \alpha,$ with $\alpha$ defined as in \eqref{eq-al}, we obtain
$$\rot_0(t,\gamma(0))> m_1+1+ \tfrac{1}{2} + \rot_0(t,\gamma(1))\,,$$
which, in turns, implies
$$\theta(t,\gamma(0)) - \theta(t,\gamma(1)) > 2 \pi(m_1 + 1),\quad\forall\, t\geq \alpha.$$
Therefore, recalling the bound
$2\pi(\lceil t/\tau_0(\ell_2)\rceil  - \tfrac{3}{2} ) < \theta(t,\gamma(1)) \leq 2 \pi\lceil t/\tau_0(\ell_2)\rceil,$
the interval inclusion \eqref{eq-n*} is achieved for
\begin{equation*}
n^* = n^*(r_0):= \left\lceil \frac{r_0}{\tau_0(\ell_2)}\right\rceil.
\end{equation*}
This proves our claim.\\
By the continuity of the composite mapping $[0,1]\ni
s\mapsto \rot_0(r_0,\gamma(s)),$
it follows that
$$\{\theta
(r_0,\gamma(s)),\,s\in [0,1]\}\supseteq[2\pi
n^*,2\pi(n^*+m_1 -1)+\pi].$$
As a consequence, by the Bolzano Theorem, there exist $m_1$ pairwise
disjoint maximal intervals $[t_i{'},t_i{''}]\subseteq [0,1],$
for $i=0,\dots,m_1 - 1,$ such that
$$\theta(r_0,\gamma(s))\in [2\pi n^*+2\pi i,2\pi n^*+\pi+2\pi i],\,\forall s\in [t_i{'},t_i{''}],\, i=0,\dots,m_1 -1,$$
with $\theta(r_0,\gamma(t_i{'}))=2\pi n^*+2\pi i$ and
$\theta(r_0,\gamma(t_i{''}))=2\pi n^*+\pi+2\pi i.$ Setting
$$\mathcal R_1:=\mathcal R^b\quad\mbox{and} \quad \mathcal R_2:=\mathcal R^t,$$
we
orientate such rectangular regions by choosing
$${\mathcal
R}^-_{1,\,{\ell}}:=\mathcal R_1\cap\Gamma_0(\ell_1)\quad \mbox{and}\quad
{\mathcal R}^-_{1,\,{r}}:=\mathcal R_1\cap\Gamma_0(\ell_2),$$
as well as
$${\mathcal R}^-_{2,\,{\ell}}:=\mathcal
R_2\cap\Gamma_\mu(h_1)\quad \mbox{and} \quad {\mathcal
R}^-_{2,\,{r}}:=\mathcal R_2\cap\Gamma_\mu(h_2).$$
See the caption of Figure
\ref{fig-v4}
as a reminder for the corresponding sets.\\
Introducing at last the $m_1$ nonempty and pairwise disjoint compact sets
$$\mathcal H_i:=\left\{z\in\mathcal A_P^b: \theta(r_0,z)\in [2\pi n^*+2\pi
i,2\pi n^*+\pi+2\pi i]\right\},$$
for $i=0,\dots,m_1 - 1,$ we are ready to prove
that
\begin{equation}\label{eq-str1}
(\mathcal H_i,\Psi_0):\widetilde{\mathcal R}_1\stretchx\widetilde{\mathcal R}_2,\quad \forall\, i=0,\dots,m_1-1,
\end{equation}
where we recall that $\Psi_0$ is the Poincar\'e map associated to
system $(E_0).$ Indeed, let us take a path $\gamma:[0,1]\to\mathcal
R_1,$  with $\gamma(0)\in {\mathcal R}^-_{1,\,{\ell}}$ and
$\gamma(1)\in {\mathcal R}^-_{1,\,{r}}.$ For
$r_0\geq\alpha$ and fixing $i\in \{0,\dots,m_1-1\},$
there is a subinterval
$[t_i{'},t_i{''}]\subseteq[0,1],$ such that
$\gamma(t)\in\mathcal H_i$ and $\Psi_0(\gamma(t))\in \mathcal
A_P^t,\, \forall t\in [t_i{'},t_i{''}].$ Noting that
$\Gamma_\mu(\Psi_0(\gamma(t_i{'})))\le h_1$ and
$\Gamma_\mu(\Psi_0(\gamma(t_i{''})))\ge h_2,$ then
there is a subinterval
$[t_i^{*},t_i^{**}]\subseteq[t_i{'},t_i{''}],$ such that
$\Psi_0(\gamma(t))\in\mathcal R_2,\,\forall t\in
[t_i^{*},t_i^{**}],$ with $\Psi_0(\gamma(t_i^{*}))\in{\mathcal
R}^-_{2,\,{\ell}}$ and
$\Psi_0(\gamma(t_i^{**}))\in{\mathcal R}^-_{2,\,{r}}.$
Hence, condition \eqref{eq-str1} is verified.

\smallskip

\noindent
Let us turn to system $(E_\mu).$ This time we focus our
attention on the solutions of such system starting from $\mathcal
A_Q^t$  and crossing $\mathcal A_Q^b$ at least $m_2$ times.
Similarly as before, we assume to have performed a rototranslation
of the plane that makes the $x$-axis coincide with the line $r$
and that brings the origin to the point $Q,$ so that we can
express the solution $\zeta(\cdot,w)$ of system $(E_{\mu})$ with
starting point in $w\in\mathcal A_Q^t$ through polar coordinates
$(\tilde\rho,\tilde\theta).$ In particular, it holds that
$\tilde\theta(0,w)\in[0,\pi].$ For any $t\in [0,r_\mu]=[0,T-r_0]$
and $w\in\mathcal A_Q^t,$ the rotation number is now defined as
$$\rot_\mu(t,w):=\frac{\tilde\theta (t,w)-\tilde\theta (0,w)}{2\pi}.$$
Since the time-map $\tau_{\mu}$ is strictly increasing, it follows that
$\tau_\mu(h_1)<\tau_\mu(h_2).$ We claim that, choosing a switching time
$r_\mu\ge\beta,$ with
\begin{equation}\label{eq-be}
\beta:=\frac{(m_2+3 +\tfrac{1}{2})\,\tau_\mu(h_1)\,\tau_\mu(h_2)}{\tau_\mu(h_2)-\tau_\mu(h_1)}\,,
\end{equation}
then, for any path $\omega:[0,1]\to {\mathcal A_Q},$
with $\omega(0)\in\Gamma_\mu(h_1)$ and $\omega(1)\in\Gamma_\mu(h_2),$
the interval inclusion
\begin{equation}\label{eq-incl}
\left[\tilde\theta(r_\mu,\omega(1)),\tilde\theta(r_\mu,\omega(0))\right]\supseteq \left[\pi(
2 n^{**}+1),2\pi(n^{**}+m_2)\right]
\end{equation}
is satisfied for some $n^{**}=n^{**}(r_\mu)\in {\mathbb N}.$
The claim can be proved with arguments analogous to the ones employed above and thus its verification is omitted. The nonnegative integer $n^{**}$ has to be chosen as
\begin{equation*}
n^{**}:=\left\lceil \frac{r_\mu}{\tau_\mu(h_2)}\right\rceil.
\end{equation*}
By \eqref{eq-incl} and the continuity of the composite map $[0,1]\ni
s\mapsto \rot_\mu({r_\mu},\omega(s)),$  it follows that
$$\left\{\tilde\theta ({r_\mu},\omega(s)),\,s\in [0,1]\right\}\supseteq\left[2\pi
n^{**}+\pi,2\pi(n^{**}+m_2)\right].$$
As a consequence, Bolzano Theorem ensures the existence of $m_2$ pairwise
disjoint maximal intervals $[s_i{'},s_i{''}]\subseteq [0,1],$
for $i=0,\dots,m_2-1,$ such that
$$\tilde\theta({r_\mu},\omega(s))\in [2\pi n^{**}+\pi+2\pi i,2\pi n^{**}+2 \pi+2\pi i],\,\forall s\in [s_i{'},s_i{''}],\, i=0,\dots,m_2-1,$$
with $\tilde\theta(r_\mu,\omega(s_i{'}))=2\pi n^{**}+\pi+2\pi i$ and $\tilde\theta(r_\mu,\omega(s_i{''}))=2\pi n^{**}+2 \pi+2\pi i.$
\\
For $\widetilde{\mathcal R}_1$ and $\widetilde{\mathcal R}_2$ as above and introducing the $m_2$ nonempty,
compact and pairwise disjoint sets
$$\mathcal K_i:=\left\{w\in\mathcal A_Q^t: \tilde\theta(r_\mu,w)\in [2\pi n^{**}+\pi+2\pi i,2\pi n^{**}+2 \pi+2\pi i]\right\},$$
with $i=0,\dots,m_2-1,$
we are in position to check that
\begin{equation}\label{eq-str2}
(\mathcal K_i,\Psi_\mu):\widetilde{\mathcal R}_2\stretchx\widetilde{\mathcal R}_1,\forall\,i=0,\dots,m_2-1,
\end{equation}
where $\Psi_\mu$ is the Poincar\'e map associated to system $(E_\mu).$
Indeed, taking a path $\omega:[0,1]\to\mathcal R_2,$ with $\omega(0)\in {\mathcal R}^-_{2,\,{\ell}}$
and $\omega(1)\in {\mathcal R}^-_{2,\,{r}},$ for $r_\mu\ge\beta$ and for any $i\in \{0,\dots,m_2 -1\}$
fixed, there exists a subinterval $[s_i{'},s_i{''}]\subseteq[0,1],$ such that $\omega(t)\in\mathcal K_i$
and $\Psi_\mu(\omega(t))\in \mathcal A_Q^b,\, \forall t\in [s_i{'},s_i{''}].$
Since $\Gamma_0(\Psi_\mu(\omega(s_i{'})))\le\ell_1$ and $\Gamma_0(\Psi_\mu(\omega(s_i{''})))\ge\ell_2,$
there exists a subinterval $[s_i^{*},s_i^{**}]\subseteq[s_i{'},s_i{''}]$
such that $\Psi_\mu(\omega(t))\in\mathcal R_1,\,\forall t\in [s_i^{*},s_i^{**}],$
with $\Psi_\mu(\omega(s_i^{*}))\in{\mathcal R}^-_{1,\,{\ell}}$ and $\Psi_\mu(\omega(s_i^{**}))\in{\mathcal R}^-_{1,\,{r}}.$
Hence, condition \eqref{eq-str2} is proved.

\smallskip

\noindent
The stretching properties in \eqref{eq-str1} and \eqref{eq-str2} allow to apply
Theorem \ref{th-ltmb} and the thesis follows immediately.
\qed

\medskip
\noindent
We observe that in the proof we have
chosen as $\mathcal R_1$ the ``lower'' set $\mathcal R^b$ and as
$\mathcal R_2$ the ``upper'' set $\mathcal R^t.$ However, since the orbits of both systems $(E_0)$ and $(E_\mu)$
are closed, the same argument works (by slightly modifying some constants, if needed)
for
${\mathcal R}_1 = {\mathcal R}^t$ and
${\mathcal R}_2 = {\mathcal R}^b.$

\section{Suspension Bridges Model}\label{sec-sb}

In this section we illustrate a possible application of the results from Section \ref{sec-ltm} to a
periodically perturbed Duffing equation of the form
\begin{equation}\label{eq-sb}
x'' + k x^+ = p_{q,s}(t),
\end{equation}
where $k, q, s, r_q\,, r_s$
are positive real numbers and $p_{q,s}:{\mathbb R}\to {\mathbb R}$
is a periodic function of period
\begin{equation*}
T= T_{q,s}\,:= r_q + r_s\,,
\end{equation*}
defined on $[0,T[\,$ by
\begin{equation*}
p_{q,s}(t)\,:=\;
\left\{
\begin{array}{lll}
&q \quad &\mbox{ for }\; 0\leq t < r_q\,,\\
-{\!\!\!\!}&s \quad &\mbox{ for }\; r_q\leq t < r_q \, + r_s\,,\\
\end{array}
\right.
\end{equation*}
and then extended to the real line by $T$-periodicity.\\
Equation \eqref{eq-sb} can be regarded as a simplified version of the
Lazer-McKenna suspension bridges model \cite{LaMK-87, LaMK-90} and
it is also meaningful from the point of view of the study of ODEs
with ``jumping nonlinearities'' and the periodic Dancer-Fu\v{c}ik
spectrum. See \cite{Ma-07} for an interesting survey concerning recent
developments in such area.\\
The results that we are going to quote have been obtained in \cite{PaZa-08} and are recalled in \cite{PaPiZa-08}, where only the main arguments employed in the proofs are presented, while the technical details are omitted. This is the scheme that we follow in the treatment below.\\
The main achievement in \cite{PaZa-08} concerns the existence of infinitely many periodic solutions as well as the presence of complex dynamics for equation \eqref{eq-sb}. Here we provide a simplified version of the original \cite[Theorem 1.2]{PaZa-08}, where precise information about the oscillatory behaviour of the solutions was given, too.

\begin{theorem}\label{th-mrb}
For any choice of $k, q, s > 0$ and for every positive integer $m\geq 2,$
it is possible to find two intervals
$\,]a^{m}_q,b^{m}_q[\,$ and $\,]a^{m}_s,b^{m}_s[\,$ such that, for every
forcing term $p_{q,s}$ with $r_q \in\, ]a^{m}_q,b^{m}_q[\,$ and
$r_s \in\, ]a^{m}_s,b^{m}_s[\,,$ the Poincar\'{e} map
associated to \eqref{eq-sb} induces chaotic dynamics
on $m$ symbols in the plane.
\end{theorem}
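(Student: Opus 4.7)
The plan is to recast \eqref{eq-sb} as the planar system $x'=y$, $y'=-kx^+ + p_{q,s}(t)$ and analyze its Poincar\'e map $\Psi$ over one period, which factors as $\Psi = \Psi_{-s}\circ \Psi_q$, where $\Psi_q$ and $\Psi_{-s}$ are the time-$r_q$ and time-$r_s$ maps of the autonomous systems $(E_q)\!:\, x''+kx^+=q$ and $(E_{-s})\!:\, x''+kx^+=-s$. I would look for oriented rectangles $\widetilde{\mathcal{A}}$, $\widetilde{\mathcal{B}}$ in the phase plane and apply Theorem \ref{th-ltma} with $\varphi=\Psi_q$ and $\psi=\Psi_{-s}$: verifying $(H_\varphi)$ with $m$ pairwise disjoint compact sets $\mathcal{H}_0,\ldots,\mathcal{H}_{m-1}\subseteq\mathcal{A}$ and $(H_\psi)$ at once yields chaos on $m$ symbols for $\Psi$.

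The input for the stretching conditions is a careful phase-plane analysis of the two regimes. System $(E_q)$ is conservative with potential $V_q(x)=\tfrac{k}{2}(x^+)^2 - qx$, which has a unique nondegenerate minimum at $(q/k,0)$; its level sets are closed orbits encircling that center, and the time map $\tau_q(\ell)$ is constant and equal to $2\pi/\sqrt{k}$ for small orbits entirely in $\{x\geq 0\}$, while it is strictly increasing (and tends to $+\infty$) for orbits reaching into $\{x<0\}$, because the parabolic traversal time in the linear-potential region $V_q(x)=-qx$ grows with the amplitude. In contrast, $(E_{-s})$ has monotone potential $V_{-s}(x)=\tfrac{k}{2}(x^+)^2+sx$, no equilibrium, and every trajectory enters $\{x>0\}$ along an arc of an ellipse centered at $(-s/k,0)$ traversed with angular speed $\sqrt{k}$ and exits to $x=-\infty$ along a parabola $y^2=-2sx+C$ with $y$ decreasing linearly at rate $s$. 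These explicit formulas let one compute precisely the action of $\Psi_{-s}$ on phase-plane rectangles.

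For $\widetilde{\mathcal{A}}$ I would take a thin rectangular portion of an annulus $\{\ell_1\leq E_q\leq\ell_2\}$ around $(q/k,0)$ with $\tau_q(\ell_1)<\tau_q(\ell_2)$, oriented by letting $\mathcal{A}^-$ be the arcs lying on the two bounding level curves, and for $\widetilde{\mathcal{B}}$ a second generalized rectangle sitting transverse to both flows. Condition $(H_\varphi)$ then follows from the same rotation-number argument used in the proof of Theorem \ref{th-mr}: the strict monotonicity of $\tau_q$ forces any path in $\mathcal{A}$ from $\mathcal{A}^-_\ell$ to $\mathcal{A}^-_r$ to be twisted by $\Psi_q$ into a curve that, provided $r_q$ exceeds a threshold depending on $m$ and on $\tau_q(\ell_1),\tau_q(\ell_2)$, crosses $\mathcal{B}$ at least $m$ times from one side of $\mathcal{B}^-$ to the other, yielding the $m$ compact sets $\mathcal{H}_i$. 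Condition $(H_\psi)$ is obtained from the shear in $(E_{-s})$: the $y$-dependent parabolic drift in $\{x<0\}$, together with the variable elliptic traversal time in $\{x>0\}$, produces a single stretching of $\widetilde{\mathcal{B}}$ across $\widetilde{\mathcal{A}}$ whenever $r_s$ lies in a suitable window. Theorem \ref{th-ltma} then finishes the proof, and the intervals $\,]a_q^m,b_q^m[\,$ and $\,]a_s^m,b_s^m[\,$ of the statement are precisely the ranges of $r_q,r_s$ on which $(H_\varphi)$ and $(H_\psi)$ simultaneously hold.

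The main obstacle will be the construction of $\widetilde{\mathcal{B}}$ and the verification of $(H_\psi)$. In the Volterra setting of Theorem \ref{th-mr} the second regime was also a center with closed orbits, so the geometry reduced to two linked-together annuli; here $(E_{-s})$ has no closed orbits and all trajectories escape to $x=-\infty$, so one must choose $\mathcal{B}$ inside the transit region where elliptic arcs enter $\{x>0\}$ and use the elliptic/parabolic gluing along $\{x=0\}$ to produce the single crossing of $\mathcal{A}$ demanded by $(H_\psi)$. This analysis is also what explains why both $r_q$ and $r_s$ must lie in \emph{bounded} intervals (rather than unbounded rays, as in Theorem \ref{th-mr}): if $r_s$ is too large the image of $\mathcal{B}$ under $\Psi_{-s}$ has already slipped past $\mathcal{A}$ toward $x=-\infty$, and $r_q$ must be coordinated with $r_s$ so that the spiralling image of $\mathcal{A}$ under $\Psi_q$ lands in the correct transit region for $\Psi_{-s}$.
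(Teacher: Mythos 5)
Your proposal follows the same route as the paper: decompose the Poincar\'e map as $\Psi=\Psi_s\circ\Psi_q$, exploit the strictly increasing time-map $\tau_q$ of the center system $(E_q)$ to obtain the $m$-fold stretching $(H_\varphi)$, use the escaping shear of $(E_s)$ for $(H_\psi)$, and close with Theorem \ref{th-ltma}. The paper merely pins down the regions your sketch leaves open---$\mathcal{R}_1$ sits in the third quadrant bounded by two level curves of $\mathcal{E}_q$ (supplying $\mathcal{R}_1^-$) and two of $\mathcal{E}_s$, while $\mathcal{R}_2$ is its reflection across the $x$-axis with $\mathcal{R}_2^-$ on the $\mathcal{E}_s$-curves---and your remark on why the non-recurrent $(E_s)$ forces $r_s$ (and hence $r_q$) into bounded windows correctly explains why the statement here uses bounded intervals rather than the half-lines of Theorem \ref{th-mr}.
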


\noindent
We stress that, according to \cite[Theorem 1.3]{PaZa-08}, the above result is stable with respect to small perturbations in the sense that, once we have proved the existence of chaotic-like dynamics in a certain range
of parameters for equation \eqref{eq-sb}, then the same conclusions still hold for
\begin{equation*}
x'' + c x' + k x^+ = p(t),
\end{equation*}
provided that $|c|$ and $\int_0^T |p(t) - p_{q,s}(t)|\,dt$ (with $T = r_q + r_s$)
are sufficiently small. See \cite{PaZa-08} for more details.

\smallskip

\noindent
We describe now the geometry associated to the phase-portrait of \eqref{eq-sb},
in order to understand how to apply Theorem \ref{th-ltma} from Section \ref{sec-ltm}.
Similar arguments could be also employed for equation
$$ x'' + g(x) = p(t),$$
with $g$ a monotone
nondecreasing function bounded from below and such that $g'(+\infty) = k > 0.$\\
With reference to \eqref{eq-sb}, first of all, we observe that the Poincar\'{e} map
$\Psi$ associated to the equivalent planar system
\begin{equation}\label{sys-pq}
\left\{
\begin{array}{ll}
\dot x = y\\
\dot y = -k x^+ + p_{q,s}(t)
\end{array}
\right.
\end{equation}
can be decomposed as
\begin{equation*}
\Psi= \Psi_{s}\circ \Psi_q\,,
\end{equation*}
where $\Psi_{q}$ and $\Psi_{s}$ are the Poincar\'{e} maps
\begin{equation*}
\Psi_{q}\,: z_0 \mapsto \zeta_{q}(r_q\,,z_0),\quad
\Psi_{s}\,: z_0 \mapsto \zeta_{s}(r_s\,,z_0)
\end{equation*}
along the time intervals $[0,r_q]$ and $[0,r_s]$
associated to the dynamical systems defined by
$$
\left\{
\begin{array}{ll}
\dot x = y\\
\dot y = -k x^+ + q
\end{array}
\right.
\leqno{(E_q)}
$$
and
$$
\left\{
\begin{array}{ll}
\dot x = y\\
\dot y = -k x^+ - s.
\end{array}
\right.
\leqno{(E_s)}
$$
Here $\zeta_{q}(\cdot\,,z_0)$ denotes the solution $(x(\cdot),y(\cdot))$ of $(E_{q})$ with
$(x(0),y(0))= z_0\,.$ Clearly, $\zeta_s(\cdot\,,z_0)$ has the same meaning with respect to
system $(E_{s}).$

\begin{figure}[h]
   \centering
   \includegraphics[scale=0.35]{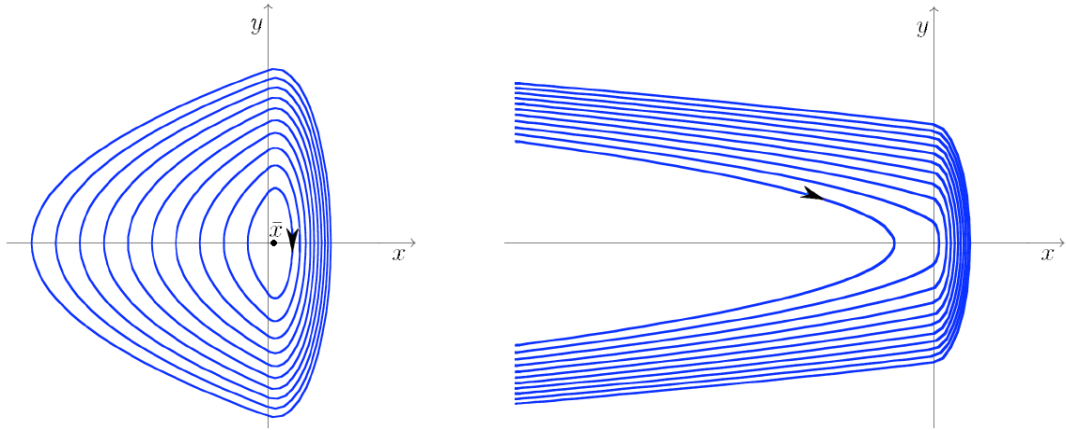}
   \caption{\footnotesize{Trajectories of system $(E_q)$  (left)
and system $(E_s)$ (right).
In this picture we
have chosen $k=10,$ $q=4$ and $s=0.5.$
}}
\label{fig:10}
\end{figure}

\noindent
The orbits of system $(E_q)$ correspond to the level lines
of the energy
$${\mathcal E}_{q}(x,y):= \frac 1 2 y^2 + \frac k 2 (x^+)^2 - q x.$$
Except for the equilibrium point $\bar x:=(\tfrac{q}{k},0),$
they are closed curves which are
run in the clockwise sense.
Moreover, their fundamental period $\tau_q( \cdot )$ is a monotonically nondecreasing function
with respect to the energy (in particular, strictly increasing on $[0,+\infty)$) and it limits to $+\infty$ as the energy tends to $+\infty.$\\
On the other hand,
the orbits of system $(E_s)$ correspond to the level lines
of the energy
$${\mathcal E}_{s}(x,y):= \frac 1 2 y^2 + \frac k 2 (x^+)^2 + s x$$
and they are run with $y'< 0.$

\noindent
Since our trajectories will switch from system $(E_q)$ to
system $(E_s)$ and vice versa,
in order to study the effect of the forcing term $p_{q,s}(t),$
we are led to overlap the energy level lines of the two systems.
Intersecting two level lines of
$(E_{q})$ with two level lines of $(E_{s}),$
we can determine two rectangular regions,
like in Figure \ref{fig:12}.

\begin{figure}[ht]
   \centering
   \includegraphics[scale=0.28]{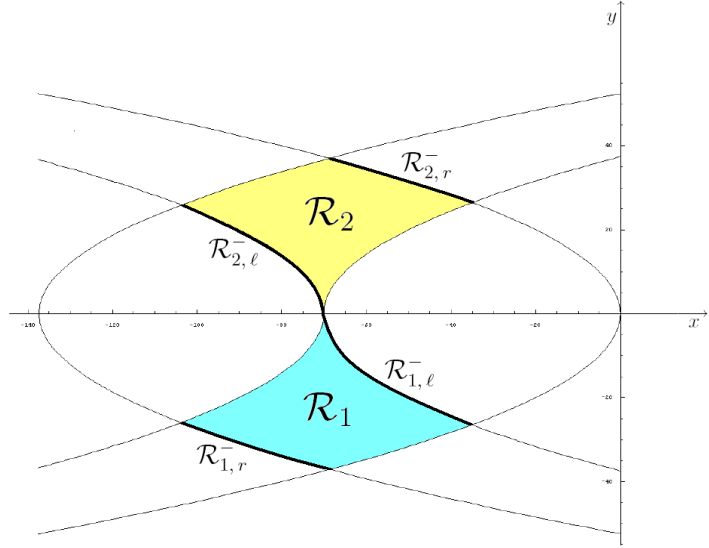}
   \caption{\footnotesize{Level lines of  system $(E_q)$ overlapped with the level lines of
system $(E_s)$ in the left half plane $x\leq 0.$ The
sets ${\mathcal R}_1$ and ${\mathcal R}_2$ are those bounded by the
four different level lines and contained in the lower half plane
$y\leq 0$ and in the upper half plane $y \geq 0,$ respectively. Orienting them by choosing as
${\mathcal R}_{1}^-={\mathcal R}^-_{1,\,{\ell}}\cup{\mathcal R}^-_{1,\,{r}}$ and ${\mathcal R}_{2}^-={\mathcal R}^-_{2,\,{\ell}}\cup{\mathcal R}^-_{2,\,{r}}$ the boundary sets drawn with a thicker line, we enter the setting of Theorem \ref{th-ltma} and thus the presence of chaotic dynamics follows.
}}
\label{fig:12}
\end{figure}

\noindent
To get periodic and chaotic solutions for system \eqref{sys-pq} and thus for equation
\eqref{eq-sb},
we employ the following elementary procedure.
We construct a set ${\mathcal R}_1$
in the third quadrant bounded by two level lines of $(E_{q})$ and two level lines of $(E_{s}).$
Since the period ${\tau}_q(\cdot)$ of the closed orbits of system $(E_q)$
is a strictly monotone increasing function on $[0,+\infty),$
we define the two components ${\mathcal R}^-_{1,\,{\ell}}$ and ${\mathcal R}^-_{1,\,{r}}$ of ${\mathcal R}^-_1$ as the intersections of ${\mathcal R}_1$ with the level lines of ${\mathcal E}_q\,,$ so that points on the inner boundary move faster than points on the outer boundary. Just to fix ideas, we choose as ${\mathcal R}^-_{1,\,{\ell}}$ the inner boundary and as ${\mathcal R}^-_{1,\,{r}}$ the outer boundary.
Hence, if we take a path $\gamma$ contained in the third quadrant and joining ${\mathcal R}^-_{1,\,{\ell}}$ to ${\mathcal R}^-_{1,\,{r}}\,,$
we find that, after a sufficiently long time $r_{q},$ its image under $\Psi_q$
turns out to be a spiral-like line crossing the second quadrant at least twice.
In particular, we can select two subpaths $\omega_0$ and $\omega_1$
of $\gamma,$ whose images under $\Psi_q$ cross the second quadrant.
Now we define a rectangular subset ${\mathcal R}_2$ in the second quadrant,
which is just the reflection of ${\mathcal R}_1$ with respect to the
$x$-axis, having as components of ${\mathcal R}^-_2$
the intersections of ${\mathcal R}_2$ with
two level lines of ${\mathcal E}_s.$ Selecting as ${\mathcal R}^-_{2,\,{\ell}}$ the intersection with the inner level line and as ${\mathcal R}^-_{2,\,{r}}$ the intersection with the outer one, it follows that $\Psi_q(\omega_0)$ and
$\Psi_q(\omega_1)$ admit subpaths contained in ${\mathcal R}_2$
and joining ${\mathcal R}^-_{2,\,{\ell}}$ to ${\mathcal R}^-_{2,\,{r}}\,.$
Switching to system $(E_s),$ it is possible to prove that the image of such subpaths under $\Psi_s$ crosses the set ${\mathcal R}_1$ from ${\mathcal R}^-_{1,\,\ell}$ to ${\mathcal R}^-_{1,\,r}\,.$ Then we can conclude by applying Theorem \ref{th-ltma}, provided we are free to choose the switching times $t_q$ and $t_s\,.$\\
The technical details for this argument can be found in \cite{PaZa-08}.

\smallskip

\noindent
Similar conclusions can be drawn for the second order ODE
\begin{equation*}
x'' + q(t) g(x) = 0,
\end{equation*}
with $g$ a nonlinear term and $q(\cdot)$ a periodic step function
(or a suitable small perturbation of a step function).

\smallskip

\noindent
The examples of ODEs we have presented suggest the possibility of proving
in an elementary but rigorous manner the presence of chaotic dynamics for a broad class of
second order nonlinear ODEs with periodic coefficients, which are piecewise
constant or at least not far from the piecewise constant case,
as long as we have the freedom to tune some switching time parameters within
a certain range depending on the coefficients governing the equations.
With this respect, our approach shows some resemblances with different techniques based on more sophisticated tools, like for instance on modifications of the methods by Shilnikov or Melnikov, in which one looks for a transverse homoclinic point \cite{BaPa-93,HaLi-86,Pa-86}. Indeed, also in such cases, it is possible to prove the presence of complex dynamics for differential equations, provided that the period of the time-dependent coefficients tends to infinity. See \cite{AmBa-98a,AmBa-98b,BeBo-99,dPFeTa-02}, where alternative techniques are employed. In \cite{HeZa-03} the differentiability hypothesis on the Melnikov function is replaced with sign-changing conditions and the proofs make use of the Poincar\'e-Miranda Theorem.\\
We conclude by observing that the same arguments employed above seem to work also for more general time-dependent coefficients.
However, a rigorous proof in such cases would require
a more delicate analysis or possibly the aid of computer assistance (as
in \cite{BaCs-08,GaZg-98,MiMr-95b,PoRaVi-07,Wi-03,YaLi-06,Zg-97}) and this is beyond the aims of the present thesis.

\backmatter

\end{document}